\def\R{\mathbb{R}}
\def\N{\mathbb{N}}
\def\epsilon{\varepsilon}
\renewcommand\tilde{\widetilde}
\newcommand{\be}{\begin{equation}}
\newcommand{\ee}{\end{equation}}
\newcommand{\baa}{\begin{array}}
\newcommand{\eaa}{\end{array}}
\newcommand{\ba}{\begin{eqnarray}}
\newcommand{\ea}{\end{eqnarray}}
\newtheorem{theorem}{Theorem}[section]
\newtheorem{lemma}[theorem]{Lemma}
\newtheorem{corollary}[theorem]{Corollary}
\newtheorem{definition}[theorem]{Definition}
\newtheorem{remark}[theorem]{Remark}
\newtheorem{conjecture}[theorem]{Conjecture}
\numberwithin{equation}{section}
\newenvironment{proof}[1][Proof]{\noindent\textbf{#1.} }{\hfill $\Box$}
\begin{document}
\date{}
\title{\bf{On the mean speed of bistable transition fronts in unbounded domains}\thanks{This work has been carried out in the framework of Archim\`ede Labex of Aix-Marseille University. The project leading to this publication has received funding from Excellence Initiative of Aix-Marseille University~-~A*MIDEX, a French ``Investissements d'Avenir" programme, from the European Research Council under the European Union's Seventh Framework Programme (FP/2007-2013) ERC Grant Agreement n.~321186~- ReaDi~- Reaction-Diffusion Equations, Propagation and Modelling and from the ANR NONLOCAL project (ANR-14-CE25-0013). H. Guo was supported by the China Scholarship Council for 3 years of study at Aix-Marseille University. W.J. Sheng was supported by NSF of China (11401134), by postdoctoral scientific research developmental fund of Heilongjiang Province (LBH-Q17061) and the China Scholarship Council for a one-year visit at Aix-Marseille University.}}
\author{Hongjun Guo \textsuperscript{a}, Fran\c{c}ois Hamel \textsuperscript{a} and Wei-Jie Sheng \textsuperscript{b}\\
\\
\footnotesize{\textsuperscript{a} Aix Marseille Univ, CNRS, Centrale Marseille, I2M, Marseille, France}\\
\footnotesize{\textsuperscript{b} Department of Mathematics, Harbin Institute of Technology, Harbin, China}}
\maketitle

\begin{abstract}\noindent{}This paper is concerned with the existence and further properties of propagation speeds of transition fronts for bistable reaction-diffusion equations in exterior domains and in some domains with multiple cylindrical branches. In exterior domains we show that all transition fronts with complete propagation propagate with the same global mean speed, which turns out to be equal to the uniquely defined planar speed. In domains with multiple cylindrical branches, we show that the solutions emanating from some branches and propagating completely are transition fronts propagating with the unique planar speed. We also give some geometrical and scaling conditions on the domain, either exterior or with multiple cylindrical branches, which guarantee that any transition front has a global mean speed.
\noindent{}
\vskip 0.1cm
\noindent\textit{Keywords.} Reaction-diffusion equations; transition fronts; mean speed; exterior domains; domains with branches.
\end{abstract}

\tableofcontents


\section{Introduction and main results}

This paper is concerned with propagation phenomena for semilinear parabolic reaction-diffusion equations of the type
\be\label{eq1.1}\left\{\begin{array}{rcll}
u_t & = & \Delta u+f(u), & t\in\R,\ x\in\Omega,\vspace{3pt}\\
u_{\nu} & = & 0, & t\in\R,\ x\in\partial\Omega,\end{array}\right.
\ee
set in smooth unbounded open connected sets (domains) $\Omega$ of $\R^N$ with $N\ge 2$ (more precise assumptions on~$\Omega$ will be made later). Here, $u_t=\frac{\partial u}{\partial t}$,  $\nu=\nu(x)$ is the outward unit normal on the boundary~$\partial \Omega$ and $u_{\nu}=\frac{\partial u}{\partial \nu}$. Neumann no-flux boundary conditions are thus imposed on $\partial \Omega$. Such equations are ubiquitous in many models in biology, ecology and physics. The quantity $u(t,x)$ is assumed to be bounded, and then to range in the interval $[0,1]$ without loss of generality. It may for instance stand for the density of a species at time $t$ and location $x$.

The reaction term $f$ is assumed to be of class $C^{1,\beta}([0,1],\R)$ (with $\beta>0$) with two stable zeroes~$0$ and~$1$, that is,
\begin{align}\label{F1}
f(0)=f(1)=0,\ \ f'(0)<0\ \hbox{ and }\ f'(1)<0.
\end{align}
Having in mind that we are interested in propagating solutions $u$ of~\eqref{eq1.1} connecting the two steady states $0$ and $1$ in some unbounded domains $\Omega$, we assume throughout the paper that the states $0$ and $1$ can at least be connected by a planar traveling front, that is, equation~\eqref{eq1.1}, if set in $\Omega=\R$, admits a traveling front $\phi(x-c_ft)$ solving
\be\label{phi-eq}
\phi''+c_f\phi'+f(\phi)=0\ \text{in $\R$},\ \ 0<\phi<1\hbox{ in }\R,\ \ \phi(-\infty)=1\text{ and }\phi(+\infty)=0,
\ee
with $c_f\neq0$ (the front is not stationary, it is truly propagating). Without loss of generality, even if it means changing $\phi(x-c_ft)$ into $1-\phi(-x-c_ft)$ and $f(s)$ into $-f(1-s)$, one can then assume that
\be\label{F2}
c_f>0.
\ee
By multiplying~\eqref{phi-eq} by $\phi'$ and integrating over $(-\infty,\xi)$ with an arbitrary $\xi\in(-\infty,+\infty]$, condition~\eqref{F2} also implies that $\int_\eta^1 f(s)ds>0$ for every $0\le\eta<1$. The front $\phi(x-c_ft)$ describes the invasion of the state $0$ by the state $1$, with constant propagation speed $c_f>0$ and constant profile~$\phi$. Actually, with condition~\eqref{F1}, the speed $c_f$ of~\eqref{phi-eq} is unique and the profile~$\phi$ is unique up to shifts, see~\cite{AW,FM}. {\it The assumptions~\eqref{F1}-\eqref{F2} are made thourghout the paper and are therefore not repeated in the statements of the results}.

For a function $f$ satisfying~\eqref{F1}, condition~\eqref{phi-eq} is fulfilled if $f$ is bistable, that is, there exists $\theta\in(0,1)$ such that
\be\label{F3}
f<0\hbox{ on }(0,\theta)\ \hbox{ and }\ f>0\hbox{ on }(\theta,1),
\ee
see~\cite{AW,FM}. In particular, an important case~of a function $f$ satisfying~\eqref{F1}-\eqref{F2} is the cubic nonlinea\-rity $f(u)=u(1-u)(u-\theta)$ with $\theta\in (0,1/2)$. Other sufficient conditions for~\eqref{phi-eq} to hold for multistable functions $f$ having more than one oscillation in the interval $(0,1)$ are given in~\cite{FM}. Lastly, for mathematical purposes, we extend the function $f$ in $\R\backslash [0,1]$ as follows: $f(u)=f'(0)u>0$ for $u\in (-\infty,0)$ and $f(u)=f'(1)(u-1)<0$ for $u\in (1,+\infty)$.

The paper deals with propagation phenomena for~\eqref{eq1.1} in various types of domains $\Omega$. Since the steady state $1$ is in some sense more stable than the steady state $0$ due to~\eqref{F2}, the region where $u$ is close to $1$ is expected to invade the region where $u$ is close to $0$. Propagating solutions connecting the stable steady states $0$ and $1$ play an important role in the dynamics of~\eqref{eq1.1} and the characterization of their propagation speed is a fundamental issue. This paper deals with this issue and in particular with the understanding of the role of the geometry of the underlying domain $\Omega$. In the one-dimensional line $\Omega=\R$, standard traveling fronts $\phi(x-ct)$ connecting~$0$ and~$1$ and propagating with constant speed $c=c_f$ are assumed to exist by~\eqref{phi-eq}. However, for general domains $\Omega\subset\R^N$, standard planar traveling fronts of the type $\phi(x\cdot e-ct)$ for some unit vector $e$ and some speed $c\in\R$ do not exist anymore. The shape of the level sets of the solutions $u$ of~\eqref{eq1.1} depend on $\Omega$ and may be much more complex in general. To describe propagating solutions connecting the two stable states $0$ and $1$ in arbitrary unbounded domains~$\Omega$, we therefore have to consider the recently introduced and more general notions of transition fronts and global mean speed.


\subsection{Notions of transition fronts and global mean speed}\label{sec1.1}

Before giving the general definition of transition fronts connecting the steady states $0$ and $1$ in general domains $\Omega$, let us first recall some well-known results about the reaction-diffusion equation~\eqref{eq1.1} in the whole space $\R^N$. By assumption~\eqref{phi-eq}, equation~\eqref{eq1.1} set in $\R$ admits standard traveling fronts $u(t,x)=\phi(x-c_f t)$. In higher dimensions $\Omega=\R^N$ with $N\ge 2$, planar fronts $\phi(x\cdot e-c_ft)$ moving with constant speed $c_f$ still exist, where $e\in\mathbb{S}^{N-1}$ is any unit vector of $\R^N$. Even in the homogeneous space $\Omega=\R^N$, under assumptions~\eqref{F1}-\eqref{F3}, many other types of solutions of~\eqref{eq1.1} exist, such as axisymmetric conical-shaped, resp. pyramidal, fronts $\psi(x-cte)$, where $e\in\mathbb{S}^{N-1}$, $c\ge c_f$ and the level sets of the function $\psi:\R^N\to(0,1)$ are invariant by rotation around the vector $e$, resp. have a pyramidal shape, see~\cite{HM,HMR1,HMR2,HS,NT1,T1,T2,T3}. These solutions are all invariant with respect to time in a moving frame. Let us also mention that standard traveling fronts $\psi(x-cte)$, with other shapes, still exist when~$f$ is balanced, that is, $\int_0^1\!f(s)ds=0$, see~\cite{CGHNR,DKW,G1}.

In general domains $\Omega$ without any invariance by translation, standard traveling fronts do not exist anymore in general. Instead, there may still exist front-like solutions which behave asymptotically as standard traveling fronts in some sense. For instance, in exterior domains and in some cylinder-like domains, there may exist front-like solutions $u(t,x)$ satisfying
$$u(t,x)-\phi(x\cdot e-c_f t)\to0\ \text{ as $t\rightarrow -\infty$ uniformly in $\overline{\Omega}$},$$
where $e$ is the direction of propagation for very negative times, see~\cite{BBC,BHM,CG,P}. We will come back to these solutions later.

Considering the various types of known existing fronts and the generality of the underlying domains $\Omega$, unifying notions of generalized traveling fronts, namely the transition fronts,  have been introduced in~\cite{BH1,BH2} (see also~\cite{S} in the one-dimensional setting). In order to recall the notion of transition fronts and that of their global mean speed, let us introduce a few notations. The unbounded open connected set $\Omega\subset \R^N$ is assumed to have a globally $C^{2,\beta}$ boundary with $\beta>0$ (this is what we call a smooth domain throughout the paper), that is, there exist $\rho>0$ and $C>0$ such that, for every $y\in\partial \Omega$, there are a rotation $R_y$ of $\R^N$ and a $C^{2,\beta}$ map $\psi_y:\bar{B}=\big\{x'\in\R^{N-1}: |x'|\le2\rho\big\}\rightarrow \R$ such that $\psi_y(0)=0$, $\|\psi_y\|_{C^{2,\beta}(\bar{B})}\le C$ and
$$\Omega\cap B(y,\rho)=\left[y+R_y\big(\{x=(x',x_N)\in\R^N: x'\in\bar{B}, x_N>\psi_y(x')\}\big)\right]\cap B(y,\rho),$$
where
$$B(y,\rho)=\big\{x\in\R^N: |x-y|<\rho\big\}$$
and $|\ \ |$ denotes the Euclidean norm. Let $d_{\Omega}$ be the geodesic distance in $\overline{\Omega}$. For any two subsets~$A$ and~$B$ of~$\overline{\Omega}$, we set
\begin{equation*}
d_{\Omega}(A,B)=\inf\big\{d_{\Omega}(x,y): (x,y)\in A\times B\big\},
\end{equation*}
and $d_{\Omega}(x,A)=d_{\Omega}(\{x\},A)$ for $x\in\R^N$.

Consider now two families $(\Omega_t^-)_{t\in \mathbb{R}}$ and $(\Omega_t^+)_{t\in \mathbb{R}}$ of open non-empty subsets of $\Omega$ such that
\begin{eqnarray}\label{eq1.3}
\forall t\in \mathbb{R},\ \ \left\{\begin{array}{l}
\Omega_t^-\cap \Omega_t^+=\emptyset,\vspace{3pt}\\
\partial \Omega_t^-\cap \Omega=\partial \Omega_t^+\cap \Omega=:\Gamma_t,\vspace{3pt}\\
\Omega_t^-\cup \Gamma_t \cup \Omega_t^+=\Omega,\vspace{3pt}\\
\sup\big\{d_{\Omega}(x,\Gamma_t): x\in \Omega_t^+\big\}=\sup\big\{d_{\Omega}(x,\Gamma_t): x\in \Omega_t^-\big\}=+\infty\end{array}\right.
\end{eqnarray}
and
\begin{eqnarray}\label{eq1.4}
\left\{\begin{array}{l}
\inf\Big\{\sup\big\{d_{\Omega}(y,\Gamma_t): y\in \Omega_t^+, d_{\Omega}(y,x)\leq r\big\}: t\in \mathbb{R},\ x\in \Gamma_t\Big\}\rightarrow +\infty\vspace{3pt}\\
\inf\Big\{\sup\big\{d_{\Omega}(y,\Gamma_t): y\in \Omega_t^-, d_{\Omega}(y,x)\leq r\big\}: t\in \mathbb{R},\ x\in \Gamma_t\Big\}\rightarrow +\infty\end{array}\right.
\text{ as}\ \ r\rightarrow +\infty.
\end{eqnarray}
Notice that the condition~\eqref{eq1.3} implies in particular that the interface $\Gamma_t$ is not empty for every~$t\in \mathbb{R}$. As far as~\eqref{eq1.4} is concerned, it says that for any $M>0$, there is $r_M>0$ such that, for every $t\in \mathbb{R}$ and~$x\in \Gamma_t$, there are $y^{\pm}\in \mathbb{R}^N$ such that
\begin{eqnarray}\label{eq1.5}
y^{\pm}\in \Omega^{\pm}_t,\ \ d_{\Omega}(x,y^{\pm})\leq r_M\ \ \text{and}\ \ d_{\Omega}(y^{\pm},\Gamma_t)\geq M.
\end{eqnarray}
In other words, condition~\eqref{eq1.4} means that any point on $\Gamma_t$ is not too far from the centers of two large balls (in the sense of the geodesic distance in $\Omega$) included in $\Omega^-_t$ and $\Omega^+_t$, this property being in some sense uniform with respect to $t$ and to the point on $\Gamma_t$ (without loss of generality, one can also assume that the map $M\mapsto r_M$ is non-decreasing). Moreover, in order to avoid interfaces with infinitely many twists, the sets $\Gamma_t$ are assumed to be included in finitely many graphs: there is an integer $n\geq 1$ such that, for each $t\in \mathbb{R}$, there are $n$ open subsets $\omega_{i,t}\subset \mathbb{R}^{N-1}$(for $1\leq i\leq n$), $n$ continuous maps $\psi_{i,t}: \omega_{i,t}\rightarrow \mathbb{R}$ and $n$ rotations $R_{i,t}$ of $\mathbb{R}^N$, with
\begin{equation}\label{eq1.6}
\Gamma_t \subset \bigcup_{1\leq i\leq n} R_{i,t}\left(\big\{x=(x',x_N)\in \mathbb{R}^N: x'\in \omega_{i,t},\ x_N=\psi_{i,t}(x')\big\}\right).
\end{equation}

\begin{definition}\label{TF}{\rm{\cite{BH1,BH2}}}
For problem~\eqref{eq1.1}, a transition front connecting $0$ and $1$ is a classical solution $u:\mathbb{R}\times\overline{\Omega} \rightarrow (0,1)$ for which there exist some sets $(\Omega_t^{\pm})_{t\in \mathbb{R}}$ and $(\Gamma_t)_{t\in \mathbb{R}}$ satisfying~\eqref{eq1.3}-\eqref{eq1.6}, and, for every $\varepsilon>0$, there exists $M_{\varepsilon}>0$ such that
\begin{eqnarray}\label{eq1.7}
\left\{\baa{l}
\forall t\in \mathbb{R},\ \ \forall x\in \overline{\Omega_t^+}, \ \ \left(d_{\Omega}(x,\Gamma_t)\geq M_{\varepsilon}\right)\Rightarrow \left(u(t,x)\geq 1-\varepsilon\right)\!,\vspace{3pt}\\
\forall t\in \mathbb{R},\ \ \forall x\in \overline{\Omega_t^-}, \ \ \left(d_{\Omega}(x,\Gamma_t)\geq M_{\varepsilon}\right)\Rightarrow \left(u(t,x)\leq \varepsilon\right)\!.\eaa\right.
\end{eqnarray}
Furthermore, $u$ is said to have a global mean speed $\gamma$ $(\geq 0)$ if
$$\frac{d_{\Omega}(\Gamma_t,\Gamma_s)}{|t-s|}\rightarrow \gamma \ \ \text{as}\ \ |t-s|\rightarrow +\infty.$$
\end{definition}

This definition has been shown in~\cite{BH1,BH2,H} to cover and unify all classical cases. Condition~\eqref{eq1.7} somehow means that the transition between the limiting steady states $0$ and $1$ takes place in some uniformly-bounded-in-time neighborhoods of $\Gamma_t$. Without this uniformity condition, other solutions may exist: for instance, for the equation~\eqref{eq1.1} in $\R$ under assumptions~\eqref{F1}-\eqref{F3}, there are solutions $u(t,x)$ converging to $0$ and $1$ as $x\to\pm\infty$ for each time~$t\in\R$, but which do not satisfy~\eqref{eq1.7} and thus are not transition fronts, see~\cite{MN}. Notice that, for a given transition front connecting $0$ and $1$, the families $(\Omega^{\pm}_t)_{t\in\R}$ and $(\Gamma_t)_{t\in\R}$ satisfying~\eqref{eq1.7} are not unique, since any uniformly-bounded-in-time thickening or thinning of $\Omega^-_t$ or $\Omega^+_t$ satisfies~\eqref{eq1.7} too. However, for a given transition front satisfying~\eqref{eq1.7} with some families $(\Omega^{\pm}_t)_{t\in\R}$, $(\Gamma_t)_{t\in\R}$ and $(\tilde{\Omega}^{\pm}_t)_{t\in\R}$, $(\tilde{\Gamma}_t)_{t\in\R}$, the Hausdorff distance between the interfaces $\Gamma_t$ and $\tilde{\Gamma}_t$ is uniformly bounded in time, see~\cite{BH2}. Notice also that, for a given transition front connecting $0$ and $1$, the global mean speed $\gamma$, if any, does not depend on the choice of the families $(\Omega^{\pm}_t)_{t\in\R}$ and $(\Gamma_t)_{t\in\R}$. However, for equations of the type~\eqref{eq1.1} in $\R$ with Fisher-KPP reactions $f$, there are transition fronts without global mean speed, see~\cite{HN,HR2}.

For problem~\eqref{eq1.1} under assumptions~\eqref{F1}-\eqref{F2}, it was proved in~\cite{H} that, in the one-dimensional case $\Omega=\R$, the only transition fronts connecting $0$ and $1$ are the right-moving and left-moving traveling fronts $\phi(\pm x-c_ft)$ (up to shifts), moving with constant speed $c_f$. In the space $\R^N$, any almost-planar transition front (in the sense that, for every $t\in\R$, $\Gamma_t$ is a hyperplane) connecting~$0$ and~$1$ is truly planar. Furthermore, still in $\R^N$, any transition front connecting $0$ and $1$ has a global mean speed $\gamma$, which is equal to $c_f$ and is therefore independent of the shape of the level sets of $u$, see~\cite{H}. We point out that the aforementioned axisymmetric conical-shaped or pyramidal fronts of the type $\psi(x-cte)$, existing for any $e\in\mathbb{S}^{N-1}$ and any $c\ge c_f$, still have a global mean speed equal to $c_f$, whatever $c$ may be in $[c_f,+\infty)$, since the distance between the level sets of these fronts at times $t$ and $s$ is always asymptotically equivalent to $c_f|t-s|$ as~$|t-s|\to+\infty$. Lastly, even in the homogeneous space $\R^N$, non-standard transition fronts which are not invariant in any moving frame were also constructed in~\cite{H} under assumptions~\eqref{F1}-\eqref{F3}. More generally speaking, there is now a large literature devoted to transition fronts for bistable reactions in homogeneous or heterogeneous settings~\cite{BHM,DHZ,E,G2,S1,SG,Z3}, as well as for other types of homogeneous or space/time dependent reactions in dimension~1~\cite{Du,HR1,HR2,MNRR,MRS,N,NR3,NRRZ,NR4,S4,SS,TZZ,Z1} and in higher dimensions~\cite{AHLTZ,BGW,NR1,NR2,S2,S3,Z2,Z4}.

Our goal in the present paper is to study some qualitative properties of transition fronts and their propagation speeds for problem~\eqref{eq1.1} under the assumptions~\eqref{F1}-\eqref{F2} in heterogeneous domains~$\Omega$. We will focus on two classes of domains: the exterior domains (that is, the complements of compact sets) and the domains with multiple cylindrical branches (more precise definitions will be given in the next subsections). We will especially give some sufficient conditions on $\Omega$ for the transition fronts to have a global mean speed equal to the planar speed $c_f$ in the homogeneous case $\Omega=\R^N$. We will also comment on the sharpness of these conditions.


\subsection{Exterior domains}

Let us first consider the case where $\Omega$ is an exterior domain, that is, $\Omega=\R^N\setminus K$, where~$K$ is a compact set with smooth boundary. The interaction between the obstacle $K$ and a planar traveling front, say $\phi(x_1-c_ft)$ propagating in the direction $x_1$ without loss of generality, was thoroughly studied in~\cite{BHM}. A solution $u(t,x)$ of~\eqref{eq1.1} converging to $\phi(x_1-c_ft)$ as $t\to-\infty$ uniformly in $x\in\overline{\Omega}$ was constructed in~\cite{BHM}, for $C^{1,1}([0,1])$ functions $f$ satisfying~\eqref{F1}-\eqref{F2}. It was also proved that, if the obstacle $K$ is star-shaped or directionally convex with respect to some hyperplane\footnote{The obstacle $K$ is called star-shaped if either $K=\emptyset$ or there is $x$ in the interior $\mathrm{Int}(K)$ of $K$ such that $x+t(y-x)\in\mathrm{Int}(K)$ for all $y\in\partial K$ and $t\in[0,1)$. In the latter case, we say that $K$ is star-shaped with respect to the point $x$. The obstacle $K$ is called directionally convex with respect to a hyperplane $H=\{x\in\R^N: x\cdot e=a\}$, with $e\in\mathbb{S}^{N-1}$ and $a\in\R$, if for every line $\Sigma$ parallel to $e$, the set $K\cap\Sigma$ is either a single line segment or empty and if $K\cap H$ is equal to the orthogonal projection of $K$ onto $H$.} (see Figure~$1$), then the solution $u$ passes the obstacle, in the sense that~$u(t,x)$ converges to $\phi(x_1-c_ft)$ as $t\to+\infty$ uniformly in $x\in\overline{\Omega}$. In particular, in these cases, the propagation is said to be {\it complete}, namely
\be\label{complete}
u(t,x)\to1\ \hbox{ as }t\to+\infty\ \hbox{ locally uniformly in }x\in\overline{\Omega}.
\ee
That solution $u$ is also proved to be an almost-planar transition front connecting $0$ and $1$, in the sense that Definition~\ref{TF} is satisfied and one can choose $\Gamma_t=\big\{x\in\Omega: x_1=c_ft\big\}$ in~\eqref{eq1.3} to be the intersection of $\Omega$ with a hyperplane. Notice that the interfaces $\Gamma_t$ give a rough idea of the location of the level sets of $u(t,\cdot)$ but they are not equal to these level sets in general. For instance, here the solution $u$ is not planar as soon as $K$ is not empty, even if the interfaces $\Gamma_t$ can be chosen to be included in parallel hyperplanes.
\begin{figure}[ht]\label{fig1}\centering
\includegraphics[scale=0.45]{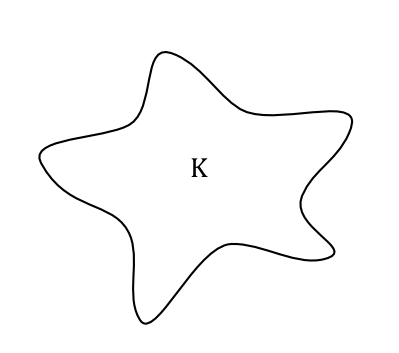}\hskip 1cm \includegraphics[scale=0.6]{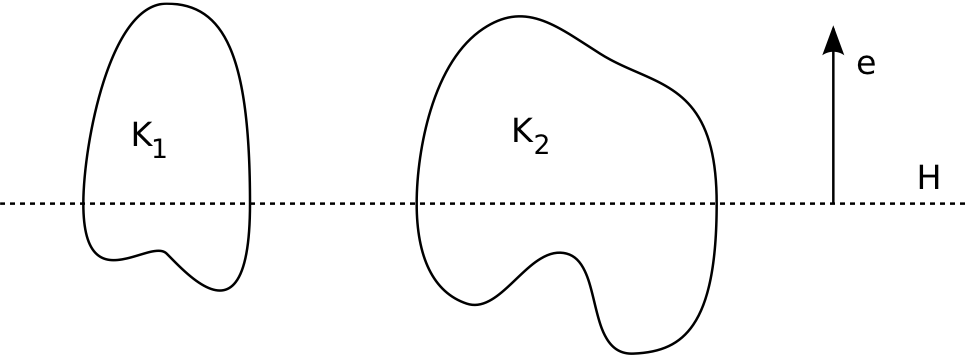}
\caption{Left: a star-shaped obstacle; right: a directionally convex obstacle $K=K_1\cup K_2$.}
\end{figure}

Transition fronts connecting $0$ and $1$ with other shapes are also expected to exist. For instance, we conjecture that, under some geometrical conditions on $K$, equation~\eqref{eq1.1} admits transition fronts converging to any given axisymmetric conical-shaped or pyramidal traveling front $\psi(x-cte)$ as $t\to\pm\infty$ uniformly in $x\in\overline{\Omega}$, where $e\in\mathbb{S}^{N-1}$ is any given unit vector. This question will be the purpose of a future work.

\subsubsection*{Transition fronts with complete propagation}

In the present paper, we focus on the existence and characterization of the global mean speed of {\it any} transition front connecting $0$ and $1$, whatever the shape of its level sets may be. As recalled in Section~\ref{sec1.1}, any transition front connecting $0$ and $1$ for~\eqref{eq1.1} in $\R^N$ has a global mean speed $\gamma$, which is equal to $c_f$, whatever the shape, planar or not, of level sets of the front may be, see~\cite{H}. Our first main result states that the same conclusion holds in exterior domains, provided that the propagation is complete.

\begin{theorem}\label{th1}
Let $\Omega=\R^N\setminus K$, where $K$ is a compact set with smooth boundary. Then any transition front of~\eqref{eq1.1} connecting $0$ and $1$ and satisfying the complete propagation condition~\eqref{complete} has a global mean speed~$\gamma$, and $\gamma=c_f$.
\end{theorem}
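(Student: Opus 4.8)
The plan is to show that the interfaces $\Gamma_t$ of any transition front with complete propagation cannot lag behind or outrun the planar speed $c_f$ by more than a bounded amount, and then to extract the mean speed. Since $\Omega=\R^N\setminus K$ with $K$ compact, choose $R>0$ with $K\subset B(0,R)$; outside $B(0,R)$ the domain coincides with $\R^N$, and the geodesic distance $d_\Omega$ differs from the Euclidean distance $|x-y|$ by at most a fixed constant depending only on $K$. So it suffices to track the motion of the front in terms of, say, the quantity $m(t):=\sup\{x_1 : x\in\Gamma_t\}$ or the position of a fixed level set along the $x_1$-axis, far from the obstacle. First I would record the basic consequences of Definition~\ref{TF}: the level sets $\{u(t,\cdot)=\lambda\}$ for $\lambda\in(0,1)$ stay within a uniformly bounded (in $t$) neighborhood of $\Gamma_t$, and by parabolic estimates $u$ and its derivatives are globally bounded, so the front has a well-defined ``leading edge'' up to bounded fluctuations.

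The heart of the argument is a two-sided comparison with the planar front $\phi(\pm x\cdot e-c_ft)$ using sub- and supersolutions, exploiting that $c_f\neq0$ and that $0,1$ are stable. For the upper bound on the speed: because $0$ is a stable zero and the propagation is complete, one builds a supersolution of the form $\phi(x_1-c_ft-\xi(t))+($small exponential correction$)$ valid in $\{x_1>R\}$, with a boundary/matching argument near $\partial B(0,R)$ handled by the usual trick of inserting a large negative shift so that the supersolution exceeds $u$ on the ``cap'' $\{x_1=R\}$ and on $\partial\Omega$ is irrelevant there; one concludes $\limsup_{t\to+\infty} m(t)/t\le c_f$, and symmetrically $\liminf\ge c_f$ by a subsolution constructed from $\phi$ translated to sit below $u$ at time $t_0$ (using that by completeness $u(t_0,\cdot)$ is close to $1$ on an arbitrarily large ball, hence lies above a suitably shifted $\phi$). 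The same comparison run backward/forward in the direction $-e_1$ (and, if needed, in all directions $e\in\mathbb S^{N-1}$ to control the whole interface, not just its $x_1$-extent) pins the displacement of $\Gamma_t$ to $c_ft+O(1)$. For the lower bound on $m(t)$ one also uses that a transition front cannot be blocked: if $m(t)$ grew sublinearly, the solution would, by interior estimates and completeness, be squeezed between two steady states $0<u<1$ in a way violating the known classification in $\R^N$ far from $K$ — more concretely, one slides $\phi$ from behind and applies the parabolic maximum principle on the half-space $\{x_1>R\}$ with the cap as a moving boundary datum that is $\ge1-\varepsilon$ by~\eqref{complete}.

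Once $|m(t)-c_ft|\le C$ for all $t$ (and likewise for the analogous one-sided support functions in every direction, so that $d_\Omega(\Gamma_t,\Gamma_s)$ is controlled from above and below by $c_f|t-s|\pm C'$ uniformly, using the uniform-boundedness of the Hausdorff distance between admissible interfaces noted after Definition~\ref{TF}), the conclusion follows: $d_\Omega(\Gamma_t,\Gamma_s)/|t-s|\to c_f$ as $|t-s|\to\infty$, so the global mean speed exists and equals $c_f$. The main obstacle I anticipate is the construction of the matching sub/supersolutions near the obstacle: one must absorb the geometry of $K$ and the Neumann condition on $\partial\Omega\cap\partial B(0,R)$ without destroying the ordering, and one must ensure the corrections are exponentially small uniformly in time so the $O(1)$ shifts do not accumulate; this is exactly where the stability $f'(0),f'(1)<0$ and the exponential decay of $\phi$ and $1-\phi$ (together with $c_f>0$) are essential, and where some care is needed to make the estimates uniform in $t\in\R$ rather than just as $t\to\pm\infty$. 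The rest — parabolic regularity, the distance comparison $|d_\Omega-d_{\mathrm{Eucl}}|\le C_K$, and the passage from $|m(t)-c_ft|\le C$ to the mean-speed statement — is routine.
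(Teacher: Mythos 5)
The central difficulty with your proposal is the choice of the quantity you track. You propose to control $m(t):=\sup\{x_1:x\in\Gamma_t\}$ (or similar one-sided support functions in all directions), and the pivotal claim is $|m(t)-c_ft|\le C$. This is false for general transition fronts in an exterior domain. Already in $\R^N$, $N\ge2$, the axisymmetric conical and pyramidal fronts $\psi(x-cte)$ with $c>c_f$ are transition fronts connecting $0$ and $1$ for which the interfaces $\Gamma_t$ may be taken as conical surfaces with apex at $cte$, and for these $m(t)$ either equals $+\infty$ (since $\Gamma_t$ is unbounded) or, after restricting to the apex direction, grows like $ct$ with $c>c_f$. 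The same phenomenon is expected for transition fronts in exterior domains (as the paper notes around Remark~1.4 and in the discussion of conjectured conical-shaped fronts). Consequently any quantity of Hausdorff/support-function type moves at a rate that genuinely depends on the shape of the front, not just on $c_f$. What Definition~\ref{TF} requires you to control is $d_\Omega(\Gamma_t,\Gamma_s)=\inf\{d_\Omega(x,y):x\in\Gamma_t,\,y\in\Gamma_s\}$, an \emph{infimum} over pairs, and you cannot bound this by first bounding a supremum over $\Gamma_t$: the two quantities decouple as soon as the interfaces are non-planar.

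Relatedly, the comparison functions you propose, planar sub/super-solutions $\phi(x_1-c_ft-\xi(t))$ matched at a cap $\{x_1=R\}$, are tied to a fixed direction and only see the planar skeleton of the front. The paper instead builds \emph{radially symmetric} expanding sub-solutions (Lemma~\ref{lemma2.2}) and contracting super-solutions (Lemma~\ref{lemma2.4}), seeded on balls centered at points chosen adaptively on $\Gamma_{t_k}$ or $\Gamma_{s_k}$, and derives a contradiction whenever $d_\Omega(\Gamma_{t_k},\Gamma_{s_k})$ deviates from $c_f|t_k-s_k|$ along a sequence. This is precisely what makes the argument blind to the shape of the level sets. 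You would also need a statement like Lemma~\ref{lemma2.6} (the interface is far from the obstacle for $|t|$ large) before any clean comparison far from $K$ can be run, because for intermediate times the interface may intersect $B(0,R)$; the complete-propagation hypothesis is used exactly to obtain this. The soft ingredients you list (uniform Hausdorff control of admissible interfaces, parabolic regularity, $|d_\Omega - d_{\mathrm{Eucl}}|\le C_K$ far from $K$) are all correct and appear in the paper, but they cannot rescue the support-function strategy, because the object to be bounded is of infimum type and the claimed bound $|m(t)-c_ft|\le C$ does not hold.
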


The aforementioned transition fronts constructed in~\cite{BHM} and converging to $\phi(x_1-c_ft)$ as~$t\to\pm\infty$ uniformly in $x\in\overline{\Omega}$ are particular examples fulfilling the hypotheses and the conclusion of Theorem~\ref{th1}.

There are also known counterexamples without the condition~\eqref{complete}. Namely, it was proved in~\cite{BHM} that, for {\it some} particular strongly non-convex obstacles $K$, the solutions $u(t,x)$ emanating from the planar front $\phi(x_1-c_ft)$ as $t\to-\infty$ do not pass the obstacle completely, in the sense that~\eqref{complete} is not fulfilled (in other words, the disturbance caused by the obstacle $K$ remains forever). However, one still has $u(t,x)-\phi(x_1-c_ft)\to0$ as $|x|\to+\infty$ uniformly in $t\in\R$, and the solutions $u$ are proved to still converge to the planar front $\phi(x_1-c_ft)$ as $t\to+\infty$ far behind the obstacle, in the sense that $\sup_{x\in\overline{\Omega},\,x_1\ge\xi_t}|u(t,x)-\phi(x_1-c_ft)|\to0$ as $t\to+\infty$ if $\xi_t\to+\infty$ as~$t\to+\infty$. Therefore, these solutions $u$ can still be viewed as transition fronts connecting~$0$ and~$1$ with, say,
$$\left\{\baa{lll}
\Omega_t^-=\big\{x\in\Omega: x_1>c_ft\big\}, & \Omega_t^+=\big\{x\in\Omega: x_1<c_ft\big\}, &\hbox{for }t\le0,\vspace{3pt}\\
\Omega_t^-=\big\{x\in\Omega: x_1>c_ft\big\}\ \cup \big(B(0,R)\cap\Omega\big), & \Omega_t^+=\big\{x\in\Omega: x_1<c_ft\big\}\setminus B(0,R), & \hbox{for }t>0,\eaa\right.$$
where $R>0$ is, say, any real number such that $K\subset B(0,R)$. These fronts connecting $0$ and $1$ do not satisfy~\eqref{complete}, and they do not have any global mean speed since $d_\Omega(\Gamma_t,\Gamma_s)\sim c_f|t-s|$ as $|t-s|\to+\infty$ with $t,s\le0$, but $d_\Omega(\Gamma_t,\Gamma_s)=0$ for all $t,s>0$. Notice nevertheless that, from~\cite{BHM}, these solutions could still be viewed as almost-planar fronts with $\Gamma_t=\big\{x\in\Omega: x_1=c_ft\big\}$ and global mean speed~$c_f$, but, instead of connecting $0$ and $1$ in Definition~\ref{TF}, they connect $0$ and a {\it non-constant} classical solution $p:\overline{\Omega}\to(0,1)$ of the stationary problem
\be\label{eqp}
\Delta p+f(p)=0\hbox{ in }\Omega,\ \ \ p_{\nu}=0\hbox{ on }\partial\Omega,\ \hbox{ and }\ p(x)\to1\hbox{ as }|x|\to+\infty.
\ee

\begin{remark}\label{rem13}{\rm
It actually turns out that, for {\it any} exterior domain (and not only those considered in the previous paragraph), the hypothesis~\eqref{complete} in Theorem~\ref{th1} is not only sufficient but also necessary for the conclusion to hold. Indeed, the arguments used in Section~\ref{sec2.3} below imply that, for any transition front $u$ of~\eqref{eq1.1} connecting $0$ and $1$, there is a solution $p:\overline{\Omega}\to(0,1]$ of~\eqref{eqp} such that
\be\label{liminfp}
\liminf_{t\to+\infty}u(t,x)\ge p(x)>0\ \hbox{ locally uniformly in }x\in\overline{\Omega},
\ee
in the sense that $\liminf_{t\to+\infty}\min_{\mathcal{C}}\big(u(t,\cdot)-p\big)\ge0$ for any compact set $\mathcal{C}\subset\overline{\Omega}$. Now, if~$u$ has global mean speed equal to $c_f$, then in particular $d_{\Omega}(\Gamma_t,\Gamma_0)\to+\infty$ as $t\to+\infty$. Hence~$\inf\big\{|x|: x\in\Gamma_t\big\}\to+\infty$ as $t\to+\infty$ and Definition~\ref{TF} implies that either~$u(t,x)\to0$ or~$u(t,x)\to1$ as $t\to+\infty$, locally uniformly in $x\in\overline{\Omega}$. Since the former case is ruled out by~\eqref{liminfp}, only the latter case holds, meaning that~\eqref{complete} holds and the propagation is complete. In other words, the converse of Theorem~\ref{th1} is true, that is, in an exterior domain, any transition front connecting~$0$ and~$1$ has global mean speed equal to $c_f$ if and only if it satisfies~\eqref{complete}.}
\end{remark}

\begin{remark}{\rm Other notions of distances between sets could be considered instead of the one used in the paper. Some would lead to similar results. But the Hausdorff distance $d_H$ would lead to different results in general. For instance, for~\eqref{eq1.1} in $\R^N$ with $N\ge2$ under assumptions~\eqref{F1}-\eqref{F3}, there are transition fronts (examples of such fronts are the axisymmetric conical-shaped or pyramidal fronts) connecting $0$ and $1$ for which $d_H(\Gamma_t,\Gamma_s)\sim c|t-s|$ as $|t-s|\to+\infty$ with $c>c_f$ and for which the global mean speed thus depends on the front. We refer to~\cite[Remark~2.8]{H} for further details.}
\end{remark}

\subsubsection*{The case of almost-planar fronts}

If a transition front $u$ connecting $0$ and $1$ for~\eqref{eq1.1} is further assumed to be almost-planar, in the sense that for every $t\in\R$ one can choose $\Gamma_t=H_t\cap\Omega$ for some hyperplane $H_t$, then one can easily show that the propagation must be complete in the sense of~\eqref{complete} and, by Theorem~\ref{th1}, the hyperplanes $H_t$ move with global mean speed $c_f$ (see more details in Section \ref{sec2.3}). In particular, for such a front $u$, the hyperplanes $H_t$ turn out to be parallel to each other by definition of the distance, since the global mean speed $c_f$ is nonzero. As a consequence, the following result holds.

\begin{corollary}\label{cor1}
Let $\Omega=\R^N\setminus K$, where $K$ is a compact set with smooth boundary. Then any almost-planar transition front of~\eqref{eq1.1} connecting $0$ and $1$ has a global mean speed equal to $c_f$.
\end{corollary}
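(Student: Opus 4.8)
The plan is to reduce the statement to Theorem~\ref{th1} by showing that the complete propagation condition~\eqref{complete} is automatic for almost-planar fronts. Let $u$ be an almost-planar transition front of~\eqref{eq1.1} connecting $0$ and $1$, so that for every $t\in\R$ one may take $\Gamma_t=H_t\cap\Omega$ with $H_t$ a hyperplane and $\Omega_t^\pm=H_t^\pm\cap\Omega$, where $H_t^\pm$ are the two open half-spaces bounded by $H_t$. Once~\eqref{complete} is established, Theorem~\ref{th1} yields a global mean speed equal to $c_f$ (and, as noted before the statement, since that speed is nonzero the $H_t$ are then moreover mutually parallel, though this is not needed for the conclusion). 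So the whole task reduces to proving~\eqref{complete}.

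To this end I would first invoke Remark~\ref{rem13}: there is a classical solution $p\colon\overline{\Omega}\to(0,1]$ of~\eqref{eqp} with $\liminf_{t\to+\infty}\min_{\mathcal C}\big(u(t,\cdot)-p\big)\ge0$ for every compact $\mathcal C\subset\overline{\Omega}$. Since $p$ is continuous and positive on $\overline{\Omega}$ and $p(x)\to1$ as $|x|\to+\infty$, the constant $p_*:=\inf_{\overline{\Omega}}p$ is \emph{strictly positive}; this $\varepsilon$- and $t$-independent lower bound is the engine of the argument. The core step is the claim that $d_{\Omega}(x_0,\Gamma_t)\to+\infty$ as $t\to+\infty$ for every fixed $x_0\in\Omega$. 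I would argue by contradiction: if $d_{\Omega}(x_0,\Gamma_{t_n})\le M_0$ along some $t_n\to+\infty$ and some $M_0>0$, fix once and for all an $\varepsilon\in(0,p_*/2)$, pick $z_n\in\Gamma_{t_n}$ with $d_{\Omega}(x_0,z_n)\le M_0+1$ (possible since $\Gamma_{t_n}\neq\emptyset$ by~\eqref{eq1.3}), and apply~\eqref{eq1.4} and~\eqref{eq1.5} with $M=M_\varepsilon$ at $z_n$ to get a point $y_n^-\in\Omega_{t_n}^-$ with $d_{\Omega}(y_n^-,\Gamma_{t_n})\ge M_\varepsilon$ and $d_{\Omega}(y_n^-,z_n)\le r_{M_\varepsilon}$; then $u(t_n,y_n^-)\le\varepsilon$ by~\eqref{eq1.7}, while $d_{\Omega}(x_0,y_n^-)\le M_0+1+r_{M_\varepsilon}=:R_0$, a \emph{fixed} constant. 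Hence all the points $y_n^-$ lie in the fixed compact set $\mathcal C:=\{x\in\overline{\Omega}:d_{\Omega}(x_0,x)\le R_0\}$, and the $\liminf$ bound forces $u(t_n,y_n^-)\ge p(y_n^-)-\varepsilon\ge p_*-\varepsilon>\varepsilon$ for $n$ large, a contradiction; this proves the claim.

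To finish, apply the $\liminf$ bound with $\mathcal C=\{x_0\}$: for $t$ large one has $u(t,x_0)>p(x_0)/2>0$, which by~\eqref{eq1.7} forces $x_0\notin\overline{\Omega_t^-}$ whenever $d_{\Omega}(x_0,\Gamma_t)\ge M_{p(x_0)/2}$; the latter holds for $t$ large by the claim, so $x_0\in\Omega_t^+$ for $t$ large, and then $d_{\Omega}(x_0,\Gamma_t)\to+\infty$ together with~\eqref{eq1.7} gives $u(t,x_0)\to1$. Running this uniformly over a compact $\mathcal K\subset\overline{\Omega}$ --- using $d_{\Omega}(x,\Gamma_t)\ge d_{\Omega}(x_0,\Gamma_t)-\mathrm{diam}_{\Omega}(\mathcal K)$ and a connectedness argument from~\eqref{eq1.3} to see that $\mathcal K$ lies on one side of $\Gamma_t$ once $d_{\Omega}(x_0,\Gamma_t)>\mathrm{diam}_{\Omega}(\mathcal K)$ --- gives $u(t,\cdot)\to1$ uniformly on $\mathcal K$, i.e.~\eqref{complete}, and Theorem~\ref{th1} then concludes (the details being those of Section~\ref{sec2.3}).

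I expect the core claim to be the only real obstacle. The subtlety is that the radius $R_0$ of the neighborhood in which one exhibits a value $\le\varepsilon$ of $u$ grows through $r_{M_\varepsilon}$, hence blows up as $\varepsilon\downarrow0$, so one cannot first fix the neighborhood and then shrink $\varepsilon$; the resolution --- and the reason the asymptotics $p\to1$ at infinity recorded in Remark~\ref{rem13} are essential --- is that $p$ is bounded below by the global positive constant $p_*$, which permits committing to a single $\varepsilon<p_*/2$ from the start. The remaining ingredients (the parabolic regularity behind Remark~\ref{rem13} and the elementary half-space geometry used for uniformity over compacts) are routine.
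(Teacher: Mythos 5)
Your proposal is correct, and it takes a genuinely different route from the paper's proof. The paper's argument for Corollary~\ref{cor1} uses Corollary~\ref{corollary2.2} to obtain the lower bound $\liminf_{t\to+\infty}u(t,\cdot)\ge 1-2\delta$ only on $\Omega\setminus B(0,L+R)$, and then crucially exploits the almost-planarity (each $H_{t_k}$ is a hyperplane) to produce, near any bounded point of $\Gamma_{t_k}$, a point $x'_k\in\Omega\setminus B(0,L+R)$ lying in a large ball inside $\Omega^-_{t_k}$, thereby forcing $u(t_k,x'_k)\le\delta$ and reaching a contradiction. You instead invoke Lemma~\ref{lemmap} (a forward reference: it appears in Section~\ref{sec2.3} just \emph{after} the paper's proof of Corollary~\ref{cor1}, but its proof does not depend on the corollary, so the ordering is only cosmetic) to get a lower bound that is positive \emph{everywhere}: $p\ge p_*:=\inf_{\overline{\Omega}}p>0$. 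Because this bound is uniform in space, you can run the contradiction argument directly against (\ref{eq1.4})--(\ref{eq1.5}) and (\ref{eq1.7}) without needing to steer the test point $y_n^-$ away from the obstacle; the fixed threshold $\varepsilon<p_*/2$ fixes $M_\varepsilon$ and $r_{M_\varepsilon}$, keeps $y_n^-$ in a fixed compact set, and yields the contradiction. This is a clean and valid alternative.

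It is worth flagging one feature of your argument: it does not use the almost-planar hypothesis anywhere in the core step, so it in fact establishes the complete-propagation property~(\ref{complete}) for \emph{every} transition front connecting $0$ and $1$ in an exterior domain, and hence the global mean speed $c_f$ for all of them. This is stronger than Corollary~\ref{cor1} and would in particular subsume Corollaries~\ref{cor2} and~\ref{cor3} without their geometric or scaling hypotheses. This is consistent with a careful reading of the paper's discussion: the blocked solutions from~\cite{BHM}, which converge to a nonconstant $p<1$ near the obstacle, cannot in fact be transition fronts connecting $0$ and $1$ (the intermediate values of $u$ near the obstacle prevent conditions~(\ref{eq1.4}) and~(\ref{eq1.7}) from holding simultaneously for any choice of $\Omega^\pm_t$); as the text later notes, they are almost-planar transition fronts connecting $0$ and $p$. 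So the extra generality is genuine, not a sign of an error in your proof. That said, since the paper frames the complete-propagation condition as a nontrivial hypothesis, you would do well to state this consequence explicitly and justify the claim $p_*>0$ (continuity of $p$, $p>0$ on $\overline{\Omega}$, and $p\to1$ at infinity, plus compactness of $\overline{\Omega}\cap\overline{B(0,R_*)}$) as part of the write-up; the final uniform-over-compacts step should also be spelled out along the lines of the paper's own ``same arguments'' passage rather than left to a sketch.
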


We point out that the situation considered here differs from the case of periodic media, for which the speeds of pulsating fronts, which are particular almost-planar transition fronts, may depend on the direction of propagation, see~\cite{D,DR,Du2,G2,He}.

\subsubsection*{Sufficient geometrical and scaling conditions on the obstacle}

It follows from~\cite[Theorems~6.1 and~6.4]{BHM} that, if the compact obstacle $K$ is either star-shaped or directionally convex with respect to some hyperplane, then any solution $p:\overline{\Omega}\to[0,1]$ of~\eqref{eqp} is identically equal to~$1$. Together with property~\eqref{liminfp} satisfied by any transition front $u$ connecting $0$ and $1$, Theorem~\ref{th1} immediately yields the following corollary.

\begin{corollary}\label{cor2}
Let $\Omega=\R^N\setminus K$, where $K$ is a compact set with smooth boundary. If $K$ is either star-shaped or directionally convex with respect to some hyperplane, then any transition front of~\eqref{eq1.1} connecting $0$ and $1$ has a global mean speed equal to $c_f$.
\end{corollary}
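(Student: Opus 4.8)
The plan is to reduce the corollary to Theorem~\ref{th1} by showing that, for the obstacles considered here, \emph{every} transition front connecting $0$ and $1$ automatically satisfies the complete propagation condition~\eqref{complete}. Once this reduction is in place, Theorem~\ref{th1} directly gives that such a front has a global mean speed equal to $c_f$, which is the claim.

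First, I would invoke the classification of bounded stationary states. By~\cite[Theorems~6.1 and~6.4]{BHM}, if the compact obstacle $K$ is either star-shaped or directionally convex with respect to some hyperplane, then the only classical solution $p:\overline{\Omega}\to[0,1]$ of the stationary problem~\eqref{eqp} is $p\equiv1$. This is the one place where the geometry of $K$ enters (through a sliding/reflection argument in~\cite{BHM}), and since it is already available I would simply quote it.

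Next, I would use the property recorded in Remark~\ref{rem13} and established by the compactness argument of Section~\ref{sec2.3}: for an arbitrary transition front $u$ of~\eqref{eq1.1} connecting $0$ and $1$, there is a solution $p:\overline{\Omega}\to(0,1]$ of~\eqref{eqp} with $\liminf_{t\to+\infty}u(t,x)\ge p(x)$ locally uniformly in $x\in\overline{\Omega}$, i.e.~\eqref{liminfp} holds. Such a $p$ is in particular $[0,1]$-valued, so the classification of the previous step forces $p\equiv1$. Hence $\liminf_{t\to+\infty}u(t,x)\ge1$ locally uniformly in $\overline{\Omega}$; since $u<1$ everywhere, this yields $u(t,x)\to1$ as $t\to+\infty$ locally uniformly in $\overline{\Omega}$, which is exactly~\eqref{complete}. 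Theorem~\ref{th1} then applies and gives that $u$ has a global mean speed $\gamma=c_f$, completing the argument.

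Granting Theorem~\ref{th1} together with the two facts quoted above, there is essentially no obstacle left: the proof is a short chain of implications. The only point deserving a line of care is checking that the solution $p$ produced in~\eqref{liminfp} genuinely takes values in $[0,1]$, so that the \cite{BHM} classification applies verbatim; this holds because $p$ arises as a locally uniform limit of the $(0,1)$-valued functions $u(t_n,\cdot)$ along a sequence $t_n\to+\infty$. All the real difficulty is upstream, in the proof of Theorem~\ref{th1} and in establishing~\eqref{liminfp}.
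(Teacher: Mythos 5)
Your proof is correct and follows exactly the same route as the paper: combine the Liouville-type classification from~\cite[Theorems~6.1 and~6.4]{BHM} (forcing $p\equiv1$) with the lower bound~\eqref{liminfp} established in Section~\ref{sec2.3} (via Lemma~\ref{lemmap}) to deduce complete propagation, then apply Theorem~\ref{th1}. One cosmetic slip worth noting: in Lemma~\ref{lemmap} the solution $p$ is obtained as the increasing large-time limit of a compactly-supported sub-solution $\underline{u}$ sitting below $u$, not directly as a locally uniform limit of $u(t_n,\cdot)$; nonetheless your only use of this -- that $p$ takes values in $(0,1]\subset[0,1]$ -- is exactly what the lemma states, so the argument is unaffected.
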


Lastly, we will show in Section~\ref{sec2.3} that, under assumptions~\eqref{F1}-\eqref{F2}, for any smooth exterior domain $\Omega=\R^N\setminus K$, the dilated exterior domains $R\,\Omega=\R^N\setminus(R\,K)$ and their shifts $R\,\Omega+x_0=\R^N\setminus(R\,K)\,+\,x_0$ are such that, for all $R>0$ large enough and all $x_0\in\R^N$, any solution $p:\overline{R\,\Omega+x_0}\to[0,1]$ of~\eqref{eqp} with $R\,\Omega+x_0$ instead of $\Omega$ is identically equal to $1$. This Liouville-type result is actually new with respect to the related ones in~\cite{BHM,B} since the obstacles $R\,K+x_0$ may not be star-shaped or directionally convex, or close to any of these two cases. Together with Theorem~\ref{th1} and~\eqref{liminfp}, the next result will then follow.

\begin{corollary}\label{cor3}
Let $\Omega=\R^N\setminus K$, where $K$ is a compact set with smooth boundary. Then there is $R_0>0$ such that, for any $R\ge R_0$ and for any $x_0\in\R^N$, any transition front connecting $0$ and $1$ for~\eqref{eq1.1} in the domain~$R\,\Omega+x_0$ has a global mean speed equal to $c_f$.
\end{corollary}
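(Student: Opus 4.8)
The plan is to obtain Corollary~\ref{cor3} from Theorem~\ref{th1}, the lower bound~\eqref{liminfp} — valid by Remark~\ref{rem13} for \emph{any} transition front connecting $0$ and $1$ in \emph{any} exterior domain — and the Liouville-type statement announced just before Corollary~\ref{cor3}: there is $R_0>0$ such that, for every $R\ge R_0$ and every $x_0\in\R^N$, the only solution $p:\overline{R\,\Omega+x_0}\to[0,1]$ of~\eqref{eqp} posed in $R\,\Omega+x_0$ is $p\equiv1$. Granting this, fix $R\ge R_0$ and $x_0\in\R^N$ and let $u$ be a transition front connecting $0$ and $1$ for~\eqref{eq1.1} in the exterior domain $R\,\Omega+x_0$; by~\eqref{liminfp} there is a solution $p$ of~\eqref{eqp} in this domain with $\liminf_{t\to+\infty}u(t,\cdot)\ge p$ locally uniformly, and the Liouville statement gives $p\equiv1$, so $\liminf_{t\to+\infty}u(t,\cdot)\ge1$ locally uniformly. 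Since $0<u<1$, this is precisely the complete propagation~\eqref{complete}, and Theorem~\ref{th1} applied in $R\,\Omega+x_0$ then yields that $u$ has a global mean speed equal to $c_f$.

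To prove the Liouville statement, the first step is to rescale: with $q(y):=p(x_0+Ry)$ for $y\in\overline{\Omega}$ the parameter $x_0$ drops out entirely, and it remains to show that for $R$ large the only solution $q:\overline{\Omega}\to[0,1]$ of $\Delta q+R^2f(q)=0$ in $\Omega=\R^N\setminus K$ with $q_\nu=0$ on $\partial\Omega$ and $q(y)\to1$ as $|y|\to+\infty$ is $q\equiv1$ (so the dependence on $x_0$ in the corollary is automatically uniform). Assume by contradiction that $R_n\to+\infty$ and $q_n\not\equiv1$ solve this. The strong maximum principle and the Hopf lemma give $q_n>0$ in $\overline{\Omega}$ and show that $\inf_{\overline{\Omega}}q_n>\theta$ would already force $q_n\equiv1$ (at an interior minimum one has $f(q_n)\le0$, so the minimum value lies in $[0,\theta]\cup\{1\}$); hence $\inf_{\overline{\Omega}}q_n\le\theta$, and because $q_n\to1$ at infinity this infimum is attained at an interior point $z_n$. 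Using the stability of $1$ (i.e.\ $f'(1)<0$ in~\eqref{F1}) one constructs, uniformly in $n$, an exponentially decaying barrier for $1-q_n$ outside a fixed compact set, which confines $z_n$ to a fixed compact subset of $\overline{\Omega}$; up to extraction $z_n\to z_\infty\in\overline{\Omega}$.

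The core is a blow-up of $q_n$ near $z_n$ at the transition scale $1/R_n$: the function $\widehat q_n(\eta):=q_n(z_n+\eta/R_n)$ solves $\Delta\widehat q_n+f(\widehat q_n)=0$ and converges, locally in $C^2$, to a solution $\widehat q$ of $\Delta\widehat q+f(\widehat q)=0$ either in $\R^N$ or, when $z_\infty\in\partial\Omega$, in a half-space with Neumann condition — the blow-up of the smooth set $\Omega$ — with $0\le\widehat q\le1$ and $\widehat q\le\theta$ somewhere; in the half-space case an even reflection turns $\widehat q$ into such a solution on $\R^N$. The quantitative input is the energy attached to the unbalanced nonlinearity: condition~\eqref{F2} yields $\int_\eta^1 f>0$ for every $\eta\in[0,1)$, i.e.\ the potential $W$ with $W'=-f$, $W(1)=0$ satisfies $W>W(1)=0$ on $[0,1)$; combined with the far-field behaviour this gives control of the energy of $q_n$ concentrated near $z_n$, hence $|\{q_n\le\theta\}|\to0$. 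The step I expect to be the main obstacle is to upgrade this into the fact that $\widehat q\to1$ at infinity — equivalently, that the low region $\{q_n\le\theta\}$ has diameter $O(1/R_n)$ rather than being a thin layer along a large interface. I would handle this by rescaling instead at the diameter of the low region: when that scale is comparable to $1/R_n$ the blow-up above applies directly, while if it is much larger the energy bound forces the low region to be an asymptotically flat thin slab across which $q_n$ is, to leading order, a one-dimensional solution joining $1$ to $1$ — which does not exist by the one-dimensional energy identity — and a short induction on $N$ handles the remaining cases. Once $\widehat q\to1$ at infinity, the standard Liouville property for bistable equations (the only bounded $[0,1]$-valued solution of $\Delta v+f(v)=0$ in $\R^N$ tending to $1$ at infinity is $v\equiv1$) contradicts $\widehat q\le\theta$ somewhere. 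The reason ``$R$ large'' is exactly what is required is that then all geometric features of the obstacle $R\,K+x_0$ — curvature radii, widths of channels, diameters of cavities — are large, so that in every blow-up the obstacle degenerates to a hyperplane or disappears, which is what rules out any pinning of the interface by the obstacle.
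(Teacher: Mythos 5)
Your reduction of Corollary~\ref{cor3} to Theorem~\ref{th1}, the lower bound~\eqref{liminfp}, and a Liouville-type theorem for~\eqref{eqp} in $R\,\Omega+x_0$ is exactly the paper's first step, and the rescaling remark that $x_0$ can be dropped is fine. The divergence, and the trouble, is entirely in how the Liouville theorem is proved.

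The paper does not use energy estimates at all. It constructs (in Lemma~\ref{lemmap}, via the Berestycki--Lions variational method) a compactly supported radial sub-solution $\psi$ of $\Delta\psi+f(\psi)=0$ on $\overline{B(0,R)}$ with $\psi(0)>\theta_2$ and $\psi=0$ on $\partial B(0,R)$, and slides it along a path from infinity inside $R_k\Omega+x_k$ to conclude that $p_k(y)\ge\psi(0)>\theta_2$ at every $y$ such that $B(y,R)\subset R_k\Omega+x_k$. Since $p_k(y_k)\le\theta_2$ at the minimum $y_k$ (a direct consequence of the strong maximum principle, Hopf's lemma and $f>0$ on $(\theta_2,1)$), the minimum must lie within distance $R$ of the boundary, and the translated functions converge to a half-space solution $p_\infty$ for which one already knows $p_\infty>\theta_2$ on $\{x_1\ge R\}$. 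From there the convergence $p_\infty(x)\to1$ as $x_1\to+\infty$ and a further sliding-plus-Hopf argument finish the job. The sliding comparison with $\psi$ delivers, essentially for free, precisely the far-field information on the blow-up limit that you identify as the main obstacle.

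In your approach that obstacle is a genuine gap, and there are subsidiary problems feeding into it. The claim that the far-field behaviour controls the energy concentrated near $z_n$ and gives $|\{q_n\le\theta_2\}|\to0$ is not justified: the convergence $q_n\to1$ at infinity is pointwise and not uniform in $n$, and there is no a priori lower bound on $q_n$ at a fixed distance from $K$, so no uniform energy bound outside a fixed compact set (indeed such a bound is essentially equivalent to the Liouville conclusion you are trying to prove). The same issue undermines the uniform exponential barrier for $1-q_n$ that is supposed to confine $z_n$ to a compact set. Moreover, even granting $|\{q_n\le\theta_2\}|\to0$, this is perfectly consistent with the low region being a thin shell of thickness $\sim1/R_n$ wrapping around $\partial K$, and the blow-up $\widehat q$ at scale $1/R_n$ near a point of that shell could a priori be a planar profile $\phi(e\cdot\eta)$, which does not tend to $1$ at infinity. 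Your proposed handling — rescaling at the diameter of the low region, reducing to a one-dimensional profile joining $1$ to $1$, plus a ``short induction on $N$'' — is a heuristic, not an argument; ruling out the interface pinning around the large obstacle is exactly the content of the theorem, and the energy identity alone does not do it. The paper's compactly supported sub-solution circumvents all of this in one stroke.
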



\subsection{Domains with multiple cylindrical branches}

In the second part of the paper, we consider the class of domains with multiple cylindrical branches. By way of an example, the simplest case of such a domain is a straight cylinder, namely when $\Omega$ can be written up to rotation as
\be\label{straight}
\Omega=\big\{(x_1,x'): x_1\in\R,\ x'\in\omega\big\},
\ee
where $\omega\subset\R^{N-1}$ is a smooth bounded non-empty open connected subset of $\R^{N-1}$. In such a domain, the planar front $\phi(x_1-c_f t)$ is a solution of~\eqref{eq1.1} moving with constant speed $c_f$.

Another particular case of a domain with multiple cylindrical branches is that of a bilaterally straight cylinder, that is, $\Omega$ can be written up to rotation as
\be\label{bilateral}
\Omega=\big\{(x_1,x'): x_1\in\R,\ x'\in\omega(x_1)\big\},
\ee
where $(\omega(x_1))_{x_1\in\R}$ is a smooth family of smooth bounded non-empty open connected subset of~$\R^{N-1}$ such that $\omega(x_1)$ is independent of $x_1$ for $x_1\le-L$ and for $x_1\ge L$, for some $L>0$. In a bilaterally straight cylinder $\Omega$ of the type~\eqref{bilateral} which is not a straight cylinder, the planar front~$\phi(x_1-c_f t)$ is not a solution of~\eqref{eq1.1} anymore. However, there still exist some front-like solutions emanating from the planar front $\phi(x_1-c_f t)$ coming from the ``left" part of the domain. More precisely, there exists a unique solution $u:\R\times\overline{\Omega}\to(0,1)$ of~\eqref{eq1.1} such that
\be\label{eq1.9}
u(t,x)-\phi(x_1-c_f t)\rightarrow 0\,\text{ as } t\rightarrow -\infty \text{ uniformly in $\overline{\Omega}$},
\ee
see~\cite{BBC,P} (see also~\cite{CG} for related results in some more specific geometries). We call such a solution a front-like solution. In fact, a similar result holds if $\Omega$ is bent, that is, the ``left" and ``right" parts of~$\Omega$ may not be parallel to the same direction, see~\cite{BBC}. Moreover, it was shown in~\cite{BBC} that, under some geometrical conditions on $\Omega$ (for instance, if the map $x_1\mapsto\omega(x_1)$ is non-increasing), the propagation is complete, in the sense that the solution $u$ of~\eqref{eq1.1} satisfying~\eqref{eq1.9} satisfies~\eqref{complete} too. However, under some other geometrical conditions (for instance, if $x_1\mapsto\omega(x_1)$ is non-decreasing and if $\omega(0)$ is contained in a small ball and $\omega(1)$ contains a large ball), blocking phenomena may occur, that is, the solution $u$ of~\eqref{eq1.1} satisfying~\eqref{eq1.9} is such that
\be\label{blocking}
u(t,x)\rightarrow u_{\infty}(x)\ \text{ as }t\rightarrow +\infty\text{ uniformly in }\overline{\Omega},\ \text{ with } u_{\infty}(x)\rightarrow 0\ \text{ as }x_1\rightarrow +\infty,
\ee
see~\cite{BBC,CG,RRBK} (see also~\cite{DR} for similar blocking phenomena in some periodic domains).

In the second part of this paper, we deal with transition fronts in domains with multiple cylindrical branches generalizing the domains of the type~\eqref{bilateral}. The results are concerned with the propagation speeds of transition fronts. They are of a different nature than the aforementioned known results  and are actually new even in the case of bilaterally straight cylinders, in the sense that we especially also make more precise the behavior of the solutions as $t\to+\infty$ when the propagation is complete, and we provide other geometrical and scaling conditions for the existence of a global mean speed for {\it all} transition fronts.

Let us now define what we mean by a domain with multiple branches. A cylindrical branch in a direction $e\in\mathbb{S}^{N-1}$ with cross section $\omega\,$\footnote{The section $\omega$ is a smooth non-empty bounded domain for the $(N-1)$-dimensional Euclidean topology of the hyperplane of $\R^N$ orthogonal to $e$ and containing $0$.} and shift $x_0\in\R^N$ is the open unbounded domain of $\R^N$ defined by
\be\label{defbranch}
\mathcal{H}_{e,\omega,x_0}=\big\{x\in\R^N: x-(x\cdot e)e\in\omega,\ x\cdot e>0\big\}+x_0.
\ee
A smooth unbounded domain of $\R^N$ is called a domain with multiple cylindrical branches if there exist a real number $L>0$, an integer $m\ge2$, and $m$ cylindrical branches $\mathcal{H}_i:=\mathcal{H}_{e_i,\omega_i,x_i}$ (with $i=1,\cdots,m$), such that
\be\label{branches}\left\{\baa{l}
\displaystyle\mathcal{H}_i\cap\mathcal{H}_j=\emptyset\hbox{ for every }i\neq j\in\big\{1,\cdots,m\big\}\ \hbox{ and }\ \Omega\setminus B(0,L)=\mathop{\bigcup}_{i=1}^{m}\mathcal{H}_i\setminus B(0,L),\vspace{3pt}\\
\mathcal{H}_i\setminus B(0,L)\hbox{ is connected for every }i\in\{1,\cdots,m\}.\eaa\right.
\ee
Some examples of domains with 5 cylindrical branches are depicted in Figure~2.
\begin{figure}[ht]\centering
\includegraphics[scale=0.55]{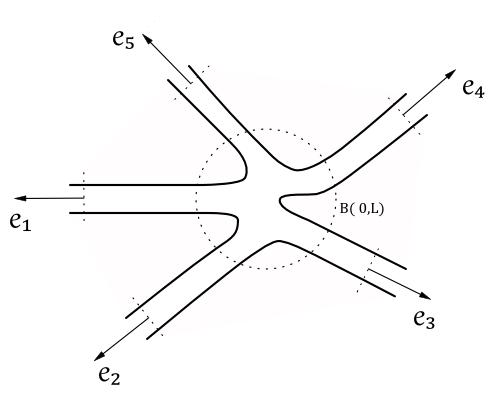}\includegraphics[scale=0.45]{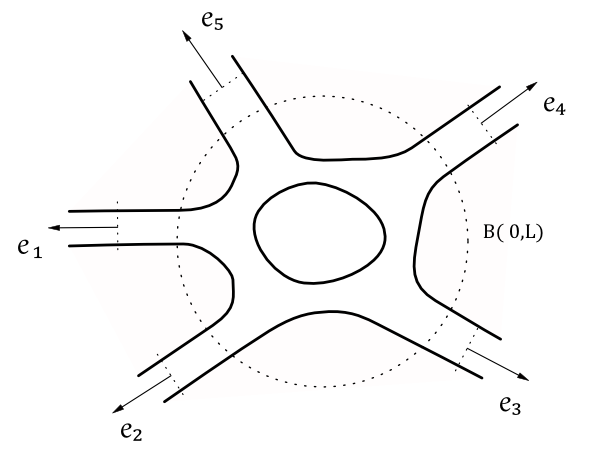}
\caption{Examples of domains with 5 cylindrical branches.}
\end{figure}

\subsubsection*{Solutions emanating from planar fronts with complete propagation}

We first show that the front-like solutions emanating from planar fronts in some branches and propagating completely are transition fronts and converge to planar fronts in the other branches.

\begin{theorem}\label{th10}
Let $\Omega$ be a smooth domain with $m\,(\ge2)$ cylindrical branches and let $I$ and $J$ be two non-empty sets of $\{1,\cdots,m\}$ such that $I\cap J=\emptyset$ and $I\cup J=\{1,\cdots,m\}$. Let $u:\R\times\overline{\Omega}\to(0,1)$ be a time-increasing solution of~\eqref{eq1.1} such that
\be\label{frontlike}
\left\{\baa{rcll}
u(t,x)\!-\!\phi(-x\cdot e_i\!-\!c_f t\!+\!\sigma_i) & \!\!\!\to\!\!\! & 0\!\! & \text{uniformly in $\overline{\mathcal{H}_i}\cap\overline{\Omega}\,$ for every $i\in I$},\vspace{3pt}\\
u(t,x) & \!\!\to\!\! & 0\!\! & \displaystyle\text{uniformly in $\overline{\Omega\setminus \mathop{\bigcup}_{i\in I}\mathcal{H}_i}$},\eaa\right.\text{as $t\rightarrow -\infty$}
\ee
for some real numbers $(\sigma_i)_{i\in I}$. If $u$ propagates completely in the sense of~\eqref{complete}, then it is a transition front connecting $0$ and $1$ with global mean speed $c_f$ and $(\Gamma_t)_{t\in \R}$, $(\Omega^{\pm}_t)_{t\in \R}$ defined by
\be\label{eq+1.11}
\Gamma_t\!=\!\mathop{\bigcup}_{i\in I}\!\big\{x\!\in\!\mathcal{H}_i\cap\Omega: x\cdot e_i\!=\!c_f|t|\!+\!A\big\}\ (t\!\le\!0),\ \Gamma_t\!=\!\mathop{\bigcup}_{j\in J}\!\big\{x\!\in\!\mathcal{H}_j\cap\Omega: x\cdot e_j\!=\!c_f t\!+\!A\big\}\ (t\!>\!0),
\ee
and
\begin{eqnarray}\label{eq+1.12}
\left\{\baa{llll}
\displaystyle\Omega_t^+=\mathop{\bigcup}_{i\in I}\big\{x\in \mathcal{H}_i\cap\Omega: x\cdot e_i>c_f|t|+A\big\}, & \Omega_t^-=\Omega\setminus\overline{\Omega_t^+}, &\text{ for $t\le0$,}\vspace{3pt}\\
\displaystyle\Omega_t^-=\mathop{\bigcup}_{j\in J}\big\{x\in \mathcal{H}_j\cap\Omega: x\cdot e_j>c_f t+A\big\}, & \Omega_t^+=\Omega\setminus\overline{\Omega_t^-}, &\text{ for $t>0$,}\eaa\right.
\end{eqnarray}
for some $A>0$. Moreover, there exist some real numbers $(\tau_j)_{j\in J}$ such that
\be\label{largetime}\left\{\baa{rcll}
u(t,x)-\phi(x\cdot e_j\!-\!c_f t\!+\!\tau_j) & \!\!\!\to\!\!\! & 0\!\! & \text{uniformly in $\overline{\mathcal{H}_j}\cap\overline{\Omega}\,$ for every $j\in J$},\vspace{3pt}\\
u(t,x) & \!\!\!\to\!\!\! & 1\!\! & \displaystyle\text{uniformly in $\overline{\Omega\setminus \mathop{\bigcup}_{j\in J}\mathcal{H}_j}$},\eaa\right.\text{ as $t\rightarrow+\infty$.}
\ee
\end{theorem}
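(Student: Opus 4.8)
The plan is to analyze the front-like solution $u$ separately in the far regions of each branch and in the bounded core, using monotonicity in $t$, parabolic estimates, and comparison with the planar front $\phi$. Since $u$ is time-increasing and bounded between $0$ and $1$, the limit $u_\infty(x):=\lim_{t\to+\infty}u(t,x)$ exists pointwise, is a stationary solution of the Neumann problem in $\Omega$, and satisfies $0\le u_\infty\le1$; by the complete propagation hypothesis~\eqref{complete}, $u_\infty\equiv1$, and standard parabolic interior/boundary estimates upgrade the convergence to locally uniform convergence in $\overline{\Omega}$ together with all derivatives. The first task is to promote this to the uniform-in-each-branch statement~\eqref{largetime}. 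For a fixed $j\in J$, the idea is to look at the ``entry point'' of the branch $\mathcal{H}_j$: complete propagation forces $u(t,\cdot)$ to be close to $1$ on the bounded set $\overline{\Omega}\cap \partial B(0,L')$ for all large $t$, for $L'$ slightly larger than $L$. Inside the straight part of $\mathcal{H}_j$, one then sandwiches $u$ between shifted planar fronts $\phi(x\cdot e_j-c_ft+\tau_j^\pm)$ using the classical ideas from cylinder fronts (one-dimensionalization via the fact that $\mathcal{H}_j$ is a straight cylinder far out, plus the sliding method and the stability of $\phi$): the front entering the branch must, after a bounded delay, look like a planar front translating at speed $c_f$, and the leading/lagging constants can be pinched to a common value $\tau_j$ by the ODE stability of $\phi$ (spectral gap coming from $f'(0),f'(1)<0$). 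Simultaneously one must check $u(t,x)\to1$ uniformly in $\overline{\Omega\setminus\bigcup_{j\in J}\mathcal{H}_j}$, which is again just the bounded-core part and follows from the locally uniform convergence $u_\infty\equiv1$.

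For the behavior as $t\to-\infty$, hypothesis~\eqref{frontlike} already gives the branch-by-branch asymptotics; the content to add is that $u$ is a genuine transition front with the interfaces~\eqref{eq+1.11} and sets~\eqref{eq+1.12}, and that its global mean speed is $c_f$. So the second task is to verify the geometric axioms~\eqref{eq1.3}–\eqref{eq1.6} for these explicit $(\Omega_t^\pm,\Gamma_t)$ and the limiting conditions~\eqref{eq1.7}. Here one chooses $A>L$ large enough so that, for $t\le0$, the sets $\{x\in\mathcal{H}_i\cap\Omega: x\cdot e_i=c_f|t|+A\}$ lie inside the straight part of each branch $i\in I$, and symmetrically for $t>0$ and branches $j\in J$; the geometry of finitely many straight cylinders makes~\eqref{eq1.3}–\eqref{eq1.6} immediate (the $\Gamma_t$ are finite unions of translates of cross sections $\partial\omega_i$, hence uniformly graphs; the geodesic-distance conditions hold because each branch is an infinite cylinder). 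The uniform transition estimate~\eqref{eq1.7} is then a direct consequence of the two one-sided uniform asymptotics: for $t\le0$ of~\eqref{frontlike} together with $u(t,\cdot)\to0$ off $\bigcup_{i\in I}\mathcal{H}_i$, and for $t\ge0$ of~\eqref{largetime} together with $u(t,\cdot)\to1$ off $\bigcup_{j\in J}\mathcal{H}_j$; on the compact time interval $[-T_0,T_0]$ that remains, one uses that $u$ is continuous, strictly between $0$ and $1$, and that for each fixed $t$ the profile decays to $0$ and $1$ at infinity in the branches (this last uses the strong maximum principle and the branch geometry, or simply elliptic/parabolic estimates plus the two limits already established, by interpolating along $t$). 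Finally, $d_\Omega(\Gamma_t,\Gamma_s)/|t-s|\to c_f$ as $|t-s|\to+\infty$: for $t,s$ both large negative or both large positive this is because $\Gamma_t$ is, in each relevant branch, a cross section at geodesic-coordinate $c_f|t|+A$ and the geodesic distance in a straight cylinder between two such cross sections is $c_f|t-s|$ (for $|t-s|$ large the minimizing geodesic runs straight down one branch); for the mixed case $s\to-\infty$, $t\to+\infty$ the geodesic from $\Gamma_s$ (deep in some branch $i\in I$) to $\Gamma_t$ (deep in some branch $j\in J$) has length $c_f|s|+c_f t+O(1)$, and dividing by $|t-s|=t-s\sim t+|s|$ again gives $c_f$.

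The main obstacle I expect is the sharp pinching to a single translation constant $\tau_j$ in~\eqref{largetime}, i.e.\ proving that the leading and lagging shifts of the incoming front in branch $j$ actually coincide in the limit rather than merely being bounded. This is where one genuinely needs the exponential stability of the bistable planar front (exponential relaxation with rate governed by $-\max(f'(0),f'(1))>0$) and a careful use of super- and subsolutions of the form $\phi(x\cdot e_j-c_ft+\tau_j\pm q(t))$ with $q(t)\to0$; one must also handle the mismatch between the flat cross section of the straight cylinder part of $\mathcal{H}_j$ and the way the front enters from the curved core, which requires a uniform estimate on how fast the influence of the core region decays along the branch (an argument in the spirit of Berestycki–Bouhours–Chapuisat for bent cylinders). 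A secondary technical point is establishing that $u(t,\cdot)\to0$ (resp.\ $\to1$) uniformly, not just locally uniformly, off the distinguished branches — for $t\to+\infty$ this needs $u_\infty\equiv1$ to hold uniformly on the branches not in $J$, which are straight cylinders far out, so the same one-dimensionalization and comparison argument applies there, forcing $u$ on those branches to be close to $1$ everywhere for large $t$ (there is no front traveling out into them, since $u_\infty\equiv1$).
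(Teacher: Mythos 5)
Your broad plan is aligned with the paper's proof: establish \eqref{largetime} first and then read off the transition-front axioms and the mean speed from the explicit $\Gamma_t$, $\Omega_t^\pm$ in \eqref{eq+1.11}--\eqref{eq+1.12}. The geometric and mean-speed parts of your argument (straight cylinders make \eqref{eq1.3}--\eqref{eq1.6} routine; $d_\Omega(\Gamma_t,\Gamma_s)/|t-s|\to c_f$ by computing geodesic distances along branches) are essentially what the paper does, and the $t\to-\infty$ side of \eqref{eq1.7} does come directly from \eqref{frontlike}. The genuine issue is the step you yourself flag as the main obstacle, namely the convergence in \eqref{largetime} to a planar front with a single well-defined shift $\tau_j$, and your proposed mechanism for it is not adequate.

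Concretely, you propose to sandwich $u$ between $\phi(x\cdot e_j-c_ft+\tau_j^\pm)$ inside $\mathcal{H}_j$ and then pinch $\tau_j^\pm$ together by ``ODE stability of $\phi$''. The exponential stability of the bistable planar front tells you that initial data close to a \emph{single} translate of $\phi$ relaxes to some nearby translate of $\phi$; it does not by itself tell you that a solution merely sandwiched between two far-apart translates converges to a definite translate, nor does it tell you that the spatial profile of $u(t,\cdot)$ in $\mathcal{H}_j$ at large $t$ is close to \emph{some} planar profile in the first place. The paper fills this gap with three concrete ingredients that your sketch does not supply: (i) an a priori two-sided bound (Lemma~\ref{lemma4.1}) with exponentially decaying corrections in $t$ and in $x\cdot e_j$, built from sub/supersolutions in the spirit of Fife--McLeod, which pins the location of the level sets in the branch to within a bounded shift; (ii) a Liouville-type classification (Lemma~\ref{lemma3.2-}) asserting that the only transition fronts connecting $0$ and $1$ in a straight infinite cylinder are the planar traveling fronts $\phi(\pm x_1-c_ft+\tau^*)$ --- this is the actual tool used, together with a compactness/translation argument sending $t_n\to+\infty$ and shifting by $c_ft_ne_j$, to conclude that any subsequential large-time limit of $u$ in $\mathcal{H}_j$ is a planar front at speed $c_f$; and (iii) a quantitative stability estimate (Lemma~\ref{lemma4.2}) showing that $\varepsilon$-closeness to $\phi(\cdot-c_ft+\tau)$ at some time propagates to $M\varepsilon$-closeness for all later times, which is what actually pins down a single $\tau_j$ from the subsequential limits. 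Your invocation of ``classical ideas from cylinder fronts'' and ``one-dimensionalization'' is gesturing in the right direction, but the Liouville statement in straight cylinders is not a classical off-the-shelf fact here --- it is proved in the paper by adapting the $\R^N$ classification result of~\cite{H} --- and without it, together with the a priori sandwich bound to make the compactness argument meaningful, your argument for \eqref{largetime} does not close.
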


In a bilaterally straight cylinder or in a domain with two cylindrical branches, the existence and uniqueness of time-increasing solutions $u$ satisfying~\eqref{frontlike} follows from the arguments used in~\cite{BBC,BHM,P}. In a domain with multiple cylindrical branches, the existence and uniqueness of time-increasing solutions $u$ satisfying~\eqref{frontlike} could be shown with similar arguments. The interest of Theorem~\ref{th10} is to show that, provided the propagation is complete in the sense of~\eqref{complete}, these solutions $u$ are transition fronts connecting $0$ and $1$ and more precisely that they converge at large time to the planar fronts (with some finite shifts) in the other branches. These facts are actually new even in the case of bilaterally straight cylinders, where $c_f$ was only known to be the asymptotic spreading speed.

Notice also that the condition~\eqref{complete} of complete propagation is necessary for the conclusion to hold. In bilaterally straight cylinders of the type~\eqref{bilateral}, some examples of blocking phenomena have been exhibited in~\cite{BBC,CG,RRBK}, namely there exist time-increasing solutions $u:\R\times\overline{\Omega}\to(0,1)$ satisfying~\eqref{eq1.9} and~\eqref{blocking}, for which~\eqref{complete} is therefore not fulfilled. These solutions could still be viewed as transition fronts connecting $0$ and $1$ with, say, $\Omega^{\pm}_t=\big\{x\in\Omega: \pm(x_1-c_ft)<0\big\}$ and $\Gamma_t=\big\{x\in\Omega: x_1=c_ft\big\}$ for $t\le0$ and $\Omega^{\pm}_t=\big\{x\in\Omega: \pm x_1<0\big\}$ and $\Gamma_t=\big\{x\in\Omega: x_1=0\big\}$ for $t\ge0$. In particular, these solutions do not have a global mean speed.

Our next result is concerned with the existence and uniqueness of the global mean speed of {\it any} transition front, assuming the complete propagation of the front-like solutions emanating from each of the branches. In the sequel, for each $i\in\big\{1,\cdots,m\big\}$, we call $u_i:\R\times\overline{\Omega}\to(0,1)$ the time-increasing solution of~\eqref{eq1.1} coming from the branch $\mathcal{H}_i$, that is,
\be\label{frontlikei}
u_i(t,x)\!-\!\phi(-x\cdot e_i\!-\!c_f t)\!\mathop{\longrightarrow}_{t\to-\infty}\!0\text{ uniformly in }\overline{\mathcal{H}_i}\cap\overline{\Omega},\ u_i(t,x)\!\mathop{\longrightarrow}_{t\to-\infty}\!0 \text{ uniformly in $\overline{\Omega\!\setminus\!\mathcal{H}_i}.$}
\ee

\begin{theorem}\label{th6}
Let $\Omega$ be a smooth domain with $m\,(\ge2)$ cylindrical branches. Assume that, for every $i\in\big\{1,\cdots,m\big\}$, the time-increasing solution $u_i$ of~\eqref{frontlikei} propagates completely in the sense of~\eqref{complete}. Then any transition front of~\eqref{eq1.1} connecting $0$ and $1$ propagates completely in the sense of~\eqref{complete} and it has a global mean speed equal to $c_f$.
\end{theorem}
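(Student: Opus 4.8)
The plan is to reduce the general transition front to the front-like solutions $u_i$ via parabolic sub- and super-solution arguments, exploiting the monotonicity of the $u_i$ in time and the classification of stationary states. First I would recall (as in Remark~\ref{rem13}, using the arguments of Section~\ref{sec2.3}) that for any transition front $u$ connecting $0$ and $1$ there is a solution $p:\overline{\Omega}\to(0,1]$ of~\eqref{eqp}-type equation (with the behavior $p\to1$ at infinity along the appropriate branches) such that $\liminf_{t\to+\infty}u(t,x)\ge p(x)>0$ locally uniformly. So the only thing that could prevent~\eqref{complete} is the existence of a nontrivial bounded stationary state $p\not\equiv1$. The first key step is therefore a Liouville-type claim: \emph{under the hypothesis that every $u_i$ propagates completely, the only solution $p:\overline{\Omega}\to[0,1]$ of the stationary problem (with $p\to1$ along each branch) is $p\equiv1$.} I would prove this by contradiction: if $p\not\equiv1$, then $0<p<1$ somewhere; placing a deep planar front coming in from branch $\mathcal{H}_i$ below $p$ (possible since $p\to1$ in that branch and $p$ stays away from $0$ on compacts by the strong maximum principle, after shifting the front far enough into the branch) and using the comparison principle together with the time-monotonicity of $u_i$, one gets $u_i(t,\cdot)\le p$ for all $t$ along a suitable subsequence of shifts, hence (letting the shift and time go to $+\infty$, using $u_i\to1$ by complete propagation) $p\equiv1$, a contradiction.

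The second key step is to show that the general transition front $u$ itself propagates completely. Given the Liouville result, $p\equiv1$ in~\eqref{liminfp}, so $\liminf_{t\to+\infty}u(t,x)\ge1$ locally uniformly, which combined with $u<1$ gives $u(t,x)\to1$ locally uniformly, i.e.~\eqref{complete}. The third step is the identification of the global mean speed. Once $u$ propagates completely, I would squeeze $u$ between time-shifted copies of the $u_i$'s (coming in from the branches where, for very negative times, $u$ is close to $0$ far from $\Gamma_t$ — one needs a little care here: the interface $\Gamma_t$ escapes to infinity as $t\to-\infty$ by the transition-front definition, and on each far branch $u(t,\cdot)$ is then uniformly close either to $0$ or to $1$, which lets one trap it between shifts of the corresponding planar/front-like profiles). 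Then the bounded-Hausdorff-distance property of interfaces (from~\cite{BH2}, noted in the excerpt) forces $\Gamma_t$ to be within a uniformly bounded distance of the interfaces of the trapping $u_i$'s, whose interfaces move with mean speed $c_f$ in each branch. Since $d_\Omega$ is the geodesic distance and the branches are cylindrical, $d_\Omega(\Gamma_t,\Gamma_s)/|t-s|\to c_f$ as $|t-s|\to+\infty$. Equivalently, one can run the one-dimensional uniqueness/classification argument of~\cite{H} branch by branch on the far parts of $\Omega$ to pin the speed to $c_f$.

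The main obstacle I expect is the Liouville step and, relatedly, the need to control $u$ on \emph{all} branches simultaneously for $t\to-\infty$: a priori a transition front could be ``close to $1$'' on several branches at once and ``close to $0$'' on others, with the interface $\Gamma_t$ sitting in the core $B(0,L)$ or straddling several branches, so the comparison with a single $u_i$ is not immediately available. The way around this is to use the complete-propagation hypothesis for \emph{every} $u_i$ to build, for each subset of branches, a global sub-solution by gluing together deep planar fronts entering from those branches (this is exactly the kind of construction behind Theorem~\ref{th10}), and then to invoke the characterization that any transition front must eventually look like one of these; the uniform-in-time thickness condition~\eqref{eq1.7} together with conditions~\eqref{eq1.3}-\eqref{eq1.6} is what makes the gluing and the comparison robust. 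Handling the finitely many possible ``incoming configurations'' and checking that each forces complete propagation and mean speed $c_f$ is where the real work lies, but each individual case is a comparison-principle argument of the type already used in~\cite{BBC,BHM,H}.
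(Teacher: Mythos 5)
Your overall architecture (establish complete propagation first, then derive the mean speed) matches the paper's, and the key idea for complete propagation — compare $u$ with the front-like solutions $u_i$ using the half-infinite branch that $\Omega_0^+$ must contain — is essentially correct. However, there are two genuine gaps.

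\textbf{The Liouville detour.} You route the complete-propagation step through a stationary Liouville theorem: construct a stationary $p$ with $\liminf_{t\to+\infty}u\ge p>0$, prove $p\to1$ along some branch, then show $p\equiv1$. The trouble is that the existence of such a $p$ is not established (and may well fail to be straightforward) for general domains with multiple branches. The paper's construction of $p$ in Lemma~\ref{lemmap} works in exterior domains because one can fit a ball of radius $R$ supporting the stationary bump sub-solution $\psi$ and then slide it freely; in a branch domain with thin cross-sections there may be no room for any such bump, and Theorem~\ref{th6} imposes no geometric hypothesis on $\Omega$ whatsoever. Moreover, even when $p$ exists, the comparison ``$u_i(\tau,\cdot)\le p$ for some $\tau$'' that you invoke is not automatic: on the far part of $\mathcal{H}_i$ both $u_i(\tau,\cdot)$ and $p$ approach $1$, and there is no reason for one to dominate the other at a fixed $\tau$. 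The paper sidesteps both issues by never introducing a stationary $p$ at all: it proves a purely parabolic comparison $u(t,\cdot)\ge\underline{u}(t,\cdot)$ where $\underline{u}(t,x)=\max\big(u_i(t+\tau+\omega e^{-\delta t}-\omega,x)-\delta e^{-\delta t},0\big)$. The slack $-\delta e^{-\delta t}$ is what makes the initial comparison at $t=0$ go through on the far part of $\mathcal{H}_i$, and verifying that $\underline{u}$ is a sub-solution relies crucially on the lower bound $(u_i)_t\ge\kappa>0$ in the transition region (Lemma~\ref{lemut}), which you do not mention. Without that quantitative time-derivative estimate the sub-solution verification would fail.

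\textbf{The mean-speed step.} Trapping $u$ between time-shifted copies of the $u_i$'s and using the bounded-Hausdorff-distance property is appealing but, as you sketch it, it needs to know from which subset $I$ of branches $u$ ``emanates'' so that the trapping profiles are chosen correctly; that is precisely the content of Conjecture~\ref{conj2}, which the paper explicitly leaves open. Your proposed fix — glue deep planar fronts for each subset of branches and then ``invoke the characterization that any transition front must eventually look like one of these'' — is therefore circular: it relies on an unproven classification. The paper instead argues locally and quantitatively around the interfaces $\Gamma_t$: it proves, via the expanding and contracting Cauchy solutions $v_{i,l,R}$, $w_{i,l,R}$ and $\widetilde{w}_R$ (Lemmas~\ref{lemma3.3},~\ref{lemma5.2},~\ref{lemma5.3+}), lower and upper estimates $\liminf d_\Omega(\Gamma_t,\Gamma_s)/|t-s|\ge c_f-3\varepsilon$ and $\limsup\le c_f+3\varepsilon$, splitting into cases according to the position of $t,s$ relative to $T^1_\varepsilon<T^2_\varepsilon$ (the times after which $\Gamma_t$ has left and re-entered, respectively, a large central region, by Lemma~\ref{lemma5.6}) and using the bounded-Hausdorff-oscillation estimate~\eqref{eq+2.19} only to glue the time regimes together. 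This pincer argument requires no classification of $u$ and is much more robust than trapping between global solutions.
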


The condition of complete propagation of all these front-like solutions $u_i$ is actually necessary for the conclusion to hold in general. Indeed, as already mentioned~\cite{BBC}, in some bilaterally straight cylinders of the type~\eqref{bilateral}, there are solutions $u$ satisfying~\eqref{eq1.9} and~\eqref{blocking}, which then do not satisfy~\eqref{complete} and yet are transition fronts connecting $0$ and $1$ without global mean speed. In the general case of a domain~$\Omega$ with $m\,(\ge 2)$ cylindrical branches, consider now, if any, a solution~$u_i$ satisfying~\eqref{frontlikei} but not~\eqref{complete}. If this solution $u_i$ were still a transition front connecting $0$ and $1$, then $\Gamma_t$ could be chosen as $\Gamma_t=\big\{x\in\mathcal{H}_i\cap\Omega: x\cdot e_i=c_f|t|\big\}$ for very negative $t$ while $\liminf_{t\to+\infty}\inf\big\{|x|: x\in\Gamma_t\big\}<+\infty$ (since otherwise $u_i(t,x)\to1$ as $t\to+\infty$ locally uniformly in $x\in\overline{\Omega}$ by~\eqref{eq1.7} and the time-monotonicity of $u_i$). Therefore, this solution $u_i$ does not have any global mean speed. As a matter of fact, we conjecture that any solution $u_i$ satisfying~\eqref{frontlikei} but not~\eqref{complete} is still a transition front connecting $0$ and $1$. From the previous arguments, this conjecture is equivalent to the following one.

\begin{conjecture}
The converse of Theorem~$\ref{th6}$ holds as well, that is, the complete propagation of all the front-like solutions $u_i$ is equivalent to the property that all transition fronts connecting~$0$ and~$1$ have a global mean speed equal to $c_f$.
\end{conjecture}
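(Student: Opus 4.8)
Let $u$ be a transition front connecting $0$ and $1$; I would prove, in turn, that $u$ propagates completely in the sense of~\eqref{complete} and that then its global mean speed equals $c_f$. The first ingredient is the lower bound already announced in Remark~\ref{rem13}: by~\eqref{eq1.3} the set $\Omega_{t_0}^+$ always contains points arbitrarily far (for $d_\Omega$) from $\Gamma_{t_0}$, and such a point, being far from the bounded core of $\Omega$, lies deep inside one of the branches $\mathcal{H}_i$; by~\eqref{eq1.7}, $u(t_0,\cdot)\ge 1-\varepsilon$ on a long axial segment of $\mathcal{H}_i$ around it. Since~\eqref{F1}--\eqref{F2} give $\int_0^1 f>0$, there is a compactly supported subsolution $\underline w_0\not\equiv 0$, $0\le\underline w_0\le 1-\varepsilon$, of $\Delta w+f(w)=0$ with the no-flux condition, supported in that segment, whence $\underline w_0\le u(t_0,\cdot)$ on all of $\overline{\Omega}$. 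By the parabolic comparison principle the solution $w$ with $w(t_0,\cdot)=\underline w_0$ satisfies $w\le u$ for $t\ge t_0$, and, being time-increasing and bounded by $1$, converges locally uniformly in $\overline{\Omega}$ to a solution $p$ of $\Delta p+f(p)=0$ in $\Omega$, $p_\nu=0$ on $\partial\Omega$, with $0<p\le 1$ (positivity by the strong maximum principle). Hence $\liminf_{t\to+\infty}u\ge p>0$ locally uniformly, and the complete propagation of $u$ reduces to the claim $p\equiv 1$.

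Proving $p\equiv 1$ from the hypothesis --- complete propagation of every $u_i$ --- is the heart of the matter and the analogue, for branched domains, of the geometric Liouville results of~\cite{BHM} for exterior domains. I would describe $p$ deep inside each branch $\mathcal{H}_i$ by translation compactness along the axis $e_i$ and the absence of zero-speed fronts in a straight cylinder (which uses $c_f\neq 0$): $p(\cdot+se_i)$ converges as $s\to+\infty$, uniformly on cross sections, to a stationary solution of the cross-section problem on $\omega_i$. If this limit is $\equiv 1$ in every branch, then deep inside $\mathcal{H}_j$ the deviation $1-p$ decays at the exponential rate $\sqrt{-f'(1)}$, strictly faster than the rate $(-c_f+\sqrt{c_f^2-4f'(1)})/2$ of $1-u_j\simeq 1-\phi(-x\cdot e_j-c_ft)$, while $u_j(t_0,\cdot)\to 0$ uniformly away from $\mathcal{H}_j$ as $t_0\to-\infty$; comparing these asymptotics (and using that $p$ is then bounded away from $0$) yields $u_j(t_0,\cdot)\le p$ on $\overline{\Omega}$ for $t_0$ negative enough, so that the comparison principle together with $u_j(t,\cdot)\nearrow 1$ forces $p\ge 1$, contradicting $p\not\equiv 1$. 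The remaining possibility --- that deep in some branch $p$ tends to a zero of $f$ or to a non-constant cross-section solution rather than to $1$ --- is the delicate one: there the direct comparison with $u_j$ breaks down (since $u_j$ is close to $1$ arbitrarily deep in its own branch at every time), and one must exploit the complete invasion of $u_j$ more carefully to exclude such an obstruction. This Liouville step is where I expect the main difficulty of the whole proof to lie. Once it is settled, $\liminf_{t\to+\infty}u\ge 1$, i.e.~$u$ propagates completely.

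Granting complete propagation, the global mean speed is obtained by trapping $u$ between planar-front barriers inside the branches. As $t\to+\infty$ the set $\Omega_t^-$, which by~\eqref{eq1.3} always has points arbitrarily far from $\Gamma_t$ and now lies outside every fixed ball, recedes along a non-empty set of branches; in each such $\mathcal{H}_j$ --- a straight cylinder away from the core --- $\phi(x\cdot e_j-c_ft+C)$ is an exact solution of~\eqref{eq1.1}, and the exponential stability of the state $1$ and of the front $\phi$ (Fife--McLeod type estimates, as used in~\cite{BBC,BHM,FM,P}) yields sub- and supersolutions $\phi(x\cdot e_j-c_ft+C_\pm\pm\eta e^{-\omega t})$, the exponentially small corrections and a fixed modification near the mouth (where $u\to 1$) absorbing the non-straight part, so that $\phi(x\cdot e_j-c_ft+C_--o(1))\le u(t,x)\le\phi(x\cdot e_j-c_ft+C_++o(1))$ there for $t$ large; hence $\Gamma_t\cap\mathcal{H}_j$ stays at axial depth $c_ft+O(1)$. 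A symmetric analysis as $t\to-\infty$ --- after checking, by similar barriers, that the $1$-region of $u$ is deep in some branches and excludes the mouths for very negative times --- pins $\Gamma_t$ in those branches at depth $c_f|t|+O(1)$. Combining the two, and noting that the passage through the bounded core between pieces of $\Gamma_t$ and $\Gamma_s$ lying in different branches contributes only $O(1)$, one gets $d_\Omega(\Gamma_t,\Gamma_s)=c_f|t-s|+O(1)$ as $|t-s|\to+\infty$, i.e.~$u$ has global mean speed $c_f$.
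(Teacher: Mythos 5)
The statement in question is a \emph{conjecture}, not a theorem: the paper deliberately leaves it open and offers no proof, so there is nothing to compare your argument against. More importantly, your proposal does not address the open direction of the equivalence. The new content of the conjecture is the \emph{converse} of Theorem~\ref{th6}: assuming that every transition front connecting~$0$ and~$1$ has global mean speed~$c_f$, one should deduce that every front-like solution~$u_i$ of~\eqref{frontlikei} propagates completely. Your argument, by contrast, \emph{assumes} complete propagation of every~$u_i$ (you say so explicitly: ``from the hypothesis --- complete propagation of every $u_i$''), and attempts to conclude that any transition front propagates completely and has global mean speed~$c_f$. That is exactly the statement of Theorem~\ref{th6}, which the paper already proves in Section~\ref{sec4}; you are reproving the known direction, not the conjectural one.

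As the discussion immediately preceding the conjecture explains, the converse reduces to the following: if some~$u_i$ satisfies~\eqref{frontlikei} but fails~\eqref{complete}, show that~$u_i$ is nevertheless a transition front connecting~$0$ and~$1$. Once that is established, the argument already given in the paper shows that such a~$u_i$ has no global mean speed (the putative interfaces~$\Gamma_t$ recede along~$\mathcal{H}_i$ as~$t\to-\infty$ but remain bounded as~$t\to+\infty$), contradicting the hypothesis of the converse. The genuinely open issue --- producing the sets~$\Omega^\pm_t$,~$\Gamma_t$ satisfying~\eqref{eq1.3}--\eqref{eq1.7} for a \emph{blocked}~$u_i$, whose large-time limit is a non-constant stationary solution~$u_\infty$ with~$0<u_\infty<1$ spread over a bounded region --- is precisely what makes this a conjecture, and your proposal does not touch it. Incidentally, even for the direction you do treat, your sketch (cross-sectional limits, exponential rate comparison of~$1-p$ versus~$1-\phi$, absorbing the core as a ``fixed modification'') is looser than the paper's actual proof of Theorem~\ref{th6}, which rests on the expanding/contracting sub- and super-solutions of Lemmas~\ref{lemma3.3}--\ref{lemma5.3+} and the localization Lemma~\ref{lemma5.6}; but since that direction is not what the conjecture asks for, the gap of substance is that the converse is never engaged.
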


\begin{remark}\label{3branches}{\rm
In Theorem~$\ref{th1}$, we stated that, in an exterior domain $\Omega$, any transition front connecting~$0$ and~$1$ with complete propagation has a global mean speed, equal to $c_f$. We could wonder whether a similar property could hold in a domain with multiple cylindrical branches. We conjecture that such a property is actually false in general, more precisely that there may exist some transition fronts with complete propagation and without global mean speed. To be more explicit, consider the case of a domain with $3$ cylindrical branches. Assume that the solution~$u_1$ emanating from the branch~$\mathcal{H}_1$ (it is given by~\eqref{frontlikei} with $i=1$) satisfies~\eqref{complete}. Assume also that the solution $u_2$ emanating from the branch $\mathcal{H}_2$ does not satisfy~\eqref{complete}, and converges as~$t\to+\infty$ to a stable stationary solution $v:\overline{\Omega}\to(0,1)$ such that $v(x)\to1$ as~$x\cdot e_2\to+\infty$ in $\overline{\mathcal{H}_2}$ and~$v(x)\to0$ as $|x|\to+\infty$ in $\overline{\Omega\setminus\mathcal{H}_2}$ (hence, for any $a\in(0,1)$, the set where $v(x)>a$ is included in the union of $\overline{\mathcal{H}_2}$ with a bounded set), see Figure~3. Such a domain~$\Omega$ could be cooked up from the results of~\cite{BBC}. We then believe that there exists a time-increasing solution~$u:\R\times\overline{\Omega}\to(0,1)$ of~\eqref{eq1.1} such that
$$\left\{\baa{l}
\displaystyle \!u(t,x)\!-\!\phi(-x\cdot e_1\!-\!c_ft)\!\mathop{\longrightarrow}_{t\to-\infty}\!0\hbox{ (resp. }\!u(t,x)\!\mathop{\longrightarrow}_{t\to-\infty}\!v(x)\hbox{) uniformly in }\overline{\mathcal{H}_1}\!\cap\!\overline{\Omega}\hbox{ (resp. in }\overline{\Omega\!\setminus\!\mathcal{H}_1}\hbox{)},\vspace{3pt}\\
\displaystyle \!u(t,x)\!-\!\phi(x\cdot e_3\!-\!c_ft+\tau)\!\mathop{\longrightarrow}_{t\to+\infty}\!0\hbox{ (resp. }\!u(t,x)\!\mathop{\longrightarrow}_{t\to+\infty}\!1\hbox{) uniformly in }\overline{\mathcal{H}_3}\!\cap\!\overline{\Omega}\hbox{ (resp. in }\overline{\Omega\!\setminus\!\mathcal{H}_3}\hbox{)},\eaa\right.$$
for some $\tau\in\R$. This solution $u$ would then be a transition front connecting $0$ and $1$ with, say,
$$\left\{\baa{lll}
\Omega^+_t\!=\!\big\{x\in\mathcal{H}_1\cap\Omega: x\cdot e_1\!>\!c_f|t|\!+\!L\big\}\cup\big\{x\in\mathcal{H}_2\cap\Omega: x\cdot e_2\!>\!L\big\}, & \!\Omega^-_t\!=\!\Omega\!\setminus\!\overline{\Omega^+_t}, & \!\hbox{for }t\le0,\vspace{3pt}\\
\Omega^-_t\!=\!\big\{x\in\mathcal{H}_3\cap\Omega: x\cdot e_3\!>\!c_ft\!+\!L\big\}, & \!\Omega^+_t\!=\!\Omega\!\setminus\!\overline{\Omega^-_t}, & \!\hbox{for }t>0,\eaa\right.$$
where $L>0$ is a large real number, for instance as in~\eqref{branches}. Therefore, $u$ does not have any global mean speed since $d_{\Omega}(\Gamma_t,\Gamma_s)\!\sim\!c_f|t\!-\!s|$ as $|t\!-\!s|\to+\infty$ with $t,s>0$, whereas $d_{\Omega}(\Gamma_t,\Gamma_s)=0$ for all $t,s\le0$. However, this transition front $u$ satisfies~\eqref{complete}: the propagation is complete.}
\end{remark}
\begin{figure}[ht]\centering
\includegraphics[scale=0.45]{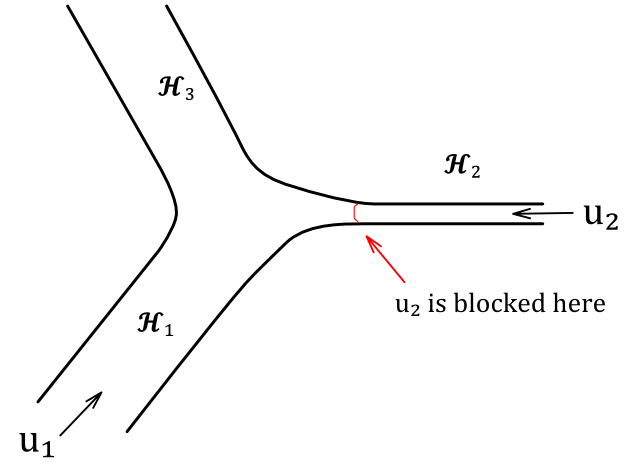}
\caption{An example for Remark \ref{3branches} (the red curve represents for instance a level set of $u_2$).}
\end{figure}

\subsubsection*{Sufficient geometrical and scaling conditions}\label{secgeom}

Finally, we give some suitable conditions on $\Omega$ so that the front-like solutions $u_i$ satisfying~\eqref{frontlikei} propagate completely in the sense of~\eqref{complete} (remember from~\cite{BBC} that blocking phenomena may occur in general and that the propagation is therefore not complete in general even for domains with two cylindrical branches). Then, under these conditions, it follows from Theorem~\ref{th6} that any transition front connecting $0$ and $1$ actually propagates with a global mean speed, equal to~$c_f$. To do so, we need some additional notations. Under the assumption~\eqref{branches}, we consider for every $i\neq j\in\{1,\cdots,m\}$ a continuous path $P_{i,j}: \R\rightarrow \Omega$ connecting the two branches $\mathcal{H}_{i}$ and $\mathcal{H}_j$ in the sense that, for any real number $A$, there is $\sigma>0$ such that
$$P_{i,j}(s)\in\big\{x\!\in\!\mathcal{H}_i\cap\Omega: x\cdot e_i\!\ge\!A\big\}\text{ for }s\!\le\!-\sigma\ \hbox{ and }\ P_{i,j}(s)\in\big\{x\!\in\!\mathcal{H}_j\cap\Omega: x\cdot e_j\!\ge\!A\big\}\text{ for }s\!\ge\!\sigma.$$
One can assume without loss of generality that $P_{i,j}(\R)=P_{j,i}(\R)$ for every $i\neq j$, and that, for every~$i$, any two paths $P_{i,j}$ and $P_{i,k}$ with $j\neq k\in\{1,\cdots,m\}\setminus\{i\}$ (if $m\ge3$) share the same parts in $\mathcal{H}_i$ far away from the origin, that is, there is $s_i\in\R$ such that $P_{i,j}(s)=P_{i,k}(s)$ for every~$s\le s_i$ and $j\neq k\in\{1,\cdots,m\}\setminus\{i\}$.

\begin{corollary}\label{th5}
There is a real number $R>0$ such that, for any smooth domain $\Omega$ with $m\,(\ge2)$ cylindrical branches, if the paths $P_{i,j}$ given above can be chosen with
$$B(P_{i,j}(s),R)\subset \Omega\ \text{ for all $s\in\R\ $ and $\ 0\in\!\!\bigcup_{1\le i\neq j\le m}\!\!P_{i,j}(\R)$},$$
and if $\Omega$ is star-shaped with respect to $0$, then any solution $u$ given in Theorem~$\ref{th10}$ propagates completely and is a transition front connecting $0$ and $1$ with global mean speed $c_f$. Furthermore, any transition connecting $0$ and $1$ propagates completely and has global mean speed equal to $c_f$.
\end{corollary}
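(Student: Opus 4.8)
The plan is to deduce the corollary from Theorems~\ref{th10} and~\ref{th6} by proving the following Liouville-type property, which is where the real work lies: \emph{under the hypotheses on $\Omega$ and $R$, any classical solution $p:\overline\Omega\to(0,1]$ of $\Delta p+f(p)=0$ in $\Omega$ with $p_\nu=0$ on $\partial\Omega$ and $p(x)\to1$ as $x\cdot e_i\to+\infty$ in $\overline{\mathcal H_i}\cap\overline\Omega$ for some $i$ is identically~$1$.} Granting this, fix any $i$: the front-like solution $u_i$ of~\eqref{frontlikei}, being time-increasing and $(0,1)$-valued, converges by parabolic estimates as $t\to+\infty$, locally uniformly in $\overline\Omega$, to some $p:\overline\Omega\to(0,1]$ solving the above elliptic problem; choosing $t_\epsilon$ with $u_i(t_\epsilon,x)\ge\phi(-x\cdot e_i-c_ft_\epsilon)-\epsilon$ on $\overline{\mathcal H_i}\cap\overline\Omega$ and letting $x\cdot e_i\to+\infty$, time-monotonicity forces $p(x)\to1$ as $x\cdot e_i\to+\infty$ there, so $p\equiv1$, i.e.\ $u_i$ propagates completely in the sense of~\eqref{complete}. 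Since this holds for every $i$, Theorem~\ref{th6} gives the last sentence of the corollary; and for a solution $u$ as in Theorem~\ref{th10}, the same argument (now with the first line of~\eqref{frontlike} and any $i\in I$) shows that its increasing limit as $t\to+\infty$ is such a $p$, hence $\equiv1$, so $u$ propagates completely and Theorem~\ref{th10} applies.

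\emph{Step 1: the Liouville property, reduction to $p\to1$ at infinity.} Fix $R$ large enough (depending only on $f$ and $N$) so that a bistable invasion triggered by a datum of height $(1+\theta)/2$ supported in a ball of radius $R/2$ propagates, without being blocked, along any ``tube of radius $R$'' $\mathcal T=\bigcup_sB(P(s),R)\subset\Omega$ (a connected region of transverse thickness $\ge2R$), its large-time limit --- for the flow in $\mathcal T$ with Neumann condition on $\partial\Omega\cap\partial\mathcal T$ and Dirichlet condition $0$ on $\partial\mathcal T\cap\Omega$ --- being $\ge1-\epsilon_0$ on $\bigcup_sB(P(s),R/2)$, with $\epsilon_0=\epsilon_0(R)<1-\theta$. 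The existence of such a universal $R$ rests on comparison, on the overlapping balls forming $\mathcal T$, with ball- and cylinder-problems for $f$, in the spirit of~\cite{BBC,BHM}. Now $B(P_{i,j}(s),R)\subset\Omega$ forces every section $\omega_k$ to contain an $(N-1)$-ball of radius~$R$, so one may fix a ball $B(y_0,R)\subset\mathcal H_i\cap\Omega$ with $y_0\cdot e_i$ arbitrarily large, on which $p\ge1-\epsilon\ge(1+\theta)/2$. As $p$ is stationary, hence a supersolution of~\eqref{eq1.1} that is positive on $\partial\mathcal T_{i,j}\cap\Omega$, comparison in $\mathcal T_{i,j}=\bigcup_sB(P_{i,j}(s),R)\subset\Omega$ between $p$ and the above flow started from a datum of height $(1+\theta)/2$ supported in $B(y_0,R)$ (which lies below $p$) yields $p\ge1-\epsilon_0$ on $\bigcup_sB(P_{i,j}(s),R/2)$. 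Iterating along the finitely many paths --- every branch being joined to $\mathcal H_i$ through the connected skeleton $\bigcup_{k\ne\ell}P_{k,\ell}(\R)\ni0$ --- gives $p\ge1-\epsilon_0$ on $B(0,R/2)$ and on a radius-$R/2$ sub-cylinder of every branch $\mathcal H_j$. Translating $x\cdot e_j\to+\infty$ produces an entire solution of~\eqref{eq1.1} in the straight cylinder $\R\times\omega_j$ that is $\ge1-\epsilon_0>\theta$ on a sub-cylinder, hence superharmonic and bounded, hence constant, hence $\equiv1$; so $p(x)\to1$ as $x\cdot e_j\to+\infty$ uniformly in $\omega_j$, for every $j$. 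Since $\Omega\setminus B(0,L)=\bigcup_k\mathcal H_k\setminus B(0,L)$, we conclude $p(x)\to1$ as $|x|\to+\infty$ in $\overline\Omega$.

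\emph{Step 2: from $p\to1$ at infinity to $p\equiv1$, via star-shapedness.} For $\lambda\in(0,1]$, set $p_\lambda:=p(\lambda\,\cdot)$; by star-shapedness of $\Omega$ with respect to $0$, $p_\lambda$ is defined on $\lambda^{-1}\overline\Omega\supset\overline\Omega$, and $\Delta p_\lambda+f(p_\lambda)=(1-\lambda^2)f(p_\lambda)$. Sliding $\lambda$ down from~$1$ and comparing $p_\lambda$ with $p$ on $\overline\Omega$ --- the inequality $p_\lambda\le p$ holding at $\lambda=1$, not degenerating at infinity since $p\to1$ there and $f'(1)<0$, and not first touched at an interior point of $\Omega$ (the contact value would have to lie in $(0,\theta]$, against $p(0)\ge1-\epsilon_0>\theta$ and the strong maximum principle) nor at a boundary point (Hopf's lemma, using $x\cdot\nu(x)\ge0$ on $\partial\Omega$), exactly as in~\cite[Section~6]{BHM} for star-shaped obstacles --- yields $p_\lambda\le p$ on $\overline\Omega$ for all $\lambda\in(0,1]$, whence $p\ge p(0)\ge1-\epsilon_0$ on $\overline\Omega$. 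Finally, since $1-\epsilon_0>\theta$, the constant $1-\epsilon_0$ is a subsolution of~\eqref{eq1.1}, so the flow in $\Omega$ started from it is time-increasing, bounded above by $p$, and converges to a stationary $q$ with $1-\epsilon_0\le q\le1$; each subsequential limit of $q$ at infinity along a branch is a bounded superharmonic solution on a straight cylinder, hence $\equiv1$, so $q\to1$ at infinity; then $\min_{\overline\Omega}q$ is attained and, lying in $[1-\epsilon_0,1]$, cannot be $<1$ (interior minimum point: $f(\min)>0$; boundary point: Hopf), so $q\equiv1$ and therefore $p\ge q\equiv1$.

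The step I expect to be the main obstacle is Step~1: $R$ must be made genuinely universal, i.e.\ one has to quantify the ``no blocking in a radius-$R$ tube'' property uniformly over all admissible --- and possibly very tortuous --- paths $P_{i,j}$ with $B(P_{i,j}(s),R)\subset\Omega$. The star-shaped sliding of Step~2, while classical in spirit, will also need care, the dilations of a star-shaped domain growing rather than shrinking, which reverses the direction in which the comparison is propagated compared with the exterior-domain arguments of~\cite{BHM}.
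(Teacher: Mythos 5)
Your first paragraph correctly reduces the corollary to proving complete propagation of all the solutions in question, and the appeal to Theorems~\ref{th10} and~\ref{th6} afterwards is exactly what the paper does. The gap is in Steps~1 and~2, where you prove the underlying Liouville-type property.

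The paper's proof goes through Lemma~\ref{lemma5.3}: it first constructs, as in the proof of Lemma~\ref{lemmap}, an explicit compactly supported \emph{stationary} subsolution $\psi\in C^2(\overline{B(0,R)})$ of the elliptic problem (radially decreasing, $\psi=0$ on $\partial B(0,R)$, $\psi(0)>\theta_2$, obtained by energy minimization), and then compares the front-like solution with the time-increasing solution $w$ started from $\bar\psi(\cdot-P_{i,j}(s))$. The key point is that the sliding of the \emph{stationary bump} $\bar\psi$ along the paths $P_{i,j}$ is immediate from the strong maximum principle because $B(P_{i,j}(s),R)\subset\Omega$ for all $s$ and $\psi$ vanishes on $\partial B$; then star-shapedness with respect to $0$ is used to slide the bump $\bar\psi(\cdot-he)$ in \emph{all} directions $e$ from the origin, following~\cite[Theorem~1.12]{BBC}, using $x\cdot\nu(x)\ge0$ on $\partial\Omega$ plus Hopf's lemma to exclude boundary contact. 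This yields $w_\infty>\psi(0)>\theta_2$ everywhere, and the ODE comparison of the last paragraph of the proof of Lemma~\ref{lemmap} then gives $w_\infty\equiv1$. Your Step~1 replaces this stationary bump by a ``universal radius-$R$ tube in which invasion cannot be blocked''; you yourself flag this as the main obstacle, and indeed the quantification over arbitrary tortuous tubes is precisely the difficulty that the stationary subsolution $\psi$ (which never needs to be evolved in time and whose sliding is a one-line maximum-principle argument) is designed to avoid. Your Step~2 replaces the translation-sliding of $\bar\psi$ by a dilation $p_\lambda(x)=p(\lambda x)$; this is genuinely different and, as written, has unresolved problems: $p_\lambda$ solves $\Delta p_\lambda+\lambda^2 f(p_\lambda)=0$, so the inhomogeneous term $-(1-\lambda^2)f(p_\lambda)$ in the equation for $p-p_\lambda$ has no fixed sign (your parenthetical about the contact value lying in $(0,\theta]$ does not follow without further argument), and the Neumann condition is lost under dilation since $\nabla p(\lambda x)\cdot\nu(x)$ at $x\in\partial\Omega$ is a derivative at the interior point $\lambda x$ with no sign constraint, so the Hopf step is not as in~\cite{BHM}. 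Finally, the passage ``superharmonic and bounded on a cylinder, hence constant'' is too loose: $q$ is only known to be $\ge1-\varepsilon_0$ (hence superharmonic) on a sub-cylinder, not everywhere, and on a cylinder one needs to average over sections or, as the paper does, compare with the ODE $\varrho'=f(\varrho)$.

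In short, the reduction is sound but the core Liouville step needs the explicit stationary bump $\psi$ and the translation-sliding of~\cite{BBC}; the tube and dilation substitutes do not hold as stated.
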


In the particular case of bilaterally straight cylinders $\Omega$ of the type~\eqref{bilateral}, similar conditions to those of Corollary~\ref{th5} had been given in~\cite[Theorem~1.12]{BBC} to warrant the complete propagation of the front-like solutions $u$ satisfying~\eqref{eq1.9}.

Lastly, similarly to Corollary~\ref{cor3}, the following result holds in dilated domains with multiple cylindrical branches, without any star-shapedness assumption, as a consequence of Theorem~\ref{th6}.

\begin{corollary}\label{cor4}
Let $\Omega$ be a smooth domain with $m\,(\ge2)$ cylindrical branches. Then there is~$R_0>0$ such that, for any $R\ge R_0$ and for any $x_0\in\R^N$, any solution $u$ given in Theorem~$\ref{th10}$ in the domain~$R\,\Omega+x_0$ propagates completely and is a transition front connecting $0$ and $1$ with global mean speed $c_f$. Furthermore, any transition connecting $0$ and $1$ for~\eqref{eq1.1} in the domain~$R\,\Omega+x_0$ propagates completely and has global mean speed equal to $c_f$.
\end{corollary}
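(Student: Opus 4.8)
The plan is to reduce the whole statement to a single Liouville-type property of the dilated domains, namely: \emph{there is $R_0>0$ such that for every $R\ge R_0$, every $x_0\in\R^N$ and every $i\in\{1,\dots,m\}$, the front-like solution $u_i$ given by~\eqref{frontlikei} in the domain $R\,\Omega+x_0$ — whose cylindrical branches are the dilated cylinders $R\,\mathcal H_i+x_0=\mathcal H_{e_i,R\omega_i,Rx_i+x_0}$, and whose existence follows as in~\cite{BBC,BHM,P} — propagates completely in the sense of~\eqref{complete}.} Granting this, the ``any transition front'' part of the corollary is precisely the conclusion of Theorem~\ref{th6} applied in $R\,\Omega+x_0$. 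For a solution $u$ as in Theorem~\ref{th10} set in $R\,\Omega+x_0$, one only has to check that $u$ also propagates completely: picking $i_0\in I$, the facts that $u$ is time-increasing and satisfies $u(t,x)-\phi(-x\cdot e_{i_0}-c_ft+\sigma_{i_0})\to0$ uniformly in $x\in\overline{R\mathcal H_{i_0}+x_0}\cap\overline{R\Omega+x_0}$ as $t\to-\infty$, while $u_{i_0}(t,x)-\phi(-x\cdot e_{i_0}-c_ft)\to0$ there and $u_{i_0}(t,\cdot)\to0$ uniformly outside $R\,\mathcal H_{i_0}+x_0$, together with the exponential steepness of $\phi$ near $0$ and $1$ and the parabolic comparison principle (a sliding argument as in~\cite{BBC,BHM,P}), yield $s_{i_0}\in\R$ large enough that $u(t,x)\ge u_{i_0}(t-s_{i_0},x)$ for all $t\in\R$ and $x\in\overline{R\,\Omega+x_0}$; letting $t\to+\infty$ and using the complete propagation of $u_{i_0}$ forces $u(t,x)\to1$ locally uniformly in $x$, i.e.~\eqref{complete} holds for $u$. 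Theorem~\ref{th10} then gives that $u$ is a transition front connecting $0$ and $1$ with global mean speed $c_f$, with $\Gamma_t$, $\Omega_t^{\pm}$ and the large-time behaviour as in~\eqref{eq+1.11}--\eqref{largetime}.

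Thus everything comes down to the displayed Liouville property. Fix $i$, $R$, $x_0$ and set $p:=\lim_{t\to+\infty}u_i(t,\cdot)$, which exists since $u_i(t,\cdot)$ is non-decreasing in $t$ and bounded by $1$; then $p$ is a classical solution of $\Delta p+f(p)=0$ in $R\,\Omega+x_0$ with $p_\nu=0$ on the boundary, $0<p\le1$, and from~\eqref{frontlikei} and the monotonicity of $u_i$ in $t$ one reads off that $p(x)\to1$ as $x\cdot e_i\to+\infty$ inside $R\,\mathcal H_i+x_0$. We must prove $p\equiv1$. Set $\theta_*:=\max\big(f^{-1}(0)\cap[0,1)\big)$; by~\eqref{F2} (which gives $\int_\eta^1 f>0$ for all $\eta\in[0,1)$) together with $f'(1)<0$, we have $f>0$ on $(\theta_*,1)$. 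Fix $\delta_0\in(0,1-\theta_*)$. The endgame is then easy: \emph{once we know $p\ge1-\delta_0$ throughout $R\,\Omega+x_0$}, the space-independent function $v(t)$ solving $\dot v=f(v)$, $v(0)=1-\delta_0$, is a solution of~\eqref{eq1.1} which increases to $1$ (since $f>0$ on $[1-\delta_0,1)$), and comparing the stationary solution $p$ with the solution of~\eqref{eq1.1} having initial datum $p$ gives $p(x)\ge v(t)$ for all $t\ge0$, whence $p\equiv1$. So everything reduces to the uniform lower bound $p\ge1-\delta_0$ on $R\,\Omega+x_0$, and this is where the dilation is used.

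The lower bound is obtained by propagating, through chains of balls, a \emph{self-improving spreading estimate}: there is $\rho_0=\rho_0(f,\delta_0)>0$ such that, for every ball $B(z,3\rho_0)\subset R\,\Omega+x_0$ at distance $\ge\rho_0$ from $\partial(R\,\Omega+x_0)$, one has $(\min_{B(z,\rho_0)}p\ge1-\delta_0)\Rightarrow(\min_{B(z,2\rho_0)}p\ge1-\delta_0)$. For the interior version one compares $p$ — a stationary, hence parabolic, solution of~\eqref{eq1.1} — with the Dirichlet parabolic flow in $B(z,3\rho_0)$ issued from a smooth plateau of height $1-\delta_0$ supported in $B(z,\rho_0)$: for $\rho_0$ large this flow is a global-in-time subsolution increasing to the positive Dirichlet ground state of $B(z,3\rho_0)$, which exceeds $1-\delta_0$ on $B(z,2\rho_0)$ — a standard invading-subsolution construction, valid because $\int_\eta^1 f>0$ for all $\eta\in[0,1)$ and $f'(1)<0$. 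For balls meeting $\partial(R\,\Omega+x_0)$ one first flattens the boundary in a $C^{2,\beta}$ chart of size $\rho_0$: the principal curvatures of $\partial(R\,\Omega+x_0)$ being bounded by $\kappa/R$ (with $\kappa$ bounding those of $\partial\Omega$), the defining graph has $C^2$-norm $O(1/R)$ and the flattened operator differs from $\Delta$ by $O(1/R)$; one then reflects $p$ evenly across the (Neumann) boundary and absorbs the $O(1/R)$ perturbation by enlarging $R_0$. Finally one runs the estimate: since $\Omega$ is a smooth domain with branches, $R\,\Omega+x_0$ satisfies a uniform interior-ball condition of radius $\ge R\,\iota_\Omega$ for some $\iota_\Omega>0$ depending only on $\Omega$ — in particular every dilated cross-section $R\,\omega_i$ and the junction $B(x_0,RL)\cap(R\,\Omega+x_0)$ contain balls of radius comparable to $R$ — so for $R_0$ large enough $3\rho_0$-balls fit everywhere; and since $p\to1$ deep in $R\,\mathcal H_i+x_0$ there is an initial such ball $B(z_0,3\rho_0)$, far out along branch $i$, with $\min_{B(z_0,\rho_0)}p\ge1-\delta_0$. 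Chaining balls across the junction and into all the branches, and applying the spreading estimate in its interior or flattened form, propagates $p\ge1-\delta_0$ to all of $\overline{R\,\Omega+x_0}$, which finishes the proof. This Liouville step is the hard part: unlike the star-shaped or directionally convex settings of Corollary~\ref{th5}, here no planar front can be slid against the geometry, so one must instead exploit quantitatively the large-scale \emph{fatness} and \emph{flatness} created by the dilation, and make the ball-chaining and the boundary reflection uniform in $x_0$ and across all the branches.
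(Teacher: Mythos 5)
Your reduction of Corollary~\ref{cor4} to a Liouville theorem for the stationary problem~\eqref{eqpi} in the dilated domains is exactly the paper's reduction (the paper also defines $p_i=\lim_{t\to+\infty}u_i(t,\cdot)$ and invokes Theorems~\ref{th10} and~\ref{th6}). But your proof of that Liouville theorem is genuinely different from the paper's, and as written it has gaps in both its interior and its boundary steps.

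For the interior ``self-improving spreading estimate'': a smooth plateau of height $1-\delta_0$ supported in $B(z,\rho_0)$ is not a sub-solution of $\Delta+f$ (where it falls from $1-\delta_0$ to $0$, both $\Delta\eta$ and $f(\eta)$ are negative), so the Dirichlet flow from it is not monotone and need not converge. If one replaces the plateau by the genuine compactly supported radial sub-solution $\psi$ of Lemma~\ref{lemmap}, monotonicity is restored, but then $\max\psi=\psi(0)$ must be chosen $\le 1-\delta_0$ to fit under $p$, and the increasing flow converges to \emph{some} steady state $p_\infty\ge\psi$ — not obviously the ground state. Sliding $\psi$ inside $B(z,3\rho_0)$ only gives $p_\infty>\theta_2$ on a large interior ball, which is strictly weaker than $p_\infty\ge1-\delta_0$; upgrading this requires an additional energy/radial phase-plane analysis (forcing the radial maximum toward $1$ as $\rho_0\to\infty$, using the absence of zeros of $f$ in $(\theta_2,1)$, plus an $O(1)$ boundary-layer estimate), none of which is in your sketch. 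For the boundary step, flattening and even reflection across a Neumann boundary is a legitimate idea, but $\psi$ is not a \emph{strict} sub-solution ($\Delta\psi+f(\psi)\equiv 0$ in its support), so the $O(1/R)$ perturbation of the operator cannot simply be ``absorbed by enlarging $R_0$'' without either shrinking $\psi$ to a strict sub-solution or sacrificing margin in the lower bound — a nontrivial fix.

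The paper sidesteps both difficulties. It never needs $p\ge1-\delta_0$ near the boundary: sliding the fixed $\psi$ along paths to the branch $\mathcal{H}_i$ gives $p>\theta_2$ at every point at distance $\ge R$ (fixed) from $\partial(R_k\Omega+x_k)$; combined with the fact that $p\not\equiv 1$ forces $\inf p\le\theta_2$ via the strong maximum principle and Hopf lemma, this pins down a low value of $p$ within $R$ of the boundary. Blowing up there to a half-space Neumann problem, and sliding $\psi$ one more time along the inward normal, produces a contradiction with the Hopf lemma because $\psi$ is radial and nonincreasing, so its normal derivative at the touching point has the wrong sign. This approach uses no boundary flattening, no reflection, and no ODE comparison at the end; it is both shorter and more robust than the plan you sketch. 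If you want to keep your chaining strategy, you will at least need to replace the plateau by a genuine sub-solution, justify why the limiting steady state exceeds $1-\delta_0$ on the middle ball, and make the boundary-flattening argument quantitatively uniform.
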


\subsubsection*{A conjecture on the classification of all transition fronts}

As already recalled from~\cite{H}, on the one-dimensional line $\Omega=\R$, the only transition fronts connec\-ting $0$ and $1$ are the traveling fronts $\phi(\pm x-c_ft)$ (up to shifts). Similarly, it can easily be shown (see Lemma~\ref{lemma3.2-} below) that, in a straight cylinder of the type~\eqref{straight}, the only transition fronts connecting $0$ and~$1$ for~\eqref{eq1.1} are the planar traveling fronts $\phi(\pm x_1-c_ft)$ (up to shifts), that is, the unique solutions emanating from the planar fronts from each of the two branches of the domain. Based on these observations and on Theorem~\ref{th6}, we conjecture that a similar description of all transition fronts connecting $0$ and $1$ holds in any domain with multiple cylindrical branches, provided that the conditions of Theorem~\ref{th6} are satisfied.

\begin{conjecture}\label{conj2}
Let $\Omega$ be a smooth domain with $m\,(\ge2)$ cylindrical branches in the sense of~\eqref{branches}. If all conditions of Theorem~$\ref{th6}$ are satisfied, then any transition front of~\eqref{eq1.1} connecting $0$ and $1$ is of the type~\eqref{frontlike}-\eqref{eq+1.12}, that is, it emanates from the planar fronts coming from some proper subset of branches as $t\to-\infty$ and it converges the planar fronts in the other branches as $t\to+\infty$.
\end{conjecture}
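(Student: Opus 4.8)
The plan is to combine the large-time and large-negative-time asymptotics forced by Theorem~\ref{th6} with a sliding/monotonicity argument analogous to the one-dimensional classification in~\cite{H} and to Lemma~\ref{lemma3.2-}. Let $u$ be any transition front connecting $0$ and $1$. By Theorem~\ref{th6}, $u$ propagates completely and has global mean speed $c_f$; in particular the interfaces $\Gamma_t$ escape to infinity as $t\to+\infty$, so for large $t$ each connected component of $\Gamma_t$ must lie in exactly one branch $\mathcal{H}_j$ (here one uses~\eqref{branches} and the uniform boundedness in time of the Hausdorff distance between admissible interfaces, recalled from~\cite{BH2}). The first step is to show that in each such branch the front is asymptotically planar: restricting $u$ to $\overline{\mathcal{H}_j}\cap\overline{\Omega}$ and using the fact that $\mathcal{H}_j\setminus B(0,L)$ is a straight half-cylinder, one applies parabolic estimates and the characterization of transition fronts in straight cylinders (Lemma~\ref{lemma3.2-}) to conclude that, along any time sequence $t_n\to+\infty$, $u(t_n+t,x)$ converges (up to extraction and up to the shift $x\cdot e_j - c_f t_n$) to a planar front $\phi(x\cdot e_j-c_ft+\tau_j)$ in $\mathcal{H}_j$. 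Since $0$ is an asymptotically stable state and the propagation is complete, only a subset $J$ of branches can carry such a planar front as $t\to+\infty$, and by time-continuity and connectedness $J$ is the same for all time sequences; set $I=\{1,\dots,m\}\setminus J$. A symmetric argument as $t\to-\infty$, using that $u$ is a transition front and that $\Gamma_t$ again escapes to infinity (now the propagation being complete rules out $u\to1$ as $t\to-\infty$), shows that $u$ converges as $t\to-\infty$ to $0$ outside a union of branches indexed by some set $I'$ and to planar fronts $\phi(-x\cdot e_i-c_ft+\sigma_i)$ inside each $\mathcal{H}_i$, $i\in I'$.

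The second step is to identify $I'=I$ and to upgrade the sequential convergence to genuine convergence as $t\to\pm\infty$. For $t\to-\infty$, one argues that $u$ must be time-increasing: this follows from a standard sliding argument exploiting $f'(0)<0$, $f'(1)<0$ and the bistable structure together with the fact that for very negative times $u$ is a small perturbation of a monotone planar front in each branch of $I'$ and is uniformly close to $0$ elsewhere (one can invoke here the comparison and attractivity arguments underlying the existence/uniqueness statements of~\cite{BBC,BHM,P}). Once $u$ is known to be time-increasing, the $\omega$-limit set as $t\to-\infty$ is a single stationary state, which by the transition-front property and~\eqref{eq1.3} must be $0$; the rate of approach is then dictated by the linearization at $0$, giving the uniform convergence~\eqref{frontlike} with a well-defined set $I':=I$ and well-defined shifts $(\sigma_i)_{i\in I}$. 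At this point $u$ is exactly a solution of the type~\eqref{frontlike}, and Theorem~\ref{th10} applies and yields the form~\eqref{eq+1.11}--\eqref{eq+1.12} of the interfaces and the large-time asymptotics~\eqref{largetime} with the shifts $(\tau_j)_{j\in J}$, which is precisely the claimed conclusion.

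The main obstacle I expect is the second step, namely proving that $u$ is time-monotone — or, equivalently, that the $\alpha$- and $\omega$-limit sets of the orbit $\{u(t,\cdot)\}_{t\in\R}$ in the locally uniform topology are reduced to single points (the planar fronts in the appropriate branches, shifted, and the constant $0$). In the one-dimensional case of~\cite{H} this rests on the classification of entire solutions trapped between two shifts of a bistable front and on strong maximum principle / zero-number arguments; in a domain with several branches one must run such an argument branch by branch while controlling what happens in the bounded ``core'' $\Omega\cap B(0,L)$, where no translation invariance is available. The natural tool is a Lyapunov-type functional or a comparison with sub- and super-solutions built from shifted planar fronts glued across the core, combined with the complete-propagation hypothesis of Theorem~\ref{th6} to prevent blocking; carrying this out rigorously, and in particular ruling out the possibility that a transition front oscillates in time without converging, is where the real work lies. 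A secondary difficulty is bookkeeping: showing that the partition $(I,J)$ is nonempty on both sides and proper — that at least one branch emits a front as $t\to-\infty$ and at least one receives one as $t\to+\infty$ — which follows from the nonemptiness of $\Gamma_t$ for all $t$ in~\eqref{eq1.3} together with the global mean speed being positive.
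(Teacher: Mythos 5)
The statement you have attempted to prove is explicitly labelled in the paper as a \emph{conjecture}, not a theorem: the authors present no proof, and a proof does not appear to be known. So there is no ``paper's own proof'' to compare against; what can be assessed is whether your proposal closes the conjecture, and it does not.

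The decisive gap is one that you yourself flag: proving that $u$ is time-increasing, or more weakly that its $\alpha$- and $\omega$-limit sets are single orbits of the form given in~\eqref{frontlike} and~\eqref{largetime}. Your appeal to ``a standard sliding argument exploiting $f'(0)<0$, $f'(1)<0$'' and to the attractivity results of~\cite{BBC,BHM,P} does not fill this gap, because those results establish existence and uniqueness of a solution \emph{with prescribed} asymptotics as $t\to-\infty$. They are not a classification of entire solutions, and to invoke the uniqueness part you would first need to know that $u$ already has the required asymptotics as $t\to-\infty$ --- which is exactly what Step~2 is supposed to deliver. The argument is therefore circular: Step~1 gives only sequential (subsequential, locally uniform) convergence to planar fronts along time sequences; Step~2 tries to upgrade this using monotonicity, but the monotonicity argument requires the asymptotics of Step~2. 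In one dimension, the upgrade is done in~\cite{H} via zero-number (Sturm) arguments and a classification of solutions trapped between two shifts of the front; neither tool carries over to a domain with several branches and a non-translation-invariant core $\Omega\cap B(0,L)$. Also note that Lemma~\ref{lemma3.2-} is applied in Section~\ref{sec32} to a limit $u_\infty$ only after the two-sided bound~\eqref{eq+4.9} has been established from the assumed $t\to-\infty$ behaviour~\eqref{frontlike}; in your setting that bound is not available a priori, so even the conclusion that the limits along $t_n\to\pm\infty$ are transition fronts in the straight cylinder requires a separate argument.

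A secondary, less severe gap is the assertion that the partition $(I,J)$ is ``the same for all time sequences by time-continuity and connectedness.'' Different subsequences $t_n\to+\infty$ could in principle select different branch sets or different shifts $\tau_j$; ruling this out requires a quantitative stability estimate in the spirit of Lemma~\ref{lemma4.2}, which again presupposes the very asymptotics one is trying to establish. In short, your outline reproduces the structure one would hope to use, and it correctly identifies the hard point, but the hard point is precisely what makes~\eqref{frontlike} a hypothesis of Theorem~\ref{th10} rather than a consequence, and why the authors leave Conjecture~\ref{conj2} open.
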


Notice that this conjecture is not expected to hold in general without the assumptions of Theorem~\ref{th6}, having in mind the possible counter-example mentioned in Remark~\ref{3branches}. However, from the comments given before Corollary~\ref{th5}, Conjecture~\ref{conj2} is expected to hold in the locally star-shaped and ``large" domains considered in Corollaries~\ref{th5} and~\ref{cor4}.

Lastly, under the assumptions of Theorem~\ref{th6}, we conjecture that the solutions given in Theorem~\ref{th10} are stable with respect to small perturbations and that they attract all solutions of the Cauchy problem which are initially close to $0$ or $1$ in each branch asymptotically. These questions will be the purpose of future work.

We complete this introduction by mentioning a recent work of Jimbo and Morita~\cite{JM} on the existence of solutions emanating from some branches and of possible blocking phenomena for equations set on the finite union of infinitely thin branches with a common vertex and Kirchhoff law for the first-order spatial derivatives at that vertex.\hfill\break

\noindent{\bf{Outline of the paper.}} This article is organized as follows. Section 2 is devoted to the proof of Theorem~\ref{th1} and its corollaries on the global mean speed of transition fronts with complete propagation in exterior domains. In Section~3, we prove Theorem~\ref{th10} on the front-like solutions emanating from some branches in domains with multiple cylindrical branches. In Section~4, we prove Theorem~\ref{th6} and its corollaries on the global mean speed of any transition front in domains with multiple branches.


\section{Transition fronts in exterior domains}\label{sec2}

This section is devoted to the proof of the existence and uniqueness properties of the global mean speed of transition fronts in exterior domains. Throughout this section, $K$ is a smooth non-empty compact subset of $\R^N$ and $\Omega=\R^N\setminus K$. Assume without loss of generality that~$0\in K$, and let $L>0$ be such that
\be\label{defL}
K\subset B(0,L).
\ee
We recall that we assume~\eqref{F1}-\eqref{F2} throughout the paper. Let
\be\label{deftheta12}
\theta_1=\min\big\{s\in(0,1): f(s)=0\big\}\ \hbox{ and }\ \theta_2=\max\big\{s\in(0,1): f(s)=0\big\}
\ee
and note that $0<\theta_1\le\theta_2<1$. We fix a real number $0<\delta<1/4$ such that
\be\label{d}
0\!<\!\delta\!<\!\min\Big(\frac{\theta_1}{4},\frac{1\!-\!\theta_2}{4},\frac{|f'(0)|}{2},\frac{|f'(1)|}{2}\Big),\ f'\!\le\!\frac{f'(0)}{2} \text{ in }[0,4\delta],\text{ and }f'\!\le\!\frac{f'(1)}{2} \text{ in $[1\!-\!4\delta,1]$}.
\ee
Notice that $0<4\delta<\theta_1\le\theta_2<1-4\delta<1$.

Section~\ref{sec2.1} is devoted to the proof of some key-lemmas that are used in the proof of the main results, namely Theorem~\ref{th1} in Section~\ref{sec2.2} and Corollaries~\ref{cor1} and~\ref{cor3} in Section~\ref{sec2.3}.


\subsection{Key-lemmas}\label{sec2.1}

For any $x_0\in\R^N$ and $R>0$, let $v_{x_0,R}(t,x)$ denote the solution of the Cauchy problem
\be\label{vrx0}
\left\{\baa{lll}
(v_{x_0,R})_t-\Delta v_{x_0,R}=f(v_{x_0,R}), & t>0,\ x\in\overline{\Omega},\vspace{3pt}\\
(v_{x_0,R})_\nu=0, & t>0,\ x\in\partial\Omega,
\eaa
\right.
\ee
with initial condition
$$v_{x_0,R}(0,x)= 1-\delta\hbox{ for }x\in B(x_0,R)\cap\overline{\Omega},\ \hbox{ and }\ v_{x_0,R}(0,x)=0\hbox{ for }x\in\overline{\Omega}\setminus B(x_0,R).$$
We also denote $\tilde{v}_{x_0,R}$ the solution of the Cauchy problem~\eqref{vrx0} with initial condition
$$\tilde{v}_{x_0,R}(0,x)= 1-2\delta\hbox{ for }x\in B(x_0,R)\cap\overline{\Omega},\ \hbox{ and }\ \tilde{v}_{x_0,R}(0,x)=0\hbox{ for }x\in\overline{\Omega}\setminus B(x_0,R).$$

One immediately gets the following lemma from \cite[Lemma 5.2]{BHM}.

\begin{lemma}{\rm{\cite[Lemma 5.2]{BHM}}}\label{lemma2.1}
There exist $8$ positive real numbers $R_1$, $R_2$, $R_3$, $T$, $\tilde{R}_1$, $\tilde{R}_2$, $\tilde{R}_3$ and~$\tilde{T}$ such that $R_3>R_2>R_1>0$, $\tilde{R}_3>\tilde{R}_2>\tilde{R}_1>0$, $R_2-R_1>c_f T/4$, $\tilde{R}_2-\tilde{R}_1>c_f\tilde{T}/4$ and, if~$B(x_0,R_3)\subset \Omega$, resp. if $B(x_0,\tilde{R}_3)\subset\Omega$, then
$$v_{x_0,R_1}(T,\cdot)\ge 1\!-\!\delta \text{ in }\overline{B(x_0,R_2)}\,(\subset\Omega),\ \ \hbox{resp. }\tilde{v}_{x_0,\tilde{R}_1}(\tilde{T},\cdot)\ge 1\!-\!2\delta \text{ in }\overline{B(x_0,\tilde{R}_2)}\,(\subset\Omega).$$
\end{lemma}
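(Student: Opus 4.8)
\textbf{Proof plan for Lemma~\ref{lemma2.1}.}

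The statement asserts the existence of eight universal constants governing the short-time spreading of solutions of~\eqref{vrx0} issued from a plateau of height $1-\delta$ (resp.\ $1-2\delta$) on a Euclidean ball contained in $\Omega$. Since the Lemma is explicitly attributed to \cite[Lemma~5.2]{BHM}, the essential content is that a \emph{local-in-space, interior} spreading estimate, originally proved in $\R^N$ (or for the specific geometry of \cite{BHM}), still applies here: the key observation is that, because $B(x_0,R_3)\subset\Omega$, the solution $v_{x_0,R_1}$ restricted to the ball $B(x_0,R_3)$ may be compared, on the time interval $[0,T]$ and as long as comparison is needed only inside $B(x_0,R_2)$, with the corresponding Cauchy problem for $v_t-\Delta v=f(v)$ in the \emph{whole space} $\R^N$ with the same radial initial datum. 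Concretely, I would first reduce to the whole-space problem by noting that the Neumann boundary $\partial\Omega$ lies at Euclidean distance at least $R_3$ from $x_0$, and by finite speed of "information" for the reaction-diffusion semigroup (more precisely, a sub/supersolution argument confining the comparison to a slightly shrinking family of balls) the values on $B(x_0,R_2)$ up to time $T$ are unaffected by the boundary condition, \emph{provided} $R_3-R_2$ is chosen large enough relative to $c_fT$ and the diffusion.

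The second and main step is the whole-space spreading estimate itself, which is classical for bistable (more generally, for $f$ satisfying~\eqref{F1}--\eqref{F2}, so that $\int_\eta^1 f>0$ for all $\eta\in[0,1)$): if the initial datum is at least $1-\delta$ on a sufficiently large ball $B(x_0,R_1)$ and $\ge0$ elsewhere, then the solution invades at an asymptotic speed $c_f$, and in particular after a finite time $T$ it exceeds $1-\delta$ on the enlarged concentric ball $B(x_0,R_2)$ with $R_2-R_1$ comparable to $c_fT$. The standard tool here is a compactly supported, radially symmetric, expanding subsolution built from the one-dimensional front profile $\phi$ (or from the construction in \cite{AW,FM}): one takes a function of the form $\underline v(t,x)=\phi\big(|x-x_0|-R_1-c't+\text{(curvature correction)}\big)-\text{(small exponential tail)}$ with $c'$ slightly below $c_f$, checks it is a subsolution for $|x-x_0|$ bounded (the positive curvature term $\tfrac{N-1}{r}\partial_r$ only helps, but one must handle the region near $r=0$, e.g.\ by flattening the subsolution there at a height $\le 1-\delta$), and applies the parabolic maximum principle. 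Choosing $R_1$ large enough that the initial datum dominates $\underline v(0,\cdot)$, and then $T$ large and $R_2$ accordingly, yields $v_{x_0,R_1}(T,\cdot)\ge 1-\delta$ on $\overline{B(x_0,R_2)}$. One then picks $R_3>R_2$ large enough to validate the boundary-insensitivity reduction of Step~1, and finally the constraint $R_2-R_1>c_fT/4$ is arranged simply by enlarging $R_2$ (equivalently taking $T$ not too large relative to $R_2-R_1$) — it is a convenient normalization, not a deep requirement. The tilded quantities $\tilde R_1,\tilde R_2,\tilde R_3,\tilde T$ are obtained by the identical argument with $1-\delta$ replaced by $1-2\delta$ throughout; since $1-2\delta$ still lies in the basin of attraction of $1$ (indeed $1-2\delta>\theta_2$ by~\eqref{d}), the same subsolution machinery applies verbatim.

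The main obstacle — and the reason this is stated as a citation rather than reproved — is the careful construction and verification of the expanding radial subsolution in the bistable regime: one must control the profile near the center $r=0$ where the conical/spherical correction terms are singular, and one must ensure the subsolution genuinely reaches height $1-\delta$ (not merely some fixed level below $\theta_2$) on the target ball, which forces $T$ to be taken large enough and couples the choices of $R_1$, $R_2$, $R_3$, $T$. All of this is precisely the content of \cite[Lemma~5.2]{BHM}, so for the present paper it suffices to invoke that lemma, observing only that its hypothesis "$B(x_0,R_3)\subset\Omega$" is exactly what decouples the estimate from the obstacle $K$ and makes the constants independent of $x_0$ and of $\Omega$.
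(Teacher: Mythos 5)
The paper gives no proof of Lemma~\ref{lemma2.1}: it is taken verbatim from \cite[Lemma~5.2]{BHM}, and the text only remarks that it follows ``immediately'' from that reference. Your proposal therefore matches the paper's approach — cite \cite{BHM} — and your sketch of what the underlying argument must look like (an expanding radial subsolution, compactly supported in $B(x_0,R_3)$, combined with the parabolic comparison principle, with $R_2-R_1>c_fT/4$ a normalization obtained by tuning $T$, and the tilded version obtained by rerunning the argument at level $1-2\delta>\theta_2$) is accurate and consistent with the constructions used elsewhere in the paper (e.g.\ the sub-solutions $\underline{v}$, $\underline{w}$ in Lemma~\ref{lemma2.2}). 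One minor simplification worth noting: once the sub-solution is built so that it vanishes identically outside $B(x_0,R_3)\subset\Omega$, it automatically satisfies the Neumann condition on $\partial\Omega$ (its normal derivative there is zero), so the comparison in $\overline{\Omega}$ goes through directly and no separate ``boundary-insensitivity'' or finite-speed-of-information reduction is needed; this is exactly how the paper handles the boundary in the proofs of Lemmas~\ref{lemma2.2} and~\ref{lemma2.4}.
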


The first key-lemma in the proof of Theorem~\ref{th1}, namely Lemma~\ref{lemma2.2} below, provides some lower bounds in expanding balls for the solutions emanating from compactly supported initial conditions close to $1$ on large balls. 

\begin{lemma}\label{lemma2.2}
For any $\varepsilon\in(0,c_f)$, there exist some real numbers $L_{\varepsilon}>L$ and $R_{\varepsilon}>0$ such that, for any point $x_0\in\Omega$ with $|x_0|\ge R_{\varepsilon}+L_{\varepsilon}$, there holds
\be\label{eq+2.3}
v_{x_0,R_\epsilon}(t,x)\ge 1-2\delta \text{ for all $0\le t\le T_{\varepsilon}=\frac{|x_0|-R_{\varepsilon}-L_{\varepsilon}}{c_f-\varepsilon}$ and $x\in\overline{B(x_0,(c_f-\varepsilon)t)}\,(\subset\overline{\Omega})$}.
\ee
Furthermore, under the notations of Lemma~$\ref{lemma2.1}$, there is $X_\varepsilon>0$ such that, if $|x_0|\ge X_\varepsilon$ and if there are $\tau\ge0$ and a solution $v$ of the Cauchy problem~\eqref{vrx0} with $1\ge v(0,\cdot)\ge v_{x_0,R_\epsilon}(0,\cdot)$ in $\Omega$ and $v(t,\cdot)\ge1-2\delta$ in $\overline{B(0,L+\tilde{R}_{3}-\tilde{R}_{2})}\cap\overline{\Omega}\,$ for all $t\ge T_{\varepsilon}+\tau$, then there is $T'_{\varepsilon}\ge \tau$ such that
\be\label{eq+2.4}
v(t,x)\ge 1-4\delta \text{ for all $t\ge T_{\varepsilon}+T'_{\varepsilon}$ and } x\in\overline{B(x_0,(c_f-\varepsilon)(t-T'_{\varepsilon}))}\cap\overline{\Omega}.
\ee
\end{lemma}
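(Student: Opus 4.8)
\textbf{Proof plan for Lemma~\ref{lemma2.2}.} The strategy is to combine the finite-speed-of-spreading obtained from Lemma~\ref{lemma2.1} with a comparison against compactly-supported subsolutions that are built from the planar front $\phi$, tracking carefully the distance of the base point $x_0$ to the obstacle $K\subset B(0,L)$. For the first assertion, I would first observe that as long as the ball $B(x_0,(c_f-\varepsilon)t)$ stays outside $B(0,L)$ — which is exactly guaranteed by the condition $t\le T_\varepsilon=(|x_0|-R_\varepsilon-L_\varepsilon)/(c_f-\varepsilon)$ together with a suitable choice $L_\varepsilon>L$ — the solution $v_{x_0,R_\varepsilon}$ does not feel the Neumann boundary and can be compared to the solution of the same Cauchy problem set in the whole space $\R^N$. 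Then I would use the classical fact for bistable equations (see~\cite{AW,FM,BHM}) that a whole-space solution starting above $1-\delta$ on a large enough ball $B(x_0,R_\varepsilon)$ propagates with asymptotic speed $c_f$, and more quantitatively that it stays above $1-2\delta$ on the expanding ball $B(x_0,(c_f-\varepsilon)t)$ once $R_\varepsilon$ is chosen large (depending on $\varepsilon$). This is where iterating Lemma~\ref{lemma2.1} is natural: applying it repeatedly along a chain of balls of radius $\sim R_2$ whose centers march outward at speed $\geq 4(R_2-R_1)/T>c_f-\varepsilon$ yields the lower bound $1-2\delta$ (after first getting $1-\delta$ on $\overline{B(x_0,R_2)}$, then relaying), and the constraint $R_2-R_1>c_fT/4$ is precisely what makes the relay speed exceed $c_f-\varepsilon$ for $\varepsilon$ small; one absorbs the initial transient by enlarging $R_\varepsilon$ and $L_\varepsilon$.

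For the second assertion, the point is to restart the argument after the disturbance caused by the obstacle has been ``healed'' near $K$. The hypothesis provides a time $T_\varepsilon+\tau$ after which $v\ge 1-2\delta$ on $\overline{B(0,L+\tilde R_3-\tilde R_2)}\cap\overline\Omega$, i.e.\ on a fixed ball around the obstacle that is large enough to contain $\overline{B(0,\tilde R_2)}$ translated appropriately; applying the second (tilded) half of Lemma~\ref{lemma2.1} there, or rather its iteration, one propagates the bound $1-2\delta$ outward from that ball. Meanwhile, from the first part of the present lemma we already have $v_{x_0,R_\varepsilon}(T_\varepsilon,\cdot)\ge 1-2\delta$ on $\overline{B(x_0,(c_f-\varepsilon)T_\varepsilon)}$, a ball centered at $x_0$ reaching essentially up to $\overline{B(0,L_\varepsilon)}$. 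The idea is that these two regions where $v\ge1-2\delta$ — one growing from the obstacle, one sitting near $x_0$ — overlap (or can be joined through $\overline\Omega$) once $|x_0|$ is large, so the union is connected and contains a large ball; then a final application of the whole-space bistable spreading estimate, now with the slightly degraded threshold $1-4\delta$ to leave room for the transition, gives $v(t,x)\ge 1-4\delta$ on $\overline{B(x_0,(c_f-\varepsilon)(t-T'_\varepsilon))}\cap\overline\Omega$ for $t\ge T_\varepsilon+T'_\varepsilon$, with $T'_\varepsilon\ge\tau$ the time needed for the healing near $K$ plus the relay to reach $x_0$. Monotonicity of the Cauchy problem in the initial datum is used to transfer everything from $v_{x_0,R_\varepsilon}$ to the general $v$ with $v(0,\cdot)\ge v_{x_0,R_\varepsilon}(0,\cdot)$.

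The quantities $L_\varepsilon$, $R_\varepsilon$, $X_\varepsilon$ must be chosen in the right order: first $R_\varepsilon$ large enough (in terms of $\varepsilon$, $\delta$, and the radii of Lemma~\ref{lemma2.1}) that the whole-space spreading estimate holds with speed $c_f-\varepsilon$ and threshold $1-2\delta$; then $L_\varepsilon>L$ large enough that for $0\le t\le T_\varepsilon$ the expanding ball $B(x_0,(c_f-\varepsilon)t)$ avoids $B(0,L)$, which forces $L_\varepsilon$ to exceed $L$ by a margin controlling the early-time transient; then $X_\varepsilon$ large enough that the two ``bulk'' regions meet and that $T_\varepsilon=(|x_0|-R_\varepsilon-L_\varepsilon)/(c_f-\varepsilon)$ is comfortably positive and long enough to carry out the relays. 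The main obstacle, I expect, is the bookkeeping in this second part: one must verify that the region where $v\ge1-2\delta$ at time $T_\varepsilon+\tau+(\text{healing time})$ genuinely contains a ball of radius $R_\varepsilon$ (or at least $R_1$, to re-ignite Lemma~\ref{lemma2.1}) whose subsequent expansion reaches and overtakes $x_0$, and to do this uniformly in $x_0$ with $|x_0|\ge X_\varepsilon$ and in $\tau\ge0$, using geodesic connectedness of $\overline\Omega$ to chain the balls around $K$ — the geometry of $\Omega=\R^N\setminus K$ with $K$ possibly very non-convex makes the relay path length, and hence $T'_\varepsilon$, the delicate quantity to bound, though since $K$ is a fixed compact set this length is bounded by a constant depending only on $K$.
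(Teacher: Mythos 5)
Your overall strategy — compactly supported subsolutions built from $\phi$, exploiting the fact that when the expanding ball avoids $B(0,L)$ the subsolution vanishes near $\partial\Omega$ and hence automatically satisfies the Neumann condition, then ``healing'' around the obstacle and relaunching an expanding front — is the same as the paper's. But there is a genuine quantitative gap in the engine you propose for achieving the speed $c_f-\varepsilon$.

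You claim that iterating Lemma~\ref{lemma2.1} ``along a chain of balls of radius $\sim R_2$ whose centers march outward at speed $\ge 4(R_2-R_1)/T > c_f-\varepsilon$'' yields the expansion at rate $c_f-\varepsilon$. This is false: one iteration of Lemma~\ref{lemma2.1} advances the reached region by at most $R_2-R_1$ in time $T$, so the relay speed is $(R_2-R_1)/T$, not $4(R_2-R_1)/T$. The hypothesis $R_2-R_1>c_f T/4$ in Lemma~\ref{lemma2.1} therefore only guarantees relay speed $>c_f/4$, which for any $\varepsilon<3c_f/4$ is \emph{strictly smaller} than $c_f-\varepsilon$. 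Lemma~\ref{lemma2.1} alone can never deliver the sharp spreading speed; it is used in the paper only to obtain propagation at \emph{some} positive speed (e.g.\ in Corollary~\ref{corollary2.2} and in the healing step near $K$), never to prove~\eqref{eq+2.3}. The sharp estimate~\eqref{eq+2.3} requires the explicit radially expanding subsolution $\underline{v}(t,x-x_0)=\max\big(\phi(\underline{\zeta}(t,x-x_0))-\delta e^{-\delta t}-\delta_\varepsilon,0\big)$ with $\underline{\zeta}(t,x)=h_\varepsilon(|x|)-(c_f-\varepsilon)t-\omega e^{-\delta t}+\omega-R_\varepsilon+C$, modelled on \cite[Lemma~4.1]{H}; the decreasing $\omega e^{-\delta t}$ shift and the $\delta e^{-\delta t}$ vertical drop are what make it a genuine subsolution at speed $c_f-\varepsilon$. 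Your appeal to a ``classical whole-space spreading estimate'' hides exactly this construction, and invoking it as a black box while simultaneously attributing the speed to the Lemma~\ref{lemma2.1} relay is circular and, as shown, quantitatively incorrect.

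A second, smaller gap concerns the step ``compare to the solution of the same Cauchy problem set in the whole space $\R^N$.'' The whole-space solution is not in general a subsolution of the Neumann problem on $\overline\Omega$ (there is no sign condition on its normal derivative along $\partial\Omega$). The correct statement, which the paper uses, is that a \emph{compactly supported} whole-space subsolution that vanishes identically near $\partial\Omega$ is automatically a subsolution of the Neumann problem; this is why $\underline{v}$ is built to vanish on $\overline{B(0,L)}\cap\overline\Omega$ for $0\le t\le T_\varepsilon$, which forces the choice $L_\varepsilon=L-C+C_\varepsilon$. Finally, for the second assertion your sketch stops at ``a final application of the whole-space bistable spreading estimate,'' but there the expanding ball $\overline{B(x_0,(c_f-\varepsilon)(t-T'_\varepsilon))}$ does eventually engulf $K$, so a plain whole-space front-like subsolution cannot be used; the paper needs the additional cutoff $\tilde h_\varepsilon$ that glues the front profile $\phi(\underline\xi)$ far from $K$ to the constant $1-\delta$ near $K$ (legitimate precisely because, by hypothesis, $v\ge 1-2\delta$ near $K$ after time $T_\varepsilon+\tau$), and the smallness condition~\eqref{eq+2.6-} plus the choice of $X_\varepsilon$ in~\eqref{eq+2.7} are what control the extra error terms coming from this cutoff and from the curvature term $(N-1)/|x-x_0|$. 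These technical ingredients are absent from your plan and are not reducible to Lemma~\ref{lemma2.1}.
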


\begin{proof} It is based on the maximum principle and on the construction of suitable sub-solutions close to some radially symmetric expanding fronts propagating with speeds less than but close to $c_f$. We first fix some parameters used throughout the proof and also in other proofs, and we then show~\eqref{eq+2.3} and~\eqref{eq+2.4}.

{\it Step 1: choice of some parameters.} Take any $\varepsilon\in (0,c_f)$. From the properties of the function~$\phi$, there is $C>0$ such that
\be\label{eq+2.2}
\phi\ge 1-\delta\text{ in $(-\infty,-C]$}\ \text{ and }\  \phi\le \delta \text{ in $[C,+\infty)$}.
\ee
Since $\phi'$ is continuous and negative in $\R$, there are some real numbers $k>0$ and then $\omega>0$ and~$\delta_\epsilon>0$  satisfying
\be\label{eq+2.4-}
\phi'\le-k\hbox{ in }[-C,C],\ \ k\,\omega\ge 2\delta+ 2\max_{[0,1]}|f'|,\ \hbox{ and }\ \delta_{\varepsilon}=\min\Big(\frac{\varepsilon\,k}{2\max_{[0,1]} |f'|},\frac{\delta}{2}\Big)>0.
\ee
There is also $C_{\varepsilon}>C>0$ such that
\be\label{ghsa1}
\phi\ge 1-\delta_{\varepsilon} \text{ in }(-\infty,-C_{\varepsilon}]\ \text{ and }\ \phi\le \delta_{\varepsilon} \text{ in $[C_{\varepsilon},+\infty)$}.
\ee

As in \cite[Lemma 4.1]{H}, it is easy to see that there is a $C^2$ function $h_{\varepsilon}: [0,+\infty)\rightarrow (0,+\infty)$ satisfying
\be\label{defheps1}
\left\{\baa{l}
0\le h'_{\varepsilon}\le 1 \text{ on $[0,+\infty)$},\vspace{3pt}\\
h'_{\varepsilon}=0 \text{ on a neighborhood of $0$},\ \ h_{\varepsilon}(r)=r \text{ on $[H_{\varepsilon},+\infty)$ for some $H_{\varepsilon}>0$},\vspace{3pt}\\
\displaystyle\frac{(N-1)\,h'_{\varepsilon}(r)}{r} +h''_{\varepsilon}(r)\le \frac{\varepsilon}{2}\text{ for all $r\in[0,+\infty)$}.\eaa\right.
\ee
Notice in particular that $r\le h_{\varepsilon}(r)\le r+h_{\varepsilon}(0)$ for all $r\ge 0$. Finally, define
\be\label{defRLeps}
R_{\varepsilon}=\max\big(H_{\varepsilon},h_{\varepsilon}(0)+\omega+C_{\varepsilon}+C\big)>0\ \hbox{ and }\ L_{\varepsilon}=L-C+C_{\varepsilon}>L>0.
\ee

{\it Step 2: proof of~\eqref{eq+2.3}.} Let $x_0$ be any point in $\R^N$ such that $|x_0|\ge R_\varepsilon+L_\varepsilon$ (which yields~$x_0\in\Omega$ by~\eqref{defL}) and denote
$$T_{\varepsilon}=\frac{|x_0|-R_{\varepsilon}-L_{\varepsilon}}{c_f-\varepsilon}$$
(notice that~\eqref{eq+2.3} is actually immediate if $T_\epsilon=0$, namely $|x_0|=R_\epsilon+L_\epsilon$). For all $(t,x)\in [0,+\infty)\times\R^N$, we set
$$\underline{v}(t,x)=\max\big(\phi(\underline{\zeta}(t,x))-\delta e^{-\delta t}-\delta_{\varepsilon},0\big),\ \hbox{ with }\ \underline{\zeta}(t,x)=h_{\varepsilon}(|x|)-(c_f-\varepsilon)t-\omega e^{-\delta t}+\omega-R_{\varepsilon}+C.$$
Let us check that $\underline{v}(t,x-x_0)$ is a sub-solution of the problem satisfied by $v_{x_0,R_\epsilon}(t,x)$ for $0\le t\le T_{\varepsilon}$ and $x\in\overline{\Omega}$.

Let us first check the initial and boundary conditions. At time $0$, since~$\phi\le1$ and $1-\delta-\delta_\varepsilon>0$, it is obvious that $\underline{v}(0,x-x_0)\le 1-\delta-\delta_{\varepsilon}\le v_{x_0,R_\epsilon}(0,x)$ for all~$x\in B(x_0,R_{\varepsilon})\,(\subset\overline{\Omega})$. Furthermore, if $x\in\overline{\Omega}\setminus B(x_0,R_{\varepsilon})$, then $|x-x_0|\ge R_{\varepsilon}\ge H_{\varepsilon}$ and $\underline{\zeta}(0,x-x_0)=h_{\varepsilon}(|x-x_0|)-R_{\varepsilon}+C\ge C$, hence $0\le\underline{v}(0,x-x_0)\le \max(\delta-\delta-\delta_{\varepsilon},0)=0\le v_{x_0,R_\epsilon}(0,x)$. Thus, $\underline{v}(0,x-x_0)\le v_{x_0,R_\epsilon}(0,x)$ for all $x\in\overline{\Omega}$. On the other hand, for $0\le t\le T_{\varepsilon}$ and~$x\in \overline{B(0,L)}\cap\overline{\Omega}$, one has that
$$\underline{\zeta}(t,x-x_0)\ge |x-x_0|-(|x_0|-R_{\varepsilon}-L_{\varepsilon})-R_{\varepsilon}+C\ge L_{\varepsilon}-L+C=C_\epsilon,$$
hence $0\le\underline{v}(t,x-x_0)\le\max(\delta_{\varepsilon}-\delta e^{-\delta t}-\delta_{\varepsilon},0)=0$. Thus, $\underline{v}(t,x-x_0)=0$ for $0\le t\le T_{\varepsilon}$ and~$x\in \overline{B(0,L)}\cap\overline{\Omega}$, hence $\underline{v}_\nu(t,x-x_0)=0$ for all $x\in\partial\Omega$ since $K=\R^N\setminus\Omega\subset B(0,L)$.

Let us now check that
$$\mathcal{L} \underline{v}(t,x-x_0):=\underline{v}_t(t,x-x_0)-\Delta \underline{v}(t,x-x_0)-f(\underline{v}(t,x-x_0))\le 0$$
for all $0<t\le T_{\varepsilon}$ and $x\in \Omega$ such that $\underline{v}(t,x-x_0)>0$. For any such $(t,x)$, there holds
$$\begin{array}{rcl}
\mathcal{L} \underline{v}(t,x\!-\!x_0)& \!\!=\!\! & \displaystyle\frac{\varepsilon}{2}\phi'(\underline{\zeta}(t,x\!-\!x_0))\!+\!f(\phi(\underline{\zeta}(t,x\!-\!x_0)))\!-\!f(\underline{v}(t,x\!-\!x_0))\!+\!\omega \delta e^{-\delta t} \phi'(\underline{\zeta}(t,x\!-\!x_0))\vspace{3pt}\\
& \!\!\!\! & \displaystyle+\delta^2 e^{-\delta t}+\Big(\frac{\varepsilon}{2}-\frac{(N-1)h'_{\varepsilon}(|x-x_0|)}{|x-x_0|}-h''_{\varepsilon}(|x-x_0|)\Big)\phi'(\underline{\zeta}(t,x-x_0))\vspace{3pt}\\
& \!\!\!\! & +(1-(h'_{\varepsilon}(|x-x_0|))^2)\,\phi''(\underline{\zeta}(t,x-x_0))
\end{array}$$
By the same analysis in the proof of \cite[Lemma 4.1, page 317]{H}, one can infer that
$$\begin{array}{rcl}
f(\phi(\underline{\zeta}(t,x-x_0)))-f(\phi(\underline{\zeta}(t,x-x_0))-\delta e^{-\delta t})+\omega \delta e^{-\delta t} \phi'(\underline{\zeta}(t,x-x_0))+\delta^2 e^{-\delta t} & \!\!\!\! & \vspace{3pt}\\
\displaystyle+\Big(\frac{\varepsilon}{2}\!-\!\frac{(N\!-\!1)h'_{\varepsilon}(|x\!-\!x_0|)}{|x\!-\!x_0|}\!-\!h''_{\varepsilon}(|x\!-\!x_0|)\Big)\phi'(\underline{\zeta}(t,x\!-\!x_0))\!+\!(1\!-\!(h'_{\varepsilon}(|x\!-\!x_0|))^2)\phi''(\underline{\zeta}(t,x\!-\!x_0)) & \!\!\!\le\!\!\! & 0.
\end{array}$$
Therefore, one has that
$$\mathcal{L} \underline{v}(t,x-x_0)\le \frac{\varepsilon}{2}\phi'(\underline{\zeta}(t,x-x_0))+f(\phi(\underline{\zeta}(t,x-x_0))-\delta e^{-\delta t})-f(\underline{v}(t,x-x_0)).$$
If $\underline{\zeta}(t,x-x_0)\le -C$, then $1-2\delta\le \phi(\underline{\zeta}(t,x-x_0))-\delta e^{-\delta t}<1$ and $1-4\delta\le \underline{v}(t,x-x_0)<1$. By~\eqref{d}, it follows that $f(\phi(\underline{\zeta}(t,x-x_0))-\delta e^{-\delta t})-f(\underline{v}(t,x-x_0))\le(f'(1)/2)\delta_{\varepsilon}<0$ and
$$\mathcal{L} \underline{v}(t,x-x_0)\le \frac{\varepsilon}{2}\phi'(\underline{\zeta}(t,x-x_0))+\frac{f'(1)}{2}\delta_{\varepsilon} \le 0,$$
since $\phi'(\underline{\zeta}(t,x-x_0))<0$. Similarly, one can get that $\mathcal{L} \underline{v}(t,x-x_0)\le 0$ if $\underline{\zeta}(t,x-x_0)\ge C$. Finally, if $-C\le \underline{\zeta}(t,x-x_0)\le C$, then $f(\phi(\underline{\zeta}(t,x-x_0))-\delta e^{-\delta t})-f(\underline{v}(t,x-x_0))\le \delta_{\varepsilon}\max_{[0,1]}|f'|$, and $\phi'(\underline{\zeta}(t,x-x_0))\le-k$. Thus, it follows from~\eqref{eq+2.4-} that $\mathcal{L} \underline{v}(t,x-x_0)\le-\varepsilon k/2+\delta_{\varepsilon}\max_{[0,1]}|f'|\le 0$.

By the comparison principle, one concludes that
\be\label{eq+2.5}
v_{x_0,R_\epsilon}(t,x)\ge \underline{v}(t,x-x_0)\ge \phi(\underline{\zeta}(t,x-x_0))-\delta e^{-\delta t}-\delta_{\varepsilon}\ \text{ for all $0\le t\le T_{\varepsilon}$ and $x\in\overline{\Omega}$}.
\ee
For $0\le t\le T_{\varepsilon}$ and $x\in\overline{B(x_0,(c_f-\varepsilon)t)}$, one has
$$\underline{\zeta}(t,x-x_0)\le (c_f-\varepsilon)t+h_{\varepsilon}(0)-(c_f- \varepsilon )t+\omega-R_{\varepsilon}+C \le -C_{\varepsilon}.$$
Then, it follows from~\eqref{eq+2.5} and $\delta_{\varepsilon}\le \delta/2$ that $v_{x_0,R_\epsilon}(t,x)\ge 1-2\delta_{\varepsilon}-\delta\ge 1-2\delta$ for all~$0\le t\le T_{\varepsilon}$ and $x\in\overline{B(x_0,(c_f-\varepsilon)t)}$. This completes the proof of~\eqref{eq+2.3}.

{\it Step 3: proof of~\eqref{eq+2.4}.}
Consider a $C^2$ function $\tilde{h}_{\varepsilon}:\R\rightarrow [0,1]$ such that for some $\xi_{\varepsilon}>0$,
$$\tilde{h}_{\varepsilon}=0\text{ in $(-\infty,-C-\xi_{\varepsilon}]$},\ \tilde{h}_{\varepsilon}=1\text{ in $[-C,+\infty)$, and } 0\le \tilde{h}'_{\varepsilon}\le 1 \text{ in $\R$}.$$
Even if it means increasing $\xi_{\varepsilon}$, one can assume without loss of generality that
\be\label{eq+2.6-}
\omega\delta^2\|\tilde{h}'_{\varepsilon}\|_{L^\infty(\R)}+2\|\tilde{h}'_{\varepsilon}\|_{L^\infty(\R)}\|\phi'\|_{L^\infty(\R)}+\delta\|\tilde{h}''_{\varepsilon}\|_{L^\infty(\R)}\le \frac{|f'(1)|}{4}\delta_{\varepsilon}.
\ee

With the same notations as above, it follows from~\eqref{eq+2.3} that, if $|x_0|\ge L_\epsilon+R_\epsilon$, then
\be\label{eq+2.6}
v_{x_0,R_\epsilon}(T_{\varepsilon},x)\ge 1-2\delta \text{ for all $x\in\overline{B(x_0,|x_0|-R_{\varepsilon}-L_{\varepsilon})}$}.
\ee
Let $\tilde{R}_1$, $\tilde{R}_2$, $\tilde{R}_3$ and $\tilde{T}$ be the positive numbers given in Lemma~\ref{lemma2.1}. Let $X_\epsilon>0$ be such that
\be\label{eq+2.7}
X_\epsilon>L_{\varepsilon}+L+R_{\varepsilon}+\tilde{R}_3,\ \ \frac{N-1}{X_\epsilon+L_{\varepsilon}+R_{\varepsilon}}\le \frac{\varepsilon}{2},\ \hbox{ and }\ \frac{(N-1)\,\|\phi'\|_{L^\infty(\R)}}{X_\epsilon+L_{\varepsilon}+R_{\varepsilon}}\le \frac{|f'(1)|}{4}\delta_{\varepsilon}.
\ee
Let $D_{\varepsilon}=L_{\varepsilon}+R_{\varepsilon}+\xi_{\varepsilon}+\omega+2C$. Since $\sup_{|x_0|\ge X_\epsilon,\,x\in\overline{B(x_0,|x_0|+D_{\varepsilon})}\cap\overline{\Omega}} d_{\Omega}(x,x_0)<+\infty$, there exists an integer $n_{\varepsilon}\ge 1$ such that, for any $|x_0|\ge X_\epsilon$ and any point $x\in\overline{B(x_0,|x_0|+D_{\varepsilon})}$ such that~$|x|\ge L+\tilde{R}_{3}-\tilde{R}_2$, there are $n_{\varepsilon}$ points $x^1,\cdots,x^{n_{\varepsilon}}$ in $\R^N$ satisfying
\begin{eqnarray*}
\left\{\baa{lll}
B(x^1,\tilde{R}_1)\subset B(x_0,|x_0|-L_{\varepsilon}-R_{\varepsilon}),\vspace{3pt}\\
B(x^i,\tilde{R}_{3})\subset \Omega  &\text{ for $1\le i\le n_{\varepsilon}$},\vspace{3pt}\\
B(x^{i+1},\tilde{R}_1)\subset B(x^i,\tilde{R}_2)  &\text{ for $1\le i\le n_{\varepsilon}-1$},\vspace{3pt}\\
x\in B(x^{n_{\varepsilon}},\tilde{R}_2).
\eaa
\right.
\end{eqnarray*}
Lemma \ref{lemma2.1} and~\eqref{eq+2.6} then yield $v_{x_0,R_\epsilon}(T_\epsilon,\cdot)\ge\tilde{v}_{x_1,\tilde{R}_1}(0,\cdot)$ in $\overline{\Omega}$ and $v_{x_0,R_\epsilon}(T_{\varepsilon}+\widetilde T,y)\ge 1-2\delta$ for all $y\in\overline{B(x^1,\tilde{R}_2)}$. Since $B(x^2,\tilde{R}_1)\subset B(x^1,\tilde{R}_2)$ and $B(x^2,\tilde{R}_3)\subset\Omega$, one can apply Lemma~\ref{lemma2.1} again and get that $v_{x_0,R_\epsilon}(T_{\varepsilon}+\widetilde T,y)\ge 1-2\delta$ for all $y\in\overline{B(x^2,\tilde{R}_2)}$. By induction, one gets that $v_{x_0,R_\epsilon}(T_{\varepsilon}+n_{\varepsilon}\widetilde T,y)\ge 1-2\delta$ for all $y\in\overline{B(x^{n_{\varepsilon}},\tilde{R}_2)}$. Thus,
$$v_{x_0,R_\epsilon}(T_{\varepsilon}+n_{\varepsilon}T,x)\ge 1-2\delta\ \hbox{ for any }|x_0|\ge R_\epsilon\hbox{ and }x\in\overline{B(x_0,|x_0|+D_{\varepsilon})}\hbox{ with }|x|\ge L+\tilde{R}_{3}-\tilde{R}_2.$$

Assume now, until the end of the proof of Lemma~\ref{lemma2.2}, that $|x_0|\ge X_\epsilon$ and that there exist~$\tau\ge0$ and a solution $v$ of the Cauchy problem~\eqref{vrx0} with $1\ge v(0,\cdot)\ge v_{x_0,R_\epsilon}(0,\cdot)$ in $\Omega$ and $v(t,x)\ge1-2\delta$ for all $t\ge T_{\varepsilon}+\tau$ and $x\in \overline{B(0,L+\tilde{R}_{3}-\tilde{R}_{2})\cap \Omega}$. Even if it means increasing $n_\epsilon$,
one can assume without loss of generality that $n_\epsilon \widetilde{T}\ge\tau$. Call now $T'_\epsilon=n_\epsilon \widetilde{T}$. From the results of the previous paragraph and the assumptions made in the present paragraph, one infers that
\be\label{eq+2.8}
v(T_{\varepsilon}+T'_{\varepsilon},x)\ge 1-2\delta\ \text{ for all $x\in\overline{B(x_0,|x_0|+D_{\varepsilon})\cap \Omega}$}.
\ee

For all $(t,x)\in[0,+\infty)\times\R^N$, define
$$\Phi(t,x)\!=\!\tilde{h}_{\varepsilon}(\underline{\xi}(t,x))\phi(\underline{\xi}(t,x))\!+\!(1\!-\!\tilde{h}_{\varepsilon}(\underline{\xi}(t,x)))(1\!-\!\delta)\hbox{ and }\underline{w}(t,x)\!=\!\max\!\big(\Phi(t,x)\!-\!\delta_{\varepsilon}\!-\!2\delta e^{-\delta (t\!-\!T_{\varepsilon}\!-\!T'_{\varepsilon})},0\big)$$
with
$$\underline{\xi}(t,x)=|x-x_0|-(c_f-\varepsilon)(t-T_{\varepsilon}-T'_{\varepsilon})-\omega e^{-\delta (t-T_{\varepsilon}-T'_{\varepsilon})}-|x_0|-L_{\varepsilon}-R_{\varepsilon}-\xi_{\varepsilon}-C,$$
and let us check that $\underline{w}(t,x)$ is a sub-solution of the problem satisfied by $v(t,x)$ for $t\ge T_{\varepsilon}+T'_{\varepsilon}$ and $x\in\overline{\Omega}$.

Let us first check the initial (at time $T_{\varepsilon}+T'_{\varepsilon}$) and boundary conditions. At time~$T_{\varepsilon}\!+\!T'_{\varepsilon}$,~\eqref{eq+2.8} implies that $\underline{w}(T_{\varepsilon}+T'_{\varepsilon},x)\le 1-2\delta\le v(T_{\varepsilon}+T'_{\varepsilon},x)$ for all $x\in B(x_0,|x_0|+D_{\varepsilon})\cap\overline{\Omega}$. Furthermore, if $x\in\overline{\Omega}\setminus B(x_0,|x_0|+D_{\varepsilon})$, then~$|x-x_0|\ge |x_0|+D_{\varepsilon}=|x_0|+L_\epsilon+R_\epsilon+\xi_\epsilon+\omega+2C$ and~$\underline\xi(T_{\varepsilon}+T'_{\varepsilon},x)\ge C$. Hence~$\tilde{h}_{\varepsilon}(\underline\xi(T_{\varepsilon}+T'_{\varepsilon},x))=1$ and $\underline{w}(T_{\varepsilon}+T'_{\varepsilon},x)\le \max(\delta-\delta_{\varepsilon}-2\delta,0)=0\le v(T_{\varepsilon}+T'_{\varepsilon},x)$ for all $x\in\overline{\Omega}\setminus B(x_0,|x_0|+D_{\varepsilon})$. Finally, $\underline{w}(T_{\varepsilon}+T'_{\varepsilon},x)\le v(T_{\varepsilon}+T'_{\varepsilon},x)$ for all $x\in\overline{\Omega}$. On the other hand, for any $t\ge T_{\varepsilon}+T'_{\varepsilon}$ and $x\in \overline{B(0,L)}\cap\overline{\Omega}$, it follows from the definition of $L_{\varepsilon}$ and $R_{\varepsilon}$ that $\underline{\xi}(t,x)\le L-L_{\varepsilon}-R_{\varepsilon}-\xi_{\varepsilon}-C\le -C-\xi_{\varepsilon}$, hence $\tilde{h}_{\varepsilon}(\underline{\xi}(t,x))=0$ and~$\underline{w}(t,x)=\max(1-\delta-\delta_{\varepsilon}-2\delta e^{-\delta(t-T_{\varepsilon}-T'_{\varepsilon})},0)=1-\delta-\delta_{\varepsilon}-2\delta e^{-\delta(t-T_{\varepsilon}-T'_{\varepsilon})}$. Therefore,~$\underline{w}_\nu(t,x)=0$ for all $t\ge T_\epsilon+T'_\epsilon$ and $x\in\partial \Omega$, since $\partial\Omega\subset B(0,L)$.

Let us now check that
$$\mathcal{L}\underline{w}(t,x)=\underline{w}_t(t,x)-\Delta \underline{w}(t,x)-f(\underline{w}(t,x))\le 0$$
for any point $(t,x)\in(T_{\varepsilon}+T'_{\varepsilon},+\infty)\times\Omega$ such that $\underline{w}(t,x)>0$. If $\underline{\xi}(t,x)<-C-\xi_{\varepsilon}$, one has that $\tilde{h}_{\varepsilon}(\underline{\xi}(t,x))=0$, $\Phi(t,x)=1-\delta$ and $\underline{w}(t,x)=1-\delta-\delta_{\varepsilon}-2\delta e^{-\delta(t-T_{\varepsilon}-T'_{\varepsilon})}\ge 1-4\delta>0$ (and these formulas hold in a neighborhood of the point $(t,x)$). By~\eqref{d}, it follows that
$$-f(\underline{w}(t,x))\le \frac{f'(1)}{2}(\delta+\delta_{\varepsilon}+2\delta e^{-\delta (t-T_{\varepsilon}-T'_{\varepsilon})})<f'(1)\,\delta e^{-\delta (t-T_{\varepsilon}-T'_{\varepsilon})}$$
and $\mathcal{L}\underline{w}(t,x)\le2\delta^2 e^{-\delta(t-T_{\varepsilon}-T'_{\varepsilon})}+f'(1)\delta e^{-\delta (t-T_{\varepsilon}-T'_{\varepsilon})}\le 0$, since $\delta<-f'(1)/2$. If $-C-\xi_{\varepsilon}\le \underline{\xi}(t,x)\le -C$, one has $0\le \tilde{h}_{\varepsilon}(\underline{\xi}(t,x))\le 1$ and~$1>\Phi(t,x)\ge 1-\delta$. A straightforward computation leads to
$$\baa{rcl}
\mathcal{L}\underline{w}(t,x) & = & \tilde{h}_{\varepsilon}(\underline{\xi}(t,x))f(\phi(\underline{\xi}(t,x)))-f(\underline{w}(t,x))+(\varepsilon+\omega\delta e^{-\delta (t-T_{\varepsilon}-T'_{\varepsilon})}) \tilde{h}_{\varepsilon}(\underline{\xi}(t,x))\phi'(\underline{\xi}(t,x))\vspace{3pt}\\
& & +\,(c_f-\varepsilon-\omega\delta e^{-\delta (t-T_{\varepsilon}-T'_{\varepsilon})}) \tilde{h}'_{\varepsilon}(\underline{\xi}(t,x))(1-\delta-\phi(\underline{\xi}(t,x)))-2\tilde{h}'_{\varepsilon}(\underline{\xi}(t,x))\phi'(\underline{\xi}(t,x))\vspace{3pt}\\
& & \displaystyle+\,\tilde{h}''_{\varepsilon}(\underline{\xi}(t,x))(1-\delta-\phi(\underline{\xi}(t,x)))+\tilde{h}'_{\varepsilon}(\underline{\xi}(t,x))(1-\delta-\phi(\underline{\xi}(t,x)))\frac{N-1}{|x-x_0|}\vspace{3pt}\\
& & \displaystyle-\,\tilde{h}_{\varepsilon}(\underline{\xi}(t,x))\phi'(\underline{\xi}(t,x))\frac{N-1}{|x-x_0|}+2\delta^2 e^{-\delta (t-T_{\varepsilon}-T'_{\varepsilon})}\vspace{3pt}\\
& \le & \tilde{h}_{\varepsilon}(\underline{\xi}(t,x))f(\phi(\underline{\xi}(t,x)))-f(\underline{w}(t,x))+ \omega \delta^2 \tilde{h}'_{\varepsilon}(\underline{\xi}(t,x))-2\tilde{h}'_{\varepsilon}(\underline{\xi}(t,x))\phi'(\underline{\xi}(t,x))\vspace{3pt}\\
& & \displaystyle+\,\delta|\tilde{h}''_{\varepsilon}(\underline{\xi}(t,x))|-\tilde{h}_{\varepsilon}(\underline{\xi}(t,x))\phi'(\underline{\xi}(t,x))\frac{N-1}{|x-x_0|}+2\delta^2 e^{-\delta (t-T_{\varepsilon}-T'_{\varepsilon})},\eaa$$
since $\varepsilon\le c_f$, $\tilde{h}_\epsilon\ge0$, $\tilde{h}'_{\varepsilon}\ge 0$, and $\phi'<0$. Since $\underline{\xi}(t,x)\le -C$ implies $\phi(\underline{\xi}(t,x))\ge 1-\delta$ and $\underline{w}(t,x)\ge 1-4\delta$, one gets that $f(\phi(\underline{\xi}(t,x)))\ge 0$ and
$$\begin{array}{l}
\tilde{h}_{\varepsilon}(\underline{\xi}(t,x))f(\phi(\underline{\xi}(t,x)))\!-\!f(\underline{w}(t,x))\vspace{3pt}\\
\qquad\qquad\qquad\qquad\le f(\phi(\underline{\xi}(t,x)))-f(\underline{w}(t,x))\vspace{3pt}\\
\qquad\qquad\qquad\qquad\le\displaystyle\frac{f'(1)}{2}(1\!-\!\tilde{h}_{\varepsilon}(\underline{\xi}(t,x)))(1\!-\!\delta\!-\!\phi(\underline{\xi}(t,x)))+\frac{f'(1)}{2}(\delta_{\varepsilon}\!+\!2\delta e^{-\delta (t-T_{\varepsilon}-T'_{\varepsilon})})\vspace{3pt}\\
\qquad\qquad\qquad\qquad\displaystyle\le\frac{f'(1)}{2}(\delta_{\varepsilon}+2\delta e^{-\delta (t-T_{\varepsilon}-T'_{\varepsilon})}).\end{array}$$
Since $\underline{\xi}(t,x)\ge -C-\xi_{\varepsilon}$, one has $|x-x_0|\ge |x_0|+L_{\varepsilon}+R_{\varepsilon}$ and it then follows from~\eqref{d},~\eqref{eq+2.6-} and~\eqref{eq+2.7} (remember that $|x_0|\ge X_\epsilon$) that
$$\baa{rcl}
\mathcal{L}\underline{w}(t,x) & \!\!\le\!\! & \displaystyle\frac{f'(1)}{2}(\delta_{\varepsilon}+2\delta e^{-\delta (t-T_{\varepsilon}-T'_{\varepsilon})})+ \omega \delta^2\tilde{h}'_{\varepsilon}(\underline{\xi}(t,x))-2\tilde{h}'_{\varepsilon}(\underline{\xi}(t,x))\phi'(\underline{\xi}(t,x))+\delta|\tilde{h}''_{\varepsilon}(\underline{\xi}(t,x))|\vspace{3pt}\\
& \!\!\!\! & \displaystyle+|\phi'(\underline{\xi}(t,x))|\frac{N-1}{|x_0|+L_{\varepsilon}+R_{\varepsilon}}+2\delta^2 e^{-\delta (t-T_{\varepsilon}-T'_{\varepsilon})}\vspace{3pt}\\
& \!\!\le\!\! & 0.\eaa$$
If $\underline{\xi}(t,x)>-C$, one has $\tilde{h}_{\varepsilon}(\underline{\xi}(t,x))=1$ and $\Phi(t,x)=\phi(\underline{\xi}(t,x))$ (and these formulas hold in a neighborhood of the point $(t,x)$). Notice that $|x-x_0|\ge |x_0|+L_{\varepsilon}+R_{\varepsilon}$ since~$\underline{\xi}(t,x)\ge -C\ge-C-\xi_\epsilon$. Therefore,
$$\baa{rcl}
\mathcal{L}\underline{w}(t,x) & = & \displaystyle f(\phi(\underline{\xi}(t,x)))\!-\!f(\underline{w}(t,x))\!+\!(\varepsilon\!+\!\omega\delta e^{-\delta (t-T_{\varepsilon}-T'_{\varepsilon})})  \phi'(\underline{\xi}(t,x))\!-\!\phi'(\underline{\xi}(t,x))\frac{N\!-\!1}{|x\!-\!x_0|}\vspace{3pt}\\
& & +2\delta^2 e^{-\delta (t-T_{\varepsilon}-T'_{\varepsilon})}\vspace{3pt}\\
& \le & \displaystyle f(\phi(\underline{\xi}(t,x)))-f(\underline{w}(t,x))+\Big(\frac{\varepsilon}{2}+\omega\delta e^{-\delta (t-T_{\varepsilon}-T'_{\varepsilon})}\Big)  \phi'(\underline{\xi}(t,x))+2\delta^2 e^{-\delta (t-T_{\varepsilon}-T'_{\varepsilon})}\eaa$$
by~\eqref{eq+2.7} (remember that $|x_0|\ge X_\epsilon$). If $-C\le \underline{\xi}(t,x)\le C$, one has $-\phi'(\underline{\xi}(t,x))\ge k$ and it then follows from~\eqref{eq+2.4-} that
$$\mathcal{L}\underline{w}(t,x)\le\Big(\max_{s\in[0,1]}|f'(s)|\Big)(\delta_{\varepsilon}+2\delta e^{-\delta (t-T_{\varepsilon}-T'_{\varepsilon})})-\Big(\frac{\varepsilon}{2}+\omega\delta e^{-\delta (t-T_{\varepsilon}-T'_{\varepsilon})}\Big)\,k+2\delta^2 e^{-\delta (t-T_{\varepsilon}-T'_{\varepsilon})}\le 0.$$
If $\underline{\xi}(t,x)>C$, one has $\Phi(t,x)=\phi(\underline{\xi}(t,x))\le \delta$ (and these formulas hold in a neighborhood of the point $(t,x)$), hence $f(\phi(\underline{\xi}(t,x)))-f(\underline{w}(t,x))\le(f'(0)/2)(\delta_{\varepsilon}+2\delta e^{-\delta (t-T_{\varepsilon}-T'_{\varepsilon})})$ by~\eqref{d}. Thus, it follows from the negativity of $\phi'$ and~\eqref{d} that
$$\mathcal{L}\underline{w}(t,x)\le\frac{f'(0)}{2}(\delta_{\varepsilon}+2\delta e^{-\delta (t-T_{\varepsilon}-T'_{\varepsilon})})+\Big(\frac{\varepsilon}{2}+\omega\delta e^{-\delta (t-T_{\varepsilon}-T'_{\varepsilon})}\Big)\, \phi'(\underline{\xi}(t,x))+2\delta^2 e^{-\delta (t-T_{\varepsilon}-T'_{\varepsilon})}\le 0.$$

In conclusion, one gets that $\mathcal{L}\underline{w}(t,x)=\underline{w}_t(t,x)-\Delta \underline{w}(t,x)-f(\underline{w}(t,x))\le 0$ for all $t>T_{\varepsilon}+T'_{\varepsilon}$ and $x\in\Omega$ such that $\underline{w}(t,x)>0$. By the comparison principle, it follows that $v(t,x)\ge \underline{w}(t,x)$ for all $t\ge T_{\varepsilon}+T'_{\varepsilon}$ and $x\in\overline{\Omega}$. Finally, for any $t\ge T_{\varepsilon}+T'_{\varepsilon}$ and $x\in\overline{B(x_0,(c_f-\varepsilon)(t-T'_{\varepsilon}))}\cap\overline{\Omega}$, one has
$$\underline{\xi}(t,x)\le (c_f-\varepsilon)T_{\varepsilon}-|x_0|-L_{\varepsilon}-R_{\varepsilon}-\xi_{\varepsilon}-C\le-\xi_\epsilon-C\le -C,$$
hence $v(t,x)\ge\underline{w}(t,x)\ge1\!-\!\delta\!-\!\delta_{\varepsilon}\!-\!2\delta\ge1\!-\!4\delta$. The proof of Lemma~\ref{lemma2.2} is thereby complete.
\end{proof}
\vskip 0.3cm

Combining Lemmas~\ref{lemma2.1} and~\ref{lemma2.2}, one then gets the following corollary.

\begin{corollary}\label{corollary2.2}
Let $L>0$ be such that $K\subset B(0,L)$ and let $R_1$, $R_2$ and $R_3$ be the positive real numbers given in Lemma~$\ref{lemma2.1}$. Then, there is $R>0$ such that the following holds. For any~$x_0\in\Omega$ with $B(x_0,R_3)\subset\Omega$ and for any $y\in\Omega\setminus B(0,L+R)=\R^N\setminus B(0,L+R)$, there is a real number~$\tau_{x_0,y}>0$ such that $v_{x_0,R_1}(t,y)\ge 1-2\delta$ for all $t\ge\tau_{x_0,y}$. Moreover, for any $M\ge0$,
$$\sup_{x_0\in\Omega,\,B(x_0,R_3)\subset\Omega,\,|y|\ge L+R,\,|x_0-y|\le M}\tau_{x_0,y}<+\infty.$$
Lastly, for any $x_0\in\Omega$ with $B(x_0,R_3)\subset\Omega$ ,
$$\liminf_{t\rightarrow +\infty} v_{x_0,R_1}(t,x)\ge 1-2\delta\ \text{ locally uniformly for $x\in \Omega\setminus B(0,L+R)$}.$$
\end{corollary}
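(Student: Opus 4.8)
The plan is to combine the single-step propagation of Lemma~\ref{lemma2.1} with a time-monotonicity argument (letting $t\to+\infty$), and to use the radially expanding sub-solutions of Lemma~\ref{lemma2.2} for the quantitative part. The first observation is that $v_{x_0,R_1}$ is monotone along time-steps of size $T$: since $B(x_0,R_3)\subset\Omega$, Lemma~\ref{lemma2.1} gives $v_{x_0,R_1}(T,\cdot)\ge1-\delta$ on $\overline{B(x_0,R_2)}\supset\overline{B(x_0,R_1)}$, hence $v_{x_0,R_1}(T,\cdot)\ge v_{x_0,R_1}(0,\cdot)$ on $\overline{\Omega}$, and comparing the Cauchy solutions issued from $v_{x_0,R_1}(T,\cdot)$ and $v_{x_0,R_1}(0,\cdot)$ yields $v_{x_0,R_1}(t+T,\cdot)\ge v_{x_0,R_1}(t,\cdot)$ for all $t\ge0$. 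Together with $0\le v_{x_0,R_1}\le1$ and parabolic estimates, $v_{x_0,R_1}(t,\cdot)$ then converges as $t\to+\infty$, locally uniformly in $\overline{\Omega}$, to a classical solution $V:\overline{\Omega}\to[0,1]$ of $\Delta V+f(V)=0$ in $\Omega$, $V_\nu=0$ on $\partial\Omega$, with $V\ge1-\delta$ on $\overline{B(x_0,R_2)}$.

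The core step is to show that $V\ge1-\delta$ on $\{x\in\R^N:|x|\ge L+R\}$ for a suitable $R>0$ independent of $x_0$; one may take $R=R_3$. For this I would iterate Lemma~\ref{lemma2.1}: if $a,b$ satisfy $B(a,R_3)\subset\Omega$, $B(b,R_1)\subset B(a,R_2)$ and $v_{x_0,R_1}(jT,\cdot)\ge1-\delta$ on $\overline{B(a,R_2)}$, then $v_{x_0,R_1}(jT,\cdot)\ge v_{b,R_1}(0,\cdot)$, so Lemma~\ref{lemma2.1} and the comparison principle give $v_{x_0,R_1}((j{+}1)T,\cdot)\ge1-\delta$ on $\overline{B(b,R_2)}$. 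Starting from $a=x_0$ and following a finite chain of such points — which exists because $\Omega$ is connected and because on $\{|x|>L+R_3\}$, a connected set since $N\ge2$, the clearance $B(z,R_3)\subset\Omega$ holds automatically since $K\subset B(0,L)$ — one reaches, for any given $y$ with $|y|\ge L+R_3$, a ball $\overline{B(z_y,R_2)}\ni y$ on which $v_{x_0,R_1}$ is eventually $\ge1-\delta$; hence $V(y)\ge1-\delta$. (Equivalently, once the plateau has reached one ball inside $\{|x|\ge L+R_3+R_2\}$, one can replace the chaining by a radially expanding sub-solution in the exterior of a ball, built as in the proof of~\eqref{eq+2.3}, to get $V\ge1-\delta$ on all of $\{|x|\ge L+R\}$ at once.) Combined with the local uniform convergence above, this gives $v_{x_0,R_1}(t,y)\to V(y)\ge1-\delta>1-2\delta$ for every $y$ with $|y|\ge L+R$, whence the existence of $\tau_{x_0,y}$ with $v_{x_0,R_1}(t,y)\ge1-2\delta$ for all $t\ge\tau_{x_0,y}$; the last assertion of the corollary is then immediate from $V\ge1-\delta$ on $\{|x|\ge L+R\}$ and the local uniform convergence $v_{x_0,R_1}(t,\cdot)\to V$.

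It remains to bound $\tau_{x_0,y}$ uniformly when $B(x_0,R_3)\subset\Omega$, $|y|\ge L+R$ and $|x_0-y|\le M$. A useful remark, which follows from $v_{x_0,R_1}(t+T,\cdot)\ge v_{x_0,R_1}(t,\cdot)$, is that $v_{x_0,R_1}(t,y)\ge\min_{s\in[0,T]}v_{x_0,R_1}(\tau+s,y)$ for all $t\ge\tau$, so it suffices to make $v_{x_0,R_1}(\cdot,y)$ exceed $1-2\delta$ on one time-interval of length $T$, after a time controlled by $M$ and the data. When $|x_0|$ exceeds a constant depending only on $M$ and on the constants of Lemmas~\ref{lemma2.1}--\ref{lemma2.2}, the point $x_0$ lies far from $K$, and a number of applications of Lemma~\ref{lemma2.1} bounded in terms of $M/(R_2-R_1)$ — or, alternatively, estimate~\eqref{eq+2.3} of Lemma~\ref{lemma2.2} — brings $v_{x_0,R_1}(\cdot,y)$ above $1-\delta$ in bounded time. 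When $|x_0|$ stays in a fixed compact set, so does $(x_0,y)$, and continuous dependence on $x_0$ over a bounded time interval, combined with the above remark, yields a local, hence (by compactness) uniform, bound.

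I expect the main difficulty to be precisely this uniformity: showing that the front issued from $x_0$ reaches the far region $\{|x|\ge L+R\}$ with a travel time controlled only by $M$, $L$ and the fixed geometry of $K$ — equivalently, that $x_0$ and $y$ can always be joined inside $\Omega$ by a chain of overlapping balls of radii $R_1<R_2$ with $R_3$-clearance whose length is bounded in terms of $|x_0-y|$ and $K$ — which is exactly where the exterior-domain structure (the compactness of $K$ and the connectedness of $\{|x|>L+R_3\}$) enters, and where the quantitative estimates~\eqref{eq+2.3}--\eqref{eq+2.4} of Lemma~\ref{lemma2.2} become indispensable.
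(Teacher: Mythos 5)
Your proposal follows the same two-ingredient strategy as the paper — iterate Lemma~\ref{lemma2.1} along a chain of overlapping balls from $x_0$ out to $y$, then call on Lemma~\ref{lemma2.2} for the quantitative control — and you correctly identify the chain-length bound as the crux of the uniformity. But the detour through the steady state $V$ introduces a genuine gap, and the claim $R=R_3$ would not survive the repair.

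The $T$-step monotonicity $v_{x_0,R_1}(t+T,\cdot)\ge v_{x_0,R_1}(t,\cdot)$ gives, for each fixed $s\in[0,T)$, that $v_{x_0,R_1}(nT+s,\cdot)$ increases to some limit $V_s$, and $(V_s)_s$ is a $T$-periodic orbit of the equation with $V_T=V_0$. It does \emph{not} follow from this alone that $V_s$ is independent of $s$, i.e.\ that $v_{x_0,R_1}(t,\cdot)$ converges to a single steady state $V$ as $t\to+\infty$; for scalar autonomous parabolic equations on unbounded domains there is no off-the-shelf ``convergence to equilibrium'' theorem, and $v_{x_0,R_1}(h,\cdot)\ge v_{x_0,R_1}(0,\cdot)$ is false for small $h>0$ (the initial step-function plateau first decays near its edge), so the orbit is not monotone in $t$. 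Your chain argument controls $v_{x_0,R_1}$ only at the times $nT$; nothing yet bounds it from below on the intervals $(nT,(n+1)T)$, and your own ``useful remark'' makes exactly this requirement explicit. The paper closes this gap by making the chain terminate with $v_{x_0,R_1}(kT,\cdot)\ge 1-\delta$ on a whole ball $\overline{B(y,R_\epsilon)}$ and then invoking~\eqref{eq+2.3} with $\epsilon=c_f/2$: the expanding sub-solution $v_{y,R_\epsilon}$ stays $\ge 1-2\delta$ at $y$ on the \emph{full} interval $[0,T_\epsilon]$ with $T_\epsilon\ge T$, so $v_{x_0,R_1}(\cdot,y)\ge 1-2\delta$ on $[kT,(k+1)T]$, and the $T$-step monotonicity propagates this to all $t\ge kT$. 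That step forces $R$ to be large enough that $T_\epsilon=(|y|-R_\epsilon-L_\epsilon)/(c_f-\epsilon)\ge T$ for every admissible $y$; the paper's choice $R=c_fT/2+L_\epsilon+R_\epsilon+R_3-R_2$ is engineered precisely for this, whereas $R=R_3$ is too small in general (the Lemma~\ref{lemma2.2} constants $L_\epsilon,R_\epsilon$ bear no a priori relation to $R_3$). Once you insert the Lemma~\ref{lemma2.2} step explicitly between the chaining and the conclusion, and enlarge $R$ accordingly, your argument lines up with the paper's; the $V$ convergence becomes unnecessary (only a $\liminf$ is asked for and only a $\liminf$ is obtained). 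Your compactness alternative for the uniformity of $\tau_{x_0,y}$ is a fine variant of the paper's direct bound on the chain length $k_{x_0,y}$, but note that it too relies on a strict lower bound $\liminf_t v_{x_0,R_1}(t,y^0)>1-2\delta$ at each base point, which again comes from Lemma~\ref{lemma2.2} rather than from the unproved convergence to $V$.
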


\begin{proof}
Let $T>0$ be the positive real number given in Lemma~\ref{lemma2.1}. Set $\varepsilon=c_f/2$ and let~$L_{\varepsilon}$,~$R_{\varepsilon}$ be defined as in Lemma~\ref{lemma2.2} for this value $\varepsilon$. Define $R=c_fT/2+L_{\varepsilon}+R_{\varepsilon}+R_{3}-R_2$. Pick any $x_0\in\Omega$ with $B(x_0,R_3)\subset\Omega$, any $y\in \Omega\setminus B(0,L_\epsilon+R)$ and any $z\in\overline{B(y,R_{\varepsilon})}$ (hence, $|y|>\max\big(L_\epsilon+R_{\varepsilon}+R_{3}-R_2,c_fT/2+L_\epsilon+R_\epsilon\big)$ and $|z|>L_\epsilon+R_3-R_2$). It is straightforward to check that there exist an integer $k=k_{x_0,y}\ge 1$ (which depends on $|x_0-y|$, $R_\epsilon$ and the other parameters, but which can be chosen independently of $z\in\overline{B(y,R_\epsilon)}$) and $k$ points $x^1,\cdots,x^k$ in~$\R^N$ such that
\begin{eqnarray*}
\left\{\baa{lll}
&x^1=x_0,&\\
&B(x^i,R_3)\subset\Omega,\ &\text{ for $1\le i\le k$},\\
&B(x^{i+1},R_1)\subset B(x^i,R_2),\ &\text{ for $1\le i\le k-1$},\\
&z\in B(x^k,R_2).&
\eaa
\right.
\end{eqnarray*}
By Lemma~\ref{lemma2.1}, it follows that $v_{x_0,R_1}(T,x)\ge 1-\delta$ for all $x\in\overline{B(x_0,R_2)}=\overline{B(x^1,R_2)}$. Since $B(x^2,R_1)\subset B(x^1,R_2)$ and $B(x^2,R_3)\subset\Omega$, another application of Lemma~\ref{lemma2.1} yields $v_{x_0,R_1}(2T,x)\ge 1-\delta$ for all $x\in\overline{B(x^2,R_2)}$. By an immediate induction, there holds $v_{x_0,R_1}(kT,x)\ge 1-\delta$ for all $x\in\overline{B(x^k,R_2)}$. Thus, $v_{x_0,R_1}(kT,z)\ge 1-\delta$ for all $z\in\overline{B(y,R_{\varepsilon})}$. In fact, by using Lemma~\ref{lemma2.1} and an induction again, one infers that $v_{x_0,R_1}(nT,\cdot)\ge 1-\delta$ in~$\overline{B(y,R_{\varepsilon})}$ for any integer $n$ such that $n\ge k$ (one can pick $x^i=z$ for all $i\ge k$).

Since $|y|\ge L_{\varepsilon}+R_{\varepsilon}$, it follows from the comparison principle and Lemma~\ref{lemma2.2} that
$$v_{x_0,R_1}(nT\!+\!t,x)\!\ge\! v_{R_{\varepsilon}}(y;t,x)\!\ge\! 1\!-\!2\delta\hbox{ for all }0\!\le\! t\!\le\! T_\epsilon\!=\!\frac{2(|y|\!-\!L_\epsilon\!-\!R_\epsilon)}{c_f}\,(\ge\!T)\hbox{ and }x\!\in\!\overline{B(y,c_f t/2)},$$
and for any $n\ge k$. Since $T_\epsilon\ge T$, one infers that $v_{x_0,R_1}(t,y)\ge 1-2\delta$ for any $nT\le t\le (n+1)T$. Since $n$ is any integer such that $n\ge k$, one gets that $v_{x_0,R_1}(t,y)\ge 1-2\delta$ for all $t\ge kT$. Finally, define $\tau=kT>0$ and notice that the integer $k=k_{x_0,y}$ appearing in the proof can be chosen locally constant with respect to $y$ belonging to the (closed) set $\Omega\setminus B(0,L+R)=\R^N\setminus B(0,L+R)$ and also so that $\sup_{x_0\in\Omega,\,B(x_0,R_3)\subset\Omega,\,|y|\ge L+R,\,|x_0-y|\le M}k_{x_0,y}<+\infty$ for any $M\ge0$. The conclusion of Corollary~\ref{corollary2.2} follows.
\end{proof}
\vskip 0.3cm

The second key-lemma is concerned with some upper bounds for solutions which are initially equal to $1$ outside a large ball where they take a small value. For any $x_0\in\R^N$ and $R>0$, let~$w_{x_0,R}(t,x)$ denote the solution of the Cauchy problem
\be\label{defwR}\left\{\baa{lll}
(w_{x_0,R})_t-\Delta w_{x_0,R}=f(w_{x_0,R}), &t>0,\ x\in\overline{\Omega},\vspace{3pt}\\
(w_{x_0,R})_{\nu}=0, &t>0,\ x\in\partial\Omega,\eaa\right.
\ee
with initial condition
$$w_{x_0,R}(0,x)=\delta\hbox{ for }x\in B(x_0,R)\cap\overline{\Omega}\ \hbox{ and }\ w_{x_0,R}(0,x)=1\hbox{ for }x\in\overline{\Omega}\setminus B(x_0,R).$$
By constructing suitable super-solutions which are close to some contracting radially symmetric fronts moving with speeds larger than but close to $c_f$, we derive in the following lemma some upper bounds for the solutions $w_R$ in some contracting balls.

\begin{lemma}\label{lemma2.4}
For any $\varepsilon\in(0,c_f)$, there exist some real numbers $L_{\varepsilon}>L$ and $R_{\varepsilon}>0$ such that
\begin{itemize}
\item[{\rm{(i)}}] for any $R>R_{\varepsilon}$ and any point $x_0\in\Omega$ with $|x_0|\ge R+L_{\varepsilon}$, there holds
\end{itemize}
\be\label{eq+2.11}
w_{x_0,R}(t,x)\le 2\delta\ \text{for all $0\le t\le \tau_{1,\epsilon}=\frac{R\!-\!R_{\varepsilon}}{c_f\!+\!\varepsilon}$ and } x\in\overline{B(x_0,R\!-\!R_{\varepsilon}\!-\!(c_f\!+\!\varepsilon)t)}\,(\subset\overline{\Omega}),
\ee
\begin{itemize}
\item[{\rm{(ii)}}] for any $R>R_{\varepsilon}$ and any point $x_0\in\R^N$ with $R\ge R_\epsilon+|x_0|+L_\epsilon$, there holds
\end{itemize}
$$w_{x_0,R}(t,x)\!\le\!3\delta\text{ for all $0\!\le\!t\!\le\!\tau_{2,\epsilon}\!=\!\frac{R\!-\!R_{\varepsilon}\!-\!|x_0|\!-\!L_{\varepsilon}}{c_f+\varepsilon}$}\text{ and } x\in\overline{B(x_0,R\!-\!R_{\varepsilon}\!-\!(c_f\!+\!\varepsilon)t)}\cap\overline{\Omega}.$$
\end{lemma}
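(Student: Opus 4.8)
The plan is to mirror closely the sub-solution construction of Lemma~\ref{lemma2.2}, but now build \emph{super}-solutions based on contracting radially symmetric fronts moving with speed slightly larger than $c_f$. The parameters $C$, $k$, $\omega$, $\delta_\varepsilon$, $C_\varepsilon$ from~\eqref{eq+2.2}--\eqref{ghsa1} carry over verbatim. The new ingredient is a $C^2$ radial corrector $g_\varepsilon:[0,+\infty)\to(0,+\infty)$ playing the role of $h_\varepsilon$ but with the opposite sign in the curvature term: since a contracting sphere has \emph{negative} effective curvature contribution $-\frac{(N-1)}{r}\phi'(\cdot)>0$ (because $\phi'<0$), one must absorb this positive term. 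The standard trick (as in~\cite[Lemma~4.1]{H}) is to choose $g_\varepsilon$ with $0\le g_\varepsilon'\le 1$, $g_\varepsilon(r)=r$ for $r$ large, and $-\frac{(N-1)g_\varepsilon'(r)}{r}-g_\varepsilon''(r)\ge -\frac{\varepsilon}{2}$ for all $r$; the curvature is harmless only once $r$ is bounded below, which is why the ball must stay at geodesic distance at least $\sim L_\varepsilon$ from the obstacle region, and the radial variable $|x-x_0|$ must be bounded below — this is exactly what the hypothesis $|x_0|\ge R+L_\varepsilon$ in (i) and $R\ge R_\varepsilon+|x_0|+L_\varepsilon$ in (ii) guarantees, keeping the ``active zone'' $\{|\overline\xi|\lesssim C\}$ away from the origin.

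For part~(i): set $L_\varepsilon=L-C+C_\varepsilon$ and $R_\varepsilon$ as in Lemma~\ref{lemma2.2} (or slightly larger), fix $x_0$ with $|x_0|\ge R+L_\varepsilon$, put $\tau_{1,\varepsilon}=(R-R_\varepsilon)/(c_f+\varepsilon)$, and define, for $0\le t\le\tau_{1,\varepsilon}$,
$$\overline{v}(t,x)=\min\!\big(\phi(\overline\zeta(t,x))+\delta e^{-\delta t}+\delta_\varepsilon,\,1\big),\qquad \overline\zeta(t,x)=-g_\varepsilon(|x-x_0|)+(c_f+\varepsilon)t+\omega e^{-\delta t}-\omega+R-R_\varepsilon+C,$$
so that at $t=0$ one has $\overline{v}(0,x)\ge\delta$ inside $B(x_0,R)$ and $\overline{v}(0,x)=1$ outside (using $\phi(\overline\zeta)\ge 1-\delta_\varepsilon$ when $\overline\zeta\le -C_\varepsilon$, which holds for $|x-x_0|\ge R$), hence $\overline v(0,\cdot)\ge w_{x_0,R}(0,\cdot)$. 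The boundary condition is handled exactly as before: for $x\in\overline{B(0,L)}\cap\overline\Omega$ one checks $\overline\zeta(t,x)\le -C_\varepsilon$, so $\overline v\equiv 1$ near $\partial\Omega$ and $\overline v_\nu=0$. The differential inequality $\mathcal L\overline v\ge 0$ is verified regionwise in $\overline\zeta\le -C$, $-C\le\overline\zeta\le C$, $\overline\zeta\ge C$, using~\eqref{d} on the stability of $0$ and $1$, the bound $\phi'\le -k$ on $[-C,C]$, and~\eqref{eq+2.4-}; the curvature term $+\frac{(N-1)g_\varepsilon'}{|x-x_0|}(-\phi')\ge 0$ is now dominated by $\frac{\varepsilon}{2}(-\phi')$ coming from the choice of $g_\varepsilon$ together with the speed excess $\varepsilon$ in $\overline\zeta$. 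The comparison principle then gives $w_{x_0,R}\le\overline v$, and for $x\in\overline{B(x_0,R-R_\varepsilon-(c_f+\varepsilon)t)}$ one gets $\overline\zeta(t,x)\ge -g_\varepsilon(0)\ge -$(small)$+C$, forcing $\phi(\overline\zeta)\le\delta_\varepsilon$ and hence $w_{x_0,R}(t,x)\le \delta_\varepsilon+\delta e^{-\delta t}+\delta_\varepsilon\le 2\delta$ since $\delta_\varepsilon\le\delta/2$. Part~(ii) is the same computation recentered: now $x_0$ need not lie in $\Omega$, so one incorporates the extra shift $|x_0|+L_\varepsilon$ into $\overline\zeta$ (replacing $R-R_\varepsilon$ by $R-R_\varepsilon-|x_0|-L_\varepsilon$), which both enforces the boundary condition on $\partial\Omega$ and keeps $|x-x_0|$ large enough in the active zone; the looser bound $3\delta$ (rather than $2\delta$) leaves room for the coarser estimate one gets when $x_0$ is allowed near the obstacle.

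The main obstacle is purely bookkeeping rather than conceptual: one must choose $R_\varepsilon$, $L_\varepsilon$ (and the auxiliary quantities $\xi_\varepsilon$-type cutoffs if a smoothed min is preferred over the raw $\min(\cdot,1)$) large enough that the positive curvature contribution $\frac{(N-1)}{|x-x_0|}|\phi'|$ is uniformly $\le\frac{\varepsilon}{2}|\phi'|$ throughout the region where $\overline v\not\equiv 1$, \emph{simultaneously} with the requirement that near $\partial\Omega\subset B(0,L)$ the super-solution is identically $1$; the interplay forces the two inequalities $|x_0|\ge R+L_\varepsilon$ in~(i) and $R\ge R_\varepsilon+|x_0|+L_\varepsilon$ in~(ii). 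Once these constraints are pinned down, every regionwise verification of $\mathcal L\overline v\ge 0$ is the sign-reversed copy of the corresponding computation in Lemma~\ref{lemma2.2}, and no genuinely new analytic difficulty arises; one may legitimately write ``by the same analysis as in the proof of~\cite[Lemma~4.1]{H}'' and of Lemma~\ref{lemma2.2} above for the bulk of the estimate.
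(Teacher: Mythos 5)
Your overall strategy — radial super-solutions built from contracting fronts, a smoothed radial corrector, and a comparison principle — matches the paper's. However, there are two concrete breakdowns in the details you gave.

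First, the sign of the time term in $\overline\zeta$ is reversed. You wrote $\overline\zeta(t,x)=-g_\varepsilon(|x-x_0|)+(c_f+\varepsilon)t+\cdots$, but the paper takes $\overline\zeta(t,x)=-h_\varepsilon(|x-x_0|)-(c_f+\varepsilon)t+\omega e^{-\delta t}-\omega+R-C$. The sign matters: with your $+$ sign, the time derivative contributes $(c_f+\varepsilon)\phi'(\overline\zeta)<0$ to $\mathcal{L}\overline{v}$, and after using $\phi''=-c_f\phi'-f(\phi)$ with $g_\varepsilon'=1$ the leading contribution becomes $(2c_f+\varepsilon)\phi'$, a large negative quantity in the transition zone $-C\le\overline\zeta\le C$ that no choice of $\delta_\varepsilon$, $\omega$ or $g_\varepsilon$ can compensate; the super-solution inequality $\mathcal{L}\overline{v}\ge 0$ fails there. (The paper's $-$ sign produces instead $-\varepsilon\phi'>0$ as the surviving margin, which is exactly what absorbs the curvature term $\phi'\cdot\frac{(N-1)g_\varepsilon'}{r}\ge\frac{\varepsilon}{2}\phi'$.) Also, your shift $R-R_\varepsilon+C$ in place of the paper's $R-C$ spoils both the boundary check (for $x\in\overline{B(0,L)}$ you only get $\overline\zeta\le L-L_\varepsilon+R-2R_\varepsilon+C$, which need not be $\le -C_\varepsilon$ when $R$ is much larger than $R_\varepsilon$) and the final estimate on the shrinking ball (even after fixing the sign, you only reach $\overline\zeta\ge -g_\varepsilon(0)-\omega+C$, which is not $\ge C_\varepsilon$, whereas the paper's constant yields $\overline\zeta\ge R_\varepsilon-h_\varepsilon(0)-\omega-C\ge C_\varepsilon$ by the very choice~\eqref{defRLeps} of $R_\varepsilon$). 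Your remark that the curvature term ``must be absorbed'' because it is positive is also backwards: in $\mathcal{L}\overline{v}\ge 0$ a positive term helps, and it is the \emph{negative} contribution $\phi'\cdot\frac{(N-1)g_\varepsilon'}{r}$ that must be dominated by the $-\frac{\varepsilon}{2}\phi'$ slack, which is exactly what the condition on $g_\varepsilon$ was designed to do.

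Second, your treatment of part (ii) — ``the same computation recentered'' with an extra shift $|x_0|+L_\varepsilon$ — is not enough, because it does not deal with the Neumann boundary condition when $x_0$ is near or inside the convex hull of $K$. With a pure $\min(\phi(\overline\zeta)+\cdots,1)$ construction, the only way to enforce $\overline{v}_\nu=0$ on $\partial\Omega$ is to have $\overline{v}\equiv 1$ there, which is incompatible with $\overline{v}$ being small near $x_0$ when $x_0$ lies within distance $\sim R$ of the obstacle, which is precisely the regime of part (ii). The paper resolves this with a genuinely new ingredient: a cutoff $\hat{h}_\varepsilon$ (with $\hat{h}_\varepsilon=1$ on $(-\infty,C]$, $\hat{h}_\varepsilon=0$ on $[C+\hat\xi_\varepsilon,+\infty)$) used to define $\Psi=\hat{h}_\varepsilon(\overline\xi)\phi(\overline\xi)+(1-\hat{h}_\varepsilon(\overline\xi))\delta$, so that near $\partial\Omega$ the super-solution is the \emph{constant} $\delta+\delta_\varepsilon+\delta e^{-\delta t}$ and the normal derivative vanishes. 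This interpolation toward $\delta$ (not $1$) near the obstacle is what produces the looser $3\delta$ bound in~(ii), and it is not obtainable by any rigid shift of the part~(i) construction. You should also note that the paper adds an extra requirement $\phi''\ge 0$ on $[C,+\infty)$ at the start of the proof (possible since $\phi''(s)\sim b\,e^{-\lambda s}$), used to keep the $-\phi''(g_\varepsilon')^2$ term harmless where $g_\varepsilon'<1$; this is not present in Lemma~\ref{lemma2.2} and you should not import those parameters ``verbatim.''
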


\begin{proof} We fix $\epsilon\in(0,c_f)$ throughout the proof.

(i) Since $\phi''(s)\sim b\,e^{-\lambda s}$ as $s\rightarrow +\infty$ with $\lambda=(c_f+(c_f^2-4f'(0))^{1/2})/2$ and some $b>0$, one can choose $C>0$ such that~\eqref{eq+2.2} holds together with $\phi''\ge 0$ on $[C,+\infty)$. Let $k>0$, $\omega>0$, $\delta_{\varepsilon}>0$, $C_{\varepsilon}>C>0$ and $h_{\varepsilon}(r)$ be defined as in~\eqref{eq+2.4-}-\eqref{defheps1} in Step~1 of the proof of Lemma \ref{lemma2.2}. Let also $R_\epsilon>0$ and $L_\epsilon>L$ be as in~\eqref{defRLeps} in Step~1 of the proof of Lemma \ref{lemma2.2}, namely
\be\label{defRLeps2}
R_{\varepsilon}= \max\big(H_{\varepsilon},h_{\varepsilon}(0)+\omega+C_{\varepsilon}+C\big)\ \hbox{ and }\ L_{\varepsilon}=L-C+C_{\varepsilon}>L.
\ee
Let $R>R_{\varepsilon}$ and define $\tau_{1,\epsilon}:=(R-R_{\varepsilon})/(c_f+\varepsilon)>0$.

For all $(t,x)\in[0,+\infty)\times\R^N$, we set
$$\overline{v}(t,x)=\min\big(\phi(\overline{\zeta}(t,x))\!+\!\delta e^{-\delta t}\!+\!\delta_{\varepsilon},1\big),\hbox{ with }\overline{\zeta}(t,x)=-h_{\varepsilon}(|x|)-(c_f\!+\!\varepsilon)t+\omega e^{-\delta t}-\omega+R-C.$$
Consider any point $x_0\in\Omega$ such that $|x_0|\ge R+L_\epsilon$ and let us check that $\overline{v}(t,x-x_0)$ is a super-solution of the problem satisfied by $w_{x_0,R}(t,x)$ for $0\le t\le \tau_{1,\epsilon}$ and $x\in\overline{\Omega}$.

Let us first check the initial and boundary conditions. At time $0$, it is obvious that $\overline{v}(0,x-x_0)\ge \delta+\delta_{\varepsilon}\ge w_{x_0,R}(0,x)$ if $x\in B(x_0,R)\,(\subset\overline{\Omega})$. If $x\in\overline{\Omega}\setminus B(x_0,R)$, then $|x-x_0|\ge R> R_{\varepsilon}\ge H_{\varepsilon}$ and $\overline{\zeta}(0,x-x_0)= -|x-x_0|+R-C\le -C$, hence $\overline{v}(0,x-x_0)\ge \min(1-\delta+\delta+\delta_{\varepsilon},1)=1\ge w_{x_0,R}(0,x)$. Thus, $\overline{v}(0,x-x_0)\ge w_{x_0,R}(0,x)$ for all $x\in\overline{\Omega}$. On the other hand, for $0\le t\le \tau_{1,\epsilon}$ and $x\in \overline{B(0,L)}\cap\overline{\Omega}$, one has $|x-x_0|\ge |x_0|-L\ge L_{\varepsilon}+R-L\ge H_{\varepsilon}$ and $\overline{\zeta}(t,x-x_0)\le -|x-x_0|+R-C\le L-L_{\varepsilon}-C= -C_{\varepsilon}$, hence $\overline{v}(t,x-x_0)\ge \min(1-\delta_{\varepsilon}+\delta e^{-\delta t}+\delta_{\varepsilon},1)=1$. Thus, $\overline{v}(t,x-x_0)\equiv 1$ for $0\le t\le \tau_{1,\epsilon}$ and~$x\in \overline{B(0,L)}\cap\overline{\Omega}$ and $\overline{v}_\nu(t,x-x_0)=0$ for all $x\in\partial\Omega$ (remember that $\partial\Omega\subset B(0,L)$).

Furthermore, by using similar arguments as in Step~2 of the proof of Lemma~\ref{lemma2.2}, and thanks to the proof of \cite[Lemma 4.2]{H}, one infers that
$$\mathcal{L} \overline{v}(t,x-x_0)=\overline{v}_t(t,x-x_0)-\Delta \overline{v}(t,x-x_0)-f(\overline{v}(t,x-x_0))\ge 0$$
for all $0<t\le \tau_{1,\epsilon}$ and $x\in\overline{\Omega}$ such that $\overline{v}(t,x-x_0)<1$.

By the comparison principle, one concludes that
\be\label{eq+2.13}
w_{x_0,R}(t,x)\le \overline{v}(t,x-x_0)\le \phi(\overline{\zeta}(t,x-x_0))+\delta e^{-\delta t}+\delta_{\varepsilon} \text{ for all $0\le t\le \tau_{1,\epsilon}$ and $x\in\overline{\Omega}$}.
\ee
For $0\le t\le \tau_{1,\epsilon}$ and $x\in\overline{B(x_0,R-R_{\varepsilon}-(c_f+\varepsilon)t)}\,(\subset\overline{\Omega})$, one has
$$\overline{\zeta}(t,x-x_0)\ge -|x-x_0|-h_{\varepsilon}(0)-(c_f+ \varepsilon )t-\omega+R-C\ge R_{\varepsilon}-h_{\varepsilon}(0)-\omega-C \ge C_{\varepsilon}.$$
Then, it follows from~\eqref{eq+2.13} and $\delta_{\varepsilon}\le \delta/2$ that $w_{x_0,R}(t,x)\le 2\delta_{\varepsilon}+\delta\le 2\delta$ for all $0\le t\le \tau_{1,\epsilon}$ and~$x\in\overline{B(x_0,R-R_{\varepsilon}-(c_f+\varepsilon)t)}$. This completes the proof of~\eqref{eq+2.11}.
\vskip 0.2cm

(ii) Let $C>0$, $k>0$, $\omega>0$, $\delta_{\varepsilon}>0$ and $C_{\varepsilon}>0$ be defined as above. Consider a $C^2$ function $\hat{h}_{\varepsilon}:\R\rightarrow [0,1]$ such that, for some $\hat{\xi}_{\varepsilon}>0$,
$$\hat{h}_{\varepsilon}=0\text{ in $[C+\hat{\xi}_{\varepsilon},+\infty)$},\ \ \hat{h}_{\varepsilon}=1\text{ in $(-\infty,C],\ $ and }\ -1\le \hat{h}'_{\varepsilon}\le 0 \text{ in $\R$}.$$
Even if it means increasing $\hat{\xi}_{\varepsilon}$, one can assume without loss of generality that
\be\label{eq+2.15}
\delta (2c_f+\omega\delta)\|\hat{h}'_{\varepsilon}\|_{L^\infty(\R)}+\delta\|\hat{h}''_{\varepsilon}\|_{L^\infty(\R)}+2\|\hat{h}'_{\varepsilon}\|_{L^\infty(\R)}\|\phi'\|_{L^\infty(\R)}\le \frac{|f'(0)|}{4}\delta_{\varepsilon}.
\ee

Let now $L_{\varepsilon}>L$ be such that
\be\label{eq+2.16}
\frac{N-1}{L_{\varepsilon}}\le \frac{\varepsilon}{2}\ \text{ and }\ \frac{N-1}{L_{\varepsilon}}\|\phi'\|_{L^\infty(\R)}\le \frac{|f'(0)|}{4}\delta_{\varepsilon}.
\ee
Denote here
$$R_{\varepsilon}=2C+\omega+\hat{\xi}_{\varepsilon}.$$
Even if it means increasing $L_\epsilon$ and $\hat{\xi}_{\varepsilon}$, one can assume without loss of generality that $L_\epsilon\ge L-C+C_\epsilon$ and that $R_\epsilon\ge\max(H_\epsilon,h_\epsilon(0)+\omega+C_\epsilon+C)$, so that part~(i) of the present lemma holds good with these values $L_\epsilon$ and $R_\epsilon$. Consider any $R>R_{\varepsilon}$ and any point $x_0\in\Omega$ such that $R\ge R_\epsilon+|x_0|+L_\epsilon$, and define
$$\tau_{2,\epsilon}=\frac{R-R_{\varepsilon}-|x_0|-L_{\varepsilon}}{c_f+\varepsilon}$$
(notice that the desired inequality of part (ii) of Lemma~\ref{lemma2.4} is immediate if $\tau_{2,\epsilon}=0$, namely $R=R_\epsilon+|x_0|+L_\epsilon$).

For all $(t,x)\in[0,+\infty)\times\R^N$, let us set
$$\Psi(t,x)=\hat{h}_{\varepsilon}(\overline{\xi}(t,x))\phi(\overline{\xi}(t,x))+(1-\hat{h}_{\varepsilon}(\overline{\xi}(t,x)))\delta\ \hbox{ and }\ \overline{w}(t,x)=\min\big(\Psi(t,x)+\delta_{\varepsilon}+\delta e^{-\delta t},1\big),$$
where
$$\overline{\xi}(t,x)=-|x-x_0|-(c_f+\varepsilon)t+\omega e^{-\delta t}-\omega +R-C,$$
and let us check that $\overline{w}(t,x)$ is a super-solution of the problem satisfied by $w_{x_0,R}(t,x)$ for~$0\le t\le \tau_{2,\epsilon}$ and $x\in\overline{\Omega}$.

Let us first check the initial and boundary conditions. One has $\overline{w}(0,x)\ge \delta_{\varepsilon}+ \delta\ge w_{x_0,R}(0,x)$ for all $x\in B(x_0,R)\cap \overline{\Omega}$. If $x\in\overline{\Omega}\setminus B(x_0,R)$, then $|x-x_0|\ge R$ and $\overline{\xi}(0,x)\le -R+R-C=-C$, hence $\hat{h}_\epsilon(\overline{\xi}(0,x))=1$ and $\overline{w}(0,x)\ge \min(1-\delta+\delta_{\varepsilon}+\delta,1)=1\ge w_{x_0,R}(0,x)$. Thus, $\overline{w}(0,x)\ge w_{x_0,R}(0,x)$ for all $x\in\overline{\Omega}$. On the other hand, for every $0<t\le \tau_{2,\epsilon}$ and $x\in\overline{B(0,L)}\cap\overline{\Omega}$, it follows from the definition of $L_{\varepsilon}$ and $R_{\varepsilon}$ that
$$\overline{\xi}(t,x)\ge -|x_0|-L-(R-R_{\varepsilon}-|x_0|-L_{\varepsilon})-\omega+R-C\ge R_{\varepsilon}+L_{\varepsilon}-L-\omega-C\ge C+\hat{\xi}_{\varepsilon},$$
whence $\hat{h}_{\varepsilon}(\overline{\xi}(t,x))=0$ and $\overline{w}(t,x)=\delta +\delta_{\varepsilon}+\delta e^{-\delta t}\,(<1)$. Therefore, $\overline{w}_\nu(t,x)=0$ for all $0<t\le \tau_{2,\epsilon}$ and $x\in \partial\Omega$.

Let us now check that $\mathcal{L}\overline{w}(t,x)=\overline{w}_t(t,x)-\Delta \overline{w}(t,x)-f(\overline{w}(t,x))\ge  0$ for all $0<t\le T_{\varepsilon}$ and~$x\in\Omega$ such that $\overline{w}(t,x)<1$. If $\overline{\xi}(t,x)>C+\hat{\xi}_{\varepsilon}$, then $\hat{h}_{\varepsilon}(\overline{\xi}(t,x))=0$ and $\Psi(t,x)\equiv \delta$, hence $\overline{w}(t,x)=\delta+\delta_{\varepsilon}+\delta e^{-\delta t}\le 4\delta<1$ (notice that this formula holds good in a neighborhood of the point $(t,x)$). Then, by~\eqref{d}, it follows that $f(\overline{w}(t,x))\le(f'(0)/2)(\delta+\delta_{\varepsilon}+\delta e^{-\delta t})$ and
$$\mathcal{L} \overline{w}(t,x)\ge -\delta^2 e^{-\delta t}-\frac{f'(0)}{2}(\delta+\delta_{\varepsilon}+\delta e^{-\delta t})\ge 0.$$
If $C\le \overline{\xi}(t,x)\le C+\hat{\xi}_{\varepsilon}$, one has $\phi(\overline{\xi}(t,x))\le \delta$ and $\overline{w}(t,x)=\Psi(t,x)+\delta_{\varepsilon}+\delta e^{-\delta t}\le 4\delta$. Then, a straightforward computation leads to
\be\baa{rcl}\label{Lbarw}
\mathcal{L}\overline{w}(t,x) & \!\!=\!\! & (c_f\!+\!\varepsilon\!+\!\omega\delta e^{-\delta t}) \hat{h}'_{\varepsilon}(\overline{\xi}(t,x))(\delta\!-\!\phi(\overline{\xi}(t,x)))\!-\!(\varepsilon\!+\!\omega\delta e^{-\delta t}) \hat{h}_{\varepsilon}(\overline{\xi}(t,x))\phi'(\overline{\xi}(t,x))\vspace{3pt}\\
& \!\!\!\! & +\hat{h}''_{\varepsilon}(\overline{\xi}(t,x))(\delta-\phi(\overline{\xi}(t,x)))-2\hat{h}'_{\varepsilon}(\overline{\xi}(t,x))\phi'(\overline{\xi}(t,x))-\delta^2 e^{-\delta t}\vspace{3pt}\\
& \!\!\!\! & \displaystyle-\hat{h}'_{\varepsilon}(\overline{\xi}(t,x))(\delta-\phi(\overline{\xi}(t,x)))\frac{N-1}{|x-x_0|}+\hat{h}_{\varepsilon}(\overline{\xi}(t,x))\phi'(\overline{\xi}(t,x))\frac{N-1}{|x-x_0|}\vspace{3pt}\\
& \!\!\!\! & +\hat{h}_{\varepsilon}(\overline{\xi}(t,x))f(\phi(\overline{\xi}(t,x)))-f(\overline{w}(t,x))\vspace{3pt}\\
& \!\!\ge\!\! & \delta (2c_f+\omega\delta)\hat{h}'_{\varepsilon}(\overline{\xi}(t,x))-|\hat{h}''_{\varepsilon}(\overline{\xi}(t,x))|\delta-2\hat{h}'_{\varepsilon}(\overline{\xi}(t,x))\phi'(\overline{\xi}(t,x)-\delta^2 e^{-\delta t}\vspace{3pt}\\
& \!\!\!\! & \displaystyle+\hat{h}_{\varepsilon}(\overline{\xi}(t,x))\phi'(\overline{\xi}(t,x))\frac{N-1}{|x-x_0|}+\hat{h}_{\varepsilon}(\overline{\xi}(t,x))f(\phi(\overline{\xi}(t,x)))-f(\overline{w}(t,x)),\eaa
\ee
since $\varepsilon\le c_f$, $\hat{h}_\epsilon\ge0$, $\hat{h}'_{\varepsilon}\le 0$ and $\phi'<0$. Moreover, since $\phi(\overline\xi(t,x))\le \delta$ and $\overline{w}(t,x)\le 4\delta$, one has $f(\phi(\overline{\xi}(t,x)))<0$ and
\begin{align*}
\hat{h}_\epsilon(\overline{\xi}(t,x))f(\phi(\overline{\xi}(t,x)))-f(\overline{w}(t,x))\ge & \ f(\phi(\overline{\xi}(t,x)))-f(\overline{w}(t,x))\\
\ge& \ -\frac{f'(0)}{2}(1\!-\!\hat{h}_{\varepsilon}(\overline{\xi}(t,x)))(\delta\!-\!\phi(\overline{\xi}(t,x)))\!-\!\frac{f'(0)}{2}(\delta_{\varepsilon}\!+\!\delta e^{-\mu t})\\
\ge & \ -\frac{f'(0)}{2}(\delta_{\varepsilon}+\delta e^{-\mu t}).
\end{align*}
Since $\overline{\xi}(t,x)\le C+\hat{\xi}_{\varepsilon}$, one has $|x\!-\!x_0|\!\ge \!-(c_f\!+\!\varepsilon)\tau_{2,\epsilon}\!-\!\omega\!+\!R\!-\!2C-\hat{\xi}_{\varepsilon}=R_{\varepsilon}\!+\!L_{\varepsilon}\!+\!|x_0|\!-\!\omega\!-\!2C\!-\!\hat{\xi}_{\varepsilon}\ge L_{\varepsilon}$. Therefore, it follows from~\eqref{d},~\eqref{eq+2.15},~\eqref{eq+2.16} and the negativity of $\phi'$ that
$$\baa{rcrl}
\mathcal{L}\overline{w}(t,x) & \ge & \delta (2c_f+\omega\delta)\hat{h}'_{\varepsilon}(\overline{\xi}(t,x))-|\hat{h}''_{\varepsilon}(\overline{\xi}(t,x))|\delta-2\hat{h}'_{\varepsilon}(\overline{\xi}(t,x))\phi'(\overline{\xi}(t,x)-\delta^2 e^{-\delta t} & \vspace{3pt}\\
& & \displaystyle+\hat{h}_{\varepsilon}(\overline{\xi}(t,x))\phi'(\overline{\xi}(t,x))\frac{N-1}{L_{\varepsilon}}-\frac{f'(0)}{2}(\delta_{\varepsilon}+\delta e^{-\mu t}) & \ge\ 0.\eaa$$
If $\overline{\xi}(t,x)<C$, one has $\hat{h}_{\varepsilon}(\overline{\xi}(t,x))=1$ and $\overline{w}(t,x)=\phi(\overline{\xi}(t,x))+\delta_{\varepsilon}+\delta e^{-\delta t}$, and this formula holds in a neighborhood of the point $(t,x)$. Notice also that $|x-x_0|\ge|x_0|+L_\epsilon+\hat{\xi}_\epsilon\ge L_{\varepsilon}$, since $\overline{\xi}(t,x)<C$. As a consequence,
$$\begin{array}{rcl}
\mathcal{L}\overline{w}(t,x) & = & \displaystyle\Big(\frac{N-1}{|x-x_0|}-\varepsilon\Big)\phi'(\overline{\xi}(t,x))-\omega\delta e^{-\delta t}\phi'(\overline{\xi}(t,x))-\delta^2 e^{-\delta t}+f(\phi(\overline{\xi}(t,x)))-f(\overline{w}(t,x))\vspace{3pt}\\
& \ge & \displaystyle-\frac{\varepsilon}{2}\phi'(\overline{\xi}(t,x))-\omega\delta e^{-\delta t}\phi'(\overline{\xi}(t,x))-\delta^2 e^{-\delta t}+f(\phi(\overline{\xi}(t,x)))-f(\overline{w}(t,x))
\end{array}$$
by~\eqref{eq+2.16}. If $-C\le \overline{\xi}(t,x)<C$, one has $-\phi'(\overline{\xi}(t,x))\ge k$ and it follows from~\eqref{eq+2.4-} that
$$\mathcal{L}\overline{w}(t,x)\ge \frac{\varepsilon}{2}k +k\omega\delta e^{-\delta t}-\delta^2 e^{-\delta t}-\max_{[0,1]} |f'|(\delta_{\varepsilon}+\delta e^{-\delta t})\ge 0.$$
If $\overline{\xi}(t,x)<-C$, one has $\phi(\overline{\xi}(t,x))\ge 1-\delta$ and $\overline{w}(t,x)\ge 1-\delta$. By~\eqref{d}, one infers that~$f(\phi(\overline{\xi}(t,x)))-f(\overline{w}(t,x))\ge-(f'(1)/2)(\delta_{\varepsilon}+\delta e^{-\delta t})$. Thus, it follows from~\eqref{d} and the negativity of $\phi'$ that $\mathcal{L}\overline{w}(t,x)\ge -\delta^2 e^{-\delta t}-(f'(1)/2)(\delta_{\varepsilon}+\delta e^{-\delta t})\ge 0$.

In conclusion, one gets $\mathcal{L}\overline{w}(t,x)=\overline{w}_t(t,x)-\Delta \overline{w}(t,x)-f(\overline{w}(t,x))\ge0$ for all $0<t\le \tau_{2,\epsilon}$ and $x\in \Omega$ such that $\overline{w}(t,x)<1$. The comparison principle yields~$w_{x_0,R}(t,x)\le \overline{w}(t,x)$ for all $0\le t\le \tau_{2,\epsilon}$ and $x\in\overline{\Omega}$. Then, for any $0\le t\le \tau_{2,\epsilon}$ and $x\in\overline{B(x_0,R-R_{\varepsilon}-(c_f+\varepsilon)t)}\cap\overline{\Omega}$, one has $\overline{\xi}(t,x)\ge -R+R_{\varepsilon}-\omega+R-C= C+\hat{\xi}_{\varepsilon}$, hence $\Psi(t,x)\le\delta$ and $w_{x_0,R}(t,x)\le \overline{w}(t,x)\le \delta+\delta_{\varepsilon}+\delta e^{-\delta t}\le 3\delta$. The proof is thereby complete.
\end{proof}

\begin{remark}\label{remLReps1}{\rm The proofs of Lemmas~\ref{lemma2.2} and~\ref{lemma2.4} show that the conclusions hold for all real numbers $L_\epsilon$ and $R_\epsilon$ larger than the ones appearing in the statements. In other words, one can choose $L_\epsilon>L$ and $R_\epsilon>0$ in such a way that the conclusions of these two lemmas hold simultaneously.}
\end{remark}


\subsection{Proof of Theorem \ref{th1}}\label{sec2.2}

The first step in the proof of Theorem~\ref{th1} is the claim that any transition front connecting $0$ and~$1$ for~\eqref{eq1.1} and propagating completely is in some sense located far away from the obstacle for very negative and very positive times.

\begin{lemma}\label{lemma2.6}
Let $L>0$ is given in~\eqref{defL}. Let $u$ be a transition front of~\eqref{eq1.1} connecting $0$ and~$1$ and satisfying~\eqref{complete}, that is, $u(t,x)\rightarrow 1$ as $t\rightarrow +\infty$ locally uniformly with respect to $x\in\overline{\Omega}$. Then, for every $\rho\ge 0$, there exist some real numbers $T_1<T_2$ such that
$$\Omega\cap B(0,L+\rho)\subset\Omega^-_t\hbox{ for all }t\le T_1,\ \hbox{ and }\ \Omega\cap B(0,L+\rho)\subset \Omega^+_t\hbox{ for all }t\ge T_2.$$
\end{lemma}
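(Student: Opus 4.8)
The plan is to prove the two half-statements separately: the part about $t\ge T_2$ is essentially an unwinding of Definition~\ref{TF} together with \eqref{complete}, while the part about $t\le T_1$ is a dynamical argument relying on the spreading estimates of Section~\ref{sec2.1} (Lemma~\ref{lemma2.2} and Corollary~\ref{corollary2.2}).

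For the second assertion, fix $\rho\ge0$ and pick $\varepsilon_0\in(0,1-2\delta)$. Let $M_{\varepsilon_0}$ be given by \eqref{eq1.7} and let $r=r_{M_{\varepsilon_0}}$ be the associated constant from \eqref{eq1.5}, and set $\rho'=\rho+M_{\varepsilon_0}+r+1$. By \eqref{complete}, $u(t,\cdot)\to1$ uniformly on the compact set $\overline{\Omega\cap B(0,L+\rho')}$, so there is $T_2$ with $u(t,\cdot)>\varepsilon_0$ on this set for all $t\ge T_2$; I claim $\Omega\cap B(0,L+\rho)\subset\Omega_t^+$ for such $t$. Indeed, if some $x\in\Omega\cap B(0,L+\rho)$ lay in $\Omega_t^-\cup\Gamma_t$, then $d_\Omega(x,\Gamma_t)<M_{\varepsilon_0}$ (the alternative $x\in\overline{\Omega_t^-}$ with $d_\Omega(x,\Gamma_t)\ge M_{\varepsilon_0}$ is impossible, since \eqref{eq1.7} would give $u(t,x)\le\varepsilon_0$), hence there is $\gamma\in\Gamma_t$ with $|\gamma|<L+\rho+M_{\varepsilon_0}$; applying \eqref{eq1.5} at $\gamma$ with $M=M_{\varepsilon_0}$ yields $y^-\in\Omega_t^-$ with $|y^-|<L+\rho'$ and $d_\Omega(y^-,\Gamma_t)\ge M_{\varepsilon_0}$, so $u(t,y^-)\le\varepsilon_0$ by \eqref{eq1.7}, contradicting $u(t,\cdot)>\varepsilon_0$ on $\overline{\Omega\cap B(0,L+\rho')}$.

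For the first assertion I argue by contradiction, supposing there are $t_n\to-\infty$ and $x_n\in\Omega\cap B(0,L+\rho)$ with $x_n\notin\Omega_{t_n}^-$, that is $x_n\in\Gamma_{t_n}\cup\Omega_{t_n}^+$. The first step is to produce a \emph{fixed} point $w\in\Omega$ with $B(w,R_3)\subset\Omega$ such that $u(t_n,\cdot)\ge1-\delta$ on $B(w,R_3)$ for every $n$: starting from $x_n$ and using \eqref{eq1.5} one reaches, within a bounded geodesic distance, a point $\xi_n\in\Omega_{t_n}^+$ with $d_\Omega(\xi_n,\Gamma_{t_n})$ as large as desired; the open geodesic ball of that radius around $\xi_n$ lies in $\Omega_{t_n}^+$, and since $\xi_n$ stays at bounded distance from the origin and in an exterior domain geodesic and Euclidean distances differ by a bounded amount, this ball contains a Euclidean ball $B(w,R_3)\subset\Omega\setminus B(0,2L+2R_3)$ with $d_\Omega(\cdot,\Gamma_{t_n})\ge M_\delta$ on it, whence $u(t_n,\cdot)\ge1-\delta$ there by \eqref{eq1.7} (near the obstacle one first enlarges a thin geodesic chunk using the Neumann condition and a boundary analogue of Lemma~\ref{lemma2.1}). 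By the comparison principle $u(t_n+\cdot,\cdot)\ge v_{w,R_1}(\cdot,\cdot)$, and since $w$ is fixed, Corollary~\ref{corollary2.2} provides, for each fixed $y\in\Omega\setminus B(0,L+R)$, a finite time $\tau_{w,y}$ with $u(t,y)\ge1-2\delta$ for all $t\ge t_n+\tau_{w,y}$; letting $n\to\infty$ gives $u(t,y)\ge1-2\delta$ for \emph{all} $t\in\R$ and all $y\in\Omega\setminus B(0,L+R)$.

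This last property contradicts Definition~\ref{TF}: fixing any $t\in\R$ and any $\varepsilon<1-2\delta$, \eqref{eq1.7} forces $\{x\in\overline{\Omega_t^-}:d_\Omega(x,\Gamma_t)\ge M_\varepsilon\}\subset\overline{B(0,L+R)}$, a bounded set; but by \eqref{eq1.3} there is $x_k\in\Omega_t^-$ with $d_\Omega(x_k,\Gamma_t)\to+\infty$, so $x_k$ eventually lies in $\overline{B(0,L+R)}$, a subsequence converges to some $x_*\in\overline\Omega$ with $d_\Omega(x_*,\Gamma_t)<+\infty$, and $d_\Omega(x_k,\Gamma_t)\le d_\Omega(x_k,x_*)+d_\Omega(x_*,\Gamma_t)$ stays bounded, a contradiction. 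Hence no such sequence $t_n$ exists, which yields $T_1$. I expect the main obstacle to be precisely the geometric bookkeeping in this first step: extracting, uniformly in $n$ and regardless of how $\Gamma_{t_n}$ sits relative to the obstacle, a genuine Euclidean ball in $\Omega_{t_n}^+$ at bounded distance from the origin on which $u(t_n,\cdot)$ is close to $1$, and checking that the time needed to spread from it to a fixed exterior point is bounded independently of $n$ — which is exactly what conditions \eqref{eq1.4}–\eqref{eq1.5} and the uniformity built into Corollary~\ref{corollary2.2} are designed to supply.
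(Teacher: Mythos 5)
Your proof of the $t\ge T_2$ part is correct and differs from the paper's: you argue directly from \eqref{eq1.5} and \eqref{eq1.7}, producing a point $y^-$ deep in $\Omega_t^-$ inside the fixed bounded region $\overline{\Omega\cap B(0,L+\rho')}$, whereas the paper runs a compactness argument with the parabolic Harnack inequality applied to $1-u$. Both work. For the $t\le T_1$ part, however, your first step has a genuine gap. You want a \emph{fixed} $w\in\Omega$ with $B(w,R_3)\subset\Omega$ and $u(t_n,\cdot)\ge1-\delta$ on $B(w,R_3)$ for every $n$, claiming that once $M$ in \eqref{eq1.5} is large the geodesic ball of radius $M-M_\delta$ around $\xi_n$ must contain a fixed Euclidean ball. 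For this you would need $d_\Omega(\xi_n,w)+R_3\le M-M_\delta$ for all $n$; since $d_\Omega(\xi_n,w)\le d_\Omega(\xi_n,x_n)+d_\Omega(x_n,w)\le r_M+C$ with $C$ a constant coming from $|x_n|<L+\rho$ and $|w|$ fixed, you would need $r_M\le M-M_\delta-R_3-C$. Nothing in \eqref{eq1.4}--\eqref{eq1.5} bounds $r_M$ above in terms of $M$: the map $M\mapsto r_M$ is merely non-decreasing, and already for a planar front $r_M=M$, so the required inequality $r_M\le M-(\text{positive constant})$ fails. The ``bounded geodesic distance'' in your phrasing is bounded only for a \emph{fixed} $M$, while the depth you want requires $M\to\infty$; the two demands cannot be met simultaneously by a single $w$ independent of $n$.

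What your construction does legitimately give, for a fixed $M$, is a moving point $y_n\in\Omega^+_{t_n}$ with $|y_n|$ uniformly bounded, $B(y_n,R_3)\subset\Omega$, and $u(t_n,\cdot)\ge1-\delta$ on $B(y_n,R_1)$. That is exactly what the paper produces in the case $\Gamma_{t_n}\cap B(0,L+\rho)\neq\emptyset$; it then iterates Lemma~\ref{lemma2.1} a fixed number $k$ of times (possible because $d_\Omega(y_n,x_0)$ is bounded uniformly in $n$) to transfer the lower bound to a fixed ball $B(x_0,R_2)$ at time $t_n+kT$, and treats separately the easy case where $B(0,L+\rho)\cap\Omega\subset\Omega^+_{t_n}$, in which $x_0$ itself is deep in $\Omega^+_{t_n}$ by the choice of $\rho$. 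An equivalent way to patch your argument is to keep the moving $y_n$, compare $u(t_n+\cdot,\cdot)$ with $v_{y_n,R_1}$, and invoke the second conclusion of Corollary~\ref{corollary2.2}, namely $\sup\big\{\tau_{x_0,y}:|x_0-y|\le M\big\}<+\infty$, to get a spreading time independent of $n$. The ``boundary analogue of Lemma~\ref{lemma2.1}'' you invoke in passing does not exist in the paper and is not needed: taking the fixed $M$ large enough already forces $|y_n|>L+R_3$, hence $B(y_n,R_3)\subset\Omega$. The final contradiction you derive from $u(t,y)\ge1-2\delta$ for all $t$ and all $y\in\Omega\setminus B(0,L+R)$ is correct.
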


\begin{proof}
First of all, for any $\rho\ge 0$, since $\Omega\cap B(0,L+\rho)$ is bounded and $u(t,x)\rightarrow 1$ as $t\rightarrow +\infty$ locally uniformly with respect to $x\in\overline{\Omega}$, we claim the existence of $T_2\in\R$ such that
$$\Omega\cap B(0,L+\rho)\subset \Omega^+_t\text{ for all $t\ge T_2$}.$$
Indeed, otherwise, there are some sequences $(t_n)_{n\in\N}$ in $\R$ and $(x_n)_{n\in\N}$ in $\Omega\cap B(0,L+\rho)$ such that $t_n\to+\infty$ as $n\to+\infty$, and $x_n\not\in\Omega^+_{t_n}$ for every $n\in\N$. From~\eqref{eq1.4} and~\eqref{eq1.7}, there is a sequence $(y_n)_{n\in\N}$ such that $\sup_{n\in\N}d_\Omega(y_n,x_n)<+\infty$ while $y_n\in\Omega^-_{t_n}$ and $u(t_n,y_n)\le1/2$ for every~$n\in\N$. The Harnack inequality applied to $1-u$ implies that $\sup_{n\in\N}u(t_n+1,y_n)<1$. On the other hand, the convergence $u(t,\cdot)\to1$ locally uniformly in $\overline{\Omega}$ as $t\to+\infty$ and the boundedness of the sequences $(x_n)_{n\in\N}$ and $(d_\Omega(y_n,x_n))_{n\in\N}$ yield $\lim_{n\to+\infty}u(t_n+1,y_n)=1$, a contradiction. As a consequence, the second part of the conclusion of Lemma~\ref{lemma2.6} has been proved.

As far as the first assertion is concerned, we actually prove it for every $\rho\ge0$ large enough, and it will then hold automatically for every $\rho\ge0$. Let $R_1$, $R_2$, $R_3$ and $T$ be the four positive real numbers given in Lemma~\ref{lemma2.1}, let $R>0$ be given in Corollary~\ref{corollary2.2}, and let $x_0$ be a fixed point in $\Omega$ such that $B(x_0,R_3)\subset\Omega$. Consider now any positive real number $\rho$ large enough so that
\be\label{choicerho}
\rho>R_3\ \hbox{ and }L+\rho\ge|x_0|+R_1+M_\delta,
\ee
and assume by way of contradiction that the first conclusion in Lemma~\ref{lemma2.6} is false for that value of $\rho$. Then, from~\eqref{eq1.3}, two cases may occur: either there is a sequence $(t_n)_{n\in\N}\rightarrow -\infty$ such that $\Omega\cap B(0,L+\rho)\cap \Gamma_{t_n}\neq\emptyset$ for each $n\in\N$, or there is a sequence $(t_n)_{n\in\N}\rightarrow -\infty$ such that~$\Omega\cap B(0,L+\rho)\subset \Omega^+_{t_n}$ for each $n\in\N$.

{\it Case~1: $\Omega\cap B(0,L+\rho)\cap \Gamma_{t_n}\neq \emptyset$ for each $n\in\N$.} For each $n\in\N$, pick a point $x_n\in\Omega\cap B(0,L\!+\!\rho)\cap\Gamma_{t_n}$. By~\eqref{eq1.4}, for each $n\!\in\!\N$, there exist $y_n\!\in\!\Omega^+_{t_n}$ and $r\!>\!0$ such~that
$$d_{\Omega}(x_n,y_n)\le r\ \text{ and }\ d_{\Omega}(y_n,\Gamma_{t_n})\ge \max\big(2(L+\rho)+M_{\delta}+R_1,2(L+\rho)+R_2\big).$$
This implies that $B(y_n,R_2)\subset \R^N\setminus B(0,L+\rho)$ and $d_{\Omega}(y,\Gamma_{t_n})\ge M_{\delta}$ for all $y\in B(y_n,R_1)$. Since~$\rho\ge R_3$, one gets that $B(y_n,R_1)\subset B(y_n,R_2)\subset B(y_n,R_3)\subset B(y_n,\rho)\subset\Omega\setminus B(0,L)\subset\Omega$. Thus,
\be\label{eq-2.12}
u(t_n,y)\ge 1-\delta\ \text{ for all $y\in B(y_n,R_1)$}.
\ee
Since $x_n\in B(0,L+\rho)$ and $d_{\Omega}(x_n,y_n)\le r$, the sequence $(d_{\Omega}(x_0,y_n))_{n\in\N}$ is bounded. Thus, there exists an integer $k\ge 1$ such that, for every $n\in\mathbb{N}$,  there exist $k$ points $y_n^1,\cdots,y_n^k$ in $\R^N$ satisfying $y_n^1=y_n$, $y_n^k=x_0$ and
\begin{eqnarray*}
\left\{\baa{lll}
B(y_n^i,R_3)\subset \Omega &\text{ for $1\le i\le k$,}\\
B(y_n^{i+1},R_1)\subset B(y_n^i,R_2) &\text{ for $1\le i\le k-1$}.
\eaa
\right.
\end{eqnarray*}
By~\eqref{eq-2.12} and Lemma~\ref{lemma2.1}, the comparison principle yields $u(t_n+T,y)\ge v_{y_n,R_1}(T,y)\ge 1-\delta$ for all $y\in\overline{B(y_n,R_2)}$. Since $B(y_n^2,R_1)\subset B(y_n,R_2)$ and $B(y_n^2,R_3)\subset \Omega$, one can apply Lemma~\ref{lemma2.1} again and get that $u(t_n,+2T,y)\ge v_{y^2_n,R_1}(T,y)\ge 1-\delta$ for all $y\in \overline{B(y_n^2,R_2)}$. By induction, it follows that $u(t_n+kT,y)\ge 1-\delta$ for all $y\in\overline{B(y_n^k,R_2)}=\overline{B(x_0,R_2)}$. This implies that $u(t_n+kT,y)\ge v_{x_0,R_1}(0,y)$ for all $y\in\overline{\Omega}$. By the comparison principle, one gets that $u(t,y)\ge v_{x_0,R_1}(t-t_n-kT,y)$ for all $y\in\overline{\Omega}$ and $t>t_n+kT$. Since $t_n\rightarrow -\infty$ as $n\rightarrow+\infty$, Corollary~\ref{corollary2.2} then yields
$$u(t,y)\ge \liminf_{t_n\rightarrow -\infty} v_{x_0,R_1}(t-t_n-kT,y)\ge 1-2\delta>0\ \text{ for all $t\in\R$}.$$
Since this holds for every $y\in\Omega\setminus B(0,L+R)$, this contradicts Definition~\ref{TF} and~\eqref{eq1.4}.

{\it Case~2: $\Omega\cap B(0,L+\rho)\subset \Omega^+_{t_n}$ for each $n\in\N$.} By~\eqref{choicerho}, it follows that $u(t_n,x)\ge 1-\delta$ for all $x\in B(x_0,R_1)$. The comparison principle then implies that $u(t,x)\ge v_{x_0,R_1}(t-t_n,x)$ for all $x\in\overline{\Omega}$ and $t>t_n$. Since $t_n\rightarrow -\infty$ as $n\rightarrow+\infty$, Corollary~\ref{corollary2.2} yields
$$u(t,x)\ge \liminf_{t_n\rightarrow -\infty} v_{x_0,R_1}(t-t_n,x)\ge 1-2\delta>0\ \text{ for all $t\in\R$ and $x\in \Omega\setminus B(0,L+R)$},$$
which again contradicts Definition \ref{TF} and~\eqref{eq1.4}.
\end{proof}

\begin{remark}\label{rem2.6}{\rm The above proof shows that the first conclusion of Lemma~\ref{lemma2.6} holds for any transition front of~\eqref{eq1.1} connecting $0$ and $1$, even if it is not assumed to satisfy~\eqref{complete}.}
\end{remark}

Before carrying out the proof of Theorem~\ref{th1}, let us notice that arguments based on the Harnack inequality and standard parabolic estimates, similar to those of~\cite[Lemma~3 and Remark~3]{GH} and~\cite[Propositions~1.2 and~4.2]{HR2}, imply that, for any transition front $u(t,x)$ connecting~$0$ and~$1$, the interfaces $(\Gamma_{t})_{t\in\R}$ have uniformly bounded local oscillations in the sense of the Hausdorff distance, namely
\be\label{eq+2.19}
\forall\,\sigma>0,\ \sup\big\{d_{\Omega}(x,\Gamma_s) : t\in\R,\ x\in\Gamma_t,\ |t-s|\le \sigma\big\}<+\infty.
\ee
\vskip 0.3cm

\begin{proof}[Proof of Theorem \ref{th1}] It is based on the key-results of Section~\ref{sec2.1} and the comparison of the transition front $u$ with some solutions of the type $v_{x_0,R}$ and $w_{x_0,R}$ defined in~\eqref{vrx0} and~\eqref{defwR} far away from the interfaces $\Gamma_t$ at some times $t$.

Our goal is to prove that $d(\Gamma_t,\Gamma_s)/|t-s|\to c_f$ as $|t-s|\rightarrow +\infty$. We shall first show
\be\label{eq-2.13}
\liminf_{|t-s|\rightarrow +\infty} \frac{d_\Omega(\Gamma_t,\Gamma_s)}{|t-s|}\ge c_f
\ee
and then
\be\label{eq-2.14}
\limsup_{|t-s|\rightarrow +\infty} \frac{d_\Omega(\Gamma_t,\Gamma_s)}{|t-s|}\le c_f.
\ee
Throughout the proof, $\delta\in(0,1/4)$ is given as in~\eqref{d}.
\vskip 0.3cm

{\it Step 1: some notations.} Consider any $\varepsilon\in (0,c_f/3)$. Let $R_1>0$, $R_2>0$ and $R_3>0$ be as in Lemma~\ref{lemma2.1}, and let $L_{\varepsilon}>L$, $R_{\varepsilon}>0$ and $X_\epsilon>0$ be such that the conclusions of Lemmas~\ref{lemma2.2} and~\ref{lemma2.4} hold (see Remark~\ref{remLReps1}). Define
\be\label{eq-2.15-}
D_{\varepsilon}=L_{\varepsilon}+r_{M_{\delta}+R_{\varepsilon}}+2R_{\varepsilon}+X_\epsilon+M_{\delta}+R_3-R_2>L>0.
\ee
By Lemma~\ref{lemma2.6}, there exist two real numbers $T^1_{\varepsilon}<T^2_{\varepsilon}$ such that
\be\label{eq-2.15}
\Omega\cap B(0,D_{\varepsilon})\subset \Omega_t^- \text{ for all $t\le T^1_{\varepsilon}\ $ and }\ \Omega\cap B(0,D_{\varepsilon})\subset \Omega_t^+ \text{ for all $t\ge T^2_{\varepsilon}$}.
\ee

{\it Step 2: the lower estimate~\eqref{eq-2.13}.} Firstly, we claim that
\be\label{eq-2.16}
\liminf_{t<s\le T^1_{\varepsilon},\ |t-s|\rightarrow +\infty} \frac{d_{\Omega}(\Gamma_{t},\Gamma_{s})}{|t-s|}\ge c_f-2\varepsilon.
\ee
Assume by contradiction that there exist two sequences $(t_k)_{k\in\mathbb{N}}$ and $(s_k)_{k\in\mathbb{N}}$ such that $t_k<s_k\le T^1_{\varepsilon}$, $s_k-t_k\rightarrow +\infty$ as $k\rightarrow +\infty$ and $d_{\Omega}(\Gamma_{t_k},\Gamma_{s_k})<(c_f-2\varepsilon)(s_k-t_k)$ for all~$k\in\mathbb{N}$. By the definition of the distance, there exist two sequences $(x_k)_{k\in\N}$ and $(z_k)_{k\in\N}$ such that $x_k\in\Gamma_{t_k}$, $z_k\in\Gamma_{s_k}$, and $d_{\Omega}(x_k,z_k)<(c_f-2\varepsilon)(s_k-t_k)$ for all $k\in\mathbb{N}$. By~\eqref{eq1.4}-\eqref{eq1.5}, for each $k\in\N$, there is $y_k\in\Omega^+_{t_k}$ such that
\be\label{eq-2.17}
d_{\Omega}(x_k,y_k)\le r_{M_{\delta}+R_{\varepsilon}} \text{ and } d_{\Omega}(y_k,\Gamma_{t_k})\ge M_{\delta}+R_{\varepsilon}.
\ee
By~\eqref{eq-2.15}, one has that $\inf\big\{|x|: x\in\Gamma_{t_k}\big\}\ge D_{\varepsilon}$ for all $k\in\mathbb{N}$. Thus, it follows from~\eqref{eq-2.15-} and~\eqref{eq-2.17} that $B(y_k,R_{\varepsilon})\subset \R^N\setminus B(0,L_{\varepsilon})$ and $d_{\Omega}(B(y_k,R_{\varepsilon}),\Gamma_{t_k})\ge M_{\delta}$. Therefore, $u(t_k,y)\ge 1-\delta$ for all $y\in B(y_k,R_{\varepsilon})$. Then, by the comparison principle, one gets that $u(t,y)\ge v_{R_{y_k,\varepsilon}}(t-t_k,y)$ for every $t>t_k$ and $y\in\overline{\Omega}$. By Lemma \ref{lemma2.2}, for each $k\in\N$, there is $T_{\varepsilon,k}=(|y_k|-R_{\varepsilon}-L_{\varepsilon})/(c_f-\varepsilon)$ such that
\be\label{eq-2.19}
u(t,y)\!\ge\!v_{y_k,R_{\varepsilon}}(t-t_k,y)\!\ge\!1\!-\!2\delta\text{ for all $0\!\le\!t\!-\!t_k\!\le\!T_{\varepsilon,k}$ and $y\!\in\!\overline{B(y_k,(c_f\!-\!\varepsilon)(t\!-\!t_k))}\subset\overline{\Omega}$}.
\ee

If $T_{\varepsilon}^1-t_k> T_{\varepsilon,k}$ for some $k\in\N$, it follows from~\eqref{eq-2.19} that $u(t_k+T_{\varepsilon,k},y)\ge 1-2\delta$ for all $y\in\overline{B(y_k,(c_f-\varepsilon)
T_{\varepsilon,k})}$. Since $T_{\varepsilon,k}=
(|y_k|-R_{\varepsilon}-L_{\varepsilon})/(c_f-\varepsilon)$, there is a point $\tilde{y}_k\in\overline{B(y_k,(c_f-\varepsilon)T_{\varepsilon,k})}$ such that $L_{\varepsilon}+R_{\varepsilon}\le |\tilde{y}_k|\le L_{\varepsilon}+2R_{\varepsilon}$ and
\be\label{eq-2.21}
u(t_k+T_{\varepsilon,k},\tilde{y}_k)\ge 1-2\delta.
\ee
However, we know that $\Omega\cap B(0,R_{\varepsilon})\subset\Omega\cap B(0,D_\epsilon)\subset \Omega^-_{t_k+T_{\varepsilon,k}}$. By~\eqref{eq-2.15-}, it follows that $\tilde{y}_k\in \Omega\cap B(0,D_{\varepsilon})$, that $d(\tilde{y}_k,\Gamma_{t_k+T_{\varepsilon,k}})\ge M_{\delta}$ and that $u(t_k+T_{\varepsilon,k},\tilde{y}_k)\le \delta$, contradicting~\eqref{eq-2.21}.

Therefore, $0<s_k-t_k\le T^1_{\varepsilon}-t_k\le T_{\varepsilon,k}$ for each $k\in\N$. Then, it follows from~\eqref{eq-2.19} that $\overline{B(y_k,(c_f-\varepsilon)(s_k-t_k))}\subset\overline{\Omega}$ and
\be\label{eq-2.22}
u(s_k,y)\ge 1-2\delta \text{ for all $y\in\overline{B(y_k,(c_f-\varepsilon)(s_k-t_k))}$}.
\ee
Since $z_k\in\Gamma_{s_k}$, there exist $r_{M_{\delta}}>0$ and $y'_k\in\Omega_{s_k}^-$ such that $d_{\Omega}(z_k,y'_k)\le r_{M_{\delta}}$ and $d_{\Omega}(y'_k,\Gamma_{s_k})\ge M_{\delta}$, whence
\be\label{eq-2.23}
u(s_k,y'_k)\le \delta.
\ee
Since $d_{\Omega}(x_k,y_k)\le r_{M_{\delta}+R_{\varepsilon}}$ and $d_{\Omega}(x_k,z_k)< (c_f-2\varepsilon)(s_k-t_k)$, one has
$$d_{\Omega}(y_k,y'_k)\le r_{M_{\delta}+R_{\varepsilon}}+(c_f-2\varepsilon)(s_k-t_k)+r_{M_{\delta}}\le (c_f-\varepsilon)(s_k-t_k)\ \text{ for large $k$}.$$ By~\eqref{eq-2.22}, one infers that $u(s_k,y'_k)\ge 1-2\delta$ for large $k$, contradicting~\eqref{eq-2.23}. As a consequence, the claim~\eqref{eq-2.16} has been proved.
\vskip 0.2cm

Secondly, we claim that
\be\label{eq-2.24}
\liminf_{T^2_{\varepsilon}\le t<s,\ |t-s|\rightarrow +\infty} \frac{d_{\Omega}(\Gamma_{t},\Gamma_{s})}{|t-s|}\ge c_f-2\varepsilon.
\ee
Assume by contradiction that there exist two sequences $(t_k)_{k\in\mathbb{N}}$ and $(s_k)_{k\in\mathbb{N}}$ such that $T^2_{\varepsilon}\le t_k<s_k$, $s_k-t_k\rightarrow +\infty$ as $k\rightarrow +\infty$ and $d_{\Omega}(\Gamma_{t_k},\Gamma_{s_k})<(c_f-2\varepsilon)(s_k-t_k)$ for all $k\in\mathbb{N}$. There exist then two sequences $(x_k)_{k\in\N}$ and $(z_k)_{k\in\N}$ such that $x_k\in\Gamma_{t_k}$, $z_k\in\Gamma_{s_k}$, and $d_{\Omega}(x_k,z_k)<(c_f-2\varepsilon)(s_k-t_k)$ for every $k\in\mathbb{N}$. By~\eqref{eq1.4}-\eqref{eq1.5}, for each $k\in\N$, there exist $y_k\in \Omega^+_{t_k}$ and $y'_k\in \Omega^-_{s_k}$ such that $d_{\Omega}(x_k,y_k)\le r_{M_{\delta}+R_{\varepsilon}}$, $d_{\Omega}(y_k,\Gamma_{t_k})\ge M_{\delta}+R_{\varepsilon}$, $d_{\Omega}(z_k,y'_k)\le r_{M_{\delta}}$, and $d_{\Omega}(y'_k,\Gamma_{s_k})\ge M_{\delta}$. By~\eqref{eq-2.15}, one has that $|x_k|$, $|z_k|\ge D_{\varepsilon}$ and $|y_k|\ge D_{\varepsilon}-r_{M_{\delta}+R_{\varepsilon}}\ge\max(R_\epsilon+L_\epsilon,X_\epsilon)$ and $|y'_k|\ge D_{\varepsilon}-r_{M_{\delta}}$. Then, $B(y_{k},R_{\varepsilon})\subset \R^N\setminus B(0,L_{\varepsilon})$ and $d_{\Omega}(B(y_k,R_{\varepsilon}),\Gamma_{t_k})\ge M_{\delta}$. It follows from Definition \ref{TF} that
\be\label{eq-2.26}
u(t_k,y)\ge 1-\delta \text{ for all $y\in\overline{B(y_k,R_{\varepsilon})}$},
\ee
while
\be\label{eq-2.27}
u(s_k,y'_k)\le \delta.
\ee
Since $\Omega\cap B(0,D_{\varepsilon})\subset \Omega^+_{t}$ for $t\ge T^2_{\varepsilon}$ and $D_{\varepsilon}\ge M_{\delta}+L+R_3-R_2$, one has that $u(t,x)\ge 1-\delta$ for all $t\ge T^2_{\varepsilon}$ and $x\in\overline{B(0,L+R_3-R_2)}\cap\overline{\Omega}$. It then follows from the comparison principle and Lemma~\ref{lemma2.2} that there exist $T_{\varepsilon,k}=(|y_k|-R_{\varepsilon}-L_{\varepsilon})/(c_f-\varepsilon)$ and a real number $T'_{\varepsilon}\ge0$ (independent of $k$) such that~\eqref{eq-2.19} holds and
\be\label{uty}
u(t,y)\ge1-4\delta\ \text{ for all $t-t_k\ge T_{\varepsilon,k}+T'_{\varepsilon}$ and $y\in\overline{B(y_k,(c_f-\varepsilon)(t-t_k-T'_{\varepsilon}))}\cap\overline{\Omega}$}.
\ee
We now consider three cases according to the position of the real number $s_k-t_k$ with respect to~$T_{\varepsilon,k}$ and $T_{\varepsilon,k}+T'_\epsilon$. Firstly, if $s_k-t_k\le T_{\varepsilon,k}$ for a subsequence of large $k$, then $u(s_k,y'_k)\ge 1-2\delta$ by~\eqref{eq-2.19} since
$$d_{\Omega}(y_k,y'_k)\le d_{\Omega}(y_k,x_k)+d_{\Omega}(x_k,z_k)+d_{\Omega}(z_k,y'_k)\le r_{M_{\delta}+R_{\varepsilon}}+(c_f-2\varepsilon)(s_k-t_k)+r_{M_{\delta}}< (c_f-\varepsilon)(s_k-t_k)$$
for a subsequence of large $k$, a contradiction with~\eqref{eq-2.27}. Secondly, if $s_k-t_k\ge T_{\varepsilon,k}+T'_{\varepsilon}$ for a subsequence of large $k$, then $u(s_k,y'_k)\ge 1-4\delta$ by~\eqref{uty} since
$$d_{\Omega}(y_k,y'_k)\le r_{M_{\delta}+R_{\varepsilon}}+(c_f-2\varepsilon)(s_k-t_k)+r_{M_{\delta}}< (c_f-\varepsilon)(s_k-t_k-T'_{\varepsilon})$$
for a subsequence of large $k$, a contradiction with~\eqref{eq-2.27}. Lastly, if $T_{\varepsilon,k}<s_k-t_k< T_{\varepsilon,k}+T'_{\varepsilon}$ for a subsequence of large $k$, then $u(s_k,y'_k)\ge 1-2\delta$ by~\eqref{eq-2.19} since
$$d_{\Omega}(y_k,y'_k)\!\le\!r_{M_{\delta}\!+\!R_{\varepsilon}}\!+\!(c_f\!-\!2\varepsilon)(s_k\!-\!t_k)\!+\!r_{M_{\delta}}\!\le\!(c_f\!-\!\varepsilon)(T_{\varepsilon,k}\!+\!T'_{\varepsilon})\!-\!\varepsilon(s_k\!-\!t_k)\!+\!r_{M_{\delta}\!+\!R_{\varepsilon}}\!+\!r_{M_{\delta}}\!<\!(c_f\!-\!\varepsilon)T_{\varepsilon,k}$$
for a subsequence of large $k$, contradicting~\eqref{eq-2.27}. As a consequence, the claim~\eqref{eq-2.24} is proved.
\vskip 0.2cm

Thirdly, we prove that
\be\label{liminf3eps}
\liminf_{|t-s|\rightarrow +\infty} \frac{d_\Omega(\Gamma_t,\Gamma_s)}{|t-s|}\ge c_f-3\varepsilon.
\ee
Assume by contradiction that there exist two sequences $(t_k)_{k\in\mathbb{N}}$ and $(s_k)_{k\in\mathbb{N}}$ of real numbers such that $t_k<s_k$, $s_k-t_k\rightarrow +\infty$ as $k\rightarrow +\infty$ and
\be\label{eq-2.28}
d_{\Omega}(\Gamma_{t_k},\Gamma_{s_k})<(c_f-3\varepsilon)(s_k-t_k) \text{ for all $k\in\mathbb{N}$}.
\ee

Then, six cases may occur up to extraction of a subsequence, namely, case~1: $t_k<s_k\le T_{\varepsilon}^1$ for all $k\in\N$; case~2: $t_k<T_{\varepsilon}^1<s_k<T_{\varepsilon}^2$ for all $k\in\N$; case~3: $t_k<T_{\varepsilon}^1<T_{\varepsilon}^2\le s_k$ for all $k\in\N$; case~4: $T^1_\epsilon\le t_k<s_k\le T^2_\epsilon$ for all $k\in\N$; case~5: $T_{\varepsilon}^1\le t_k\le T_{\varepsilon}^2<s_k$ for all $k\in\N$; case~6: $T_{\varepsilon}^2<t_k<s_k$ for all $k\in\N$. In fact, we have already shown that cases~1 and~6 are impossible for large $k$ by~\eqref{eq-2.16} and~\eqref{eq-2.24}. Furthermore, case~4 is also immediately ruled out for large $k$, since $s_k-t_k\to+\infty$ as $k\to+\infty$.

Now, consider case 2. In this case, there holds $t_k\to-\infty$ as $k\to+\infty$ and it follows from~\eqref{eq-2.16} that $d_{\Omega}(\Gamma_{t_k},\Gamma_{T_{\varepsilon}^1})\ge (c_f-5\varepsilon/2) (T_{\varepsilon}^1-t_k)$ for large~$k$. Since $|s_k-T^1_\epsilon|\le T^2_\epsilon-T^1_\epsilon$ for all $k$, pro\-perty~\eqref{eq+2.19} yields the existence of a constant $M>0$ such that $d_{\Omega}(\Gamma_{t_k},\Gamma_{s_k})\ge d_{\Omega}(\Gamma_{t_k},\Gamma_{T_{\varepsilon}^1})-M$ for all $k\in\N$, hence $d_{\Omega}(\Gamma_{t_k},\Gamma_{s_k})\ge(c_f-3\varepsilon) (s_k-t_k)$ for large~$k$, a contradiction with~\eqref{eq-2.28}. Similarly, one can reach a contradiction with~\eqref{eq-2.28} in case~5.

Only case~3 remains to be considered. Notice that we can then assume without loss of generality that $t_k\rightarrow -\infty$ and~$s_k\rightarrow+\infty$ as $k\rightarrow+\infty$ (otherwise, by decreasing $T_{\varepsilon}^1$ and increasing $T_{\varepsilon}^2$, one can reduce case~3 to case~2 or case~5). As above, there exist some sequences $(x_k)_{k\in\N}$, $(z_k)_{k\in\N}$, $(y_k)_{k\in\N}$ and $(y'_k)_{k\in\N}$ such that $x_k\in\Gamma_{t_k}$, $z_k\in\Gamma_{s_k}$, $y_k\in\Omega^+_{t_k}$, $y'_k\in\Omega^-_{s_k}$, $d_{\Omega}(x_k,y_k)\le r_{M_{\delta}+R_{\varepsilon}}$, $d_{\Omega}(z_k,y'_k)\le r_{M_{\delta}}$ and~\eqref{eq-2.26}-\eqref{eq-2.27} hold. Then, by Lemma~\ref{lemma2.2},
$$u(t,y)\ge 1-2\delta\ \text{ for all $0\le t-t_k\le T_{\varepsilon,k}:=\frac{|y_k|-R_{\varepsilon}-L_{\varepsilon}}{c_f-\varepsilon}$ and $y\in\overline{B(y_k,(c_f-\varepsilon)(t-t_k))}\ (\subset\overline{\Omega})$}.$$
As above, one then infers that $T^1_{\varepsilon}-t_k\le T_{\varepsilon,k}$, which implies that $T^2_{\varepsilon}-(t_k+T_{\varepsilon,k})\le T^2_{\varepsilon}-T^1_{\varepsilon}$. Since $\Omega\cap B(0,D_{\varepsilon})\subset\Omega^+_t$ for $t\ge T^2_{\varepsilon}$ and $D_{\varepsilon}\ge M_{\delta}+L+R_3-R_2$, one gets that $u(t,y)\ge 1-\delta$ for all $t\ge T^2_{\varepsilon}$ and $y\in\overline{B(0,L+R_3-R_2)}\cap\overline{\Omega}$. Therefore, by Lemma \ref{lemma2.2}, there exists a real number $T'_{\varepsilon}\ge T^2_{\varepsilon}-T^1_{\varepsilon}$ such that
$$u(t,y)\ge 1-4\delta\ \text{ for all $t-t_k\ge T_{\varepsilon}+T'_{\varepsilon}$ and $y\in\overline{B(y_k,(c_f-\varepsilon)(t-t_k-T'_{\varepsilon}))}\cap\overline{\Omega}$}.$$
Then, by the remainder of the proof of~\eqref{eq-2.24}, one concludes that~\eqref{eq-2.28} is impossible in case~3.

As a conclusion,~\eqref{liminf3eps} holds for every $\epsilon\in(0,c_f/3)$ and the proof of~\eqref{eq-2.13} is thereby complete.
\vskip 0.3cm

{\it Step 3: the upper estimate~\eqref{eq-2.14}.} In order to show~\eqref{eq-2.14}, we shall prove that
\be\label{limsup3}
\limsup_{|t-s|\rightarrow +\infty} \frac{d_{\Omega}(\Gamma_t,\Gamma_s)}{|t-s|}\le c_f+2\varepsilon.
\ee
Assume by contradiction that there exist two sequences $(t_k)_{k\in\mathbb{N}}$ and $(s_k)_{k\in\mathbb{N}}$ of real numbers such that $t_k<s_k$, $s_k-t_k\rightarrow +\infty$ as $k\rightarrow +\infty$ and
\be\label{eq-2.36}
d_{\Omega}(\Gamma_{t_k},\Gamma_{s_k})>(c_f+2\varepsilon)(s_k-t_k) \text{ for all $k\in\mathbb{N}$}.
\ee
Notice that each interface $\Gamma_t$ is automatically unbounded (since otherwise by~\eqref{eq1.3} either~$\Omega^-_t$ or~$\Omega^+_t$ would be bounded). In particular, for each $k\in\N$, there is a point $z_k\in\Gamma_{s_k}$ such that~$|z_k|\ge(c_f+2\epsilon)(s_k-t_k)+r_{M_\delta}+L$ and, by~\eqref{eq1.4}-\eqref{eq1.5}, there are some points $y^{\pm}_k\in\Omega^\pm_{s_k}$ such that $d_\Omega(y^\pm_k,\Gamma_{s_k})\ge M_\delta$ and $d_\Omega(z_k,y^\pm_k)\le r_{M_\delta}$. Hence, $u(s_k,y^+_k)\ge1-\delta$, $u(s_k,y^-_k)\le\delta$ and
$$|y^\pm_k|\ge|z_k|-r_{M_\delta}\ge(c_f+2\epsilon)(s_k-t_k)+L.$$
On the other hand, since $d(z_k,\Gamma_{t_k})\ge d_{\Omega}(\Gamma_{s_k},\Gamma_{t_k})>(c_f+2\varepsilon)(s_k-t_k)$ and since
$$|z_k|\ge(c_f+2\epsilon)(s_k-t_k)+r_{M_\delta}+L>(c_f+2\epsilon)(s_k-t_k)+L,$$
it follows that, up to extraction of a subsequence, either $B(z_k,(c_f+2\epsilon)(s_k-t_k))\subset\Omega^+_{t_k}$ for all~$k\in\N$, or $B(z_k,(c_f+2\epsilon)(s_k-t_k))\subset\Omega^-_{t_k}$ for all $k\in\N$.

Consider first the case $B(z_k,(c_f+2\epsilon)(s_k-t_k))\subset\Omega^+_{t_k}$ for all $k\in\N$. Since $d_\Omega(z_k,y^-_k)\le r_{M_\delta}$, one infers that $B_k:=B(y^-_k,(c_f+2\epsilon)(s_k-t_k)-r_{M_\delta}-M_\delta)\subset\Omega^+_{t_k}$ (for all $k$ large enough so that~$(c_f+2\epsilon)(s_k-t_k)-r_{M_\delta}-M_\delta>0$) and $d_\Omega(B_k,\Gamma_{t_k})\ge M_\delta$, hence $u(t_k,x)\ge1-\delta$ for all $x\in B_k$. Therefore, $u(t_k,\cdot)\ge v_{y^-_k,R_\epsilon}(0,\cdot)$ in $\overline{\Omega}$ for all $k$ large enough so that~$(c_f+2\epsilon)(s_k-t_k)-r_{M_\delta}-M_\delta\ge R_\epsilon$. Since $|y^-_k|\ge(c_f+2\epsilon)(s_k-t_k)+L\ge R_\epsilon+L_\epsilon$ for $k$ large and $s_k-t_k\le(|y^-_k|-R_\epsilon-L_\epsilon)/(c_f-\epsilon)$ for $k$ large, Lemma~\ref{lemma2.2} applied with $x_0=y^-_k$ together with the maximum principle implies that $u(s_k,y^-_k)\ge v_{y^-_k,R_\epsilon}(s_k-t_k,y^-_k)\ge1-2\delta$ for all $k$ large enough, a contradiction with $u(s_k,y^-_k)\le\delta$. Therefore, the first case is ruled out.

Consider now the second case $B(z_k,(c_f+2\epsilon)(s_k-t_k))\subset\Omega^-_{t_k}$ for all $k\in\N$. Denote
$$\rho_k:=(c_f+2\epsilon)(s_k-t_k)-r_{M_\delta}-M_\delta-L_\epsilon+L.$$
Since $d_\Omega(z_k,y^+_k)\le r_{M_\delta}$ and $L_\epsilon>L$, one infers that $B(y^+_k,\rho_k)\subset\Omega^-_{t_k}$ (for all $k$ large enough so that $\rho_k>0$) and $d_\Omega(B(y^+_k,\rho_k),\Gamma_{t_k})\ge M_\delta$, hence $u(t_k,x)\le\delta$ for all~$x\in B(y^+_k,\rho_k)$. Therefore, $u(t_k,\cdot)\le w_{y^+_k,\rho_k}(0,\cdot)$ in $\overline{\Omega}$ for $k$ large. Since $\rho_k>R_\epsilon$ for $k$ large, since~$|y^+_k|\ge(c_f+2\epsilon)(s_k-t_k)+L\ge\rho_k+L_\epsilon$ and since $s_k-t_k\le(\rho_k-R_\epsilon)/(c_f+\epsilon)$ for~$k$ large, Lemma~\ref{lemma2.4} applied with $x_0=y^+_k$ and $R=\rho_k$ together with the maximum principle implies that~$u(s_k,y^+_k)\le w_{y^+_k,\rho_k}(s_k-t_k,y^+_k)\le2\delta$ for all $k$ large enough, a contradiction with~$u(s_k,y^+_k)\ge1-\delta$. Therefore, the second case is ruled out too.

As a conclusion,~\eqref{limsup3} holds for every $\epsilon>0$ small enough, implying the desired upper estimate~\eqref{eq-2.14}. Together with~\eqref{eq-2.13}, the proof of Theorem~\ref{th1} is thereby complete.
\end{proof}


\subsection{Proof of Corollaries~\ref{cor1} and~\ref{cor3}}\label{sec2.3}

This subsection is devoted to the proof of Corollaries~\ref{cor1} and~\ref{cor3} on the global mean speed of almost-planar transition fronts and transition fronts in ``large" domains.
\vskip 0.3cm

\begin{proof}[Proof of Corollary~\ref{cor1}] It easily follows from Theorem~\ref{th1} and Corollary~\ref{corollary2.2}. We recall that a transition front $u$ is called almost-planar if for every $t\in\R$, the set $\Gamma_t$ can be chosen as
\be\label{defHt}
\Gamma_t=H_t\cap\Omega\ \hbox{ with }\ H_t=\{x\in\R^N: x\cdot e_t=\xi_t\},
\ee
for some vector $e_t\in\mathbb{S}^{N-1}$ and some real number $\xi_t$. Let $\delta\in(0,1/4)$ be defined as in~\eqref{d} and let~$R_1$, $R_2$ and $R_3$ be as in Lemma~\ref{lemma2.1}. Now, since $\R^N\setminus\Omega=K$ is compact and smooth,~\eqref{eq1.3} yields the existence of $x_0\in \Omega_0^+$ such that $B(x_0,R_3)\subset\Omega$ and $d_{\Omega}(B(x_0,R_1),\Gamma_0)\ge M_{\delta}$, which gives in particular $u(0,\cdot)\ge 1-\delta$ in $B(x_0,R_1)$. The comparison principle yields $u(t,x)\ge v_{x_0,R_1}(t,x)$ for all $t>0$ and $x\in\overline{\Omega}$. By Corollary~\ref{corollary2.2}, there is $R>0$ such that
\be\label{L+R}
\liminf_{t\rightarrow +\infty}u(t,x)\!\ge\!\liminf_{t\rightarrow +\infty}v_{x_0,R_1}(t,x)\!\ge\!1\!-\!2\delta\ \hbox{ locally uniformly in }x\!\in\!\Omega\setminus B(0,L+R).
\ee

This implies that hyperplanes $H_t$ defined in~\eqref{defHt} are far away from the obstacle $K$ for large time~$t$, in the sense that $\lim_{t\to+\infty}|\xi_t|=+\infty$. Indeed, if not, there exist a sequence~$(t_k)_{k\in\N}\to+\infty$ and a bounded sequence $(x_k)_{k\in\N}$ of points in $\R^N$ such that $x_k\in\Gamma_{t_k}=H_{t_k}\cap\Omega$ for all~$k\in\N$. By~\eqref{eq1.5} and since each $H_{t_k}$ is a hyperplane, there is a bounded sequence $(x'_k)_{k\in\N}$ of points in $\R^N$ such that, for every $k\in\N$, $x'_k$ belongs to the closed set $\Omega\setminus B(0,L+R)$ and~$B(x'_k,M_\delta+R+2L)\subset\Omega^-_{t_k}$, hence $u(t_k,x'_k)\le\delta$. This contradicts~\eqref{L+R}, since $0<\delta<1/4<1/3$. As a consequence, $\lim_{t\to+\infty}|\xi_t|=+\infty$.

The same arguments then imply that, for any $\rho>0$, there holds $\Omega\cap B(0,L+\rho)\subset \Omega_t^+$ for large time $t$. Therefore, due to Definition~\ref{TF}, the propagation of the front is complete in the sense of~\eqref{complete}. Corollary~\ref{cor1} then immediately follows from Theorem~\ref{th1}.
\end{proof}
\vskip 0.3cm

Before doing the proof of Corollary~\ref{cor3}, we prove an important auxiliary lemma stating that the propagation of any transition front is at least partial (see also Remark~\ref{rem13}).

\begin{lemma}\label{lemmap}
Let $\Omega=\R^N\setminus K$, where $K$ is a compact set with smooth boundary. For any transition front $u$ of~\eqref{eq1.1} connecting $0$ and $1$, there is a solution $p:\overline{\Omega}\to(0,1]$ of~\eqref{eqp} satisfying~\eqref{liminfp}.
\end{lemma}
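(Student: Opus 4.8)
\textbf{Proof proposal for Lemma \ref{lemmap}.}

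The plan is to construct $p$ as the large-time limit of a carefully chosen subsolution below $u$. The starting point is that, by Definition~\ref{TF} and~\eqref{eq1.3}, for each time $t$ the set $\Omega^+_t$ is nonempty and, using~\eqref{eq1.4}--\eqref{eq1.5}, contains a large ball; in particular, choosing any fixed $x_0\in\Omega$ with $B(x_0,R_3)\subset\Omega$, the argument of Lemma~\ref{lemma2.6} (see also Remark~\ref{rem2.6}) shows that $\Omega\cap B(0,L+\rho)\subset\Omega^-_t$ for all sufficiently negative $t$, whichever $\rho$ we pick. Taking $\rho$ large enough that $|x_0|+R_1+M_\delta\le L+\rho$, the complementary set property~\eqref{eq1.3} forces, for each negative-enough time $\sigma$, the existence of a point $y_\sigma\in\Omega^+_\sigma$ with $B(y_\sigma,R_1)\subset\Omega$ and $d_\Omega(B(y_\sigma,R_1),\Gamma_\sigma)\ge M_\delta$, so that $u(\sigma,\cdot)\ge1-\delta$ on $B(y_\sigma,R_1)$. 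Chaining Lemma~\ref{lemma2.1} finitely many times along a path of overlapping balls from $y_\sigma$ to $x_0$ (the number of steps is bounded because $d_\Omega(x_0,y_\sigma)$ stays bounded by~\eqref{eq1.4}, exactly as in Case~1 of Lemma~\ref{lemma2.6}), we obtain a time $\tau$, independent of $\sigma$, with $u(\sigma+\tau,\cdot)\ge1-\delta\ge v_{x_0,R_1}(0,\cdot)$ on $\overline{\Omega}$. The comparison principle then gives $u(t,\cdot)\ge v_{x_0,R_1}(t-\sigma-\tau,\cdot)$ on $\overline{\Omega}$ for all $t>\sigma+\tau$; letting $\sigma\to-\infty$ with $t$ fixed yields
$$u(t,\cdot)\ \ge\ \liminf_{s\to+\infty}v_{x_0,R_1}(s,\cdot)\qquad\text{in }\overline{\Omega}\text{ for every }t\in\R.$$

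Next I would identify the right-hand side as a stationary solution. Set $w(s,\cdot):=v_{x_0,R_1}(s,\cdot)$. One checks, using the maximum principle and the fact that the initial datum $v_{x_0,R_1}(0,\cdot)$ is a (generalized) subsolution of the stationary equation (it is $1-\delta<1$, resp.\ $0$, on the two pieces, so $\Delta v(0,\cdot)+f(v(0,\cdot))\ge0$ in the viscosity/distributional sense), that $s\mapsto w(s,x)$ is nondecreasing in $s$ for every $x\in\overline{\Omega}$; it is also bounded above by $1$. Hence the pointwise limit
$$p(x):=\lim_{s\to+\infty}v_{x_0,R_1}(s,x)$$
exists and satisfies $0\le p\le1$. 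By standard parabolic estimates the convergence is in $C^2_{loc}(\overline{\Omega})$ (interior and up to the Neumann boundary), so $p$ is a classical solution of $\Delta p+f(p)=0$ in $\Omega$ with $p_\nu=0$ on $\partial\Omega$. Moreover Corollary~\ref{corollary2.2} gives $p\ge1-2\delta$ on $\Omega\setminus B(0,L+R)$, in particular $\liminf_{|x|\to+\infty}p(x)\ge1-2\delta>0$; combining this with the strong maximum principle (since $p\not\equiv0$, as $p\ge1-2\delta$ far out) and the Hopf lemma on $\partial\Omega$, we get $p>0$ everywhere on $\overline{\Omega}$, so $p:\overline{\Omega}\to(0,1]$.

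The only point left is to upgrade ``$p\ge1-2\delta$ outside a large ball'' to the full decay $p(x)\to1$ as $|x|\to+\infty$ required by~\eqref{eqp}. For this I would invoke a standard asymptotic argument for~\eqref{eqp}: since $p$ is bounded between a constant close to $1$ and $1$ outside $B(0,L+R)$ and $1$ is a nondegenerate stable zero of $f$ with $f'(1)<0$ (see~\eqref{F1} and~\eqref{d}), a sliding/sweeping argument with the radial subsolutions of Lemma~\ref{lemma2.2} (applied to the stationary problem, which is exactly the $t\to+\infty$ limit already built into $w$) forces $p(x)\ge1-\eta$ for any $\eta>0$ once $|x|$ is large enough depending on $\eta$; together with $p\le1$ this gives $p(x)\to1$ as $|x|\to+\infty$, so $p$ solves~\eqref{eqp}. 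Finally, combining $p(x)=\lim_{s\to+\infty}v_{x_0,R_1}(s,x)$ with $u(t,\cdot)\ge v_{x_0,R_1}(t-\sigma-\tau,\cdot)$ and letting first $\sigma\to-\infty$ and then $t\to+\infty$ gives $\liminf_{t\to+\infty}u(t,\cdot)\ge p$ locally uniformly on $\overline{\Omega}$, which is~\eqref{liminfp}. The main obstacle is the bookkeeping in the first paragraph --- extracting, uniformly in the very negative base time $\sigma$, a large ball inside $\Omega^+_\sigma$ on which $u$ is close to $1$ and pushing it back to $x_0$ in a bounded number of Lemma~\ref{lemma2.1} steps --- but this is precisely the mechanism already used in Lemma~\ref{lemma2.6} and Corollary~\ref{corollary2.2}, so it can be quoted rather than redone.
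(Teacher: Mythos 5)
Your overall plan mirrors the paper's, but there is a genuine gap at its core: the step function $v_{x_0,R_1}(0,\cdot)=(1-\delta)\,\mathbf{1}_{B(x_0,R_1)}$ is \emph{not} a distributional (or viscosity) subsolution of the stationary problem, and consequently $s\mapsto v_{x_0,R_1}(s,\cdot)$ is \emph{not} nondecreasing in $s$. A positive multiple of an indicator function has a jump in the \emph{value} across $\partial B$, which produces a signed, dipole-type singular contribution in the distributional Laplacian: one checks directly that $\int v_{x_0,R_1}(0,\cdot)\Delta\varphi+\int f(v_{x_0,R_1}(0,\cdot))\,\varphi$ can be made negative by choosing $\varphi\ge0$ concentrated near $\partial B(x_0,R_1)$ with a large negative outward normal derivative there. (Already for the pure heat equation, the solution issued from $\mathbf{1}_{(-1,1)}$ strictly decreases at interior points for small $t$; the bounded reaction $f$ cannot offset the $O(t^{-1/2})$ diffusive loss near the jump.) So the $s\to+\infty$ limit of $v_{x_0,R_1}(s,\cdot)$ is not known to exist as a stationary solution; Corollary~\ref{corollary2.2} only gives a lower bound on the $\liminf$, not convergence.

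The missing piece is the construction of a \emph{continuous}, radially decreasing, compactly supported stationary subsolution $\psi\in C^2(\overline{B(0,R)})$ with $\Delta\psi+f(\psi)=0$, $\psi=0$ on $\partial B(0,R)$ and $\psi(0)>\theta_2$. The paper obtains it by a Berestycki--Lions energy minimization over $H^1_0(B(0,r))$, using $\int_\eta^1 f>0$ (a consequence of \eqref{F2}) to force $\psi_r(0)\to1$ as $r\to+\infty$. The zero extension $\overline\psi$ is continuous across $\partial B(0,R)$ and its outward normal derivative jumps \emph{upward} (from $\psi'(R^-)<0$ to $0$), so the singular part of its distributional Laplacian is a nonnegative measure and $\overline\psi(\cdot-x_0)$ is a genuine subsolution; the corresponding solution of~\eqref{eq1.1} is then strictly increasing in $t$ and converges to the desired $p$. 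The same $\psi$ is what is slid and rotated to show $p>\psi(0)>\theta_2$ away from $K$, which then gives $p(x)\to1$ by a compactness-plus-ODE argument; a step function cannot be slid this way. As a secondary point, your $\sigma\to-\infty$ detour is also flawed: for very negative $\sigma$, any admissible $y_\sigma\in\Omega^+_\sigma$ is at distance $\approx c_f|\sigma|$ from $x_0$, so the chaining time to $x_0$ is unbounded in $\sigma$ (in Case~1 of Lemma~\ref{lemma2.6}, the boundedness of $d_\Omega(x_0,y_n)$ comes from the contradiction hypothesis $x_n\in B(0,L+\rho)$, not from~\eqref{eq1.4}). It is, however, unnecessary: a single chaining from time $0$ already yields $\liminf_{t\to+\infty}u(t,\cdot)\ge p$.
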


\begin{proof}
It is based on the existence of a compactly supported stationary sub-solution of the elliptic version of~\eqref{eq1.1} in $\R^N$ and on the comparison of $u$ with some shifts of that sub-solution.

First of all, remember that $\theta_2\in(0,1)$ is defined in~\eqref{deftheta12}. We first claim that there are $R>0$ and a function $\psi\in C^2\big(\overline{B(0,R)}\big)$ solving
\be\label{eqphi}\left\{\baa{rcl}
\Delta\psi+f(\psi) & = & 0\ \hbox{ in }\overline{B(0,R)},\vspace{3pt}\\
0\ \le\ \psi & < & 1 \ \hbox{ in }\overline{B(0,R)},\vspace{3pt}\\
\psi & = & 0\ \hbox{ on }\partial B(0,R),\vspace{3pt}\\
\displaystyle\mathop{\max}_{\overline{B(0,R)}}\psi\ =\ \psi(0) & > & \theta_2.\eaa\right.
\ee
The proof is standard, based on~\cite{BL}, so we just sketch it. For each $r>0$, there is a minimizer $\psi_r\in H^1_0(B(0,r))$ of the energy functional $J_r$ defined in $H^1_0(B(0,r))$ by
$$J_r(\varphi)=\frac{1}{2}\int_{B(0,r)}|\nabla\varphi|^2-\int_{B(0,r)}F(\varphi),\ \ J_r(\psi_r)=\min_{\varphi\in H^1_0(B(0,r))}J_r(\varphi),$$
where $F(s)=\int_0^s\overline{f}(t)dt$ and $\overline{f}:\R\to\R$ is the function $f$ extended by $0$ outside the interval $[0,1]$. Furthermore, with this extension of $f$, there is a minimizer $\psi_r$ ranging in the interval $[0,1]$ and, from elliptic regularity, such a function $\psi_r$ is a classical $C^2(\overline{B(0,r)},[0,1])$ solution of~$\Delta\psi_r+f(\psi_r)=0$ in $\overline{B(0,r)}$ with $\psi_r=0$ on $\partial B(0,r)$. The strong elliptic maximum principle also yields $\psi_r<1$ in $\overline{B(0,r)}$ and, either $\psi_r\equiv0$ in $\overline{B(0,r)}$, or $\psi_r>0$ in $B(0,r)$. In both cases,~$\psi_r$ is a radially symmetric and nonincreasing function of $|x|$ (from the standard results of~\cite{GNN} in the latter case). In particular, $\max_{\overline{B(0,r)}}\psi_r=\psi_r(0)\in[0,1)$ in both cases. Let us now show that
$$\psi_r(0)\to1\ \hbox{ as }r\to+\infty,$$
which will then provide $R>0$ and a solution $\psi$ of~\eqref{eqphi}. Assume by way of contradiction that there are $\eta\in(0,1)$, a sequence $(r_k)_{k\in\N}$ of positive real numbers converging to $+\infty$ and a sequence $(\psi_{r_k})_{k\in\N}$ of functions such that each $\psi_{r_k}\in C^2(\overline{B(0,r_k)},[0,1))$ minimizes $J_{r_k}$ in $H^1_0(B(0,r_k))$ and $\max_{\overline{B(0,r_k)}}\psi_{r_k}=\psi_{r_k}(0)\le\eta<1$. Let $\eta_0\in[0,\eta]\ (\subset[0,1))$ be such that $F(\eta_0)=\max_{[0,\eta]}F$ and remember from the paragraph after~\eqref{F2} that $\varsigma:=\int_{\eta_0}^1f(s)ds=F(1)-F(\eta_0)>0$. Hence, $F(s)\le F(\eta_0)=F(1)-\varsigma$ for all $s\in[0,\eta]$ and $J_{r_k}(\psi_{r_k})\ge-(F(1)-\varsigma)\,\alpha_Nr_k^N$ for all $k\in\N$, where $\alpha_N>0$ denotes the Lebesgue measure of the $N$-dimensional unit ball $B(0,1)$. On the other hand, after assuming without loss of generality that $r_k>1$ for every $k\in\N$, consider the function~$\varphi_k$ defined by $\varphi_k(x)=1$ for $x\in B(0,r_k-1)$ and $\varphi_k(x)=r_k-|x|$ for $x\in\overline{B(0,r_k)}\setminus B(0,r_k-1)$. For each $k\in\N$, one has $\varphi_k\in H^1_0(B(0,r_k))$, hence
$$\baa{rcl}
J_{r_k}(\psi_{r_k}) & \le & \displaystyle J_{r_k}(\varphi_k)=\frac{\alpha_N}{2}\,(r_k^N-(r_k-1)^N)-F(1)\alpha_N(r_k-1)^N-\int_{B(0,r_k)\setminus B(0,r_k-1)}F(\varphi_k)\vspace{3pt}\\
& \le & \displaystyle\alpha_N\,\Big(\frac{1}{2}+\max_{[0,1]}|F|\Big)\,(r_k^N-(r_k-1)^N)-F(1)\alpha_N(r_k-1)^N.\eaa$$
This contradicts the inequality $J_{r_k}(\psi_{r_k})\ge-(F(1)-\varsigma)\,\alpha_Nr_k^N$ for $k$ large enough, since $\lim_{k\to+\infty}r_k=+\infty$ and $\varsigma>0$. As a consequence, $\max_{\overline{B(0,r)}}\psi_r=\psi_r(0)\to1$ as $r\to+\infty$, and there exists a solution $\psi$ of~\eqref{eqphi} for some $R>0$.

Let $R>0$ and $\psi$ be fixed as in the previous paragraph, and denote $\mu=1-\psi(0)>0$. From~\eqref{eq1.3} and the smoothness and compactness of $\R^N\setminus\Omega=K\ (\subset B(0,L))$, there is a point $x_0\in\Omega^+_0$ such that $|x_0|>L+R$ and $d_{\Omega}(x_0,\Gamma_0)\ge M_\mu+R$, hence $u(0,\cdot)\ge1-\mu$ in~$\overline{B(x_0,R)}\ (\subset\Omega^+_0\subset\Omega)$. Let $\underline{u}$ be the solution of the Cauchy problem~\eqref{vrx0} with initial condition~$\underline{u}(0,\cdot)$ defined by
$$\underline{u}(0,x)=\psi(x-x_0)\hbox{ if }x\in\overline{B(x_0,R)}\ (\subset\Omega)\ \hbox{ and }\ \underline{u}(0,x)=0\hbox{ if }x\in\overline{\Omega}\setminus\overline{B(x_0,R)}.$$
Since $0\le\psi\le1-\mu$ in $\overline{B(0,R)}$, one has $0\le\underline{u}(0,\cdot)\le u(0,\cdot)<1$ in $\overline{\Omega}$, hence $0\le\underline{u}(t,x)\le u(t,x)<1$ for all $t>0$ and $x\in\overline{\Omega}$ from the comparison principle. Moreover, since $\psi$ solves~\eqref{eqphi} with $\overline{B(x_0,R)}\subset\Omega$ and $f(0)=0$, the function $\underline{u}(0,\cdot)$ is a strict generalized sub-solution of the stationary problem corresponding to~\eqref{eq1.1}, hence $\underline{u}(t,x)$ is (strictly) increasing with respect to $t>0$ for every $x\in\overline{\Omega}$, together with $0<\underline{u}(t,x)<1$ for all $t>0$ and $x\in\overline{\Omega}$. It then follows from standard parabolic estimates that
$$\underline{u}(t,x)\to p(x)\hbox{ as }t\to+\infty\hbox{ locally uniformly in }x\in\overline{\Omega}$$
(hence, $\liminf_{t\to+\infty}u(t,x)\ge p(x)$ locally uniformly in $x\in\overline{\Omega}$), where $p:\overline{\Omega}\to(0,1]$ is a classical solution of $\Delta p+f(p)=0$ in $\overline{\Omega}$ and $p_\nu=0$ on $\partial\Omega$. Furthermore, $p>\underline{u}(0,\cdot)$ in $\overline{\Omega}$.

In order to complete the proof of Lemma~\ref{lemmap}, it only remains to show that $p(x)\to1$ as $|x|\to+\infty$. Since $p>\underline{u}(0,\cdot)$ in $\overline{\Omega}$ and $\overline{B(x_0,R)}$ is included in the open set $\Omega$, it follows from the definition of $\underline{u}(0,\cdot)$ that there is $\kappa_*>1$ such that $p>\psi(\cdot-\kappa x_0)$ in $\overline{B(\kappa x_0,R)}$ for all $\kappa\in[1,\kappa_*]$. Since $|x_0|>L+R$ and $\R^N\setminus\Omega=K\subset B(0,L)$, notice that $\overline{B(\kappa x_0,R)}\subset\Omega$ for all $\kappa\ge1$ and call
$$\kappa^*=\sup\big\{\kappa\ge1 : p>\psi(\cdot-\kappa'x_0)\hbox{ in }\overline{B(\kappa'x_0,R)}\hbox{ for all }\kappa'\in[1,\kappa]\big\}.$$
One has $\kappa^*\ge\kappa_*>1$. We claim that $\kappa^*=+\infty$. Assume not. Then, by continuity and by definition of $\kappa^*$, there holds $p\ge\psi(\cdot-\kappa^*x_0)$ in $\overline{B(\kappa^*x_0,R)}$ with equality somewhere in~$\overline{B(\kappa^*x_0,R)}$. Since $p>0$ in $\overline{\Omega}$ and $\psi(\cdot-\kappa^*x_0)=0$ on $\partial B(\kappa^*x_0,R)$, there is a interior point $x^*\in B(\kappa^*x_0,R)$ such that $p(x^*)=\psi(x^*-\kappa^*x_0)$, and then $p\equiv\psi(\cdot-\kappa^*x_0)$ in $\overline{B(\kappa^*x_0,R)}$ from the strong maximum principle. This is impossible on $\partial B(\kappa^*x_0,R)$. Therefore, $\kappa^*=+\infty$ and $p>\psi(\cdot-\kappa x_0)$ in $\overline{B(\kappa x_0,R)}$ for all $\kappa\ge1$. Similarly, one can show with the same type of sliding method that, for any $\kappa\ge1$ and for any rotation $\mathcal{R}$ of $\R^N$, there holds $p>\psi(\cdot-\mathcal{R}(\kappa x_0))$ in $\overline{B(\mathcal{R}(\kappa x_0),R)}\ (\subset\Omega)$. This implies in particular that
$$p(x)>\psi(0)>\theta_2\ \hbox{ for all }x\in\R^N\hbox{ with }|x|\ge|x_0|.$$
Remember also that $p\le1$ in $\overline{\Omega}$.

To conclude that $p(x)\to1$ as $|x|\to+\infty$, assume by way of contradiction that there is a sequence $(x_k)_{k\in\N}$ of points in $\overline{\Omega}$ such that $|x_k|\to+\infty$ as $k\to+\infty$ and $\limsup_{k\to+\infty}p(x_k)<1$.  From standard elliptic estimates, up to extraction of a subsequence, the functions $x\mapsto p(x+x_k)$ converge in $C^2_{loc}(\R^N)$ to a classical solution $p_{\infty}$ of $\Delta p_\infty+f(p_\infty)=0$ in $\R^N$ with $\theta_2<\psi(0)\le p_\infty(x)\le1$ for all $x\in\R^N$ and $p_\infty(0)<1$. The comparison principle implies that $p_\infty(x)\ge\varrho(t)$ for all $t\ge0$ and $x\in\R^N$, where $\varrho'(t)=f(\varrho(t))$ for all $t\ge0$ and $\varrho(0)=\psi(0)$. But since $f>0$ on $(\theta_2,1)$ and $f(1)=0$, one has $\lim_{t\to+\infty}\varrho(t)=1$, hence $p_{\infty}\equiv 1$ in $\R^N$, contradicting $p_\infty(0)<1$.

As a conclusion $p(x)\to1$ as $|x|\to+\infty$ and the proof of Lemma~\ref{lemmap} is thereby complete.
\end{proof}
\vskip 0.3cm

\begin{proof}[Proof of Corollary~\ref{cor3}] Let $\Omega=\R^N\setminus K$, where $K$ is a compact set with smooth boundary. Without loss of generality, one can assume that $K$ is not empty (the conclusion in the case $\Omega=\R^N$ follows from~\cite{H}). From Theorem~\ref{th1} and Lemma~\ref{lemmap}, the proof of Corollary~\ref{cor3} will be complete once one shows that there is $R_0>0$ such that, for any $R\ge R_0$ and for any $x_0\in\R^N$, any solution $p:\overline{R\Omega+x_0}\to(0,1]$ of~\eqref{eqp} (with $R\Omega+x_0$ instead of $\Omega$) satisfies $p\equiv1$ in $\overline{R\Omega+x_0}$.

To do so, assume by way of contradiction that there are a sequence $(R_k)_{k\in\N}$ of positive real numbers converging to $+\infty$, a sequence $(x_k)_{k\in\N}$ in $\R^N$ and a sequence $(p_k)_{k\in\N}$ of classical solutions $p_k:\overline{R_k\Omega+x_k}\to(0,1]$ of
$$\Delta p_k+f(p_k)=0\hbox{ in }\overline{R_k\Omega+x_k},\ \ (p_k)_{\nu}=0\hbox{ on }\partial(R_k\Omega+x_k)$$
with $\lim_{|x|\to+\infty}p_k(x)=1$ and $p_k\not\equiv1$ in $\overline{R_k\Omega+x_k}$ for each $k\in\N$. Therefore, for each $k\in\N$, there is a point $y_k\in\overline{R_k\Omega+x_k}$ such that $p_k(y_k)=\min_{\overline{R_k\Omega+x_k}}p_k\in(0,1)$. From the strong maximum principle and Hopf lemma, and since $f>0$ on $(\theta_2,1)$, one infers that $p_k(y_k)\le\theta_2$.

Now, since $K=\R^N\setminus\Omega$ is compact and smooth, there is $r>0$ such that, for any $y\in\overline{\Omega}$ and for any $r'\in(0,r]$, there is a continuous path $\gamma:[0,+\infty)\to\Omega$ such that
$$y\in\overline{B(\gamma(0),r')},\ \ B(\gamma(s),r')\subset\Omega\hbox{ for all }s\ge0,\ \hbox{ and }\ \lim_{s\to+\infty}|\gamma(s)|=+\infty.$$
Furthermore, if $B(y,r)\subset\Omega$, one can take $\gamma(0)=y$. Consider also $R>0$ and a solution $\psi$ of~\eqref{eqphi}, as given in the proof of Lemma~\ref{lemmap}. Observe that $r$ and $R$ are independent of $k$.

Take any $k\in\N$ large enough so that $r\,R_k\ge R$ and consider any point $y\in\overline{R_k\Omega+x_k}$ such that $B(y,R)\subset R_k\Omega+x_k$. There is then a continuous path $\gamma:[0,+\infty)\to\,R_k\Omega+x_k$ such that $\gamma(0)=y$, $B(\gamma(s),R)\subset R_k\Omega+x_k$ for all $s\ge0$, and $\lim_{s\to+\infty}|\gamma(s)|=+\infty$. Since $\lim_{|x|\to+\infty}p_k(x)=1$ and $\max_{\overline{B(0,R)}}\psi=\psi(0)<1$, there holds $p_k>\psi(\cdot-\gamma(s))$ in $\overline{B(\gamma(s),R)}$ for all $s$ large enough. Since $p_k>0$ in $\overline{\Omega}$ and $\psi=0$ on $\partial B(0,R)$, the same type of sliding method as in the proof of Lemma~\ref{lemmap} then implies that $p_k>\psi(\cdot-\gamma(s))$ in $\overline{B(\gamma(s),R)}$ for all $s\ge0$. As a consequence,
$$p_k(y)\ge\psi(y-\gamma(0))=\psi(0)>\theta_2\ \hbox{ for all }y\in\overline{R_k\Omega+x_k}\hbox{ such that }B(y,R)\subset R_k\Omega+x_k.$$
Since $p_k(y_k)\le\theta_2$, one concludes that there is $z_k\in\partial(R_k\Omega+x_k)$ such that $|z_k-y_k|<R$.

From standard elliptic estimates up to the boundary, it follows that, for any $\rho>0$ and any $\alpha\in(0,\beta]$, the restrictions of the functions $p_k$ on $\overline{B(z_k,\rho)}\cap\overline{R_k\Omega+x_k}$ are uniformly bounded in~$C^{2,\alpha}\big(\overline{B(z_k,\rho)}\cap\overline{R_k\Omega+x_k}\big)$. Denote $H=\big\{x\in\R^N:x_1>0\big\}$. Therefore, up to extraction of a subsequence and up to rotation of the frame, the functions $p_k(\cdot+z_k)$ converge in $C^2_{loc}(H)$ to a classical solution $p_{\infty}:\overline{H}\to[0,1]$ of
$$\Delta p_{\infty}+f(p_{\infty})=0\hbox{ in }\overline{H},\ \ (p_{\infty})_\nu=0\hbox{ on }\partial H,$$
together with $p_{\infty}(x)\ge\psi(0)>\theta_2$ for all $x\in\overline{H}$ with $x_1\ge R$ and $p_{\infty}(\zeta,0,\cdots,0)\le\theta_2$ for some~$\zeta\in[0,R]$. Notice in particular that $p_\infty>0$ in $\overline{H}$ from the strong maximum principle and Hopf lemma.

Finally, as in the last paragraph of the proof of Lemma~\ref{lemmap}, one gets that $p_{\infty}(x_1,\cdots,x_N)\to1$ as $x_1\to+\infty$ uniformly with respect to $(x_2,\cdots,x_N)\in\R^{N-1}$. In particular, denoting $X(s)=(s,0,\cdots,0)$ for $s\ge0$, there holds $p_{\infty}>\psi(\cdot-X(s))$ in $\overline{B(X(s),R)}$ for all $s$ large enough. Denote
$$s_*=\inf\big\{s\ge0: p_{\infty}>\psi(\cdot-X(s'))\hbox{ in }\overline{B(X(s'),R)}\cap\overline{H}\hbox{ for all }s'\ge s\big\}\in[0,+\infty).$$
Since $p_{\infty}(X(\zeta))\le\theta_2<\psi(0)$, one gets $s_*>\zeta$, hence $s_*>0$ and $B(X(s_*),R)\cap H$ is a non-empty open set. Furthermore, by continuity, there is $x_*\in\overline{B(X(s_*),R)}\cap\overline{H}$ such that $p_\infty(x_*)=\psi(x_*-X(s_*))$. Since $p_\infty>0$ in $\overline{H}$ and $\psi(\cdot-X(s_*))=0$ on $\partial B(X(s_*),R)$, one has $x_*\in B(X(s_*),R)$. If $x_*\in B(X(s_*),R)\cap H$, it then follows from the strong interior maximum principle that $p_\infty\equiv\psi(\cdot-X(s_*))$ in $\overline{B(X(s_*),R)}\cap\overline{H}$, which is impossible on the non-empty set $\partial B(X(s_*),R)\cap\overline{H}$. Therefore, one can assume without loss of generality that $p_\infty>\psi(\cdot-X(s_*))$ in $B(X(s_*),R)\cap H$ and $x_*\in B(X(s_*),R)\cap\partial H$. The Hopf lemma then yields $0=(p_\infty)_\nu(x_*)<\nu\cdot\nabla\psi(x_*-X(s_*))$, with $\nu=(-1,0,\cdots,0)$. On the other hand, $s_*>0$ and the function $\psi$ is radially symmetric and nonincreasing in $|x|$ in $B(0,R)$, hence $\nu\cdot\nabla\psi(x_*-X(s_*))=-\frac{\partial\psi}{\partial x_1}(x_*-X(s_*))\le0$, a contradiction.

As a conclusion, one has reached a contradiction in all cases. The existence of the sequences $(R_k)_{k\in\N}$, $(x_k)_{k\in\N}$ and $(p_k)_{k\in\N}$ is thus ruled out and the proof of Corollary~\ref{cor3} is complete.
\end{proof}


\section{Transition fronts emanating from planar fronts in domains with multiple branches: proof of Theorem~\ref{th10}}

This section is devoted to the proof of Theorem \ref{th10}. In this section, $\Omega$ is a smooth domain with~$m\,(\ge2)$ cylindrical branches in the sense of~\eqref{branches}, $I$ and $J$ are non-empty sets of $\{1,\cdots,m\}$ such that $I\cap J=\emptyset$ and $I\cup J=\{1,\cdots,m\}$, and $u:\R\times\overline{\Omega}\to(0,1)$ is a time-increasing solution of~\eqref{eq1.1} emanating from the planar fronts in the branches $\mathcal{H}_i$ with $i\in I$ in the sense of~\eqref{frontlike}, namely $u(t,x)-\phi(-x\cdot e_i-c_f t+\sigma_i)\to 0$ uniformly in $\overline{\mathcal{H}_i}\cap\overline{\Omega}$ for every $i\in I$ and $u(t,x)\to 0$ uniformly in $\overline{\Omega\setminus \mathop{\cup}_{i\in I}\mathcal{H}_i}$ as $t\to-\infty$. We also assume that $u$ satisfies~\eqref{complete}, that is, $u(t,x)\to1$ locally uniformly in $x\in\overline{\Omega}$ as $t\to+\infty$. 

We shall prove that $u$ is a transition front with sets $\Gamma_t$ and $\Omega_t^{\pm}$ defined by~\eqref{eq+1.11} and~\eqref{eq+1.12}. These properties then immediately imply that $u$ has a global mean speed equal to $c_f$. We begin in Section~\ref{sec31} with some estimates on the large time behavior of $u$ in the branches $\mathcal{H}_j$ with~$j\in J$. We complete the proof of Theorem~\ref{th10} in Section~\ref{sec32}. Lastly, we prove in Section~\ref{sec33} additional quantitative estimates of the time-derivative of~$u$, which will be useful for the proof of Theorem~\ref{th6} in Section~\ref{sec4.2}.


\subsection{Large time estimates in the branches $\mathcal{H}_j$ with $j\in J$}\label{sec31}

We recall that $L>0$ is given in~\eqref{branches}.

\begin{lemma}\label{lemma4.1} There exist $t_1\in\R$, $t_2\in\R$, $\tau_1\in\R$, $\delta>0$ and $\mu>0$ such that, for every $j\in J$,
\be\label{eq4.1}
u(t,x)\le \phi(x\cdot e_j-c_f (t-t_1)+\tau_1)+\delta e^{-\delta (t-t_1)}+\delta e^{- \mu (x\cdot e_j-L)}
\ee
for $t\ge t_1$ and $x\in\overline{\mathcal{H}_j}$ such that $x\cdot e_j\ge L$ and
\be\label{eq4.2}
u(t,x)\ge \phi(x\cdot e_j-c_f (t-t_2)-L)-\delta e^{-\delta (t-t_2)}-\delta e^{- \mu (x\cdot e_j-L)}
\ee
for $t\ge t_2$ and $x\in\overline{\mathcal{H}_j}$ such that $x\cdot e_j\ge L$.
\end{lemma}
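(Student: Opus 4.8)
The plan is to prove the two estimates separately, working for each fixed index $j\in J$ and then choosing $t_1,t_2,\tau_1,\delta,\mu$ uniform over the finitely many $j\in J$. Fix $j\in J$. Since $x\cdot e_j\ge L$ forces $|x|\ge L$, the set $E_j:=\{x\in\overline{\mathcal{H}_j}: x\cdot e_j\ge L\}$ is contained in $\overline{\Omega}$ by~\eqref{branches}, and $\partial\Omega\cap E_j$ is part of the lateral boundary of the cylinder $\mathcal{H}_j$, where $\nu\cdot e_j=0$; its ``mouth'' $\{x\in\overline{\mathcal{H}_j}: x\cdot e_j=L\}$ is compact, and $E_j\subset\overline{\Omega\setminus\bigcup_{i\in I}\mathcal{H}_i}$. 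I would run a parabolic comparison on $(t_\bullet,+\infty)\times E_j$ (with $t_\bullet=t_1$ for the upper bound, $t_\bullet=t_2$ for the lower one), using the supersolution
\[
\overline w(t,x)=\phi\big(x\cdot e_j-c_f(t-t_1)+\tau_1+\omega e^{-\delta(t-t_1)}\big)+\delta e^{-\delta(t-t_1)}+\delta e^{-\mu(x\cdot e_j-L)}
\]
and the subsolution
\[
\underline w(t,x)=\phi\big(x\cdot e_j-c_f(t-t_2)-L-\omega e^{-\delta(t-t_2)}\big)-\delta e^{-\delta(t-t_2)}-\delta e^{-\mu(x\cdot e_j-L)},
\]
where $\omega>0$ is large and $\delta,\mu>0$ are small. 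The decaying shift $\pm\omega e^{-\delta(t-t_\bullet)}$ is the analogue of the term $\omega e^{-\delta t}$ used in the proofs of Lemmas~\ref{lemma2.2}--\ref{lemma2.4}; since $\omega e^{-\delta(t-t_1)}\ge0$ and $-\omega e^{-\delta(t-t_2)}\le0$ and $\phi$ is decreasing, the comparison inequalities $\underline w\le u\le\overline w$ on $(t_\bullet,+\infty)\times E_j$ immediately give~\eqref{eq4.1} and~\eqref{eq4.2}.

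The parabolic inequalities $\mathcal{L}\overline w\ge0$ and $\mathcal{L}\underline w\le0$ in $E_j$, with $\mathcal{L}v=v_t-\Delta v-f(v)$, are checked by a direct computation using $\phi''+c_f\phi'+f(\phi)=0$ and the identity $\Delta(\phi(x\cdot e_j))=\phi''(x\cdot e_j)$ (no curvature term appears here, unlike in Lemmas~\ref{lemma2.2}--\ref{lemma2.4}). As there, one distinguishes three zones according to the value of the argument of $\phi$: where $\phi$ is close to $1$, resp.\ to $0$, one invokes the sign conditions $f'\le f'(1)/2<0$ on $[1-4\delta,1]$, resp.\ $f'\le f'(0)/2<0$ on $[0,4\delta]$, from~\eqref{d} together with the extension of $f$ outside $[0,1]$, which forces $\delta\le\min(|f'(0)|,|f'(1)|)/2$ and $\mu^2\le\min(|f'(0)|,|f'(1)|)/2$; in the intermediate zone one has $|\phi'|\ge k$ for some $k>0$, and the term produced by the time-shift, of order $\omega\delta k\,e^{-\delta(t-t_\bullet)}$, dominates the remaining contributions once $\omega$ is large. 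The one delicate point is that in the intermediate zone a contribution of order $\delta e^{-\mu(x\cdot e_j-L)}$ must also be absorbed into $\omega\delta k\,e^{-\delta(t-t_\bullet)}$: this works because in that zone $x\cdot e_j-L\ge c_f(t-t_\bullet)-C_1$ (the transition layer has drifted far from the mouth), so $e^{-\mu(x\cdot e_j-L)}\le e^{\mu C_1}e^{-\mu c_f(t-t_\bullet)}$, and one chooses $\mu=\delta/c_f$ so that $\mu c_f=\delta$, with $C_1=C_1(\delta)>0$ the scale such that $\phi\ge1-\delta$ on $(-\infty,-C_1]$ and $\phi\le\delta$ on $[C_1,+\infty)$; for $\delta$ small $\delta C_1(\delta)$ is small, so $e^{\mu C_1}$ stays bounded. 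Hence the parameters are fixed hierarchically: $\delta$ small, then $\omega$ large, then $\mu=\delta/c_f$, then $\tau_1\le-(C_1+\omega+L)$.

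It remains to dominate $u$ on the parabolic boundary of $(t_\bullet,+\infty)\times E_j$. On $\partial\Omega\cap E_j$ one has $\overline w_\nu=\underline w_\nu=0$, since $\overline w,\underline w$ depend on $x$ only through $x\cdot e_j$ and $\nu\cdot e_j=0$, matching $u_\nu=0$. On the mouth $\{x\cdot e_j=L\}$ the term $\delta e^{-\mu(x\cdot e_j-L)}$ equals $\delta$, so — using that $\phi(L+\tau_1+\omega)\ge1-\delta$ (by the choice $\tau_1\le-(C_1+\omega+L)$) and that the argument of $\phi$ in $\overline w$ is nonincreasing in $t$ — one gets $\overline w\ge(1-\delta)+\delta=1>u$ there for all $t\ge t_1$; conversely $\underline w<1-\delta$ on the mouth simply because $\phi<1$, while~\eqref{complete} and the time-monotonicity of $u$ yield $u(t,\cdot)\ge1-\delta$ on the compact mouth for all $t\ge t_2$ once $t_2$ is large. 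At the initial time: for $\overline w$ one takes $t_1$ so negative that $u(t_1,\cdot)\le\delta$ on $\overline{\Omega\setminus\bigcup_{i\in I}\mathcal{H}_i}\supset E_j$ (second line of~\eqref{frontlike}), which is $\le\overline w(t_1,\cdot)$ since $\overline w(t_1,\cdot)\ge\delta$; for $\underline w$ one takes $t_2$ large enough that, in addition, $u(t_2,\cdot)\ge1-\delta$ on the compact set $\{x\in\overline{\mathcal{H}_j}: L\le x\cdot e_j\le L+\omega+C_1\}$ (again by~\eqref{complete}), which dominates $\underline w(t_2,\cdot)$ there, while for $x\cdot e_j>L+\omega+C_1$ one has $\underline w(t_2,\cdot)<0<u$. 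The comparison principle for bounded sub/super-solutions on the (unbounded) cylinder $(t_\bullet,+\infty)\times E_j$ then gives $\underline w\le u\le\overline w$, and~\eqref{eq4.1}--\eqref{eq4.2} follow.

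The step I expect to be the main obstacle is the interplay, in the intermediate zone of $\phi$, between the spatial decay $e^{-\mu(x\cdot e_j-L)}$ and the temporal decay $e^{-\delta(t-t_\bullet)}$: this is what forces the relation $\mu c_f\ge\delta$ and the hierarchical choice of $\delta,\omega,\mu,\tau_1$, and it is also the reason the term $\delta e^{-\mu(x\cdot e_j-L)}$ cannot be dropped, since it is what keeps $\overline w\ge1$ (and $\underline w$ safely below $u$) at the mouth for all times.
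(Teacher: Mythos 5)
Your proposal is correct and follows essentially the same route as the paper's proof: the same super- and sub-solution ansatz $\phi(x\cdot e_j-c_f(t-t_\bullet)+\text{const}\pm\omega e^{-\delta(t-t_\bullet)})\pm\delta e^{-\delta(t-t_\bullet)}\pm\delta e^{-\mu(x\cdot e_j-L)}$, the same three-zone analysis of $\mathcal{L}$ (with $|\phi'|\ge k$ in the middle and $f'\le f'(0)/2$, $f'\le f'(1)/2$ near $0,1$), the same pivotal constraint $\mu c_f\ge\delta$ so that the spatial decay dominates the temporal one in the middle zone, and the same treatment of the parabolic boundary at $x\cdot e_j=L$ and at $t=t_\bullet$ using~\eqref{frontlike},~\eqref{complete} and $\nu\cdot e_j=0$. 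The only cosmetic differences are that the paper caps the super-/sub-solutions with $\min(\cdot,1)$/$\max(\cdot,0)$ rather than relying on the extension of $f$ outside $[0,1]$, and builds a slightly larger margin $R\ge\omega+2C$ into the shift $\tau_1$ to make the middle-zone absorption explicit rather than tracking $e^{\mu C_1}$ as you do; both variants close.
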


\begin{proof}
It is inspired by the construction of sub- and super-solutions in~\cite{FM}, to which some exponentially decaying terms in the direction $e_j$ are added.

{\it Step 1: choice of some parameters.} Take $\mu>0$ and then $\delta>0$ such that
\be\label{eq+3.4}
0<\mu<\sqrt{\min\Big(\frac{|f'(0)|}{2},\frac{|f'(1)|}{2}\Big)}
\ee
and
\be\label{eq+4.2}
0\!<\!\delta\!<\!\min\Big(\mu c_f,\frac{\theta_1}{4},\frac{1\!-\!\theta_2}4,\frac{|f'(0)|}{2},\frac{|f'(1)|}{2}\Big),\ f'\!\le\!\frac{f'(0)}{2} \text{ in $[0,3\delta]$, } f'\!\le\!\frac{f'(1)}{2} \text{ in $[1\!-\!3\delta,1]$},
\ee
where $0<\theta_1\le\theta_2<1$ are given in~\eqref{deftheta12}. Let $C>0$ and $k>0$ be as in~\eqref{eq+2.2}-\eqref{eq+2.4-}, and let $\omega>0$ and $R\ge\omega+2C\,(>2C)$ be large enough so that
\be\label{eq+3.6}
k\,\omega\ge2\delta+\max_{[0,1]} |f'|\ \hbox{ and }\ \Big(\!\max_{[0,1]} |f'|+\mu^2\Big) e^{-\mu (R-2C)}\le\Big(\!\max_{[0,1]} |f'|+\mu^2\Big) e^{-\mu (R-\omega-2C)}\le \delta.
\ee

{\it Step 2: proof of~\eqref{eq4.1}.} Since $u(t,x)\rightarrow 0$ as $t\rightarrow -\infty$ uniformly in $\overline{\Omega\setminus \cup_{i\in I}\mathcal{H}_i}$ from \eqref{frontlike}, there exists $t_1\in \R$ such that, for every $j\in J$,
\be\label{eq+4.3}
u(t_1,x)\le \delta\ \  \text{ for all $x\in\overline{\mathcal{H}_j}$ such that $x\cdot e_j\ge L$}
\ee
(notice that by~\eqref{branches} such points $x$ belong to $\overline{\Omega}$). Fix now an index $j\in J$. For $t\ge t_1$ and~$x\in\overline{\mathcal{H}_j}$ with $x\cdot e_j\ge L$, let us set
$$\overline{v}(t,x)=\min\left(\phi(\overline{\xi}(t,x))+\delta e^{-\delta (t-t_1)}+\delta e^{-\mu (x\cdot e_j-L)},1\right),$$
where
$$\overline{\xi}(t,x)=x\cdot e_j-c_f (t-t_1)+\omega e^{-\delta (t-t_1)}-\omega-L-R+C.$$
Let us check that $\overline{v}(t,x)$ is a super-solution of the problem satisfied by $u(t,x)$ for $t\ge t_1$ and~$x\in\overline{\mathcal{H}_j}$ with $x\cdot e_j\ge L$.

We first verify the initial and boundary conditions. At time $t_1$, it follows from~\eqref{eq+4.3} that $\overline{v}(t_1,x)\ge \delta\ge u(t_1,x)$ for all $x\in\overline{\mathcal{H}_j}$ with $x\cdot e_j\ge L$. On the other hand, for $t\ge t_1$ and $x\in\overline{\mathcal{H}_j}$ with $x\cdot e_j=L$, one has $\overline{\xi}(t,x)\le -R+C\le -C$ since $R\ge 2C$, whence $\overline{v}(t,x)\ge \min(1-\delta+\delta e^{-\delta (t-t_1)}+\delta,1)=1>u(t,x)$. Lastly, it is immediate to see that~$\overline{v}_\nu(t,x)=0$ for $t\ge t_1$ and $x\in\partial\mathcal{H}_j$ with $x\cdot e_j>L$ and $\overline{v}(t,x)<1$, since $\nu(x)\cdot e_j=0$.

Let us now check that $\mathcal{L}\overline{v}(t,x)=\overline{v}_t(t,x)-\Delta \overline{v}(t,x)-f(\overline{v}(t,x))\ge 0$ for every $t\ge t_1$ and~$x\in\overline{\mathcal{H}_j}$ such that $x\cdot e_j\ge L$ and $\overline{v}(t,x)<1$. After a straightforward computation we get
\begin{align*}
\mathcal{L}\overline{v}(t,x)=f(\phi(\overline{\xi}(t,x)))-f(\overline{v}(t,x))-\omega\delta e^{-\delta (t-t_1)}\phi'(\overline{\xi}(t,x))-\delta^2 e^{-\delta (t-t_1)}-\delta \mu^2 e^{-\mu (x\cdot e_j-L)}.
\end{align*}
If $\overline{\xi}(t,x)<-C$, one has $1>\phi(\overline{\xi}(t,x))\ge 1-\delta$ and then $1>\overline{v}(t,x)\ge 1-\delta$. From \eqref{eq+4.2} we obtain $f(\phi(\overline{\xi}(t,x)))-f(\overline{v}(t,x))\ge -(f'(1)/2)(\delta e^{-\delta (t-t_1)}+\delta e^{-\mu (x\cdot e_j-L)}).$ It then follows from \eqref{eq+3.4}-\eqref{eq+4.2} and the negativity of $\phi'$ and $f'(1)$ that
\begin{align*}
\mathcal{L}\overline{v}(t,x)\ge -\frac{f'(1)}{2}(\delta e^{-\delta (t-t_1)}+\delta e^{-\mu (x\cdot e_j-L)})-\delta^2 e^{-\delta (t-t_1)}-\delta \mu^2 e^{-\mu (x\cdot e_j-L)}\ge 0.
\end{align*}
A similar argument leads to $\mathcal{L}\overline{v}(t,x)\!\ge\!0$ if $\overline{\xi}(t,x)\!>\!C$, since $0\!<\!\phi(\overline{\xi}(t,x))\!\le\!\delta$ and $0\!<\!\overline{v}(t,x)\!\le\!3\delta$. If $-C\!\le\!\overline{\xi}(t,x)\!\le\!C$,  then $x\cdot e_j-L\ge c_f (t-t_1)+R-2C$ and $e^{-\mu (x\cdot e_j-L)}\le e^{-\mu(c_f (t-t_1)+R-2C)}$. Moreover, $-\phi'(\overline{\xi}(t,x))\ge k$ and $f(\phi(\overline{\xi}(t,x)))-f(\overline{v}(t,x))\ge -(\max_{[0,1]}|f'|)(\delta e^{-\delta (t-t_1)}+\delta e^{-\mu (x\cdot e_j-L)})$. Then one infers from~\eqref{eq+3.4}-\eqref{eq+3.6} that
$$\baa{rcl}
\mathcal{L}\overline{v}(t,x) & \ge & \displaystyle-\max_{[0,1]}|f'|(\delta e^{-\delta (t-t_1)}+\delta e^{-\mu (x\cdot e_j-L)})+k\omega\delta e^{-\delta (t-t_1)}-\delta^2 e^{-\delta (t-t_1)}-\delta \mu^2 e^{-\mu (x\cdot e_j-L)}\vspace{3pt}\\
& \ge & \displaystyle\delta e^{-\delta (t-t_1)}\Big(-\max_{[0,1]}|f'|-\delta+k\omega\Big)-\delta\Big(\max_{[0,1]}|f'|+\mu^2\Big)\,e^{-\mu(c_f (t-t_1)+R-2C)}\vspace{3pt}\\
& \ge & \displaystyle\delta e^{-\delta (t-t_1)}\Big(-\max_{[0,1]}|f'|-2\delta+k\omega\Big)\ge 0.\eaa$$

As a consequence, we arrive at $\mathcal{L}\overline{v}(t,x)=\overline{v}_t(t,x)-\Delta \overline{v}(t,x)-f(\overline{v}(t,x))\ge 0$ for all~$t\ge t_1$ and $x\in\overline{\mathcal{H}_j}$ with $x\cdot e_j\ge L$ and $\overline{v}(t,x)<1$. The comparison principle then yields
$$u(t,x)\le\overline{v}(t,x)\le\phi(x\cdot e_j-c_f(t-t_1)+\omega e^{-\delta(t-t_1)}-\omega-L-R+C)+\delta e^{-\delta (t-t_1)}+\delta e^{-\mu (x\cdot e_j-L)}$$
for all $t\ge t_1$ and $x\in\overline{\mathcal{H}_j}$ with $x\cdot e_j\ge L$, and finally~\eqref{eq4.1} holds with $\tau_1=-\omega-L-R+C$ since~$\phi$ is decreasing.

{\it Step 3: proof of~\eqref{eq4.2}.} Since $u(t,\cdot)\rightarrow 1$ as $t\rightarrow +\infty$ locally uniformly in $\overline \Omega$ by \eqref{complete}, there exists $t_2\in \R$ such that, for every $j\in J$,
$$u(t,x)\ge 1-\delta\ \  \text{for all $t\ge t_2$ and $x\in\overline{\mathcal{H}_j}$ with $L\le x\cdot e_j\le L+R$,}$$
where $R\ge \omega+2C$ is as in~\eqref{eq+3.6}. For $t\ge t_2$ and $x\in\overline{\mathcal{H}_j}$ with $x\cdot e_j\ge L$, let us set
$$\underline{v}(t,x)=\max\left(\phi(\underline{\xi}(t,x))-\delta e^{-\delta (t-t_2)}-\delta e^{-\mu (x\cdot e_j-L)},0\right),$$
where
$$\underline{\xi}(t,x)=x\cdot e_j-c_f (t-t_2)-\omega e^{-\delta (t-t_2)}+\omega-L-R+C.$$
Let us check $\underline{v}(t,x)$ is a sub-solution of the problem satisfied by $u(t,x)$ for $t\ge t_2$ and $x\in\overline{\mathcal{H}_j}$ with $x\cdot e_j\ge L$.

Let us first check the initial and boundary conditions. At time $t_2$, on the one hand, there holds $\underline{v}(t_2,x)\le 1-\delta\le u(t_2,x)$ for all $x\in\overline{\mathcal{H}_j}$ such that $L\le x\cdot e_j\le L+R$. On the other hand, for $x\in\overline{\mathcal{H}_j}$ such that $x\cdot e_j\ge L+R$, since $\underline{\xi}(t_2,x)\ge L+R-L-R+C=C$, one has $\underline{v}(t_2,x)\le \max\left(\delta-\delta-\delta e^{-\mu (x\cdot e_j-L)},0\right)=0\le u(t_2,x)$. Therefore, $\underline{v}(t_2,x)\le u(t_2,x)$ for all~$x\in\overline{\mathcal{H}_j}$ with $x\cdot e_j\ge L$. Now, for $t\ge t_2$ and~$x\in\overline{\mathcal{H}_j}$ with $x\cdot e_j=L$, one has $\underline{v}(t,x)\le1-\delta\le u(t,x)$. Moreover, it can be easily deduced from the definition of $\underline{v}$ that~$\underline{v}_\nu(t,x)=0$ for $t\ge t_2$ and~$x\in\partial\mathcal{H}_j$ with $x\cdot e_j>L$ and $\underline{v}(t,x)>0$.

Finally, with similar arguments as in Step~2, one can prove that $\underline{v}_t(t,x)\!-\!\Delta \underline{v}(t,x)\!-\!f(\underline{v}(t,x))\!\le\!0$ for all $t\ge t_2$ and $x\in\overline{\mathcal{H}_j}$ such that $x\cdot e_j\ge L$ and $\underline{v}(t,x)>0$. Property~\eqref{eq4.2} then follows from the maximum principle, the inequalities $R\ge\omega+2C\ge\omega+C$ and the negativity of $\phi'$. The proof of Lemma~\ref{lemma4.1} is thereby complete.
\end{proof}
\vskip 0.3cm

With the same token, the following lemma actually holds.

\begin{lemma}\label{lemma4.1+}
For any $\varepsilon>0$, there exists $t_{\varepsilon}\in\R$ such that, for every $j\in J$,
$$u(t,x)\ge \phi(x\cdot e_j-c_f (t-t_{\varepsilon})-L)-\varepsilon e^{-\delta (t-t_{\varepsilon})}-\varepsilon e^{- \mu (x\cdot e_j-L)}$$
for all $t\ge t_{\varepsilon}$ and $x\in\overline{\mathcal{H}_j}$ such that $x\cdot e_j\ge L$, with the same constants $\delta>0$ and $\mu>0$ as in Lemma~$\ref{lemma4.1}$.
\end{lemma}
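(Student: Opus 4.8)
The idea is to rerun the sub-solution construction of Step~3 in the proof of Lemma~\ref{lemma4.1}, but with the coefficient $\delta$ in front of the two exponentially decaying terms replaced by a prescribed small $\varepsilon>0$, while keeping the amplitude of the oscillating shift term $\omega e^{-\delta(t-t_2)}$ of the same (possibly smaller) size. So fix $\varepsilon>0$, and without loss of generality take $\varepsilon\le\delta$, where $\delta>0$ and $\mu>0$ are the constants already produced in Lemma~\ref{lemma4.1}. I would set, for $t\ge t_\varepsilon$ and $x\in\overline{\mathcal{H}_j}$ with $x\cdot e_j\ge L$,
$$\underline{v}(t,x)=\max\Big(\phi\big(\underline{\xi}(t,x)\big)-\varepsilon e^{-\delta (t-t_\varepsilon)}-\varepsilon e^{-\mu (x\cdot e_j-L)},\,0\Big),\qquad \underline{\xi}(t,x)=x\cdot e_j-c_f(t-t_\varepsilon)-\omega_\varepsilon e^{-\delta(t-t_\varepsilon)}+\omega_\varepsilon-L-R_\varepsilon+C,$$
where $\omega_\varepsilon>0$ and $R_\varepsilon\ge\omega_\varepsilon+2C$ are chosen (exactly as in~\eqref{eq+3.6}, but with $\varepsilon$ in place of $\delta$ in the right-hand sides where that $\delta$ came from the decaying terms) so that $k\,\omega_\varepsilon\ge 2\varepsilon+\max_{[0,1]}|f'|$ and $\big(\max_{[0,1]}|f'|+\mu^2\big)e^{-\mu(R_\varepsilon-\omega_\varepsilon-2C)}\le\varepsilon$. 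Because $\varepsilon\le\delta$, the algebraic inequalities~\eqref{eq+3.4}, \eqref{eq+4.2} that constrain $\mu$ and the sign conditions on $f'$ near $0$ and $1$ continue to hold with $3\varepsilon$ in place of $3\delta$ (the relevant intervals only shrink), so no re-choice of $\mu$ is needed.

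Next I would verify that $\underline{v}$ is a generalized sub-solution on $\{t\ge t_\varepsilon,\ x\in\overline{\mathcal{H}_j},\ x\cdot e_j\ge L\}$, following Step~3 of Lemma~\ref{lemma4.1} verbatim: the computation of $\mathcal{L}\underline{v}=\underline{v}_t-\Delta\underline{v}-f(\underline{v})$ on the region $\underline{v}>0$ gives
$$\mathcal{L}\underline{v}(t,x)=f(\phi(\underline{\xi}))-f(\underline{v})-\omega_\varepsilon\delta e^{-\delta(t-t_\varepsilon)}\phi'(\underline{\xi})+\delta^2\varepsilon e^{-\delta(t-t_\varepsilon)}+\varepsilon\mu^2 e^{-\mu(x\cdot e_j-L)},$$
wait — more precisely, with the $\underline{v}_t$ term producing $+\varepsilon\delta e^{-\delta(t-t_\varepsilon)}$ and $-\Delta\underline{v}$ producing $-\varepsilon\mu^2 e^{-\mu(x\cdot e_j-L)}$, the three regimes $\underline{\xi}<-C$, $\underline{\xi}>C$, $-C\le\underline{\xi}\le C$ are treated exactly as before; in the middle regime one uses $x\cdot e_j-L\ge c_f(t-t_\varepsilon)+R_\varepsilon-2C$ to bound $e^{-\mu(x\cdot e_j-L)}$ by $e^{-\delta(t-t_\varepsilon)}$ up to the constant factor absorbed into the choice of $R_\varepsilon$, and the condition $k\omega_\varepsilon\ge 2\varepsilon+\max|f'|$ to close the estimate. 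The boundary condition on $\partial\mathcal{H}_j$ is automatic since $\nu\cdot e_j=0$, and $\underline{v}\le 1-\varepsilon$ where $x\cdot e_j=L$.

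The only genuinely new point — and the place where I would be careful — is the \emph{initial} condition at the chosen time $t_\varepsilon$. For the argument to close I need $\underline{v}(t_\varepsilon,\cdot)\le u(t_\varepsilon,\cdot)$ on $\{x\cdot e_j\ge L\}$ for \emph{every} $j\in J$ simultaneously. For $x\cdot e_j\ge L+R_\varepsilon$ one has $\underline{\xi}(t_\varepsilon,x)\ge C$, hence $\phi(\underline{\xi})\le\varepsilon$ (using $C\ge C$ from~\eqref{eq+2.2}; shrink if needed so $\phi\le\varepsilon$ on $[C,\infty)$, or equivalently enlarge $C$, which only helps), so $\underline{v}(t_\varepsilon,x)=0\le u(t_\varepsilon,x)$; and on the bounded strip $L\le x\cdot e_j\le L+R_\varepsilon$ I invoke~\eqref{complete}, which gives $u(t,x)\ge 1-\varepsilon$ for all $t$ large and all $x\in\overline{\mathcal{H}_j}$ with $L\le x\cdot e_j\le L+R_\varepsilon$, uniformly in $j\in J$ since $J$ is finite; choosing $t_\varepsilon$ beyond this threshold makes $\underline{v}(t_\varepsilon,x)\le 1-\varepsilon\le u(t_\varepsilon,x)$ there too. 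The comparison principle on each $\overline{\mathcal{H}_j}\cap\{x\cdot e_j\ge L\}$ then yields $u\ge\underline{v}$, and using $R_\varepsilon\ge\omega_\varepsilon+2C\ge\omega_\varepsilon+C$ together with the monotonicity of $\phi$ to absorb the shift $\omega_\varepsilon e^{-\delta(t-t_\varepsilon)}-\omega_\varepsilon$, we get $u(t,x)\ge\phi(x\cdot e_j-c_f(t-t_\varepsilon)-L)-\varepsilon e^{-\delta(t-t_\varepsilon)}-\varepsilon e^{-\mu(x\cdot e_j-L)}$, which is the claim. The main obstacle, such as it is, is purely bookkeeping: making sure that the single pair $(\delta,\mu)$ inherited from Lemma~\ref{lemma4.1} still works after all the other parameters ($\omega_\varepsilon$, $R_\varepsilon$, $C$, $t_\varepsilon$) are re-tuned to $\varepsilon$, and that the re-tuning can be done uniformly over the finite index set $J$.
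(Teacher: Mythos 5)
Your plan is essentially the paper's: construct an $\epsilon$-scaled version of the Step~3 sub-solution from Lemma~\ref{lemma4.1}, use~\eqref{complete} to push $t_\epsilon$ late enough that the initial comparison holds on a bounded strip, and close with the maximum principle. The paper parametrizes this a bit more cleanly by setting $\hat\epsilon=\epsilon/\delta$ and multiplying \emph{both} the exponential amplitudes and the shift coefficient $\omega$ by $\hat\epsilon$ (so the sub-solution carries $\hat\epsilon\delta=\epsilon$ and $\hat\epsilon\omega$), which makes the whole inequality for $\mathcal{L}\underline{v}$ divide through by $\hat\epsilon\delta$ and reduce verbatim to the Lemma~\ref{lemma4.1} case, with the decay requirement on $R_\epsilon$ kept at ``$\le\delta$.'' Your variant, with a fresh $\omega_\epsilon$ obeying $k\omega_\epsilon\ge 2\epsilon+\max_{[0,1]}|f'|$ and a tightened ``$\le\epsilon$'' decay condition, also closes (the middle regime reduces to $(\epsilon-\delta)(\max_{[0,1]}|f'|+\epsilon)\le 0$), though it forces a larger $R_\epsilon$ than needed.

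The point you flagged parenthetically and then dismissed as ``which only helps'' is the one real gap. With the offset $-L-R_\epsilon+C$ and the original $C$ from~\eqref{eq+2.2}, for $x\cdot e_j\ge L+R_\epsilon$ one only has $\underline\xi(t_\epsilon,x)\ge C$, hence $\phi(\underline\xi)\le\delta$, which does \emph{not} force $\underline{v}(t_\epsilon,x)=0$ when $\delta>2\epsilon$; the initial comparison $\underline{v}(t_\epsilon,\cdot)\le u(t_\epsilon,\cdot)$ can then fail far out in the branch, where $u(t_\epsilon,\cdot)$ is small. The fix is the one the paper uses: introduce $C_\epsilon\ge C$ with $\phi\le\epsilon$ on $[C_\epsilon,+\infty)$ and $\phi\ge 1-\epsilon$ on $(-\infty,-C_\epsilon]$, and put $C_\epsilon$ (not $C$) in the offset of $\underline\xi$. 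But that change is not cost-free: the middle-regime lower bound becomes $x\cdot e_j-L\ge c_f(t-t_\epsilon)+R_\epsilon-\omega_\epsilon-C_\epsilon-C$, so both $R_\epsilon\ge\omega_\epsilon+2C$ and the decay constraint $(\max_{[0,1]}|f'|+\mu^2)e^{-\mu(R_\epsilon-\omega_\epsilon-2C)}\le\epsilon$ must be re-stated with $C_\epsilon$ in place of $C$ (the paper uses $R_\epsilon\ge\hat\epsilon\omega+2C_\epsilon$ and $(\max_{[0,1]}|f'|+\mu^2)e^{-\mu(R_\epsilon-\hat\epsilon\omega-2C_\epsilon)}\le\delta$). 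As written, the estimate on $e^{-\mu(x\cdot e_j-L)}$ in the regime $-C\le\underline\xi\le C$ is not justified by the $R_\epsilon$ you defined. Finally, your displayed $\mathcal{L}\underline{v}$ has two sign slips (the cross-term is $+\omega_\epsilon\delta e^{-\delta(t-t_\epsilon)}\phi'(\underline\xi)$, which is the favorable negative contribution, and $-\Delta\underline{v}$ contributes $+\epsilon\mu^2 e^{-\mu(x\cdot e_j-L)}$, not $-$); these are typos rather than conceptual errors, but the sign of the $\mu^2$ term is what you are fighting against, so it is worth getting right.
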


\begin{proof}
Let $\mu>0$, $\delta>0$ and $\omega>0$ are defined as in \eqref{eq+3.4}-\eqref{eq+3.6}. For any $\varepsilon>0$, let
\be\label{defhateps}
\hat{\varepsilon}=\frac{\varepsilon}{\delta}.
\ee
By Lemma \ref{lemma4.1}, the conclusion of Lemma~\ref{lemma4.1+} holds for $\varepsilon\ge \delta$ with the same time $t_\epsilon=t_2$ as in Lemma~\ref{lemma4.1}. It remains to consider the case $0<\varepsilon< \delta$. In this case, there is $C_\varepsilon\ge C>0$ such that $\phi\ge 1-\varepsilon$ in $(-\infty,-C_\varepsilon]$ and $\phi\le \varepsilon$ in $[C_\varepsilon,+\infty)$. Let then $R_\epsilon$ be large enough so that~$R_\epsilon\ge\hat{\epsilon}\omega+2C_\epsilon$ and
$$\Big(\max_{[0,1]} |f'|+\mu^2\Big) e^{-\mu (R_\epsilon-\hat{\epsilon}\omega-2C_\epsilon)}\le \delta.$$
Since $u(t,x)\rightarrow 1$ as $t\rightarrow +\infty$ locally uniformly in $\overline{\Omega}$, one can follow the proof of~\eqref{eq4.2} to show that there exists $t_{\varepsilon}\in\R$ such that, for every $j\in J$, the function
$$\underline{v}(t,x)=\max\left(\phi(\underline{\xi}(t,x))-\hat{\varepsilon}\delta e^{-\delta (t-t_{\varepsilon})}-\hat{\varepsilon}\delta e^{-\mu (x\cdot e_j-L)},0\right)$$
with $\underline{\xi}(t,x)=x\cdot e_j-c_f (t-t_{\varepsilon})-\hat{\varepsilon}\omega e^{-\delta (t-t_{\varepsilon})}+\hat{\varepsilon}\omega-L-R_\epsilon+C_\epsilon$, is a sub-solution of the problem satisfied by $u(t,x)$ for $t\ge t_{\varepsilon}$ and $x\in\overline{\mathcal{H}_j}$ with $x\cdot e_j\ge L$. The conclusion then follows from the comparison principle as in Step~3 of the proof of Lemma~\ref{lemma4.1}.
\end{proof}

\begin{remark}{\rm An upper bound similar to~\eqref{eq4.1} could also be obtained for all large $t$ with $\epsilon>0$ instead of $\delta$ in the factors of the exponential terms, and with a shift $\tau_\epsilon=-\epsilon\omega/\delta-L-R_\epsilon+C_\epsilon$ instead of $\tau_1$, for some $C_\epsilon>0$ and $R_\epsilon>0$. But the quantities $\tau_\epsilon$ may not be bounded from below independently of $\epsilon$ and then may not be replaced by a quantity independent of $\epsilon$.}
\end{remark}

The next lemma is about the stability of the planar front in any branch $\mathcal{H}_j$.

\begin{lemma}\label{lemma4.2}
There is $M\ge0$ such that, if there are $j\in J$, $\varepsilon>0$, $t_0\in\R$ and $\tau\in\R$ such that
$$\sup_{x\in\overline{\mathcal{H}}_j,\,x\cdot e_j\ge L}|u(t_0,x)-\phi(x\cdot e_j-c_f t_0 +\tau)|\le \varepsilon$$
together with $\phi(L-c_ft_0+\tau)\ge1-\epsilon$ and $u(t,x)\ge1-\epsilon$ for all $t\ge t_0$ and $x\in\overline{\mathcal{H}_j}$ with $x\cdot e_j=L$, then it holds
$$\sup_{x\in\overline{\mathcal{H}_j},\,x\cdot e_j\ge L} |u(t,x)-\phi(x\cdot e_j-c_f t+\tau)|\le M\,\varepsilon\ \ \text{for all $t\ge t_0$}.$$
\end{lemma}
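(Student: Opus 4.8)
The plan is to use the classical stability of the bistable traveling front under the parabolic flow, adapted to the half-branch $\{x\in\overline{\mathcal{H}_j}:x\cdot e_j\ge L\}$ with the extra information that $u$ stays above $1-\epsilon$ on the ``inner'' boundary face $\{x\cdot e_j=L\}$ and that the front profile is already close to $1$ there (the condition $\phi(L-c_ft_0+\tau)\ge 1-\epsilon$). The idea is to trap $u$ between two shifted-and-perturbed copies of $\phi$ of the form
$$
w^{\pm}(t,x)=\phi\big(x\cdot e_j-c_f t+\tau\pm q(t)\big)\pm \epsilon\,p(t),
$$
where $q(t)=q_0\big(1-e^{-\omega(t-t_0)}\big)$ and $p(t)=e^{-\omega(t-t_0)}$ for suitable constants $q_0>0$, $\omega>0$ depending only on $f$ (and $c_f,\phi$), exactly as in the construction of sub/supersolutions in~\cite{FM} and as already used in Lemma~\ref{lemma4.1}. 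The standard computation shows that, because $f'(0)<0$ and $f'(1)<0$ and $\phi'<0$ with $\phi'$ bounded away from $0$ on the ``core'' region $|x\cdot e_j-c_ft+\tau|\le C$, the functions $w^{+}$ (resp.\ $w^{-}$) are a supersolution (resp.\ subsolution) of $v_t=\Delta v+f(v)$ for $t\ge t_0$, provided $q_0$ is chosen large relative to the Lipschitz bound $\max_{[0,1]}|f'|$ over $k=\min_{[-C,C]}|\phi'|$; the Neumann condition on $\partial\mathcal{H}_j$ is automatic since $\nu\cdot e_j=0$, just as in Lemma~\ref{lemma4.1}.

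The steps I would carry out, in order. First, fix $C,k,\omega,q_0,\delta$ from $f$ and $\phi$ as in~\eqref{eq+2.2}--\eqref{eq+2.4-} and~\eqref{eq+3.6}, and set $M=1+q_0\|\phi'\|_{L^\infty}+1$ (any constant dominating the shift-and-amplitude effect). Second, verify the initial comparison at $t=t_0$: the hypothesis $|u(t_0,\cdot)-\phi(\cdot\, e_j-c_ft_0+\tau)|\le\epsilon$ gives $w^{-}(t_0,\cdot)\le u(t_0,\cdot)\le w^{+}(t_0,\cdot)$ on $x\cdot e_j\ge L$ for $\epsilon$ small (and for all $\epsilon$ after enlarging $M$, since $0\le u\le1$). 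Third, handle the boundary face $x\cdot e_j=L$: for $w^{+}$ there is nothing to do since $w^{+}\ge u$ is wanted only as an upper barrier and $w^{+}(t,x)\ge\phi(x\cdot e_j-c_ft+\tau)\ge u$ need only hold where relevant — more precisely one checks $u(t,x)\le 1\le w^{+}(t,x)$ when $\phi$-argument is very negative, which holds because $q(t)\ge0$ pushes the argument further negative and $\phi(L-c_ft+\tau)$ is already near $1$ using $\phi(L-c_ft_0+\tau)\ge1-\epsilon$ and $t\mapsto$ argument decreasing; for the lower barrier $w^{-}$ one uses exactly the hypothesis $u(t,x)\ge1-\epsilon$ on $x\cdot e_j=L$ together with $w^{-}(t,x)\le 1-\epsilon$ there (since its $\phi$-argument is $\le L-c_ft+\tau-q(t)$, very negative, so $\phi\le 1$, and minus the $\epsilon p(t)\le\epsilon$ term). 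Fourth, invoke the parabolic comparison principle on the domain $\{t\ge t_0\}\times\{x\in\overline{\mathcal{H}_j}:x\cdot e_j\ge L\}$ to conclude $w^{-}(t,x)\le u(t,x)\le w^{+}(t,x)$ for all such $(t,x)$. Fifth, read off the estimate: $|u(t,x)-\phi(x\cdot e_j-c_ft+\tau)|\le q(t)\|\phi'\|_{L^\infty}+\epsilon p(t)\le (q_0\|\phi'\|_{L^\infty}+1)\epsilon\le M\epsilon$ — wait, here one must be careful: $q(t)$ is $O(q_0)$ not $O(\epsilon)$, so I instead choose $q_0$ itself proportional to $\epsilon$, i.e.\ $q_0=K\epsilon$ with $K=K(f,\phi)$ large enough that the supersolution inequality still holds (this is the standard scaling: the shift needed to absorb an $\epsilon$-perturbation is itself $O(\epsilon)$). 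Then $q(t)\le K\epsilon$ and the final bound is $M\epsilon$ with $M=K\|\phi'\|_{L^\infty}+1$.

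The main obstacle is the boundary face $\{x\cdot e_j=L\}$: unlike the clean whole-line stability argument, here the half-branch has an artificial boundary, and the barriers $w^{\pm}$ are genuine sub/supersolutions of the equation but need not respect the comparison with $u$ on that face a priori. The role of the two hypotheses $\phi(L-c_ft_0+\tau)\ge1-\epsilon$ and $u\ge1-\epsilon$ on $\{x\cdot e_j=L\}$ is precisely to close this gap, and the monotonicity in $t$ of the argument $x\cdot e_j-c_ft+\tau\mp q(t)$ (decreasing, since $c_f>0$ and $q(t)$ nondecreasing for $w^{-}$) ensures these remain valid for all $t\ge t_0$, not just at $t_0$. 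One subtlety to check carefully is that for the lower barrier we need $\phi(L-c_ft+\tau-q(t))$ to stay in the regime where $f'\le f'(1)/2<0$, i.e.\ $\ge 1-3\delta$ say, which follows from $\phi(L-c_ft_0+\tau)\ge1-\epsilon\ge1-3\delta$ (shrinking $\epsilon$, or enlarging $M$ and noting the conclusion is trivial for $\epsilon$ not small) and the fact that decreasing the argument only increases $\phi$. The rest is the routine sub/supersolution computation identical to that already performed in the proof of Lemma~\ref{lemma4.1}, so I would simply refer to it rather than repeat it.
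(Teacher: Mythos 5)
Your overall strategy---trapping $u$ between shifted, exponentially relaxing copies of $\phi$---is the correct one and matches the paper's approach, and you correctly identify the face $\{x\cdot e_j=L\}$ as the main obstacle. However, your barriers omit the ingredient the paper uses precisely to close that gap, and as a result your two boundary claims are both false. With the signs corrected so that the super-barrier is shifted to the left (your $w^{+}=\phi(\xi+q(t))$ shifts the decreasing profile the wrong way; it should be $\phi(\xi-q(t))+\varepsilon p(t)$), one has at $x\cdot e_j=L$ only $w^{+}\ge(1-\varepsilon)+\varepsilon p(t)=1-\varepsilon(1-p(t))<1$ for all $t>t_0$, since $\phi<1$ everywhere and $p(t)=e^{-\omega(t-t_0)}<1$; meanwhile $u$ can be arbitrarily close to $1$ on that face (indeed $u(t,\cdot)\to1$ locally uniformly), so the claimed $1\le w^{+}$ fails and the ordering $u\le w^{+}$ at the face is not established. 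For the sub-barrier, $\phi\le1$ gives only $w^{-}\le 1-\varepsilon p(t)$, which for $t>t_0$ is strictly larger than $1-\varepsilon$, whereas the hypothesis guarantees only $u\ge1-\varepsilon$; so $w^{-}\le u$ at the face is not established either. Without the ordering of $u$ and the barriers on the artificial lateral face for all $t\ge t_0$, the parabolic comparison principle on the truncated half-cylinder cannot be invoked.

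The missing ingredient, used in the paper's proof of Lemma~\ref{lemma4.2} (and already in Lemmas~\ref{lemma4.1} and~\ref{lemma4.1+}), is a spatially localized cushion $\pm\hat\varepsilon\delta\,e^{-\mu(x\cdot e_j-L)}$ added to the barriers, with $\hat\varepsilon=\varepsilon/\delta$. On the face $x\cdot e_j=L$ this term equals exactly $\pm\varepsilon$, so the sub-barrier there is $\le 1-\varepsilon\le u$ (using $\phi\le1$), and the super-barrier's inner expression is $\ge(1-\varepsilon)+\varepsilon=1$ (using $\phi(L-c_ft_0+\tau)\ge1-\varepsilon$ and the monotone decrease of the $\phi$-argument in $t$), so $\min(\cdot,1)$ saturates to $1>u$. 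Away from the face the term decays and its Laplacian $-\mu^2 e^{-\mu(\cdot)}$ is absorbed by the linearized damping near $0$ and $1$ because $\mu$ is chosen in~\eqref{eq+3.4} with $\mu^2<\min\big(|f'(0)|/2,|f'(1)|/2\big)$; the shift and the time-decaying perturbation handle the core region where $|\phi'|\ge k$, exactly as you describe. In short, your supersolution/subsolution computation is the right outline, but without the spatial cushion the boundary comparison step is a genuine gap, and that is the part of the construction one cannot dispense with.
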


\begin{proof}
Let $\mu>0$, $\delta>0$ and $\omega>0$ are defined as in \eqref{eq+3.4}-\eqref{eq+3.6}. Let $j$, $\epsilon$, $t_0$ and $\tau$ be as in the statement, and define $\hat{\epsilon}$ as in~\eqref{defhateps}. Since $\sup_{x\in\overline{\mathcal{H}_j},\, x\cdot e_j\ge L}|u(t_0,x)-\phi(x\cdot e_j-c_f t_0 +\tau)|\le \varepsilon$, a similar argument to those of Lemmas~\ref{lemma4.1} and~\ref{lemma4.1+} imply that the following functions
$$\max\left(\phi(x\cdot e_j-c_f t-\omega\hat{\varepsilon} e^{-\delta (t-t_0)}+\omega \hat{\varepsilon}+\tau)-\hat{\varepsilon}\delta e^{-\delta (t-t_0)}-\hat{\varepsilon}\delta e^{- \mu (x\cdot e_j-L)}, 0\right)$$
and
$$\min\left(\phi(x\cdot e_j-c_f t+\omega\hat{\varepsilon} e^{-\delta (t-t_0)}-\omega \hat{\varepsilon}+\tau)+\hat{\varepsilon}\delta e^{-\delta (t-t_0)}+\hat{\varepsilon}\delta e^{- \mu (x\cdot e_j-L)},1\right)$$
are respectively a sub-solution and a super-solution of the problem satisfied by $u(t,x)$ for all $t\ge t_0$ and $x\in\overline{\mathcal{H}_j}$ with $x\cdot e_j\ge L$ (notice in particular that the assumptions made on $\phi(L-c_ft_0+\tau)$ and on~$u(t,x)$ with $t\ge t_0$ and $x\cdot e_j=L$ imply that $u$ is always trapped between the sub- and the super-solutions at these points). It then follows  that
\begin{align*}
\phi(x\cdot e_j-c_f t-\omega\hat{\varepsilon} e^{-\delta (t-t_0)}+&\omega \hat{\varepsilon}+\tau)-\hat{\varepsilon}\delta e^{-\delta (t-t_0)}-\hat{\varepsilon}\delta e^{- \mu (x\cdot e_j-L)}\le u(t,x)\\
&\le \phi(x\cdot e_j-c_f t+\omega\hat{\varepsilon} e^{-\delta (t-t_0)}-\omega \hat{\varepsilon}+\tau)+\hat{\varepsilon}\delta e^{-\delta (t-t_0)}+\hat{\varepsilon}\delta e^{- \mu (x\cdot e_j-L)}
\end{align*}
for all $t\ge t_0$ and $x\in\overline{\mathcal{H}_j}$ such that $x\cdot e_j\ge L$. For these $t$ and $x$, since $\phi'<0$, one infers that
$$u(t,x)\le \phi(x\cdot e_j-c_f t-\omega \hat{\varepsilon}+\tau)+2\hat{\varepsilon}\delta\le \phi(x\cdot e_j-c_f t+\tau)+\omega \hat{\varepsilon}\|\phi'\|_{L^{\infty}(\R)}+2\hat{\varepsilon}\delta.$$
Similarly, one can prove that $u(t,x)\ge \phi(x\cdot e_j-c_f t+\tau)-\omega \hat{\varepsilon}\|\phi'\|_{L^{\infty}(\R)}-2\hat{\varepsilon}\delta$. As a consequence, one has
$$\sup_{x\in\overline{\mathcal{H}_j},\, x\cdot e_j\ge L} |u(t,x)-\phi(x\cdot e_j-c t+\tau)|\le \omega \hat{\varepsilon}\|\phi'\|_{L^{\infty}(\R)}+2\hat{\varepsilon}\delta=M\epsilon\ \text{ for all $t\ge t_0$},$$
with the constant $M=\omega\|\phi'\|_{L^{\infty}(\R)}/\delta+2$ being independent of $j$, $\epsilon$, $t_0$ and $\tau$.
\end{proof}
\vskip 0.3cm

The last auxiliary lemma is a Liouville-type result for the transition fronts connecting $0$ and~$1$ for equation~\eqref{eq1.1} in straight infinite cylinders.

\begin{lemma}\label{lemma3.2-}
Assume that $\Omega_\infty=\R\times\omega=\big\{x\in\R^N: x_1\in\R,\,x'\in\omega\subset\R^{N-1}\big\}$ is a smooth straight cylinder with bounded open connected section $\omega$ and let $v:\R\times\overline{\Omega_\infty}\to(0,1)$ be a transition front connecting $0$ and $1$ for~\eqref{eq1.1} in $\Omega_\infty$. Then there are $\sigma\in\{-1,1\}$ and $\tau^*$ such that
$$v(t,x)=\phi(\sigma x_1-c_f t+\tau^*)\ \hbox{ for all }x=(x_1,x')\in\overline{\Omega_\infty}.$$
\end{lemma}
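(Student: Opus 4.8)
The plan is to classify transition fronts in a straight infinite cylinder $\Omega_\infty=\R\times\omega$ by reducing the problem to the one-dimensional line and then invoking the rigidity result from \cite{H}. The key observation is that a transition front $v$ in $\Omega_\infty$ must, for each time $t$, have an interface $\Gamma_t$ contained in a bounded-in-$x_1$ slab, and the transition regions in the two ends $\{x_1\to\pm\infty\}$ force $v$ to look planar far out; combined with the $C^{2,\beta}$ estimates and the Neumann condition on $\partial\omega$, I expect the cross-sectional dependence to be killed by a sliding/comparison argument, so that $v$ depends on $x_1$ and $t$ only.

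First I would show $v$ is planar, i.e.\ $v(t,x_1,x')=V(t,x_1)$ for some function $V$. To do this I would fix the families $(\Omega_t^\pm)$, $(\Gamma_t)$ in Definition~\ref{TF}; since $\omega$ is bounded, condition~\eqref{eq1.4} forces $\sup\{|x_1|:x\in\Gamma_t\}$ and $\inf\{|x_1|:x\in\Gamma_t\}$ to be within a bounded distance of each other uniformly in $t$, so $\Gamma_t$ lies in a slab $\{a_t\le x_1\le a_t+C_0\}$ with $C_0$ independent of $t$. Hence $u(t,\cdot)$ is close to $1$ on $\{x_1\le a_t-M_\varepsilon\}$ and close to $0$ on $\{x_1\ge a_t+C_0+M_\varepsilon\}$, uniformly in $x'$. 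Now for $\lambda\in\R^{N-1}$ tangent to the cylinder axis directions inside the section — more precisely, comparing $v(t,x_1,x')$ with translates $v(t,x_1,x'+h)$ is not available since $\omega$ need not be translation invariant; instead I would argue directly: the function $w=v$ satisfies a uniformly parabolic equation with Neumann data, and by the strong maximum principle together with the uniform-in-$t$ location of the interface, any two points $x',y'\in\omega$ at the same height $x_1$ and time $t$ satisfy, via a Harnack chain and the exponential stability of $0$ and $1$ (using $f'(0),f'(1)<0$), $|v(t,x_1,x')-v(t,x_1,y')|$ decaying as $|x_1-a_t|\to\infty$; to upgrade this to exact equality for all $x_1$, I would use a sliding argument in $x_1$ comparing $v$ with the planar fronts $\phi(\pm x_1-c_ft+\tau)$ as barriers, which are exact solutions in $\Omega_\infty$.

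The cleanest route, and the one I would actually carry out, is: (i) establish via Lemma~\ref{lemma4.1}-type sub/supersolutions (planar fronts $\phi(\pm x_1-c_ft+\tau)$ with exponentially small corrections) that $v$ is squeezed between two shifted planar fronts at every time, so that $v(t,x_1,x')-\phi(\sigma x_1-c_f t+\tau^\pm(t))$ is uniformly small with $\tau^+(t)-\tau^-(t)$ bounded; (ii) deduce $v$ is an almost-planar transition front with $\Gamma_t=\{x_1=c_f t\cdot\text{(sign)}+o(t)\}$; (iii) invoke the result recalled from \cite{H} — any almost-planar transition front connecting $0$ and $1$ in a straight cylinder is in fact the planar traveling front — or reprove it here by noting that $V(t,x_1):=\sup_{x'}v(t,x_1,x')$ and $\underline V(t,x_1):=\inf_{x'}v(t,x_1,x')$ are respectively a sub- and supersolution of the one-dimensional equation (this uses the Neumann condition to handle the boundary extrema), both are transition fronts on $\R$ connecting $0$ and $1$, hence by \cite{H} each equals $\phi(\sigma x_1-c_ft+\tau^*_\pm)$, and a final comparison forces $\tau^*_+=\tau^*_-$, giving $v\equiv\phi(\sigma x_1-c_ft+\tau^*)$.

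The main obstacle will be step (i)/(iii): showing that the cross-sectional oscillation of $v$ actually vanishes identically rather than merely decaying at the ends. The subtle point is that $\sup_{x'}v$ and $\inf_{x'}v$ need not a priori be smooth, and checking that $\sup_{x'}v$ is a genuine (viscosity or weak) subsolution requires care at points where the supremum over $x'$ is attained on $\partial\omega$ — this is exactly where the Neumann condition $v_\nu=0$ on $\R\times\partial\omega$ saves the argument, since the outward normal derivative of $v$ in the $x'$-variables vanishes there, so no spurious boundary contribution appears. Once $\sup_{x'}v$ and $\inf_{x'}v$ are known to be 1D transition fronts, the rigidity theorem of \cite{H} on the line does the rest, and the determination of $\sigma\in\{-1,1\}$ comes from the sign of $c_f$ together with which end carries the state $1$; the shift $\tau^*$ is then fixed by matching at any single point.
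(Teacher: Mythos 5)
Your step (i)--(ii) (the interfaces $\Gamma_t$ lie in a bounded-in-$t$ slab $\{a_t\le x_1\le a_t+C_0\}$, so that $v$ can be sandwiched between shifted planar fronts and $v$ is almost planar) matches the first part of the paper's proof, which redefines the interfaces as a bounded-in-$t$ finite union of cross-sections using~\eqref{eq1.3}--\eqref{eq1.4} and~\eqref{eq1.6}. The concern about $\omega$ not being translation invariant is well taken, and your observation that the Neumann condition makes $\sup_{x'}v$ (resp.\ $\inf_{x'}v$) a viscosity sub- (resp.\ super-) solution of the one-dimensional equation is correct. However, step (iii) has a genuine gap: the one-dimensional rigidity result of~\cite{H} classifies transition front \emph{solutions} of the PDE on the line, not viscosity sub- or super-solutions. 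A subsolution with transition-front geometry need not coincide with any shifted front $\phi(\sigma x_1-c_ft+\tau)$, and the sliding/strong-maximum-principle mechanism that proves the classification breaks down at a touching point between a subsolution and a solution (no conclusion follows). Hence ``by \cite{H} each equals $\phi(\sigma x_1-c_ft+\tau^*_\pm)$'' cannot be invoked. Moreover, even if that step were granted, the ``final comparison'' would only yield $\tau^*_-\ge\tau^*_+$ (from $\underline V\le V$ and the monotonicity of $\phi$), not the equality $\tau^*_+=\tau^*_-$ needed to conclude.

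The paper avoids these issues by applying the \emph{arguments} (sliding with planar sub- and super-solutions, not merely the statement) of~\cite[Theorem~2.6]{H} directly to $v$ in the cylinder. The point is that the comparison functions used there depend on $x_1$ only, so they automatically satisfy the homogeneous Neumann condition on $\R\times\partial\omega$; the entire sliding mechanism — sharp shift, touching point, strong maximum principle and Hopf lemma between two genuine solutions — then transfers to the cylinder without introducing the sup/inf intermediaries. If you want to carry out your route to completion, you would need to reproduce that full sliding argument at the level of $v$ itself rather than quotient out the $x'$-dependence first and then appeal to a one-dimensional black box.
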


\begin{proof}
Let $(\Omega^{\pm}_{v,t})_{t\in\R}$ and $(\Gamma_{v,t})_{t\in\R}$ be the sets satisfying~\eqref{eq1.3}-\eqref{eq1.7} and associated with the transition front~$v$. First of all, it immediately follows from~\eqref{eq1.3}-\eqref{eq1.4} that there exists $R>0$ such that, for every $t\in\R$ and every two points $(x_1,x')$ and $(y_1,y')$ on the same connected component of $\Gamma_{v,t}$, there holds $|x_1-y_1|\le R$. One then infers from~\eqref{eq1.3} and~\eqref{eq1.6} that, even if it means redefining the interfaces $\Gamma_{v,t}$, one can assume without loss of generality that
$$\Gamma_{v,t}=\bigcup_{k=1}^{n_t}\big\{x=(x_1,x')\in\Omega_\infty: x_1=\xi_{t,k}\big\}\ \hbox{ for every }t\in\R,$$
where $\xi_{t,1}<\cdots<\xi_{t,n_t}$ and the integers $n_t$ are bounded uniformly with respect to $t\in\R$. Since~$c_f>0$, one can then follow the arguments used in~\cite[Theorem~2.6]{H} (concerned with~\eqref{eq1.1} in~$\R^N$) to conclude that $v$ is a planar front $v(t,x)\equiv\phi(\pm x_1-c_ft+\tau^*)$ for some $\tau^*\in\R$ (here, the Neumann boundary conditions on $\partial\Omega_\infty$ do not make additional difficulties, since the solution $v$ can be compared with sub- and super-solutions depending only on the variable $x_1$).
\end{proof}


\subsection{Proof of Theorem~\ref{th10}}\label{sec32}

Let $t_1\in\R$, $t_2\in\R$, $\tau_1\in\R$, $\delta>0$ and $\mu>0$ be as in Lemma \ref{lemma4.1}. For every $j\in J$, $t\ge\max(t_1,t_2)$ and $x\in\overline{\mathcal{H}_j}$ with $x\cdot e_j\ge L$, there holds
\be\label{eq+4.9}\baa{l}
\phi(x\cdot e_j-c_f (t-t_2)-L)-\delta e^{-\delta (t-t_2)}-\delta e^{- \mu (x\cdot e_j-L)}\vspace{3pt}\\
\qquad\qquad\qquad\le u(t,x)\le \phi(x\cdot e_j-c_f (t-t_1)+\tau_1)+\delta e^{-\delta (t-t_1)}+\delta e^{- \mu (x\cdot e_j-L)}.\eaa
\ee

Consider now any sequence $(t_n)_{n\in\mathbb{N}}$ such that $t_n\rightarrow +\infty$ as $n\rightarrow +\infty$, and consider any~$j\in J$. Remember that $\mathcal{H}_j=\mathcal{H}_{e_j,\omega_j,x_j}$ is a straight half-cylinder as in~\eqref{defbranch}. For every $n\in\N$, let~$\mathcal{H}_{j}^n=\mathcal{H}_j-c_ft_ne_j$ be the shifted half-cylinder in the direction $-e_j$. The half-cylinders~$\mathcal{H}_j^n$ converge to a straight open cylinder $\mathcal{H}_j^\infty$ parallel to $e_j$ as $n\to+\infty$. From standard parabolic estimates, up to extraction of a subsequence, the functions
$$u_n(t,y)=u(t+t_n,y+c_ft_ne_j),$$
defined in $\R\times\overline{\Omega-c_ft_ne_j}$, converge locally uniformly in $(t,y)\in\R\times\overline{\mathcal{H}_j^\infty}$ to a solution $u_{\infty}(t,y)$~of
\begin{eqnarray*}\left\{\baa{lll}
(u_{\infty})_t-\Delta u_{\infty}=f(u_{\infty}), &t\in\R,\ y\in\mathcal{H}_j^{\infty},\vspace{3pt}\\
(u_{\infty})_\nu=0, &t\in\R,\ y\in\partial\mathcal{H}_j^{\infty}.\eaa\right.
\end{eqnarray*}
It follows from \eqref{eq+4.9} that
\begin{align*}
\phi(y\cdot e_j -c_f (t-t_2)-L)\le u_{\infty}(t,y)\le \phi(y\cdot e_j-c_f (t-t_1)+\tau_1)
\end{align*}
for all $(t,y)\in\R\times\overline{\mathcal{H}_j^{\infty}}$. In particular, $u_\infty$ is a transition front connecting $0$ and $1$ for~\eqref{eq1.1} in the straight cylinder $\mathcal{H}_j^\infty$. Lemma~\ref{lemma3.2-} (applied to a rotated cylinder) then yields the existence of~$\tau_j\in\R$ such that $u_{\infty}(t,y)=\phi(y\cdot e_j-c_f t+\tau_j)$ for all $(t,y)\in\R\times\overline{\mathcal{H}_j^{\infty}}$. Therefore,
\be\label{2.22}
u_n(t,y)\rightarrow \phi(y\cdot e_j-c_f t+\tau_j)\ \ \text{locally uniformly in $\R\times\overline{\mathcal{H}_j^{\infty}}$ as $n\rightarrow +\infty$}.
\ee

Pick now any $\varepsilon>0$, and let $K>0$ such that $\phi\ge 1-\varepsilon/2$ in $(-\infty,-K]$ and $\phi\le \varepsilon/2$ in~$[K,+\infty)$. Define $K_1=\max(K-c_ft_1-\tau_1,K-\tau_j)$ and $K_2=\min(-K-c_ft_2+L,-K-\tau_j)<K_1$. It then follows from \eqref{2.22} that
\be\label{eq+4.12}
\sup_{y\in\overline{\mathcal{H}_j^{n}},\ K_2\le y\cdot e_j\le K_1}|u_n(0,y)-\phi(y\cdot e_j+\tau_j)|\le\varepsilon\ \  \text{for $n$ large enough}.
\ee
Since $t_n\rightarrow +\infty$ as $n\rightarrow +\infty$,~\eqref{eq+4.9} implies that, for $n$ large enough,
$$\left\{\baa{ll}
0<u_n(0,y)\le \varepsilon & \text{for all $y\in\overline{\mathcal{H}_j^n}$ such that $y\cdot e_j\ge K_1$},\vspace{3pt}\\
1-\varepsilon\le u_n(0,y) <1 & \displaystyle\text{for all $y\in\overline{\mathcal{H}_j^n}$ such that $K_2-\frac{c_f}{2} t_n\le y\cdot e_j\le K_2$}.\eaa\right.$$
Since $K_1\ge K-\tau_j$ and $K_2\le -K-\tau_j$, one has $0<\phi(y\cdot e_j+\tau_j)\le \varepsilon/2\le\epsilon$ for all $y\in\overline{\mathcal{H}_j^n}$ with~$y\cdot e_j\ge K_1$, and $1-\varepsilon\le1-\epsilon/2\le \phi(y\cdot e_j+\tau_j)<1$ for all $y\in\overline{\mathcal{H}_j^n}$ with $y\cdot e_j\le K_2$. It then can be deduced that, for $n$ large enough,
$$|u_n(0,y)-\phi(y\cdot e_j+\tau_j)|\le \varepsilon\ \  \text{for all $y\in\overline{\mathcal{H}_j^n}$ with $y\cdot e_j\ge K_1$ or $K_2-\frac{c_f}{2} t_n\le y\cdot e_j\le K_2$}.$$
Together with \eqref{eq+4.12} and the definitions of $u_n(t,y)$ and $\mathcal{H}_j^n$, one gets that, for $n$ large enough,
\be\label{eq+4.14}
|u(t_n,x)-\phi(x\cdot e_j-c_f t_n+\tau_j)|\le \varepsilon\ \  \text{for all $x\in\overline{\mathcal{H}_j}$ such that $x\cdot e_j\ge K_2+\frac{c_f}{2} t_n$}.
\ee
Notice that $\phi(x\cdot e_j-c_f t_n+\tau_j)\ge 1-\varepsilon/2\ge1-\epsilon$ for $n$ large enough and for all $x\in\overline{\mathcal{H}_j}$ such that $L\le x\cdot e_j\le K_2+c_ft_n/2$. Moreover, from Lemma~\ref{lemma4.1+} one infers that, for $n$ large enough, $u(t_n,x)\ge\phi(x\cdot e_j-c_f(t_n-t_{\varepsilon})-L)-\varepsilon e^{-\delta(t_n-t_{\varepsilon})}-\varepsilon e^{-\mu(x\cdot e_j-L)}\ge 1-3\varepsilon$ for all $x\in\overline{\mathcal{H}_j}$ with $L\le x\cdot e_j\le K_2+c_ft_n/2$, where $t_\epsilon$ is given in Lemma~\ref{lemma4.1+}. Then for $n$ large enough we get that
$$|u(t_n,x)-\phi(x\cdot e_j-c_f t_n+\tau_j)|\le 3\varepsilon\ \ \text{for all $x\in\overline{\mathcal{H}_j}$ with $L\le x\cdot e_j\le K_2+\frac{c_f}{2} t_n$}.$$
Together with~\eqref{eq+4.14} one infers that, for $n$ large enough,
$$|u(t_n,x)-\phi(x\cdot e_j-c_f t_n+\tau_j)|\le 3\varepsilon\ \ \text{for all $x\in\overline{\mathcal{H}_j}$ with $x\cdot e_j\ge L$.}$$
Furthermore, since $\phi(-\infty)=1$ and $u(t,\cdot)\to1$ as $t\to+\infty$ locally uniformly in $\overline{\Omega}$, one infers that, for $n$ large enough, $\phi(L-c_ft_n+\tau_j)\ge1-3\epsilon$ and $u(t,x)\ge1-3\epsilon$ for all $t\ge t_n$ and $x\in\overline{\mathcal{H}_j}$ with $x\cdot e_j=L$. It then follows from Lemma~\ref{lemma4.2} that, for $n$ large enough, $|u(t,x)-\phi(x\cdot e_j-c_f t+\tau_j)|\le 3M\epsilon$ for all $t\ge t_n$ and $x\in\overline{\mathcal{H}_j}$ with $x\cdot e_j\ge L$, where the constant $M\ge0$ is given in Lemma~\ref{lemma4.2}. Since $\varepsilon>0$ is arbitrary, one infers that
$$u(t,x)-\phi(x\cdot e_j-c_f t+\tau_j)\to0\ \ \text{uniformly for $x\in\overline{\mathcal{H}_j}$ such that $x\cdot e_j\ge L$, as $t\rightarrow +\infty$}.$$

Since $j\in J$ was arbitrary and since $u(t,\cdot)\to1$ as $t\to+\infty$ locally uniformly in $\overline{\Omega}$, one obtains that $u(t,x)-\phi(x\cdot e_j-c_f t+\tau_j)\to0$ uniformly in $\cup_{j\in J}(\overline{\mathcal{H}_j}\cap\overline{\Omega})$ as $t\rightarrow +\infty$. Furthermore, since $u$ is increasing in time $t$, it also follows from~\eqref{complete} and~\eqref{frontlike} that $u(t,\cdot)\to1$ as $t\to+\infty$ uniformly in $\overline{\Omega\setminus\cup_{j\in J}\mathcal{H}_j}$. As a consequence,~\eqref{largetime} holds. Finally, with~\eqref{frontlike} and~\eqref{largetime}, it is elementary to check that $u$ is a transition front in the sense of Definition \ref{TF} with sets $\Gamma_t$ and~$\Omega_t^{\pm}$ defined by \eqref{eq+1.11} and~\eqref{eq+1.12}. The proof of Theorem~\ref{th10} is thereby complete.\hfill$\Box$


\subsection{Lower estimates of the time-derivatives of the solutions consi\-dered in Theorem~\ref{th10}}\label{sec33}

This section is devoted to the proof of some lower bounds for the time-derivatives of the solutions~$u$ considered in Theorem~\ref{th10}. The following lemma will actually be used later in Section~\ref{sec4.2} for the proof of Theorem~\ref{th6}.

\begin{lemma}\label{lemut}
Let $\Omega$, $I$, $J$ and $u$ be as in Theorem~$\ref{th10}$. Then, for every $0\!<\!a\!\le\! b\!<\!1$, there holds
\be\label{infab}
\inf_{(t,x)\in\R\times\overline{\Omega},\, a\le u(t,x)\le b}\,u_t(t,x)>0.
\ee
\end{lemma}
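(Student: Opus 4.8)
The plan is to argue by contradiction and compactness. Suppose \eqref{infab} fails for some $0<a\le b<1$. Then there is a sequence of points $(t_n,x_n)\in\R\times\overline{\Omega}$ with $a\le u(t_n,x_n)\le b$ and $u_t(t_n,x_n)\to0^+$ as $n\to+\infty$. By the structure of $\Omega$ (it is $B(0,L)$ together with the $m$ half-cylinders $\mathcal{H}_i$), each $x_n$ either stays in a fixed compact set, or escapes to infinity along one of the branches $\mathcal{H}_i$. After extracting a subsequence, exactly one of these alternatives holds, and if $x_n\to\infty$ it does so along a single branch $\mathcal{H}_{i_0}$. The strategy is to pass to the limit in each case, obtaining an entire solution of a limiting problem whose time-derivative vanishes at an interior space-time point and which takes a value in $[a,b]\subset(0,1)$ there, and then to derive a contradiction with the monotonicity properties available.

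First I would treat the bounded case. If $(|x_n|)_n$ and (after translating in time) the points stay in a fixed compact set, set $\tilde u_n(t,x)=u(t+t_n,x)$; these solve \eqref{eq1.1}, are bounded in $(0,1)$, so by parabolic estimates (interior and up to the Neumann boundary, using the $C^{2,\beta}$ regularity of $\partial\Omega$) a subsequence converges in $C^{1,2}_{loc}$ to an entire solution $u_\infty:\R\times\overline{\Omega}\to[0,1]$ of the same equation. Since $u_t>0$ everywhere (the solution $u$ is time-increasing by hypothesis, and $u_t>0$ strictly by the strong maximum principle applied to the linear parabolic equation satisfied by $u_t$, together with the Hopf lemma at $\partial\Omega$), $(u_\infty)_t\ge0$; but $(u_\infty)_t$ vanishes at the limit point $x_\infty$ of $x_n$ at time $0$, so by the strong maximum principle $(u_\infty)_t\equiv0$, i.e. $u_\infty$ is a stationary solution $p$ of \eqref{eqp}-type with $a\le p(x_\infty)\le b$, hence $p$ is a non-constant (since $p(x_\infty)<1$ and $p\to1$ along the branches by the limits in \eqref{frontlike}–\eqref{largetime}) solution with values in $(0,1)$. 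This is where I expect a little care: one must check $u_\infty\not\equiv0$ and $u_\infty\not\equiv1$, which follows because $a\le u_\infty(0,x_\infty)\le b$, so $u_\infty$ is a genuinely intermediate bounded entire solution with $\partial_t u_\infty\equiv0$. The contradiction comes from comparing $u$ with time-translates of itself: since $u$ is strictly time-increasing and $u(t,\cdot)\to1$ locally uniformly as $t\to+\infty$ (by \eqref{complete}), while $u(t,\cdot)\to0$ in the appropriate regions as $t\to-\infty$, no intermediate stationary profile can be a locally-uniform limit of time-translates $u(\cdot+t_n,\cdot)$ with bounded $t_n$; and if $t_n\to\pm\infty$ the limit is $0$ or $1$, contradicting $u_\infty(0,x_\infty)\in[a,b]$.

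For the unbounded case, $x_n\to\infty$ along $\mathcal{H}_{i_0}$, I would distinguish according to whether $i_0\in I$ or $i_0\in J$, but in both situations the relevant asymptotics are available: by \eqref{frontlike} (if $i_0\in I$, for $t\to-\infty$) and \eqref{largetime} (if $i_0\in J$, for $t\to+\infty$) and the time-monotonicity of $u$, on each branch $u$ is, uniformly in the appropriate space-time range, close to a planar front $\phi(\pm x\cdot e_{i_0}-c_f t+\text{const})$. Translating by $x_n$ and extracting, the functions $u_n(t,x)=u(t+t_n,x+x_n)$ converge in $C^{1,2}_{loc}$ to a solution $u_\infty$ of the heat equation $v_t=\Delta v+f(v)$ on the full space $\R^N$ (or a full straight cylinder, if $x_n$ escapes while staying near the lateral boundary of $\mathcal{H}_{i_0}$), again with $(u_\infty)_t\ge0$ and $(u_\infty)_t=0$ at an interior point, hence $(u_\infty)_t\equiv0$ and $u_\infty$ is stationary with a value in $[a,b]$. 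But the planar-front bounds force $u_\infty$ to coincide with a one-dimensional profile trapped between two shifts of the strictly decreasing front $\phi$ composed with $\pm x\cdot e_{i_0}-c_f\cdot$, which is genuinely time-dependent; a stationary solution cannot be squeezed between two such strictly-propagating barriers while remaining in $(0,1)$ — more precisely, using the comparison with $\phi(\pm x\cdot e_{i_0}-c_f(t-s)+C)$ for $s\to\pm\infty$ one gets $u_\infty\ge1-\varepsilon$ and $u_\infty\le\varepsilon$ at the same point for every $\varepsilon>0$, a contradiction. Collecting the cases, the assumption that \eqref{infab} fails is untenable, which proves the lemma. The main obstacle is organising the escape-to-infinity analysis cleanly — keeping track of which branch $x_n$ runs off to, whether it stays near the lateral boundary (so the limit domain is a cylinder rather than $\R^N$), and matching the correct asymptotic regime (\eqref{frontlike} versus \eqref{largetime}) to the sign of $t_n$; once the correct planar-front barriers are in place, the contradiction is immediate.
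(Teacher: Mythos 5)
Your argument is correct and follows essentially the same strategy as the paper: establish $u_t>0$ by the strong maximum principle, argue by contradiction via compactness, and analyze limits of rescaled translates using the front asymptotics from~\eqref{frontlike} and~\eqref{largetime}. The only real differences are organizational — the paper splits cases first by the behavior of $t_k$ (bounded, $\to\pm\infty$), which lets it immediately read off $u_t(t_\infty,x_\infty)=0$ in the bounded case and $u_t(t_k,x_k)\to-c_f\phi'(\sigma)>0$ in the escaping case, with no need to pass through a stationary limit and the parabolic maximum principle a second time as you do; your extra step is not wrong, just heavier than necessary. Two minor imprecisions worth noting: when $x_n$ escapes along a branch the limit domain is always the infinite straight cylinder $\mathcal{H}_{i_0}^\infty$ (since the cross-section is bounded), not $\R^N$; and the key quantitative fact that $|x_n\cdot e_{i_0}-c_f|t_n||$ must remain bounded (needed to get a nontrivial planar-front limit) is only implicit in your write-up, while the paper states it explicitly.
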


\begin{proof}
First of all, from the regularity assumptions on $f$ and $\Omega$ and from standard parabolic estimates, the function $u_t$ is a classical solution of the equation $(u_t)_t=\Delta u_t+f'(u)u_t$ in $\R\times\overline{\Omega}$ with Neumann boundary conditions $(u_t)_\nu=0$ on $\R\times\partial\Omega$. The strong parabolic maximum principle applied to the non-negative function~$u_t$ then yields $u_t>0$ in $\R\times\overline{\Omega}$ (the function $u_t$ can indeed not be identically $0$ because of~\eqref{frontlike}).

Assume now by way of contradiction that~\eqref{infab} does not hold. Then there is a sequence $(t_k,x_k)_{k\in\N}$ in~$\R\times\overline{\Omega}$ such that $a\le u(t_k,x_k)\le b$ for all $k\in\N$ and $u_t(t_k,x_k)\to0$ as $k\to+\infty$. Three cases may then occur up to extraction of a subsequence: either $t_k\to t_\infty\in\R$, or $t_k\to-\infty$, or $t_k\to+\infty$.

Consider first the case $t_k\to t_\infty\in\R$. Notice that, since $u$ is time-increasing and satisfies~\eqref{frontlike} and~\eqref{largetime}, it immediately follows that $u(t,x)\to1$ as $|x|\to+\infty$ with $x\in\cup_{i\in I}\overline{\mathcal{H}_i}$ and $u(t,x)\to0$ as $|x|\to+\infty$ with $x\in\cup_{j\in J}\overline{\mathcal{H}_j}$, locally uniformly in $t\in\R$. Therefore, since $0<a\le u(t_k,x_k)\le b<1$, the assumption $t_k\to t_\infty\in\R$ implies that the sequence $(x_k)_{k\in\N}$ is bounded and thus converges, up to extraction of a subsequence, to a point $x_\infty\in\overline{\Omega}$. As a consequence, $u_t(t_\infty,x_\infty)=0$, which is impossible.

Consider then the case $t_k\to-\infty$. By~\eqref{frontlike}, one gets that, up to extraction of a subsequence, there is $i\in I$ such that $x_k\in\overline{\mathcal{H}_i}$ for all $k\in\N$, with $\sup_{k\in\N}\big|x_k\cdot e_i-c_f|t_k|\big|<+\infty$. Therefore, using again~\eqref{frontlike} and standard parabolic estimates, the functions $(t,x)\mapsto u(t+t_k,x+x_k)$ converge in $C^{1,2}_{t,x}(\R\times\overline{\mathcal{H}_i^\infty})$ locally, up to extraction of another subsequence, to the front $\phi(x\cdot e_i-c_ft+\sigma)$ for some $\sigma\in\R$, where $\mathcal{H}_i^\infty$ is a straight infinite cylinder parallel to $e_i$ (as in the proof of Theorem~\ref{th10} in Section~\ref{sec32}). In particular, $u_t(t_k,x_k)\to-c_f\phi'(\sigma)>0$ as $k\to+\infty$, a contradiction. This case is then ruled out too.

The case $t_k\to+\infty$ can be ruled out similarly to the previous one, using now~\eqref{largetime} instead of~\eqref{frontlike}. The proof of Lemma~\ref{lemut} is thereby complete.
\end{proof}


\section{Transition fronts in domains with multiple branches: further properties}\label{sec4}

This section is devoted to the proof of Theorem~\ref{th6} and Corollaries~\ref{th5} and~\ref{cor4} on the existence and uniqueness of the global mean speed of any transition front connecting $0$ and $1$ in domains with multiple cylindrical branches. Throughout this section, $\Omega$ is a smooth domain with $m\,(\ge2)$ cylindrical branches. Keeping in mind~\eqref{F1}-\eqref{F2}, we consider
$$0<\theta_1\le\theta_2<1\ \hbox{ and }\ \delta>0$$
as in~\eqref{deftheta12}-\eqref{d}. Section~\ref{sec41} is devoted to the proof of some key-lemmas mimicking those of Section~\ref{sec2.1} but adapted to the new type of geometry. Theorem~\ref{th6} and Corollaries~\ref{th5} and~\ref{cor4} are proved in the following sections.


\subsection{Key-lemmas}\label{sec41}

This section is devoted to the proof of 3 key-lemmas on the spreading and contracting speeds of the solutions of the Cauchy problem which are initially close to $1$, resp. to $0$, in some sections of the branches $\mathcal{H}_i$ or in a large central region, and are equal to $0$, resp. $1$, elsewhere. We first remind that $L>0$ is given in~\eqref{branches}. For any branch $\mathcal{H}_i$ and for any $l>0$ and $R>0$ such that~$l\ge R+L$, let $v_{i,l,R}(t,x)$ denote the solution of the Cauchy problem
\begin{eqnarray}\label{3.2}\left\{\baa{lll}
(v_{i,l,R})_t-\Delta v_{i,l,R}=f(v_{i,l,R}), &t>0,\ x\in\overline{\Omega},\vspace{3pt}\\
(v_{i,l,R})_\nu=0, &t>0,\ x\in\partial\Omega\eaa\right.
\end{eqnarray}
with initial condition
$$v_{i,l,R}(0,x)=1-\delta\hbox{ for }x\in\overline{\mathcal{H}_i}\hbox{ with }l-R<x\cdot e_i<l+R\ \hbox{ and }\ v_{i,l,R}(0,x)=0\hbox{ elsewhere in }\overline{\Omega}.$$

\begin{lemma}\label{lemma3.3}
For any $\varepsilon\in(0,c_f)$, there exist some real numbers $L_{\varepsilon}>L$ and $R_{\varepsilon}>0$, such that for any $i\in\{1,\cdots,m\}$ and any $l\ge R_{\varepsilon}+L_{\varepsilon}$, there holds
\be\label{vilReps}
v_{i,l,R_{\varepsilon}}(t,x)\!\ge\!1\!-\!2\delta\ \text{for all $0\le t\le T_{\varepsilon}=\frac{l\!-\!R_{\varepsilon}\!-\!L_{\varepsilon}}{c_f-\varepsilon}$ and $x\in\overline{\mathcal{H}_i}$ with $|x\cdot e_i\!-\!l|\!\le\!(c_f\!-\!\varepsilon)t$}
\ee
$($notice that these points $x$ belong to $\overline{\mathcal{H}_i}\cap\overline{\Omega}$ since $x\cdot e_i\ge l-(c_f-\epsilon)T_\epsilon=R_\epsilon+L_\epsilon>L$$)$ and
\be\label{vilRepsbis}
v_{i,l,R_{\varepsilon}}(t,x)\ge 1-3\delta\ \text{for all $t\ge T_{\varepsilon}$ and $x\in\overline{\mathcal{H}_i}$ with $R_{\varepsilon}+L_{\varepsilon}\le x\cdot e_i\le l+(c_f-\varepsilon)t$}.
\ee
\end{lemma}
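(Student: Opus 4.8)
The plan is to transpose, to the branch geometry, the sub-solution arguments behind Lemma~\ref{lemma2.2} (and \cite[Lemmas~4.1 and~4.2]{H}), replacing the radial distance $|x-x_0|$ by the axial coordinate $s:=x\cdot e_i$ and working on the sub-domain $U_i:=\{x\in\mathcal{H}_i:\ x\cdot e_i>L\}$, which by~\eqref{branches} is contained in $\Omega$ and whose boundary consists of the cylinder wall $\partial\mathcal{H}_i\cap\{x\cdot e_i>L\}\subset\partial\Omega$ and of the inner cross-section $\overline{\mathcal{H}_i}\cap\{x\cdot e_i=L\}$. Given $\varepsilon\in(0,c_f)$ I would first fix the same parameters $C>0$, $k>0$, $\omega>0$, $\delta_\varepsilon\in(0,\delta/2)$, $C_\varepsilon>C$ and the same cut-off $h_\varepsilon$ as in Step~1 of the proof of Lemma~\ref{lemma2.2} (only the weaker form $h''_\varepsilon\le\varepsilon/2$ of~\eqref{defheps1} being needed), set $L_\varepsilon=L-C+C_\varepsilon$, and take $R_\varepsilon\ge H_\varepsilon$ large enough in terms of $h_\varepsilon(0),\omega,C_\varepsilon,C$ (the precise size dictated by Step~2). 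Since $h_\varepsilon$ is flat near $0$, the map $x\mapsto h_\varepsilon(|x\cdot e_i-l|)$ is $C^2$ (no kink at $s=l$); as it depends on $x$ only through $s=x\cdot e_i$, its Laplacian is $h''_\varepsilon(|s-l|)$ and $|\nabla(\cdot)|^2=(h'_\varepsilon(|s-l|))^2$, which is exactly the radial picture with the curvature term $(N-1)h'_\varepsilon(r)/r$ deleted; likewise, any function of $x\cdot e_i$ has vanishing normal derivative on the cylinder wall, so Neumann conditions there are automatic.

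\textbf{Step~1: the estimate~\eqref{vilReps}.} I would compare $v_{i,l,R_\varepsilon}$ on $[0,T_\varepsilon]\times\overline{U_i}$ with the expanding-bump sub-solution
\[\underline v(t,x)=\max\big(\phi(\underline\zeta(t,x))-\delta e^{-\delta t}-\delta_\varepsilon,\ 0\big),\qquad \underline\zeta(t,x)=h_\varepsilon(|x\cdot e_i-l|)-(c_f-\varepsilon)t-\omega e^{-\delta t}+\omega-R_\varepsilon+C.\]
The initial inequality $\underline v(0,\cdot)\le v_{i,l,R_\varepsilon}(0,\cdot)$ holds because $\underline v(0,\cdot)\le1-\delta-\delta_\varepsilon<1-\delta$ everywhere, while for $|x\cdot e_i-l|\ge R_\varepsilon\ge H_\varepsilon$ one has $\underline\zeta(0,x)\ge C$, hence $\phi(\underline\zeta(0,x))\le\delta$ and $\underline v(0,x)=0$. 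On the inner face $\{x\cdot e_i=L\}$ one checks, using $L_\varepsilon=L-C+C_\varepsilon$, $(c_f-\varepsilon)T_\varepsilon=l-R_\varepsilon-L_\varepsilon$ and the monotonicity of $t\mapsto-(c_f-\varepsilon)t-\omega e^{-\delta t}$ (so its minimum over $[0,T_\varepsilon]$ is attained at an endpoint), that $\underline\zeta(t,x)\ge C_\varepsilon$ there for all $t\in[0,T_\varepsilon]$, whence $\underline v=0\le v_{i,l,R_\varepsilon}$ on that face. The inequality $\mathcal L\underline v\le0$ where $\underline v>0$ is then the computation of Step~2 of the proof of Lemma~\ref{lemma2.2} (equivalently \cite[Lemma~4.1]{H}) with the nonnegative curvature contribution simply removed; it uses only $h''_\varepsilon\le\varepsilon/2$, \eqref{d} and~\eqref{eq+2.4-}. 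The comparison principle gives $v_{i,l,R_\varepsilon}\ge\underline v$, and since $\underline\zeta\le h_\varepsilon(0)+\omega-R_\varepsilon+C\le-C_\varepsilon$ on $\{|x\cdot e_i-l|\le(c_f-\varepsilon)t\}$, we get $\phi(\underline\zeta)\ge1-\delta_\varepsilon$ and $v_{i,l,R_\varepsilon}\ge1-\delta-2\delta_\varepsilon\ge1-2\delta$ there, i.e.~\eqref{vilReps}. The same argument in fact gives $v_{i,l,R_\varepsilon}(T_\varepsilon,\cdot)\ge1-2\delta$ on the somewhat larger slab $\{\underline\zeta(T_\varepsilon,\cdot)\le-C_\varepsilon\}$, which will be what Step~2 uses.

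\textbf{Step~2: the estimate~\eqref{vilRepsbis}.} Feeding in the (reinforced) conclusion of Step~1 at time $T_\varepsilon$, I would propagate a front outward for $t\ge T_\varepsilon$ by comparison with a Fife--McLeod-type ``pinned-bump'' sub-solution on $[T_\varepsilon,+\infty)\times\overline{\{x\in\mathcal{H}_i:\ x\cdot e_i>b_0\}}$, with $b_0:=R_\varepsilon+L_\varepsilon-2C_\varepsilon\ (>L)$,
\[\underline w(t,x)=\max\!\big(\phi\big((b_0+C_\varepsilon)-x\cdot e_i\big)+\phi\big(x\cdot e_i-a(t)\big)-1-\delta_\varepsilon-2\delta e^{-\delta(t-T_\varepsilon)},\ 0\big),\]
\[a(t)=\big(2l-R_\varepsilon-L_\varepsilon+C_\varepsilon\big)+(c_f-\varepsilon)(t-T_\varepsilon)+\omega e^{-\delta(t-T_\varepsilon)}.\]
Each composed front is separately a sub-solution (the first is stationary, giving $c_f\phi'<0$; for the second, $a'\le c_f-\varepsilon<c_f$, giving $(c_f-a')\phi'\le0$), so the only genuinely new point is the sign of the reaction term $f(\phi_1)+f(\phi_2)-f(\phi_1+\phi_2-1-\delta_\varepsilon-2\delta e^{-\delta(t-T_\varepsilon)})$, dealt with by the standard case analysis of~\cite{FM} together with~\eqref{d} and the decaying corrections. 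One checks that $\underline w$ vanishes on the inner face $\{x\cdot e_i=b_0\}$ for all $t\ge T_\varepsilon$ (there $\phi_1=\phi(C_\varepsilon)\le\delta_\varepsilon$), that $\underline w(T_\varepsilon,\cdot)\le v_{i,l,R_\varepsilon}(T_\varepsilon,\cdot)$ (this is where $R_\varepsilon$ is taken large, so that the reinforced Step~1 bound covers the support of $\underline w(T_\varepsilon,\cdot)$), and that $\mathcal L\underline w\le0$ where $\underline w>0$. The comparison principle then yields $v_{i,l,R_\varepsilon}\ge\underline w$ for $t\ge T_\varepsilon$, and on $\{R_\varepsilon+L_\varepsilon\le x\cdot e_i\le l+(c_f-\varepsilon)t\}$ one has $x\cdot e_i\ge b_0+2C_\varepsilon$ and $x\cdot e_i\le a(t)-C_\varepsilon$ (since $a(t)-C_\varepsilon\ge2l-R_\varepsilon-L_\varepsilon+(c_f-\varepsilon)(t-T_\varepsilon)=l+(c_f-\varepsilon)t$), hence $\phi_1\ge1-\delta_\varepsilon$ and $\phi_2\ge1-\delta_\varepsilon$, so $v_{i,l,R_\varepsilon}\ge1-3\delta_\varepsilon-2\delta\ge1-3\delta$ after, if needed, shrinking $\delta_\varepsilon$. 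This is~\eqref{vilRepsbis}.

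\textbf{Main obstacle.} The delicate part is Step~2: unlike in the second half of Lemma~\ref{lemma2.2}, here no a priori lower bound on $v_{i,l,R_\varepsilon}$ near the core of $\Omega$ is available, so the sub-solution cannot be a single front with a constant plateau behind it; it must be a genuine two-front ``bump'' in order to vanish --- and hence be comparable --- on the inner cross-section $\{x\cdot e_i=b_0\}$. This is what forces the Fife--McLeod-type reaction estimate and the careful tuning of $b_0$, $a(T_\varepsilon)$ and $R_\varepsilon$, so that the bump at time $T_\varepsilon$ sits below the region produced in Step~1 while its advancing edge still reaches $l+(c_f-\varepsilon)t$ for every $t\ge T_\varepsilon$; the rest (the $\mathcal L\le0$ verifications and the gluing with Step~1) is routine.
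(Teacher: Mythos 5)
Your proposal follows essentially the same route as the paper's proof: in Step 1 you compare with the same one-dimensional expanding-bump sub-solution built from $h_\varepsilon(|x\cdot e_i-l|)$ and the profile $\phi$, and in Step 2 you propagate a Fife--McLeod two-front bump forward in $t$ by comparison, with the crucial observation in both accounts being that the left ``anchor'' front makes the sub-solution vanish at the inner cross-section so that no a priori lower bound inside the core of $\Omega$ is needed. The only deviations from the paper are cosmetic: you take the left front $\phi_1$ stationary whereas the paper puts the same $-\omega e^{-\delta(t-T_\varepsilon)}$ drift into both $\xi_1$ and $\xi_2$ (either choice works, since the stationary front still contributes $c_f\phi_1'<0$ and the quantitative control of $f(\phi_1)$ near $\phi_1\approx 1$ goes through via the same $\alpha,C_\varepsilon$ tuning); and your correction constants $-\delta_\varepsilon-2\delta e^{-\delta(t-T_\varepsilon)}$ versus the paper's $-2\delta_\varepsilon-\delta e^{-\delta(t-T_\varepsilon)}$, together with the slightly enlarged $R_\varepsilon$ you flag, are just alternative parameter bookkeeping. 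So the content is correct and the key obstacle you identify --- that Step 2 cannot use a single front with a constant plateau behind it, but must use a genuine two-front bump to handle the inner face --- is exactly the point of the paper's argument.
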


\begin{proof}
Take any $\varepsilon\in(0,c_f)$. Let the parameters $C>0$, $k>0$ and $\omega>0$ be as in~\eqref{eq+2.2}-\eqref{eq+2.4-} in Step~1 of the proof of Lemma~\ref{lemma2.2}. Define
\be\label{5.13-}
0<\delta_{\varepsilon}=\min\Big(\frac{\varepsilon k}{4\max_{[0,1]}|f'|},\frac{\delta}{2}\Big)\le\min\Big(\frac{c_fk}{4\max_{[0,1]}|f'|},\frac{\delta}{2}\Big)
\ee
and let $\alpha\in(0,1]$ be such that
\be\label{defalpha}
0\le f(s)\le\min(|f'(0)|,|f'(1)|)\,\delta_\epsilon=\min(-\delta_\epsilon f'(0),-\delta_\epsilon f'(1))\ \hbox{ for all }s\in[1-\alpha\delta_\epsilon,1].
\ee
Let $C_\epsilon>C>0$ be such that
\be\label{Cepsbis}
\phi\ge1-\alpha\delta_\epsilon\hbox{ and }|\phi''|\le\min(|f'(0)|,|f'(1)|)\,\delta_\epsilon\hbox{ in }(-\infty,-C_\epsilon],\ \hbox{ and }\ \phi\le\delta_\epsilon\hbox{ in }[C_\epsilon,+\infty),
\ee
and let $h_\epsilon: [0,+\infty)\rightarrow (0,+\infty)$ be a $C^2([0,+\infty))$ function satisfying
\be\label{defheps}\left\{\baa{l}
0\le h'_{\varepsilon}\le 1\hbox{ and }\displaystyle h''_{\varepsilon}\le \frac{\varepsilon}{2} \text{ in $[0,+\infty)$},\vspace{3pt}\\
h'_{\varepsilon}=0 \text{ in a neighborhood of $0$},\ \ h_{\varepsilon}(r)=r \text{ in $[H_{\varepsilon},+\infty)$ for some $H_{\varepsilon}>0$}.\eaa\right.
\ee
Furthermore, define
\be\label{defRLepsbis}
R_{\varepsilon}=\max\big(H_{\varepsilon},h_{\varepsilon}(0)+2\omega+C_{\varepsilon}+C\big)>0\ \hbox{ and }\ L_{\varepsilon}=L-C+C_{\varepsilon}>L>0
\ee
(the $2\omega$ term in the definition of $R_\epsilon$, instead of only $\omega$ in the similar definition~\eqref{defRLeps}, is used at the end of the proof of the present lemma, see Footnote~\ref{footnote4} below). Finally, consider any~$i\in\{1,\cdots,m\}$ and any real number $l\ge R_\epsilon+L_\epsilon$, define~$T_\epsilon\ge0$ as in~\eqref{vilReps}, and let us show the lower bounds~\eqref{vilReps}-\eqref{vilRepsbis} for the function $v_{i,l,R_\epsilon}$ with these parameters (notice that~\eqref{vilReps} is actually immediate if $T_\epsilon=0$, namely $l=R_\epsilon+L_\epsilon$).

Let now $\underline{v}:[0,T_\epsilon]\times\overline{\Omega}\to[0,1)$ be the function defined by
\begin{eqnarray*}
\underline{v}(t,x)=\left\{\baa{lll}
\max\left(\phi(\underline{\zeta}(t,x))-\delta e^{-\delta t}-\delta_{\varepsilon},0\right) &\text{for $t\in[0,T_\epsilon]$ and $x\in\overline{\mathcal{H}_i}\cap\overline{\Omega}$},\vspace{3pt}\\
0&\text{for $t\in[0,T_\epsilon]$ and $x\in\overline{\Omega}\setminus\overline{\mathcal{H}_i}$},\eaa\right.
\end{eqnarray*}
where
$$\underline{\zeta}(t,x)=h_{\varepsilon}(|x\cdot e_i-l|) -(c_f-\varepsilon) t -\omega e^{-\delta t} +\omega-R_{\varepsilon}+C.$$
Notice that $\underline{v}$ is continuous. Indeed, since $l-L\ge l-L_{\varepsilon}\ge R_{\varepsilon}\ge H_{\varepsilon}$, it follows that, for any $0\le t\le T_{\varepsilon}$ and $x\in\overline{\mathcal{H}_i}$ with $x\cdot e_i\le L$, one has $\underline{\zeta}(t,x)\ge l-L-(c_f-\varepsilon)T_{\varepsilon}-R_{\varepsilon}+C=L_{\varepsilon}-L+C=C_{\varepsilon}$, hence $\phi(\underline{\zeta}(t,x))\le \delta_\varepsilon$ and $\underline{v}(t,x)=\max\left(\phi(\underline{\zeta}(t,x))-\delta e^{-\delta t}-\delta_{\varepsilon},0\right)=0$. The continuity of $\underline{v}$ then follows from its definition and from~\eqref{branches}. Observe also that $\underline{v}$ is of class $C^2$ in the set where it is positive.

Firstly, one can then follow the same arguments as in Step~2 of the proof of Lemma~\ref{lemma2.2} to show that the function $\underline{v}$ is a sub-solution of the problem satisfied by $v_{i,l,R_{\varepsilon}}(t,x)$ for $0\le t\le T_{\varepsilon}$ and $x\in\overline{\Omega}$. It then follows from the comparison principle that
\be\label{vReps}
v_{i,l,R_{\varepsilon}}(t,x)\ge \underline{v}(t,x)\ \hbox{ for all }0\le t\le T_{\varepsilon}\hbox{ and }x\in\overline\Omega.
\ee
For any $0\le t\le T_{\varepsilon}$ and $x\in\overline{\mathcal{H}_i}$ with $|x\cdot e_i-l|\le (c_f-\varepsilon)t$, one has $x\in\overline{\Omega}$ (as noticed in the statement of the lemma) and $\underline{\zeta}(t,x)\le |x\cdot e_i-l|+h_{\varepsilon}(0)-(c_f-\varepsilon)t+\omega-R_{\varepsilon}+C\le -C_{\varepsilon}$, hence $\phi(\underline{\zeta}(t,x))\ge 1-\alpha\delta_{\varepsilon}\ge1-\delta_\epsilon$ by~\eqref{Cepsbis}. Since $\delta_{\varepsilon}\le \delta/2$, we then get $v_{i,l,R_{\varepsilon}}(t,x)\ge \underline{v}(t,x)\ge 1-\delta_{\varepsilon}-\delta e^{-\delta t}-\delta_{\varepsilon}\ge 1-2\delta$ for all $0\le t\le T_{\varepsilon}$ and $x\in\overline{\mathcal{H}_i}$ such that $|x\cdot e_i-l|\le  (c_f-\varepsilon)t$. This provides the desired lower bound~\eqref{vilReps}.

Secondly, in order to show the second lower bound~\eqref{vilRepsbis}, let us set, for $t\ge T_{\varepsilon}$,
\begin{eqnarray*}
\underline{w}(t,x)=\left\{\baa{lll}
\max\left(\phi(\xi_1(t,x))+\phi(\xi_2(t,x))-1-2\delta_{\varepsilon}-\delta e^{-\delta (t-T_{\varepsilon})},0\right) &\text{for $x\in\overline{\mathcal{H}_i}\cap\overline{\Omega}$},\vspace{3pt}\\
0 &\text{for $x\in\overline{\Omega}\setminus\overline{\mathcal{H}_i}$},\eaa\right.
\end{eqnarray*}
where
$$\left\{\baa{rcl}
\xi_1(t,x) & = & -x\cdot e_i -\omega e^{-\delta (t-T_{\varepsilon})}+L_{\varepsilon} +C+2\omega+h_{\varepsilon}(0),\vspace{3pt}\\
\xi_2(t,x) & = & x\cdot e_i -(c_f-\varepsilon) t-\omega e^{-\delta (t-T_{\varepsilon})}-l+2\omega+h_{\varepsilon}(0)+C-R_{\varepsilon}.\eaa\right.$$
Notice that for all $t\ge T_{\varepsilon}$ and $x\in\overline{\mathcal{H}_i}$ with $x\cdot e_i\le L$, one has $\xi_1(t,x)\ge -L+L_{\varepsilon}+C+\omega+h_{\varepsilon}(0)\ge C_{\varepsilon}$, hence $\phi(\xi_1(t,x))\leq \delta_\varepsilon$ and
$$\max\big(\phi(\xi_1(t,x))+\phi(\xi_2(t,x))-1-2\delta_{\varepsilon}-\delta e^{-\delta (t-T_{\varepsilon})},0\big)\le \max(\delta_{\varepsilon}+1-1-2\delta_{\varepsilon}-\delta e^{-\delta (t-T_{\varepsilon})},0)=0.$$
Therefore, $\underline{w}$ is continuous in $[T_\epsilon,+\infty)\times\overline{\Omega}$ (and of class $C^2$ in the set where it is positive).

Let us prove that $\underline{w}(t,x)$ is a sub-solution of the problem satisfied by $v_{i,l,R_{\varepsilon}}(t,x)$ for $t\ge T_{\varepsilon}$ and $x\in\overline{\Omega}$. Let us first check the initial (at time $T_\epsilon$) and boundary conditions. First of all, at time $T_{\varepsilon}$, one has $\xi_2(T_{\varepsilon},x)\le l-l +\omega+h_{\varepsilon}(0)+C-R_{\varepsilon}\le -C_{\varepsilon}$ for all $x\in\overline{\mathcal{H}_i}\cap\overline{\Omega}$ such that~$x\cdot e_i\le l$, hence $\phi(\xi_2(T_{\varepsilon},x))\ge 1-\alpha\delta_{\varepsilon}\ge1-\delta_\epsilon$ by~\eqref{Cepsbis}. Moreover, for all such $x$, one has
$$\underline{\zeta}(T_{\varepsilon},x)\le |x\cdot e_i-l|+h_{\varepsilon}(0)-(c_f-\varepsilon)T_{\varepsilon}+\omega-R_{\varepsilon}+C=-x\cdot e_i+h_{\varepsilon}(0)+L_{\varepsilon}+\omega+C=\xi_1(T_{\varepsilon},x).$$
Since $\phi$ is decreasing, it follows that $\phi(\underline{\zeta}(T_{\varepsilon},x))\ge \phi(\xi_1(T_{\varepsilon},x))$ and, together with~\eqref{vReps},
$$\underline{w}(T_{\varepsilon},x)\le \underline{v}(T_{\varepsilon},x)\le v_{i,l,R_{\varepsilon}}(T_{\varepsilon},x)\ \  \text{for all $x\in\overline{\mathcal{H}_i}\cap\overline{\Omega}$ such that $x\cdot e_i\le l$}.$$
Since $l\ge L_{\varepsilon}+R_{\varepsilon}$, one also has $\xi_1(T_{\varepsilon},x)\le-l+L_{\varepsilon}+C+\omega+h_{\varepsilon}(0)\le -R_{\varepsilon}+C+\omega+h_{\varepsilon}(0)\le -C_{\varepsilon}$ for all $x\in\overline{\mathcal{H}_i}$ with $x\cdot e_i\ge l$ (notice that these points belong to $\overline{\Omega}$ by~\eqref{branches}), hence~$\phi(\xi_1(t,x))\ge 1-\delta_{\varepsilon}\ge1-\alpha\delta_{\varepsilon}$. Moreover, for all such $x$,
$$\underline{\zeta}(T_{\varepsilon},x)\!\le\!|x\cdot e_i-l|+h_{\varepsilon}(0)-(c_f-\varepsilon)T_{\varepsilon}+\omega-R_{\varepsilon}+C\!=\!x\cdot e_i-l+h_{\varepsilon}(0)-(c_f-\varepsilon)T_{\varepsilon}+\omega-R_{\varepsilon}+C\!=\!\xi_2(T_{\varepsilon},x).$$
Since $\phi$ is decreasing, it follows that $\phi(\underline{\zeta}(T_{\varepsilon},x))\ge \phi(\xi_2(t,x))$ and
$$\underline{w}(T_{\varepsilon},x)\le \underline{v}(T_{\varepsilon},x)\le v_{i,l,R_{\varepsilon}}(T_{\varepsilon},x)\ \  \text{for all $x\in\overline{\mathcal{H}_i}$ such that $x\cdot e_i\ge l$}.$$
Therefore, one infers that
$$\underline{w}(T_{\varepsilon},x)\le \underline{v}(T_{\varepsilon},x)\le v_{i,l,R_{\varepsilon}}(T_{\varepsilon},x)\ \  \text{for all $x\in\overline\Omega$}.$$
For all $t\ge T_\epsilon$ and $x\in\overline{\mathcal{H}_i}\cap\overline{\Omega}$ with $x\cdot e_i\le L$, one has $\xi_1(t,x)\ge -L+L_\varepsilon+C+\omega+h_\varepsilon(0)\ge C_\varepsilon$, hence $\phi(\xi_1(t,x))\le \delta_\varepsilon$ and $\underline{w}(t,x)=0$. One also has $\underline{w}(t,x)=0$ for all $t\ge T_\epsilon$ and $x\in\overline{\Omega}\setminus\overline{\mathcal{H}_i}$. Moreover, it is immediate to see that $ \underline{w}_\nu(t,x)=0$ for all $t\ge T_{\varepsilon}$ and $x\in\partial\mathcal{H}_i$ with $x\cdot e_i>L$. As a consequence, $\underline{w}_\nu(t,x)=0$ for all $t\ge T_\epsilon$ and $x\in\partial\Omega$ such that $\underline{w}(t,x)>0$.

Let us now verify that
$$\mathcal{L} \underline{w}(t,x)=\underline{w}_t(t,x) -\Delta \underline{w}(t,x) -f(\underline{w}(t,x))\le 0\hbox{ for all }t\ge T_{\varepsilon}\hbox{ and }x\in\overline{\mathcal{H}_i}\cap\overline{\Omega}\hbox{ with }\underline{w}(t,x)>0$$
(remember that $\underline{w}\equiv 0$ in $[T_\epsilon,+\infty)\times(\overline{\Omega}\setminus\overline{\mathcal{H}_i})$). For such $(t,x)$, a direct computation yields
\be\baa{rcl}\label{eq5.14}
\mathcal{L} \underline{w}(t,x) & = & -\phi''(\xi_1(t,x))+\omega\delta e^{-\delta (t-T_{\varepsilon})}\phi'(\xi_1(t,x))+\varepsilon\phi'(\xi_2(t,x))+\omega\delta e^{-\delta (t-T_{\varepsilon})}\phi'(\xi_2(t,x))\,\nonumber\vspace{3pt}\\
& & +\delta^2 e^{-\delta (t-T_{\varepsilon})}+f(\phi(\xi_2(t,x)))-f(\underline{w}(t,x))\vspace{3pt}\\
& \le & c_f \phi'(\xi_1(t,x))+\omega\delta e^{-\delta (t-T_{\varepsilon})}\phi'(\xi_1(t,x))+\delta^2 e^{-\delta (t-T_{\varepsilon})}+f(\phi(\xi_1(t,x)))-f(\underline{w}(t,x))\vspace{3pt}\\
& & +f(\phi(\xi_2(t,x))),\eaa
\ee
since $\phi$ is decreasing and $\phi''+c_f\phi'+f(\phi)=0$.

Consider first the case $x\cdot e_i\le l$. One has $\xi_2(t,x)\le l-(c_f-\varepsilon)T_{\varepsilon}-l+2\omega+h_{\varepsilon}(0)+C-R_{\varepsilon}\le -C_{\varepsilon}$, hence $\phi(\xi_2(t,x))\ge 1-\alpha\delta_{\varepsilon}$ and $f(\phi(\xi_2(t,x)))\le\min(|f'(0)|, |f'(1)|)\delta_\varepsilon=\min(-\delta_\epsilon f'(0),-\delta_\epsilon f'(1))$ by~\eqref{defalpha}. If $\xi_1(t,x)<-C$, then $1>\phi(\xi_1(t,x))\ge 1-\delta$ and then
$$1>\phi(\xi_1(t,x))\ge\underline{w}(t,x)\ge 1-\delta+1-\alpha\delta_{\varepsilon}-1-2\delta_{\varepsilon}-\delta\ge 1-4\delta.$$
It follows from \eqref{d} that $f(\phi(\xi_1(t,x)))-f(\underline{w}(t,x))\le(f'(1)/2)(1-\phi(\xi_2(t,x))+2\delta_{\varepsilon}+\delta e^{-\delta (t-T_{\varepsilon})})$ and, together with $\phi'<0$,
$$\mathcal{L} \underline{w}(t,x)\le  \delta^2 e^{-\delta (t-T_{\varepsilon})}+\frac{f'(1)}{2}(1-\phi(\xi_2(t,x))+2\delta_{\varepsilon}+\delta e^{-\delta (t-T_{\varepsilon})})-\delta_\varepsilon f'(1)\le 0.$$
If $-C\le \xi_1(t,x)\le C$, then $\phi'(\xi_1(t,x))\le-k$ and $f(\phi(\xi_2(t,x)))\le \delta_{\varepsilon} \max_{[0,1]}|f'|$. It then follows from the definition of $\omega$ in~\eqref{eq+2.4-}, from~\eqref{5.13-} and from the property $\alpha\in(0,1]$ that
$$\mathcal{L} \underline{w}(t,x)\le -c_f k -k\omega\delta e^{-\delta (t-T_{\varepsilon})}+\delta^2 e^{-\delta (t-T_{\varepsilon})}+\max_{[0,1]}|f'|(\alpha\delta_\epsilon+2\delta_{\varepsilon}+\delta e^{-\delta (t-T_{\varepsilon})}))+\delta_{\varepsilon}\max_{[0,1]}|f'|\le 0.$$
If $\xi_1(t,x)>C$, then $0<\phi(\xi_1(t,x))\le \delta$, $\underline{w}(t,x)\le \delta-2\delta_{\varepsilon}\le\delta$, and $\underline{w}(t,x)\le\phi(\xi_1(t,x))$. It follows from~\eqref{d} that $f(\phi(\xi_1(t,x)))-f(\underline{w}(t,x))\le(f'(0)/2)(1-\phi(\xi_2(t,x))+2\delta_{\varepsilon}+\delta e^{-\delta (t-T_{\varepsilon})})$ and
$$\mathcal{L} \underline{w}(t,x)\le  \delta^2 e^{-\delta (t-T_{\varepsilon})}+\frac{f'(0)}{2}(1-\phi(\xi_2(t,x))+2\delta_{\varepsilon}+\delta e^{-\delta (t-T_{\varepsilon})})-\delta_\varepsilon f'(0)\le 0.$$

Consider now the case $x\cdot e_i\ge l$. One then has
$$\xi_1(t,x)\le -l+L_{\varepsilon}+C+2\omega+h_{\varepsilon}(0)\le -R_{\varepsilon}+C+2\omega+h_{\varepsilon}(0)\le -C_{\varepsilon},$$
hence $\phi(\xi_1(t,x))\ge 1-\alpha\delta_{\varepsilon}\ge1-\delta$ and
$$|\phi''(\xi_1(t,x))|\le\min(|f'(0)|,|f'(1)|)\delta_\epsilon=\min(-\delta_\epsilon f'(0),-\delta_\epsilon f'(1))\le\delta_\epsilon\max_{[0,1]}|f'|$$
by~\eqref{Cepsbis}. If $\xi_2(t,x)<-C$, then $1>\phi(\xi_2(t,x))\ge 1-\delta$ and $\phi(\xi_2(t,x))\ge\underline{w}(t,x)\ge 1-4\delta$. It follows from~\eqref{d} that $f(\phi(\xi_2(t,x)))-f(\underline{w}(t,x))\le(f'(1)/2)(1-\phi(\xi_1(t,x))+2\delta_{\varepsilon}+\delta e^{-\delta (t-T_{\varepsilon})})$ and, together with~\eqref{eq5.14} and $\phi'<0$,
$$\mathcal{L}\underline{w}(t,x)\le -\delta_{\varepsilon}f'(1)+\delta^2 e^{-\delta (t-T_{\varepsilon})}+\frac{f'(1)}{2}(1-\phi(\xi_1(t,x))+2\delta_{\varepsilon}+\delta e^{-\delta (t-T_{\varepsilon})})\le 0.$$
Similarly, there holds $\mathcal{L}\underline{w}(t,x)\le 0$ if $\xi_2(t,x)>C$. Finally, if $-C\le \xi_2(t,x)\le C$, then~$\phi'(\xi_2(t,x))\le-k$ and, together with~\eqref{5.13-} and~\eqref{eq5.14},
$$\baa{rcl}
\mathcal{L}\underline{w}(t,x) & \le & \displaystyle-\phi''(\xi_1(t,x))-\varepsilon k -\omega k\delta e^{-\delta (t-T_{\varepsilon})} +\delta^2 e^{-\delta (t-T_{\varepsilon})}+\max_{[0,1]} |f'| (3\delta_{\varepsilon}+\delta e^{-\delta (t-T_{\varepsilon})})\vspace{3pt}\\
& \le & \displaystyle4\delta_{\varepsilon}\max_{[0,1]} |f'| -\varepsilon k -\delta e^{-\delta (t-T_{\varepsilon})}\Big(\omega k-\delta-\max_{[0,1]} |f'|\Big)\le 0.\eaa$$

Finally, one concludes that $\mathcal{L}\underline{w}(t,x)\le 0$ for all $t\ge T_{\varepsilon}$ and $x\in\overline{\mathcal{H}_i}\cap\overline{\Omega}$. The comparison principle then yields $v_{i,l,R_{\varepsilon}}(t,x)\ge \underline{w}(t,x)$ for all $t\ge T_{\varepsilon}$ and $x\in\overline{\Omega}$. Consider now any $t\ge T_\epsilon$ and $x\in\overline{\mathcal{H}_i}$ such that $R_{\varepsilon}+L_{\varepsilon}\le x\cdot e_i\le l +(c_f-\varepsilon)t$. There holds $\xi_1(t,x)\le -R_{\varepsilon}-L_{\varepsilon}+L_{\varepsilon}+C+2\omega+h_{\varepsilon}(0)\le -C_{\varepsilon}$ and $\xi_2(t,x)\le l-l+2\omega+h_{\varepsilon}(0)+C-R_{\varepsilon}\le -C_{\varepsilon}$,\footnote{\label{footnote4}In these upper bounds $\xi_1(t,x)\le-C_\epsilon$ and $\xi_2(t,x)\le-C_\epsilon$, we use the $2\omega$ term in the definition~\eqref{defRLepsbis} of $R_\epsilon$.} hence $\phi(\xi_1(t,x))\ge 1-\alpha\delta_\varepsilon\ge1-\delta_\epsilon$ and~$\phi(\xi_2(t,x))\ge 1-\alpha\delta_\varepsilon\ge 1-\delta_\varepsilon$. Since $\delta_{\varepsilon}\le \delta/2$ one gets that
$$v_{i,l,R_{\varepsilon}}(t,x)\ge\underline{w}(t,x)\ge 1-\delta_{\varepsilon}+1-\delta_{\varepsilon}-1-2\delta_{\varepsilon}-\delta e^{-\delta (t-T_{\varepsilon})}\ge 1-3\delta,$$
which is the desired inequality. The proof of Lemma~\ref{lemma3.3} is thereby complete.  
\end{proof}
\vskip 0.3cm

For any branch $\mathcal{H}_i$, and for any $l>0$ and $R>0$ such that $l-R\ge L$, let now $w_{i,l,R}(t,x)$ denote the solution of the Cauchy problem
\begin{eqnarray*}\left\{\baa{lll}
(w_{i,l,R})_t-\Delta w_{i,l,R}=f(w_{i,l,R}), &t>0,\ x\in\overline{\Omega},\vspace{3pt}\\
(w_{i,l,R})_{\nu}=0, &t>0,\ x\in\partial\Omega\eaa\right.
\end{eqnarray*}
with initial condition $w_{i,l,R}(0,x)=\delta$ for $x\in\overline{\mathcal{H}_i}$ such that $l-R<x\cdot e_i<l+R$ and $w_{i,l,R}(0,x)=1$ elsewhere in $\overline{\Omega}$.

\begin{lemma}\label{lemma5.2}
For any $\varepsilon\in(0,c_f)$, there exist some real numbers $L_{\varepsilon}>L$ and $R_{\varepsilon}>0$ such that, for any $i\in\{1,\cdots,m\}$, any $R>R_{\varepsilon}$ and any $l\ge R+L_{\varepsilon}$, there holds
$$w_{i,l,R}(t,x)\le 2\delta\ \text{for all $0\le t\le T_{\varepsilon}=\frac{R\!-\!R_{\varepsilon}}{c_f\!+\!\varepsilon}$ and $x\in\overline{\mathcal{H}_i}$ with $|x\cdot e_i\!-\!l|\le R\!-\!R_{\varepsilon}\!-\!(c_f\!+\!\varepsilon)t$}$$
$($notice that these points $x$ belong to $\overline{\mathcal{H}_i}\cap\overline{\Omega}$ since $x\cdot e_i\ge l-R+R_\epsilon+(c_f+\epsilon)T_\epsilon=l\ge R+L_\epsilon>L$$)$.
\end{lemma}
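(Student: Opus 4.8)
The plan is to follow the proof of Lemma~\ref{lemma2.4}(i), replacing the radial variable $|x|$ by the one-dimensional variable $|x\cdot e_i-l|$ along the axis of the branch $\mathcal{H}_i$, and extending the super-solution by the constant $1$ on $\overline{\Omega}\setminus\overline{\mathcal{H}_i}$. For a fixed $\varepsilon\in(0,c_f)$, I would first choose the parameters $C>0$, $k>0$, $\omega>0$ and $\delta_\varepsilon>0$ as in~\eqref{eq+2.2}--\eqref{eq+2.4-}, $C_\varepsilon>C$ as in~\eqref{ghsa1}, and a cut-off $h_\varepsilon$ satisfying~\eqref{defheps} (note that only~\eqref{defheps}, and not the curvature condition of~\eqref{defheps1}, will be needed, since no curvature term will appear), and then set $R_\varepsilon=\max\big(H_\varepsilon,h_\varepsilon(0)+\omega+C_\varepsilon+C\big)$ and $L_\varepsilon=L-C+C_\varepsilon>L$. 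These depend only on $\varepsilon$ (and the fixed data $f$, $\delta$, $\Omega$), not on $i$, $l$ or $R$. Since $R>R_\varepsilon\ge H_\varepsilon$ and $l\ge R+L_\varepsilon$, the whole slab $\{x\in\overline{\mathcal{H}_i}:|x\cdot e_i-l|<R\}$ is contained in $\{x\cdot e_i>L\}$, which by~\eqref{branches} lies in $\overline{\Omega}\setminus B(0,L)$ and on which $\partial\mathcal{H}_i$ is a piece of $\partial\Omega$ parallel to $e_i$.

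Next I would introduce, for $R>R_\varepsilon$, $l\ge R+L_\varepsilon$, $i\in\{1,\cdots,m\}$ and $0\le t\le T_\varepsilon=(R-R_\varepsilon)/(c_f+\varepsilon)$, the function $\overline{w}$ equal to $\min\big(\phi(\overline\zeta(t,x))+\delta e^{-\delta t}+\delta_\varepsilon,\,1\big)$ for $x\in\overline{\mathcal{H}_i}\cap\overline{\Omega}$ and equal to $1$ for $x\in\overline{\Omega}\setminus\overline{\mathcal{H}_i}$, where $\overline\zeta(t,x)=-h_\varepsilon(|x\cdot e_i-l|)-(c_f+\varepsilon)t+\omega e^{-\delta t}-\omega+R-C$; observe that $\overline\zeta$ is smooth in $x$ because $h_\varepsilon$ is constant near $0$, so there is no corner at $x\cdot e_i=l$. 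Then I would check that $\overline w$ is a continuous super-solution of the Cauchy problem solved by $w_{i,l,R}$. The inequality $\overline w(0,\cdot)\ge w_{i,l,R}(0,\cdot)$ is immediate: in the slab $\overline w(0,x)\ge\delta+\delta_\varepsilon>\delta$, while for $|x\cdot e_i-l|\ge R$ one has $h_\varepsilon(|x\cdot e_i-l|)=|x\cdot e_i-l|$ and $\overline\zeta(0,x)\le-C$, hence $\overline w(0,x)=1$. The point I expect to be the main obstacle is the junction of $\mathcal{H}_i$ with the rest of $\Omega$, and it is handled exactly as in the proof of Lemma~\ref{lemma3.3}: using $l\ge R+L_\varepsilon$ one shows that for $0\le t\le T_\varepsilon$ and $x\in\overline{\mathcal{H}_i}$ with $x\cdot e_i\le L$ one has $\overline\zeta(t,x)\le-C_\varepsilon$, hence $\overline w\equiv1$ there, which matches the value $1$ imposed on $\overline{\Omega}\setminus\overline{\mathcal{H}_i}$; consequently $\overline w$ is continuous, and the region where $\overline w<1$ lies in $\{x\in\overline{\mathcal{H}_i}:x\cdot e_i>L\}\subset\overline{\Omega}\setminus B(0,L)$, where $\overline w_\nu=0$ on $\partial\Omega$ because $\overline\zeta$ depends only on $x\cdot e_i$ and $\nu\cdot e_i=0$ on the lateral boundary of $\mathcal{H}_i$.

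It then remains to verify $\mathcal{L}\overline w=\overline w_t-\Delta\overline w-f(\overline w)\ge0$ wherever $\overline w<1$, which is the same computation as in the proof of \cite[Lemma~4.2]{H} and of Lemma~\ref{lemma2.4}(i): here $\overline w$ depends on $x$ only through the scalar $x\cdot e_i$, so $\Delta\overline w$ reduces to the one-dimensional $\partial_{x\cdot e_i}^2\overline w$ and the radial term $(N-1)h'_\varepsilon/|x-x_0|$ simply disappears, which only makes the estimate easier; the sign analysis according to whether $\overline\zeta<-C$, $-C\le\overline\zeta\le C$ or $\overline\zeta>C$ uses~\eqref{d} and the choices of $\omega$ and $\delta_\varepsilon$ just as before. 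The comparison principle then yields $w_{i,l,R}(t,x)\le\phi(\overline\zeta(t,x))+\delta e^{-\delta t}+\delta_\varepsilon$ for all $0\le t\le T_\varepsilon$ and $x\in\overline{\Omega}$. Finally, for $x\in\overline{\mathcal{H}_i}$ with $|x\cdot e_i-l|\le R-R_\varepsilon-(c_f+\varepsilon)t$ one has $h_\varepsilon(|x\cdot e_i-l|)\le|x\cdot e_i-l|+h_\varepsilon(0)$, whence $\overline\zeta(t,x)\ge R_\varepsilon-h_\varepsilon(0)-\omega-C\ge C_\varepsilon$ by the choice of $R_\varepsilon$, so $\phi(\overline\zeta(t,x))\le\delta_\varepsilon$ and $w_{i,l,R}(t,x)\le 2\delta_\varepsilon+\delta\le2\delta$ since $\delta_\varepsilon\le\delta/2$; such points $x$ indeed lie in $\overline{\mathcal{H}_i}\cap\overline{\Omega}$ because $x\cdot e_i\ge l-(R-R_\varepsilon)\ge L_\varepsilon>L$. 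The genuinely new ingredient compared with the exterior-domain case is just the gluing of the super-solution across the mouth of the branch, which the separation $l\ge R+L_\varepsilon>R+L$ takes care of.
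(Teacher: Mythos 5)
Your proposal is correct and follows essentially the same route as the paper's proof: the same choice of parameters $C$, $k$, $\omega$, $\delta_\varepsilon$, $C_\varepsilon$, $h_\varepsilon$, $R_\varepsilon=\max(H_\varepsilon,h_\varepsilon(0)+\omega+C_\varepsilon+C)$ and $L_\varepsilon=L-C+C_\varepsilon$, the same super-solution $\overline{w}=\min\big(\phi(\overline\zeta)+\delta e^{-\delta t}+\delta_\varepsilon,1\big)$ extended by $1$ on $\overline{\Omega}\setminus\overline{\mathcal{H}_i}$ with the same phase $\overline\zeta$, the same continuity check across the mouth of the branch using $l\ge R+L_\varepsilon$, and the same final estimate $\overline\zeta\ge C_\varepsilon$ in the contracting slab. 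Your observation that the radial term $(N-1)h'_\varepsilon/|x-x_0|$ disappears, so that only the weaker condition $h''_\varepsilon\le\varepsilon/2$ from~\eqref{defheps} is needed rather than the full radial-curvature condition of~\eqref{defheps1}, is also exactly what the paper does.
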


\begin{proof}
Take any $\varepsilon\in(0,c_f)$. Let the positive parameters $C$, $k$, $\omega$, $\delta_{\varepsilon}$, and $C_{\varepsilon}\ge C$ be defined as in~\eqref{eq+2.2}-\eqref{ghsa1} in Step~1 of the proof of Lemma~\ref{lemma2.2}. Let $h_\epsilon:[0,+\infty)\to(0,+\infty)$ be a $C^2$ function satisfying~\eqref{defheps} with some $H_\epsilon>0$, and let $R_\epsilon>0$ and $L_\epsilon>L$ be defined as in~\eqref{defRLeps} in Step~1 in the proof of Lemma~\ref{lemma2.2}, namely
$$R_{\varepsilon}=\max\big(H_{\varepsilon},h_{\varepsilon}(0)+\omega+C_{\varepsilon}+C\big)>0\ \hbox{ and }L_{\varepsilon}=L-C+C_{\varepsilon}>L>0.$$

Consider now any $i\in\{1,\cdots,m\}$, any $R>R_{\varepsilon}$ and any $l\ge R+L_\epsilon$. Set $T_{\varepsilon}=(R-R_{\varepsilon})/(c_f+\varepsilon)$ and define the function
\begin{eqnarray*}
\overline{w}(t,x)=\left\{\baa{lll}
\min\left(\phi(\overline{\zeta}(t,x))+\delta e^{-\delta t}+\delta_{\varepsilon},1\right)  &\text{ for $t\ge0$ and $x\in\overline{\mathcal{H}_i}\cap\overline{\Omega}$},\vspace{3pt}\\
1 &\text{ for $t\ge0$ and $x\in\overline{\Omega}\setminus\overline{\mathcal{H}_i}$,}\eaa\right.
\end{eqnarray*}
where
$$\overline{\zeta}(t,x)=-h_{\varepsilon}(|x\cdot e_i-l|) -(c_f+\varepsilon) t+\omega e^{-\delta t}-\omega+R-C.$$
Pick any $0\le t\le T_{\varepsilon}$ and $x\in\overline{\mathcal{H}_i}\cap\overline{\Omega}$ such that $x\cdot e_i\le L$. Since $l\ge R+ L_{\varepsilon}> L\ge x\cdot e_i$ and $l-L\ge l-L_{\varepsilon}\ge R>R_{\varepsilon}\ge H_{\varepsilon}$, there holds $h_{\varepsilon}(|x\cdot e_i-l|)= |x\cdot e_i-l|=l-x\cdot e_i$, $\overline{\zeta}(t,x)\le -l+x\cdot e_i+R-C\le -L_{\varepsilon}+L-C=-C_{\varepsilon}$ and  $\phi(\overline{\zeta}(t,x))\ge 1-\delta_\varepsilon$, hence~$\overline{w}(t,x)=\min\left(\phi(\overline{\zeta}(t,x))+\delta e^{-\delta t}+\delta_{\varepsilon},1\right)=1$. Given its definition, the function $\overline{w}$ is then continuous in $[0,+\infty)\times\overline{\Omega}$ (and of class $C^2$ in the set where it is less than $1$).

One can then follow the same argument as for the proof of~\eqref{eq+2.11} in Lemma~\ref{lemma2.4} to get that $\overline{w}$ is a super-solution of the problem satisfied by $w_{i,l,R}(t,x)$ for $0\le t\le T_{\varepsilon}$ and~$x\in\overline\Omega$. It then can be inferred from the comparison principle that $w_{i,l,R_{\varepsilon}}(t,x)\le \overline{w}(t,x)$ for all $0\le t\le T_{\varepsilon}$ and $x\in\overline\Omega$. Finally, take any $0\le t\le T_{\varepsilon}$ and $x\in\overline{\mathcal{H}_i}$ such that~$|x\cdot e_i-l|\le R-R_{\varepsilon}- (c_f+\varepsilon)t$. Notice that $x\cdot e_i\ge l-R+R_\epsilon\ge L_\epsilon+R_\epsilon>L$, hence $x\in\overline{\Omega}$. Furthermore, $\overline{\zeta}(t,x)\ge -|x\cdot e_i-l|-h_{\varepsilon}(0)-(c_f+\varepsilon)t-\omega+R-C\ge R_\epsilon-h_\epsilon(0)-\omega-C\ge C_{\varepsilon}$ and $\phi(\overline{\zeta}(t,x))\le\delta_{\varepsilon}$. It then follows from $\delta_{\varepsilon}\le \delta/2$ that
$$w_{i,l,R_{\varepsilon}}(t,x)\le \overline{w}(t,x)\le \delta_{\varepsilon}+\delta e^{-\delta t}+\delta_{\varepsilon}\le 2\delta.$$
The proof of Lemma~\ref{lemma5.2} is thereby complete.
\end{proof}
\vskip 0.3cm

The last lemma of this section is devoted to the proof of some upper estimates for contracting solutions which are initially close to $0$ in a large central region and in some parts of all branches. For any $R>L$, let $\widetilde{w}_R(t,x)$ denote the solution of the Cauchy problem
\be\label{defwtildeR}\left\{\baa{lll}
(\widetilde{w}_R)_t-\Delta \widetilde{w}_R=f(\widetilde{w}_R), &t>0,\ x\in\overline{\Omega},\vspace{3pt}\\
(\widetilde{w}_R)_\nu=0, &t>0,\ x\in\partial\Omega\eaa\right.
\ee
with initial condition
$$\widetilde{w}_R(0,x)=\delta\hbox{ for }x\in\overline{\Omega}\cap\Big(\overline{B(0,L)}\cup\bigcup_{i=1}^m\big\{x\in\overline{\mathcal{H}_i}: x\cdot e_i<R\big\}\Big)\hbox{ and }\widetilde{w}_R(0,x)=1\hbox{ elsewhere in }\overline{\Omega}.$$

\begin{lemma}\label{lemma5.3+}
For any $\varepsilon\in(0,c_f)$, there exists $R_{\varepsilon}>0$ such that, for any $R\ge R_{\varepsilon}+L$, there holds
\begin{align*}
\widetilde{w}_R(t,x)\le 3\delta&\ \text{ for all $0\le t\le T_{\varepsilon}=\frac{R-R_{\varepsilon}-L}{c_f+\varepsilon}$}\vspace{3pt}\\
&\ \ \text{and $x\in\overline{\Omega}\cap\Big(\overline{B(0,L)}\cup\bigcup_{i=1}^m\big\{x\in\overline{\mathcal{H}_i}: x\cdot e_i\le R-R_{\varepsilon}-(c_f+\varepsilon) t\big\}\Big)$}.
\end{align*}
\end{lemma}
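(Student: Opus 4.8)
The plan is to mimic the construction used for Lemma~\ref{lemma5.2}, but now with a radially symmetric contracting front in the central ball glued to planar contracting fronts in each branch. First I would fix $\epsilon\in(0,c_f)$ and recall from Step~1 of the proof of Lemma~\ref{lemma2.2} the parameters $C>0$, $k>0$, $\omega>0$, $\delta_\epsilon\in(0,\delta/2]$, $C_\epsilon>C$, and a $C^2$ function $h_\epsilon:[0,+\infty)\to(0,+\infty)$ satisfying~\eqref{defheps}, and set $R_\epsilon=\max\big(H_\epsilon,h_\epsilon(0)+\omega+C_\epsilon+C\big)$ (possibly enlarging $R_\epsilon$ and $L_\epsilon$ so that the previous lemmas also hold). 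Given $R\ge R_\epsilon+L$, set $T_\epsilon=(R-R_\epsilon-L)/(c_f+\epsilon)$ (the conclusion being immediate if $T_\epsilon=0$).

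Next I would define the candidate super-solution $\overline w:[0,+\infty)\times\overline\Omega\to(0,1]$ by a single formula on the whole domain, using the geodesic-like radial variable adapted to the branched geometry:
\begin{equation*}
\overline w(t,x)=\min\big(\phi(\overline\zeta(t,x))+\delta e^{-\delta t}+\delta_\epsilon,\,1\big),
\end{equation*}
where $\overline\zeta(t,x)=-g(x)-(c_f+\epsilon)t+\omega e^{-\delta t}-\omega+R-C$ and $g(x)=h_\epsilon(|x|)$ for $x\in\overline\Omega\cap\overline{B(0,L)}$ while $g(x)=h_\epsilon\big(|x|\wedge(\text{something})\big)$... — more carefully, one takes $g$ to be a fixed smooth function that equals $h_\epsilon(|x|)$ near the central ball and equals $h_\epsilon(x\cdot e_i)$ (up to an additive constant chosen so the two pieces match on the gluing region near $\partial B(0,L)$) on the branch $\mathcal H_i$ far from the origin, with $0\le|\nabla g|\le 1$, $\Delta g\le\epsilon/2$ everywhere, and $g(x)=x\cdot e_i-\text{const}$ for $x\cdot e_i$ large in each branch. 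Such a $g$ exists by~\eqref{branches} (finitely many branches, all with bounded cross-sections), and on the central region one uses the radial estimate $\tfrac{N-1}{r}h'_\epsilon+h''_\epsilon\le\epsilon/2$; on each branch the one-dimensional estimate $h''_\epsilon\le\epsilon/2$ suffices since $\nu(x)\cdot e_i=0$ there. Then I would verify: (i) initial condition $\overline w(0,\cdot)\ge\widetilde w_R(0,\cdot)$, using that $\overline\zeta(0,x)\le-C_\epsilon$ on the central ball and on $\{x\cdot e_i<R\}$ so $\overline w(0,x)=1$ there, and $\overline\zeta(0,x)\ge-C$ outside so $\overline w(0,x)\ge1-\delta+\delta+\delta_\epsilon\ge1$; (ii) Neumann condition, which holds because on $\partial\Omega$ one has $\nu(x)\cdot\nabla g(x)=0$ (near the central ball $\overline w\equiv1$, and in the branches $\nu\cdot e_i=0$); (iii) the differential inequality $\mathcal L\overline w=\overline w_t-\Delta\overline w-f(\overline w)\ge0$ wherever $\overline w<1$, which reduces — exactly as in the proof of~\eqref{eq+2.11} in Lemma~\ref{lemma2.4} via $\phi''+c_f\phi'+f(\phi)=0$ and the choices~\eqref{eq+2.4-} — to checking three regimes $\overline\zeta<-C$, $-C\le\overline\zeta\le C$, $\overline\zeta>C$, the only new term being $(\text{const}-|\nabla g|^2)\phi''$, which one controls using $\phi''\ge0$ on $[C,+\infty)$ (chosen as in Lemma~\ref{lemma2.4}(i)) together with $|\nabla g|\le1$. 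The comparison principle then gives $\widetilde w_R(t,x)\le\overline w(t,x)$ for $0\le t\le T_\epsilon$, and for $x$ in the stated set one checks $\overline\zeta(t,x)\ge C_\epsilon$, hence $\phi(\overline\zeta(t,x))\le\delta_\epsilon$ and $\widetilde w_R(t,x)\le 2\delta_\epsilon+\delta\le3\delta$.

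The main obstacle is the construction and bookkeeping of the single global radial variable $g$ that interpolates between the spherical geometry near the origin and the $m$ cylindrical geometries in the branches: one must arrange simultaneously $|\nabla g|\le1$ (needed for the sign of the $(1-|\nabla g|^2)\phi''$ term), $\Delta g\le\epsilon/2$ (needed to absorb the curvature into the $\tfrac\epsilon2\phi'$ slack), the matching of additive constants across the finitely many gluing collars so that $\overline w$ is continuous and the initial/boundary data compare correctly, and $g(x)=x\cdot e_i+c_i$ in each branch for $x\cdot e_i$ large so the branch computation is literally one-dimensional. Once $g$ is fixed (independently of $R$, $l$, and $x_0$ — it depends only on $\Omega$ and $\epsilon$), the remaining verifications are routine repetitions of the arguments already carried out for Lemmas~\ref{lemma2.4} and~\ref{lemma3.3}, so I would present them compactly, emphasizing only the region near $\partial B(0,L)$ where both the radial and the linear pieces of $g$ are active.
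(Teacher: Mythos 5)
Your plan reproduces the architecture of Lemma~\ref{lemma2.4}, but it breaks down precisely at the step you yourself flag as the main obstacle: the global ``radial'' variable $g$. There are two closely linked errors.

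First, a sign error in the verification of the initial condition. You write $\overline\zeta(t,x)=-g(x)-(c_f+\epsilon)t+\omega e^{-\delta t}-\omega+R-C$ with $g\approx h_\epsilon(|x|)$ near the origin. At $t=0$ this gives $\overline\zeta(0,x)=-g(x)+R-C$, which for $x$ near the origin and $R$ large is \emph{large and positive}, not $\le -C_\epsilon$. So $\phi(\overline\zeta(0,x))\le\delta_\epsilon$ and $\overline w(0,x)\approx 2\delta_\epsilon+\delta$, not $1$. Your claim ``$\overline\zeta(0,x)\le -C_\epsilon$ on the central ball \dots so $\overline w(0,x)=1$ there'' is wrong; and it would in any case be self-defeating, since $\overline\zeta$ decreases in $t$, so $\overline w\equiv1$ at $t=0$ on that set would force $\overline w\equiv1$ there for all $t$, giving no useful bound at later times.

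Second, because of the above, $\overline w<1$ on the whole central blob already at $t=0$, and in particular near $\partial\Omega\cap\overline{B(0,L)}$. So your parenthetical ``near the central ball $\overline w\equiv1$'' is false, and the Neumann inequality $\overline w_\nu\ge 0$ must be checked there explicitly. With $g=h_\epsilon(|x|)$ near the center, $\nu\cdot\nabla g=h'_\epsilon(|x|)\,(\nu\cdot x)/|x|$, which has no reason to vanish (or even have a definite sign) on the portion of $\partial\Omega$ inside $B(0,L)$: the lemma makes no star-shapedness assumption, and in a domain with multiple branches the boundary near the junction can bend arbitrarily. This is a genuine gap, not a bookkeeping issue; a single radially symmetric profile cannot respect Neumann boundary conditions on an arbitrary central boundary.

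The fix is to make the super-solution constant in space on the central region (not just a slowly varying radial function), so that both the Neumann condition and the PDE inequality there become trivial, and then blend that constant into the planar profile $\phi(\xi_i)$ in each branch via a one-dimensional cutoff $\hat h_\epsilon(\xi_i)$ (so that $\overline w$ depends only on $x\cdot e_i$ on $\{x\cdot e_i>L\}$, hence $\nu\cdot\nabla\overline w=0$ there automatically). This is exactly what the paper's proof does with $\Phi(t,x)=\hat h_\epsilon(\xi_i)\phi(\xi_i)+(1-\hat h_\epsilon(\xi_i))\delta$ on each branch and $\Phi=\delta$ otherwise. Your ``glued radial $g$'' strategy would require additional geometric hypotheses on $\Omega\cap B(0,L)$, which are not available.
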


\begin{proof}
Take any $\varepsilon\in (0,c_f)$. Given $\delta>0$ as in~\eqref{d}, let $C>0$, $k>0$, $\omega>0$, $\delta_\epsilon>0$ and $C_\epsilon\ge C$ be as in~\eqref{eq+2.2}-\eqref{ghsa1}. Consider a $C^2$ function $\hat{h}_{\varepsilon}:\R\rightarrow [0,1]$ such that
\be\label{e5.30}\left\{\baa{l}
\hat{h}_{\varepsilon}=1\ \text{in $(-\infty,C]$},\ \ \hat{h}_{\varepsilon}=0\ \text{in $[C+\xi_{\varepsilon}-1,+\infty)$, }\  -1\le \hat{h}'_{\varepsilon}\le 0\  \text{in $\R$},\vspace{3pt}\\
\displaystyle\delta (2c_f+\omega\delta)\|\hat{h}'_{\varepsilon}\|_{L^\infty(\R)}+\delta\|\hat{h}''_{\varepsilon}\|_{L^\infty(\R)}+2||\hat{h}'_{\varepsilon}\|_{L^\infty(\R)}\|\phi'\|_{L^\infty(\R)}\le \frac{|f'(0)|}{2}\delta_{\varepsilon}\eaa\right.
\ee
for some $\xi_{\varepsilon}>1$, and let
\be\label{defReps2}
R_{\varepsilon}=2C+\omega+\xi_{\varepsilon}.
\ee
Finally, pick any $R\ge R_{\varepsilon}+L$, and define $T_{\varepsilon}=(R-R_{\varepsilon}-L)/(c_f+\varepsilon).$

For all $t\in[0,T_\epsilon]$ and $x\in\overline{\Omega}$, let us set
\begin{eqnarray*}
\Phi(t,x)=\left\{\baa{lll}
\hat h_{\varepsilon}(\xi_i(t,x))\phi(\xi_i(t,x))+(1\!-\!\hat h_{\varepsilon}(\xi_i(t,x)))\delta &\text{if $x\in\overline{\mathcal{H}_i}$ with $x\cdot e_i>L$, $i=1,\cdots,m$},\vspace{3pt}\\
\delta &\text{otherwise}\eaa\right.
\end{eqnarray*}
and
$$\overline{w}(t,x)=\min\big(\Phi(t,x)+\delta_{\varepsilon}+\delta e^{-\delta t},1\big),$$
where
$$\xi_i(t,x)=-x\cdot e_i-(c_f+\varepsilon)t+\omega e^{-\delta t}-\omega +R-C.$$
We shall show that $\overline{w}$ is a super-solution of the problem satisfied by $\tilde w_R$ in $[0,T_{\varepsilon}]\times\overline{\Omega}$. Notice first that for any $i\in\{1,\cdots,m\}$, any $0\le t\le T_{\varepsilon}$ and any $x\in\overline{\mathcal{H}_i}\cap\overline{\Omega}$ with $x\cdot e_i\le L$, one has~$\xi_i(t,x)\ge -L-R+L+R_{\varepsilon}-\omega+R-C=C+\xi_{\varepsilon}>C+\xi_\epsilon-1$, hence $\hat{h}_\epsilon(\xi_i(t,x))=0$ and this equality also holds in a neighborhood of $(t,x)$ in $[0,T_\epsilon]\times\overline{\Omega}$. Owing to their definitions, the function $\Phi$ and $\overline{w}$ are then continuous in $[0,T_\epsilon]\times\overline{\Omega}$ and the function $\Phi$ is of class $C^2$. Furthermore,
$$\overline{w}(t,x)=\delta\!+\!\delta_{\varepsilon}\!+\!\delta e^{-\delta t}\le 3\delta\ \text{ for all $t\!\in\![0,T_{\varepsilon}]$ and $x\in\overline{\Omega}\cap\!\Big(\overline{B(0,L)}\cup\bigcup_{i=1}^m\big\{x\in\overline{\mathcal{H}_i}: x\cdot e_i\!\le\!L\big\}\Big)$}.$$

Let us now check the initial and boundary conditions. We have $\overline{w}(0,x)\ge \delta_{\varepsilon}+\delta\ge \widetilde{w}_R(0,x)$ for all $x\in\overline{\mathcal{H}_i}\cap\overline{\Omega}$ with $x\cdot e_i<R$ for some $i$, and for all $x\in\overline{\Omega}\cap\overline{B(0,L)}$. Furthermore, for all $x\in\overline{\mathcal{H}_i}$ with $x\cdot e_i\ge R$ for some $i$, one has $\xi_i(0,x)\le-C$, hence $\hat h_\epsilon(\xi_i(0,t))=1$, $\phi(\xi_i(t,x))\ge 1-\delta$, and $\overline{w}(0,x)\ge \min(1-\delta+\delta_{\varepsilon}+\delta,1)=1\ge \widetilde{w}_R(0,x)$. As a result, $\overline{w}(0,\cdot)\ge \widetilde{w}_R(0,\cdot)$ in $\overline\Omega$. On the other hand, since $\overline{w}(t,x)=\delta+\delta_{\varepsilon}+\delta e^{-\delta t}$ for all $0\le t\le T_{\varepsilon}$ and $x\in\overline{\Omega}\cap\big(\overline{B(0,L)}\cup\cup_{i=1}^m\big\{x\in\overline{\mathcal{H}_i}: x\cdot e_i\le L\big\}\big)$ and since each $\mathcal{H}_i$ is parallel to $e_i$, we have~$\overline{w}_\nu(t,x)=0$ for all $(t,x)\in[0,T_\epsilon]\times\partial \Omega$ such that $\overline{w}(t,x)<1$.

Let us finally check that
$$\mathcal{L}\overline{w}(t,x)=\overline{w}_t(t,x)-\Delta\overline{w}(t,x)-f(\overline{w}(t,x))\ge 0$$
for every $0\le t\le T_{\varepsilon}$ and $x\in\overline\Omega$ such that $\overline{w}(t,x)<1$. Pick any such $(t,x)$ in this paragraph. If $x\in\overline{B(0,L)}$ or if $x\in\overline{\mathcal{H}_i}$ with $x\cdot e_i\le L$ for some $i$, one has $\overline{w}(t,x)=\delta+\delta_\epsilon+\delta e^{-\delta t}$ (and this equality holds in a neighborhood of $(t,x)$ in $[0,T_\epsilon]\times\overline{\Omega}$) and it then follows from~\eqref{d} that~$f(\overline{w}(t,x))\le(f'(0)/2)(\delta+\delta_{\varepsilon}+\delta e^{-\delta t})$ and
$$\mathcal{L}\overline{w}(t,x)=-\delta^2 e^{-\delta t}-f(\overline{w}(t,x))\ge-\delta^2 e^{-\delta t}-\frac{f'(0)}{2}(\delta+\delta_{\varepsilon}+\delta e^{-\delta t})\ge \Big(\!\!-\delta-\frac{f'(0)}{2}\Big)\delta e^{-\delta t}\ge 0.$$
Assume now that $x\in\overline{\mathcal{H}_i}$ with $x\cdot e_i>L$ for some $i$. If $\xi_i(t,x)<-C$, one has $\hat{h}_{\varepsilon}(\xi_i(t,x))=1$, $\phi(\xi_i(t,x))\ge 1-\delta$ and $1>\overline{w}(t,x)=\phi(\xi_i(t,x))+\delta_{\varepsilon}+\delta e^{-\delta t}\ge 1-\delta$ (and these formulas hold in a neighborhood of $(t,x)$ in $[0,T_\epsilon]\times\overline{\Omega}$). Then from~\eqref{d} one gets that $f(\phi(\xi_i(t,x)))-f(\overline{w}(t,x))\ge -(f'(1)/2)(\delta_{\varepsilon}+\delta e^{-\delta t})\ge 0$ and a straightforward calculation gives
$$\baa{rcl}
\mathcal{L}\overline{w}(t,x) & = &-\varepsilon\phi'(\xi_i(t,x))-\omega\delta e^{-\delta t}\phi'(\xi_i(t,x))-\delta^2 e^{-\delta t}+f(\phi(\xi_i(t,x)))-f(\overline{w}(t,x))\vspace{3pt}\\
& \ge &-\varepsilon\phi'(\xi_i(t,x))-\omega\delta e^{-\delta t}\phi'(\xi_i(t,x))-\delta^2e^{-\delta t}-\frac{f'(1)}{2}(\delta_{\varepsilon}+\delta e^{-\delta t})\ge 0,\eaa$$
since $\phi'<0$, $f'(1)<0$ and $\delta<|f'(1)|/2$. If $-C\le \xi_i(t,x)<C$, one still has $\hat{h}_{\varepsilon}(\xi_i(t,x))=1$ and $\overline{w}(t,x)=\phi(\xi_i(t,x))+\delta_{\varepsilon}+\delta e^{-\delta t}$ (and these formulas hold in a neighborhood of $(t,x)$ in $[0,T_\epsilon]\times\overline{\Omega}$). Furthermore, $-\phi'(\xi_i(t,x))\ge k$ and $f(\phi(\xi_i(t,x)))-f(\overline{w}(t,x))\ge -\max_{[0,1]} |f'|(\delta_{\varepsilon}+\delta e^{-\delta t})$. Then from~\eqref{eq+2.4-} one has
\begin{align*}
\mathcal{L}\overline{w}(t,x)=& \ -\varepsilon\phi'(\xi_i(t,x))-\omega\delta e^{-\delta t}\phi'(\xi_i(t,x))-\delta^2 e^{-\delta t}+f(\phi(\xi_i(t,x)))-f(\overline{w}(t,x))\vspace{3pt}\\
\ge & \ \varepsilon k+\omega k \delta e^{-\delta t}-\delta^2 e^{-\delta t}-\max_{[0,1]} |f'|(\delta_{\varepsilon}+\delta e^{-\delta t})\ge 0.
\end{align*}
Lastly, if $\xi_i(t,x)\ge C$, one has $0<\phi(\xi_i(t,x))\le \delta$, hence $0<\phi(\xi_i(t,x))\le\Phi(t,x)\le \delta$ and $0<\phi(\xi_i(t,x)\le\Phi(t,x)<\overline{w}(t,x)=\Phi(t,x)+\delta_{\varepsilon}+\delta e^{-\delta t}\le 3\delta$. It follows from~\eqref{d} that~$f(\phi(\xi_i(t,x)))<0$ and, together with $\hat{h}_\epsilon\le1$,
$$\baa{rcl}
\hat{h}_\epsilon(\xi_i(t,x))f(\phi(\xi_i(t,x)))\!-\!f(\overline{w}(t,x)) & \!\!\ge\!\! & f(\phi(\xi_i(t,x)))-f(\overline{w}(t,x))\vspace{3pt}\\
& \!\!\ge\!\! & \displaystyle-\frac{f'(0)}{2}(1\!-\!\hat{h}_{\varepsilon}(\xi_i(t,x)))(\delta\!-\!\phi(\xi_i(t,x)))\!-\!\frac{f'(0)}{2}(\delta_{\varepsilon}\!+\!\delta e^{-\delta t})\vspace{3pt}\\
& \!\!\ge\!\! & \displaystyle-\frac{f'(0)}{2}(\delta_{\varepsilon}+\delta e^{-\delta t}).\eaa$$
Since $0<\varepsilon\le c_f$, $\hat{h}_\epsilon\ge0$, $\hat{h}'_{\varepsilon}\le0$, $\phi>0$ and $\phi'<0$ in $\R$, it then follows from~\eqref{d} and~\eqref{e5.30}, as in~\eqref{Lbarw}, that
$$\baa{rcl}
\mathcal{L}\overline{w}(t,x) & \!\!=\!\! &(c_f+\varepsilon+\omega\delta e^{-\delta t}) \hat h'_{\varepsilon}(\xi_i(t,x))(\delta-\phi(\xi_i(t,x)))-(\varepsilon+\omega\delta e^{-\delta t}) \hat h_{\varepsilon}(\xi_i(t,x))\phi'(\xi_i(t,x))\vspace{3pt}\\
& \!\!\!\! & +\,\hat h''_{\varepsilon}(\xi_i(t,x))(\delta-\phi(\xi_i(t,x)))-2\hat h'_{\varepsilon}(\xi_i(t,x))\phi'(\xi_i(t,x))-\delta^2 e^{-\delta t}\vspace{3pt}\\
& \!\!\!\! & +\,\hat h_{\varepsilon}(\xi_i(t,x))f(\phi(\xi_i(t,x)))-f(\overline{w}(t,x))\vspace{3pt}\\
& \!\!\ge\!\! & -\,\delta (2c_f+\omega\delta)|\hat h'_{\varepsilon}(\xi_i(t,x))|-|\hat h''_{\varepsilon}(\xi_i(t,x))|\delta-2|\hat h'_{\varepsilon}(\xi_i(t,x))||\phi'(\xi_i(t,x))|\vspace{3pt}\\
& \!\!\!\! & \displaystyle-\,\delta^2 e^{-\delta t} -\frac{f'(0)}{2}(\delta_{\varepsilon}+\delta e^{-\delta t})\vspace{3pt}\\
& \!\!\ge\!\! & 0.\eaa$$

As a conclusion, there holds $\overline{w}_t(t,x)-\Delta\overline{w}(t,x)-f(\overline{w}(t,x))\ge 0$ for all $(t,x)\in[0,T_{\varepsilon}]\times\overline\Omega$ with $\overline{w}(t,x)<1$. The comparison principle then yields $\widetilde{w}_R\le \overline{w}$ in $[0,T_{\varepsilon}]\times\overline\Omega$. Consider now any
$$t\in[0,T_\epsilon]\ \hbox{ and }\ x\in\overline{\Omega}\cap\Big(\overline{B(0,L)}\cup\bigcup_{i=1}^m\big\{x\in\overline{\mathcal{H}_i}: x\cdot e_i\le R-R_{\varepsilon}-(c_f+\varepsilon) t\big\}\Big).$$
On the one hand, if  $x\in\overline{B(0,L)}$ or if $x\in\overline{\mathcal{H}_i}$ with $x\cdot e_i\le L$ for some $i\in\{1,\cdots,m\}$, then $\widetilde{w}_R(t,x)\le \overline{w}(t,x)\le \delta+\delta_{\varepsilon}+\delta e^{-\delta t}\le 3\delta$. On the other hand, if $x\in\overline{\mathcal{H}_i}$ with $L<x\cdot e_i\le R-R_{\varepsilon}-(c_f+\varepsilon) t$ for some $i$, one has $\xi_i(t,x)\ge -R+R_{\varepsilon}-\omega+R-C=C+\xi_{\varepsilon}>C$ by~\eqref{defReps2}, hence $\phi(\xi_i(t,x))\le \delta$ and $\Phi(t,x)\le \delta$. Therefore, $\widetilde{w}_R(t,x)\le \overline{w}(t,x)\le \delta+\delta_{\varepsilon}+\delta e^{-\delta t}\le 3\delta$. The proof of Lemma~\ref{lemma5.3+} is thereby complete.
\end{proof}

\begin{remark}\label{remLReps}{\rm As in Remark~$\ref{remLReps1}$, it follows from their proofs that one can choose $L_\epsilon>L$ and $R_\epsilon>0$ in such a way that the conclusions of Lemmas~\ref{lemma3.3},~\ref{lemma5.2} and~\ref{lemma5.3+} hold simultaneously.}
\end{remark}


\subsection{Proof of Theorem \ref{th6}}\label{sec4.2}

Throughout this section, we assume that, for every $i\in\{1,\cdots,m\}$, the time-increasing front-like solution $u_i$ of~\eqref{frontlikei} propagates completely, in the sense that $u_i(t,x)\to1$ as $t\to+\infty$ locally uniformly in~$x\in\overline{\Omega}$. We consider any transition front $u$ connecting $0$ and $1$ for~\eqref{eq1.1}, and associated with some sets $(\Omega^\pm_t)_{t\in\R}$ and $(\Gamma_t)_{t\in\R}$. By comparing $u$ with some front-like solutions $u_i$, we first show that $u$ propagates completely as well, and we derive a result similar to Lemma~\ref{lemma2.6}, namely that the interfaces $\Gamma_t$ are located far away from the origin at very negative and very positive times.

\begin{lemma}\label{lemma5.6}
Under the above assumptions, the front $u$ propagates completely in the sense of~\eqref{complete} and, for every $\rho\ge 0$, there exist some real numbers $T_1<T_2$  such that
$$\Omega\cap B(0,L+\rho)\subset \Omega_t^-\ \ \text{for all $t\le T_1$,}\ \ \text{and}\ \ \Omega\cap B(0,L+\rho)\subset \Omega_t^+\ \ \text{for all $t\ge T_2$}.$$
\end{lemma}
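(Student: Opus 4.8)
The statement combines two claims: first, that the transition front $u$ propagates completely in the sense of~\eqref{complete}; second, an analogue of Lemma~\ref{lemma2.6} saying that each bounded central ball eventually lies in $\Omega_t^+$ for large $t$ and lies in $\Omega_t^-$ for very negative $t$. The plan is to establish the three assertions in the order (i) the $t\to-\infty$ location statement, (ii) complete propagation, (iii) the $t\to+\infty$ location statement, since (ii) will rely on (i) and (iii) will follow quickly from (ii) by the same Harnack argument used in the proof of Lemma~\ref{lemma2.6}.

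\textbf{Step 1: the interface stays away from the origin at very negative times.} I would first prove that for every $\rho\ge0$ there is $T_1\in\R$ with $\Omega\cap B(0,L+\rho)\subset\Omega_t^-$ for all $t\le T_1$. Fix $\delta>0$ as in~\eqref{d}, and let $R_\epsilon,L_\epsilon$ be chosen (using Remark~\ref{remLReps}) so that Lemmas~\ref{lemma3.3},~\ref{lemma5.2},~\ref{lemma5.3+} all apply. Suppose the claim fails for some $\rho$; then, by~\eqref{eq1.3}, along a sequence $t_n\to-\infty$ either $\Gamma_{t_n}$ meets $\Omega\cap B(0,L+\rho)$ or $\Omega\cap B(0,L+\rho)\subset\Omega^+_{t_n}$. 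In either case, using~\eqref{eq1.4}--\eqref{eq1.5} one finds a large ball inside $\Omega^+_{t_n}$ where $u(t_n,\cdot)\ge1-\delta$, located either near a given branch far from the origin (if $\Gamma_{t_n}$ meets the central ball) or in the central region itself. Comparing $u(t_n,\cdot)$ with the subsolution dynamics of Lemma~\ref{lemma3.3}, starting from $v_{i,l,R_\epsilon}$ with appropriate parameters, and then spreading the $1-2\delta$ lower bound back toward the origin (using~\eqref{vilRepsbis} together with finitely many applications of Lemma~\ref{lemma3.3} along a path $P_{i,j}$, mimicking the induction in Corollary~\ref{corollary2.2} and in Case~1 of the proof of Lemma~\ref{lemma2.6}), one obtains $u(t,x)\ge1-3\delta$ for all $t$ on a large region not shrinking to the origin, contradicting Definition~\ref{TF} and~\eqref{eq1.4}. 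This step is the analogue of Lemma~\ref{lemma2.6} Case~1/Case~2 and is the bookkeeping-heavy part.

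\textbf{Step 2: complete propagation.} Here is where the hypothesis that every $u_i$ satisfies~\eqref{complete} enters. Using Step~1 with $\rho=0$ (or any fixed $\rho$), pick $t$ very negative so that $\Omega\cap B(0,L)\subset\Omega_t^-$; then, by~\eqref{eq1.4}--\eqref{eq1.5} and~\eqref{eq1.7}, in each branch $\mathcal{H}_i$ far enough out $u(t,\cdot)$ is either uniformly close to $1$ or uniformly close to $0$, and the central region carries $u(t,\cdot)$ close to $0$. I want to compare $u$ from below with (a shift in time of) one of the front-like solutions $u_i$. Concretely: since $\Gamma_t$ is unbounded, it must enter at least one branch $\mathcal{H}_i$ for arbitrarily negative $t$, so in that branch $u(t,\cdot)$ looks like a planar front with interface position $\to\infty$ as $t\to-\infty$; then a sliding/comparison argument (as in the uniqueness theory of~\cite{BBC,P} invoked before Theorem~\ref{th10}, and using the monotone convergence of the Cauchy problem from compactly supported data below $u_i(0,\cdot)$) gives $u(t,x)\ge u_i(t+s,x)$ for all $t$ large, for a suitable shift $s$. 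Since $u_i$ propagates completely, $u_i(t+s,\cdot)\to1$ locally uniformly as $t\to+\infty$, hence $\liminf_{t\to+\infty}u(t,x)\ge1$ locally uniformly, i.e.~\eqref{complete} holds. The \emph{main obstacle} is making this comparison rigorous: one must show the interface of $u$ in some branch really does recede to $+\infty$ along the branch (not get blocked near the core), which is where~\eqref{eq1.7} combined with the hypothesis ``$\liminf$ at $+\infty$ is $0$ is impossible'' (itself needing Step~1 to rule out the interface staying near the origin) does the work; alternatively one can argue by contradiction — if $u$ did not propagate completely then by monotonicity of comparison it would block some $u_i$, contradicting the assumption that $u_i$ propagates completely.

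\textbf{Step 3: the interface stays away from the origin at very positive times.} This is now immediate by the verbatim argument of the first paragraph of the proof of Lemma~\ref{lemma2.6}: since $\Omega\cap B(0,L+\rho)$ is bounded and $u(t,\cdot)\to1$ locally uniformly as $t\to+\infty$ by Step~2, a Harnack-inequality argument applied to $1-u$ (exactly as there) shows $\Omega\cap B(0,L+\rho)\subset\Omega_t^+$ for all $t\ge T_2$, some $T_2$. Combining Steps~1--3 gives the lemma.
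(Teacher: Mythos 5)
Your Step~3 is fine (it is the verbatim Harnack argument from Lemma~\ref{lemma2.6}), but Steps~1 and~2 have genuine problems, and your ordering inverts the logical dependencies.

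\textbf{Step 1 has a gap.} In a domain with $m$ cylindrical branches, the interfaces $\Gamma_t$ are \emph{bounded}: after the normalization used in the paper's opening paragraph (as in Lemma~\ref{lemma3.2-}), $\Gamma_t\cap\mathcal{H}_i$ is a finite union of cross-sections. So the exterior-domain contradiction mechanism of Lemma~\ref{lemma2.6} -- ``$\Gamma_t$ is unbounded, hence there are far-out points of $\Omega_t^-$ where $u\le\delta$, contradicting $u\ge1-2\delta$ outside a fixed ball'' -- is simply not available. Having $u(t,\cdot)\ge1-3\delta$ for all $t$ on one half-branch does \emph{not} contradict Definition~\ref{TF}, since the transition zone could sit entirely in the other branches. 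The paper escapes this by comparing $u$ from below with a time-shifted $u_i$ (via a subsolution constructed with Lemma~\ref{lemut}), and concluding $u\ge1$ in $\R\times\overline\Omega$ because $u_i$ propagates completely; that strict inequality is the actual contradiction, and the hypothesis on $u_i$ is essential. You also invoke the paths $P_{i,j}$ and the large-ball condition $B(P_{i,j}(s),R)\subset\Omega$, but those are hypotheses of Corollary~\ref{th5}, not of Theorem~\ref{th6}/Lemma~\ref{lemma5.6}, so they are not at your disposal here.

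\textbf{Step 2 misses the key structural observation and is too vague.} Complete propagation in the paper does \emph{not} rely on the $T_1$ statement; it is proved first, from scratch, using the following observation: once $\Gamma_t\cap\mathcal{H}_i$ is normalized to a finite union of cross-sections, $\Omega_t^+$ being unbounded forces $\Omega_0^+$ to contain an entire half-branch $\{x\in\mathcal{H}_i: x\cdot e_i\ge R_0\}$, hence $u(0,\cdot)\ge1-\delta$ far out in that branch while (by~\eqref{frontlikei}) $u_i(\tau,\cdot)\le\delta$ on the complement for $\tau$ very negative; one can then dominate $u_i(\tau,\cdot)$ by $u(0,\cdot)$ and build the subsolution $\max\big(u_i(t+\tau+\omega e^{-\delta t}-\omega,\cdot)-\delta e^{-\delta t},0\big)$ with the help of the quantitative lower bound on $(u_i)_t$ from Lemma~\ref{lemut}. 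Your proposed ``sliding/comparison as in the uniqueness theory of~\cite{BBC,P}'' does not supply this; and the alternative you suggest -- ``if $u$ did not propagate completely it would block some $u_i$'' -- requires an ordering $u_i\le u$ that you have not established ($u$ is a general transition front, not time-monotone, so comparison must be engineered, which is exactly what the subsolution does). In short: you should prove complete propagation first, directly from the structure of $\Gamma_0$ and the comparison with $u_i$; Step~3 is then your Step~3; and the $T_1$ statement is proved last, again by reducing to the comparison with a shifted $u_i$ and using its complete propagation, not merely by spreading $1-3\delta$ inside one branch.
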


\begin{proof}
First of all, as in the proof of Lemma~\ref{lemma3.2-}, one can assume from Definition~\ref{TF} without loss of generality, even if it means redefining $\Omega^\pm_t$ and $\Gamma_t$, that, for every $t\in\R$ and $i\in\{1,\cdots,m\}$, there is an non-negative integer $n_{i,t}\in\{0,\cdots,n\}$ and some real numbers $L<\xi_{i,t,1}<\cdots<\xi_{i,t,n_{i,t}}$ (if $n_{i,t}\ge1$) such that
\be\label{interfaces}
\Gamma_t\cap\mathcal{H}_i=\bigcup_{k=1}^{n_{i,t}}\big\{x\in\mathcal{H}_i: x\cdot e_i=\xi_{i,t,k}\big\},
\ee
where $n$ is as in~\eqref{eq1.6} and with the convention $\Gamma_t\cap\mathcal{H}_i=\emptyset$ if $n_{i,t}=0$. By~\eqref{eq1.3}, every $\Omega^+_t$ must then contain a half-infinite branch, that is, for every $t\in\R$, there exist $R_t>L$ and $i_t\in\{1,\cdots,m\}$ such that
$$\big\{x\in\mathcal{H}_{i_t}: x\cdot e_{i_t}\ge R_t\big\}\subset \Omega^+_t.$$
In particular, by denoting $i=i_0$, Definition~\ref{TF} implies that $u(0,x)\ge 1-\delta$ for all $x\in\overline{\mathcal{H}_i}$ with $x\cdot e_i\ge R_0+M_{\delta}$, where we recall that $\delta>0$ is given as in~\eqref{d}. Since $u_i(t,x)-\phi(-x\cdot e_i-c_f t)\to0$ as $t\to-\infty$ uniformly in $\overline{\mathcal{H}_i}\cap\overline{\Omega}$ and $u_i(t,x)\to0$ as $t\to-\infty$ uniformly in $\overline{\Omega}\setminus\overline{\mathcal{H}_i}$, there exists~$\tau\in\R$ such that $u_i(\tau,x)\le \delta$ for all $x\in\overline{\mathcal{H}_i}\cap\overline{\Omega}$ with $x\cdot e_i\le R_0+M_{\delta}$ and for all $x\in\overline{\Omega}\setminus\overline{\mathcal{H}_i}$.

Define now the function
$$\underline{u}(t,x)=\max\big(u_i(t+\tau+\omega e^{-\delta t}-\omega,x)-\delta e^{-\delta t},0\big)$$
and let us check that it is a sub-solution for $u$ in $[0,+\infty)\times\overline{\Omega}$, for some constant $\omega>0$ to be chosen. Observe first that $u(0,\cdot)\ge\underline{u}(0,\cdot)$ in $\overline{\Omega}$ and that $\underline{u}_\nu(t,x)=0$ for all $t\ge0$ and $x\in\partial\Omega$ such that $\underline{u}(t,x)>0$. Furthermore, for any fixed $(t,x)\in[0,+\infty)\times\overline{\Omega}$ with $\underline{u}(t,x)>0$, one has
$$\baa{rcl}
\mathcal{L}\underline{u}(t,x) & \!\!=\!\! & \underline{u}_t(t,x)-\Delta\underline{u}(t,x)-f(\underline{u}(t,x))\vspace{3pt}\\
& \!\!=\!\! & -\omega\delta(u_i)_t(t\!+\!\tau\!+\!\omega e^{-\delta t}\!-\!\omega,x)e^{-\delta t}+\delta^2e^{-\delta t}+f(u_i(t\!+\!\tau\!+\!\omega e^{-\delta t}\!-\!\omega,x))-f(\underline{u}(t,x)).\eaa$$
If $u_i(t+\tau+\omega e^{-\delta t}-\omega,x)>1-\delta$, then $1>u_i(t+\tau+\omega e^{-\delta t}-\omega,x)>\underline{u}(t,x)\ge1-2\delta$ and~$f(u_i(t+\tau+\omega e^{-\delta t}-\omega,x))-f(\underline{u}(t,x))\le(f'(1)/2)\delta e^{-\delta t}$, hence $\mathcal{L}\underline{u}(t,x)\le\delta(\delta+f'(1)/2)e^{-\delta t}\le0$ since $(u_i)_t\ge0$ and $\delta\le|f'(1)|/2=-f'(1)/2$. Similarly, if $u_i(t+\tau+\omega e^{-\delta t}-\omega,x)<\delta$, then~$\underline{u}(t,x)<u_i(t+\tau+\omega e^{-\delta t}-\omega,x)\le\delta$ and $f(u_i(t+\tau+\omega e^{-\delta t}-\omega,x))-f(\underline{u}(t,x))\le(f'(0)/2)\delta e^{-\delta t}$, hence $\mathcal{L}\underline{u}(t,x)\le\delta(\delta+f'(0)/2)e^{-\delta t}\le0$. Call now
$$\kappa=\inf_{(s,y)\in\R\times\overline{\Omega},\,\delta\le u_i(s,y)\le1-\delta}(u_i)_t(s,y),$$
which is a positive real number by Lemma~\ref{lemut} applied to $u_i$ (with $I\!=\!\{i\}$ and~$J\!=\!\{1,\cdots,m\}\setminus\{i\}$), and choose $\omega>0$ such that
$$\kappa\,\omega\ge\delta+\max_{[0,1]}|f'|.$$
For that constant $\omega$, if follows that, if $\delta\le u_i(t+\tau+\omega e^{-\delta t}-\omega,x)\le1-\delta$, then~$(u_i)_t(t+\tau+\omega e^{-\delta t}-\omega,x)\ge\kappa$ and $\mathcal{L}\underline{u}(t,x)\le(-\kappa\omega+\delta+\max_{[0,1]}|f'|)\delta e^{-\delta t}\le0$. As a conclusion,~$\mathcal{L}\underline{u}(t,x)\le0$ for all $(t,x)\in[0,+\infty)\times\overline{\Omega}$ such that $\underline{u}(t,x)>0$. One then deduces from the comparison principle that
$$u(t,x)\ge \underline{u}(t,x)\ge u_i(t+\tau+\omega e^{-\delta t}-\omega,x)-\delta e^{-\delta t}\ \text{ for all $t\geq 0$ and $x\in\overline\Omega$}.$$
Since $u_i(t,x)\rightarrow 1$ locally uniformly in $x\in\overline\Omega$ as $t\rightarrow +\infty$ by assumption, one gets that $u(t,x)\to1$ as $t\to+\infty$ locally uniformly in $x\in\overline{\Omega}$, that is, $u$ propagates completely. Furthermore, as in the proof of Lemma~\ref{lemma2.6}, this implies that, for any $\rho\ge0$, there is $T_2\in\R$ such that
$$\Omega\cap B(0,L+\rho)\subset \Omega_t^+\ \text{ for all $t\ge T_2$}.$$

We turn to consider the first conclusion of Lemma~\ref{lemma5.6}. It is sufficient to show the assertion for any $\rho\geq 0$ large enough, and then it will hold automatically for any $\rho\geq 0$. So, consider any~$\epsilon\in(0,c_f]$, let $L_\epsilon\ge L$ and $R_\epsilon>0$ be as in Lemma~\ref{lemma3.3}, and let $\rho$ be any large positive number such that
\be\label{defrho2}
\bigcup_{i=1}^m\big\{y\in\overline{\mathcal{H}_i}: L_{\varepsilon}+R_{\varepsilon}\le y\cdot  e_i\le L_{\varepsilon}+3R_{\varepsilon}+M_{\delta}\big\}\ \subset\ \overline{\Omega}\cap B(0,L+\rho).
\ee
Consider any such $\rho$ and assume now that the first conclusion of Lemma~\ref{lemma5.6} is false for that~$\rho$. Then, from \eqref{eq1.3}, two cases may occur: either there is a sequence $(t_n)_{n\in\mathbb N}\rightarrow -\infty$ such that~$\Omega\cap B(0,L+\rho)\cap \Gamma_{t_n}\neq \emptyset$ for each $n\in\mathbb N$, or there is a sequence $(t_n)_{n\in\mathbb N}\rightarrow -\infty$ such that~$\Omega\cap B(0,L+\rho)\subset \Omega_{t_n}^+$ for each $n\in\mathbb N$.

{\it Case~1: $\Omega\cap B(0,L+\rho)\cap \Gamma_{t_n}\neq \emptyset$ for each $n\in\mathbb N$.} For each $n\in\mathbb N$, pick a point~$x_n\in \Omega\cap B(0,L+\rho)\cap\Gamma_{t_n}$. By \eqref{eq1.4}-\eqref{eq1.5}, for any $R>0$ there is $r>0$ such that, for each~$n\in\mathbb N$, there exists $y_n\in\Omega^+_{t_n}$ with $d_{\Omega}(x_n,y_n)\le r$ and $d_\Omega(y_n,x_n)\ge d_{\Omega}(y_n,\Gamma_{t_n})\ge M_{\delta}+R$. Thus, up to extraction of a subsequence and by taking $R\geq R_{\varepsilon}$ large enough independently of $n$, there is $i\in\{1,\cdots,m\}$ such that, for each $n\in\N$, $y_n\in\overline{\mathcal{H}_i}$, $y_n\cdot e_i\ge R_{\varepsilon}+L_{\varepsilon}$,~$E_n:=\big\{y\in\mathcal{H}_i: |y\cdot e_i-y_n\cdot e_i|\le R_{\varepsilon}\big\}\subset \Omega^+_{t_n}$ and $d_\Omega(E_n,\Gamma_{t_n})\ge M_\delta$ (notice that $i$ can be chosen independently of $n$ up to extraction of a subsequence, since the number $m$ of branches is finite). This implies that $u(t_n,y)\ge 1-\delta$ for all $y\in\overline{\mathcal{H}_i}$ such that~$|y\cdot e_i-y_n\cdot e_i|\le R_{\varepsilon}$. It then follows from the comparison principle and Lemma~\ref{lemma3.3} that, for every $n\in\N$ and $t\ge t_n+(y_n\cdot e_i-R_\epsilon-L_\epsilon)/(c_f-\epsilon)$,
$$u(t,y)\ge v_{i,y_n\cdot e_i,R_{\varepsilon}}(t-t_n,y)\ge 1-3\delta$$
for all $y\in\overline{\mathcal{H}_i}$ such that $L_{\varepsilon}+R_{\varepsilon}\le y\cdot e_i\le y_n\cdot e_i+(c_f-\varepsilon)(t-t_n)$. Since the sequences $(x_n)_{n\in\N}$ and then $(y_n)_{n\in\N}$ are bounded, passing to the limit as $n\to+\infty$ in the previous inequality and using $\lim_{n\to+\infty}t_n=-\infty$ leads to
$$u(t,y)\ge 1-3\delta\ \ \text{for all $t\in\R$ and $y\in\overline{\mathcal{H}_i}$ such that $y\cdot e_i\ge L_{\varepsilon}+R_{\varepsilon}$}.$$

Take now any sequence $(t'_k)_{k\in \mathbb{N}}$ such that $t'_k\rightarrow -\infty$ as $k\rightarrow+\infty$. Then $u(t'_k,y)\ge 1-3\delta$ for all~$y\in\overline{\mathcal{H}_i}$ such that $y\cdot e_i\ge L_{\varepsilon}+R_{\varepsilon}$. By a similar argument to that of the first assertion, we can show the existence of $\tau'\in\R$ and $\omega'>0$ such that, for every $k\in\N$, the function
$$\underline{u}(t,x)=\max\left(u_i(t+\tau'+\omega'e^{-\delta t}-\omega',x)-3\delta e^{-\delta t},0\right)$$
is a sub-solution of the equation satisfied by $u(t'_k+\cdot,\cdot)$ in $[0,+\infty)\times\overline{\Omega}$. By the comparison principle one infers that, for every $k\in\N$,
$$u(t,x)\ge \underline{u}(t-t'_k,x)\ge u_i(t-t'_k+\tau'+\omega'e^{-\delta (t-t'_k)}-\omega',x)-3\delta e^{-\delta (t-t'_k)}\ \text{for all $t\ge t'_k$ and $x\in\overline\Omega$}.$$
Since $t'_k\rightarrow -\infty$ as $k\rightarrow +\infty$ and $u_i(t,x)\rightarrow 1$ as $t\rightarrow +\infty$ locally uniformly in $x\in \overline\Omega$, one gets that $u(t,x)\ge 1$ for every $(t,x)\in\R\times\overline{\Omega}$, which is a contradiction.

{\it Case~2: $\Omega\cap B(0,L+\rho)\subset \Omega_{t_n}^+$ for each $n\in\mathbb N$.} Owing to the property~\eqref{defrho2} satisfied by~$\rho$, it follows that, for each $n\in\N$ and $i\in\{1,\cdots,m\}$, there holds $u(t_n,y)\ge 1-\delta$ for all $y\in\overline{\mathcal{H}_i}$ such that $L_{\varepsilon}+R_{\varepsilon}\le y\cdot  e_i\le L_{\varepsilon}+3R_{\varepsilon}$. Following the same arguments as in case~1, one can reach a contradiction. The proof of Lemma~\ref{lemma5.6} is thereby complete.
\end{proof}
\vskip 0.3cm

We are now ready to carry out the proof of Theorem~\ref{th6}. Roughly speaking, we show that, for any small $\epsilon>0$, the large regions where $u$ is close to $1$ expand with speed at least $c_f-3\epsilon$ in each branch and also from the center of the domain to the other branches, whereas the large regions where $u$ is close to $0$ retract with speed at most $c_f+3\epsilon$ in each branch and also from any branch to the center of the domain.
\vskip 0.3cm

\begin{proof}[Proof of Theorem \ref{th6}]
Our goal is to prove that ${d_\Omega(\Gamma_t,\Gamma_s)}/{|t-s|}\rightarrow c_f$ as $|t-s|\rightarrow +\infty$. We shall first show that
\be\label{5.29}
\liminf_{|t-s|\rightarrow +\infty} \frac{d_\Omega(\Gamma_t,\Gamma_s)}{|t-s|}\ge c_f
\ee
and then
\be\label{5.30}
\limsup_{|t-s|\rightarrow +\infty} \frac{d_\Omega(\Gamma_t,\Gamma_s)}{|t-s|}\le c_f.
\ee
Throughout the proof, $\delta\in(0,1/4)$ is given as in~\eqref{d}.
\vskip 0.3cm

{\it Step 1: some notations.} First of all, as in the beginning of the proof of Lemma~\ref{lemma5.6}, one can assume without loss of generality, even if it means redefining $\Omega^\pm_t$ and $\Gamma_t$, that, for every $t\in\R$ and $i\in\{1,\cdots,m\}$, there are a non-negative integer $n_{i,t}\in\{0,\cdots,n\}$ and some real numbers $\xi_{i,t,1}<\cdots<\xi_{i,t,n_{i,t}}$ (if $n_{i,t}\ge1$) such that~\eqref{interfaces} holds.

Consider now any $\varepsilon\in (0,c_f/3)$. Let $L_{\varepsilon}>L$ and $R_{\varepsilon}>0$ be such that the conclusions of Lemmas~\ref{lemma3.3},~\ref{lemma5.2} and~\ref{lemma5.3+} hold (see Remark~\ref{remLReps}), and let $\overline{R}_{\varepsilon}>0$ be such that
\be\label{eq5.28}
\big\{z\in\overline{\mathcal{H}_i}\cap\overline{\Omega}: |z\cdot e_i-y\cdot e_i|\le R_{\varepsilon}\big\}\subset\big\{z\in\overline{\mathcal{H}_i}\cap\overline{\Omega}: d_{\Omega}(z,y)\le \overline{R}_{\varepsilon}\big\}
\ee
for every $i\in\{1,\cdots,m\}$ and every $y\in\overline{\mathcal{H}_i}\cap\overline{\Omega}$ with $y\cdot e_i\ge R_{\varepsilon}$. Let also $D_{\varepsilon}>L$ be such that
\be\label{eq5.29}
\forall\,i\in\{1,\cdots,m\},\ \forall\,x\in\overline{\mathcal{H}_i}\setminus B(0,D_{\varepsilon}),\ \ x\cdot e_i> L_{\varepsilon}+3R_{\varepsilon}+M_{\delta}+r_{M_{\delta}+ \overline{R}_{\varepsilon}},
\ee
where $r_{M_\delta+\overline{R_\epsilon}}>0$ is given in~\eqref{eq1.5} with $M:=M_\delta+\overline{R_\epsilon}$. Lemmaõ \ref{lemma5.6} then yields the existence of two real numbers $T^1_{\varepsilon}<T^2_{\varepsilon}$ such that
\be\label{eq5.30}
\Omega\cap B(0,D_{\varepsilon})\subset \Omega_t^- \text{ for all $t\le T^1_{\varepsilon}$,\ \ and\ \ }\Omega\cap B(0,D_{\varepsilon})\subset \Omega_t^+ \text{ for all $t\ge T^2_{\varepsilon}$}.
\ee
It then follows from~\eqref{branches} and~\eqref{eq5.29}-\eqref{eq5.30} that, for every $t\ge T_\epsilon^2$,
$$E:=\Omega\cap\Big(B(0,L)\cup\bigcup_{i=1}^m\big\{y\in\mathcal{H}_i: y\cdot e_i\le L_{\varepsilon}+3R_{\varepsilon}+r_{M_{\delta}+\overline{R}_{\varepsilon}}\big\}\Big)\,\subset\,\Omega\cap B(0,D_\epsilon)\,\subset\,\Omega^+_{t}$$
and $d_\Omega(y,\Gamma_t)\ge M_\delta$ for all $y\in\overline{E}$, hence
\be\label{eq5.40-}
u(t,y)\!\ge\!1\!-\!\delta\ \text{for all $t\!\ge\!T^2_{\varepsilon}$ and $y\!\in\!\overline{\Omega}\!\cap\!\Big(\overline{B(0,L)}\cup\bigcup_{i=1}^m\big\{y\!\in\!\overline{\mathcal{H}_i}\!:\! y\cdot e_i\!\le\!L_{\varepsilon}\!+\!3R_{\varepsilon}\!+\!r_{M_\delta\!+\!\overline{R_\epsilon}}\big\}\!\Big)$}.
\ee
In particular, for every $i\in\{1,\cdots,m\}$, we have $u(t,y)\ge 1-\delta$ for all $t\ge T^2_{\varepsilon}$ and $y\in\overline{\mathcal{H}_i}$ such that~$L_{\varepsilon}+R_{\varepsilon}\le y\cdot e_i\le L_{\varepsilon}+3R_{\varepsilon}$. Lemma \ref{lemma3.3} applied with $l=R_\epsilon+L_\epsilon$ then implies that, for all~$\tau\ge R_{\varepsilon}/(c_f-\varepsilon)$ and $t\ge T_\epsilon^2$, $u(t+\tau,y)\ge 1-3\delta$ for all $y\in\overline{\mathcal{H}_i}$ such that $L_{\varepsilon}+R_{\varepsilon}\le y\cdot e_i\le L_{\varepsilon}+2R_{\varepsilon} +(c_f-\varepsilon) \tau$. This together with \eqref{eq5.40-} yields 
\be\label{eq5.41-}\baa{ll}
u(s,y)\ge 1\!-\!3\delta & \displaystyle\text{for all $s$, $t\ge T^2_{\varepsilon}$ with $s-t\ge\frac{R_{\varepsilon}}{c_f-\varepsilon}$ and}\vspace{3pt}\\
& \displaystyle\text{$y\in\overline{\Omega}\cap\Big(\overline{B(0,L)}\cup\bigcup_{i=1}^m\big\{y\in\overline{\mathcal{H}_i}: y\cdot e_i\!\le\!L_{\varepsilon}\!+\!2R_{\varepsilon}\!+\!(c_f\!-\!\epsilon)(s\!-\!t)\big\}\!\Big)$.}\eaa
\ee

Furthermore, it also follows from~\eqref{interfaces} and~\eqref{eq5.29}-\eqref{eq5.30} that
\be\label{xiit1}
\xi_{i,t,1}>L_\epsilon+3R_\epsilon+M_\delta+r_{M_\delta+\overline{R_\epsilon}}\ \hbox{ for all }i\in\{1,\cdots,m\}\hbox{ and }t\in(-\infty,T^1_\epsilon]\cup[T^2_\epsilon,+\infty),
\ee
with the convention $\xi_{i,t,1}=+\infty$ and $\Gamma_t\cap\mathcal{H}_i=\emptyset$ if $n_{i,t}=0$. 
\vskip 0.2cm

{\it Step 2: the lower estimate \eqref{5.29}.} In order to show~\eqref{5.29}, we first claim that
\be\label{5.34}
\liminf_{t<s\le T^1_{\varepsilon},\ |t-s|\rightarrow +\infty} \frac{d_{\Omega}(\Gamma_{t},\Gamma_{s})}{|t-s|}\ge c_f-2\varepsilon.
\ee
Assume by contradiction that there exist two sequences $(t_k)_{k\in\mathbb{N}}$ and $(s_k)_{k\in\mathbb{N}}$ such that~$t_k\!<\!s_k\!\le\!T^1_{\varepsilon}$ for all $k\in\N$, $s_k-t_k\rightarrow +\infty$ as $k\rightarrow +\infty$, and $d_{\Omega}(\Gamma_{t_k},\Gamma_{s_k})<(c_f-2\varepsilon)(s_k-t_k)$ for all $k\in\mathbb{N}$. By definition of the distance, there exist two sequences $(x_k)_{k\in\mathbb N}$ and $(z_k)_{k\in\mathbb N}$ such that $x_k\in\Gamma_{t_k}$, $z_k\in\Gamma_{s_k}$ and $d_{\Omega}(x_k,z_k)<(c_f-2\varepsilon)(s_k-t_k)$ for all $k\in\mathbb{N}$. It follows from~\eqref{branches} and~\eqref{eq5.29}-\eqref{eq5.30} that, for each $k\in\mathbb N$, there is $i_k\in\{1,\cdots,m\}$ such that $x_k\in\Gamma_{t_k}\cap \mathcal{H}_{i_k}$ and
\be\label{eq5.31}
x_k\cdot e_{i_k}>L_{\varepsilon}+3R_{\varepsilon}+M_{\delta}+r_{M_{\delta}+ \overline{R}_{\varepsilon}}.
\ee
Furthermore, by~\eqref{eq1.4}-\eqref{eq1.5}, for each $k\in\mathbb N$, there is $y_k\in\Omega^+_{t_k}$ such that
\be\label{eq5.32}
d_{\Omega}(x_k,y_k)\le r_{M_{\delta}+ \overline{R}_{\varepsilon}}\ \ \text{and}\ \ d_{\Omega}(y_k,\Gamma_{t_k})\ge M_{\delta}+\overline{R}_{\varepsilon}.
\ee
It follows from \eqref{eq5.31}-\eqref{eq5.32} that
\be\label{eq5.32+}
y_k\in\mathcal{H}_{i_k}\ \ \text{and}\ \ y_k\cdot e_{i_k}\ge L_{\varepsilon}+3R_{\varepsilon}+M_{\delta}.
\ee
One then gets from~\eqref{branches},~\eqref{eq5.28} and~\eqref{eq5.32}-\eqref{eq5.32+} that $E_k\!:=\!\big\{y\in\mathcal{H}_{i_k}\!:\! |y\cdot e_{i_k}\!-\!y_k\cdot e_{i_k}|\!\le\!R_{\varepsilon}\big\}\!\subset\!\Omega^+_{t_k}$ and $d_{\Omega}\big(\overline{E_k},\Gamma_{t_k}\big)\ge M_{\delta}$. Thus $u(t_k,y)\ge 1-\delta$ for all $y\in\overline{\mathcal{H}_{i_k}}$ such that $|y\cdot e_{i_k}-y_k\cdot e_{i_k}|\le R_{\varepsilon}$. Moreover, $y_k\cdot e_{i_k}\ge R_{\varepsilon}+L_{\varepsilon}$ by~\eqref{eq5.32+} and the comparison principle implies that $u(t,y)\ge v_{i_k,y_k\cdot e_{i_k},R_{\varepsilon}}(t-t_k,y)$ for all $t\ge t_k$ and $y\in\overline{\Omega}$, with the notation~\eqref{3.2}. Lemma~\ref{lemma3.3} then yields
\be\label{eq5.33}\left\{\baa{ll}
u(t,y)\ge 1-2\delta & \displaystyle\text{for all $0\le t-t_k\le T_{\varepsilon,k}=\frac{y_k\cdot e_{i_k}-R_{\varepsilon}-L_{\varepsilon}}{c_f-\varepsilon}$}\vspace{3pt}\\
& \text{and $y\in\overline{\mathcal{H}_{i_k}}$ with $|y\cdot e_{i_k}-y_k\cdot e_{i_k}|\le (c_f-\varepsilon)(t-t_k)$},\vspace{3pt}\\
u(t,y)\ge 1-3\delta & \text{for all $t-t_k\ge T_{\varepsilon,k}$}\vspace{3pt}\\
& \text{and $y\in\overline{\mathcal{H}_{i_k}}$ with $R_{\varepsilon}+L_{\varepsilon}\le y\cdot e_{i_k}\le y_k\cdot e_{i_k} +(c_f-\varepsilon)(t-t_k)$}\eaa\right.
\ee
(notice that the points $y$ considered in the first line automatically belong to $\overline{\mathcal{H}_{i_k}}\cap\overline{\Omega}$ since $y\cdot e_{i_k}\ge y_k\cdot e_{i_k}-(c_f-\epsilon)T_{\epsilon,k}=R_\epsilon+L_\epsilon>L$).

If $T_{\varepsilon}^1-t_k\ge T_{\varepsilon,k}$ for some $k\in\mathbb N$, it follows from~\eqref{eq5.33} that
\be\label{eq5.35}
u(t_k+T_{\varepsilon,k},y)\ge 1-2\delta\ \ \text{ for all $y\in\overline{\mathcal{H}_{i_k}}$ with $L_{\varepsilon}+R_{\varepsilon}\le y\cdot e_{i_k}\le y_k\cdot e_{i_k}+ (c_f-\varepsilon)T_{\varepsilon,k}$}.
\ee
Since $T_{\varepsilon,k}=(y_k\cdot e_{i_k}-R_{\varepsilon}-L_{\varepsilon})/(c_f-\varepsilon)$, one infers from~\eqref{eq5.32+} and~\eqref{eq5.35} the existence of a point $\tilde y_k\in\mathcal{H}_{i_k}$ such that $L_{\varepsilon}+R_{\varepsilon}\le \tilde y_k\cdot e_{i_k}\leq L_{\varepsilon}+2R_{\varepsilon}$ and $u(t_k+T_{\varepsilon,k},\tilde y_k)\ge 1-2\delta$. It follows  from \eqref{eq5.29}-\eqref{eq5.30} that $\tilde{y}_k\in\Omega^-_{t_k+T_{\epsilon,k}}$ and $d_\Omega(\tilde y_k,\Gamma_{t_k+T_{\varepsilon,k}})\geq M_\delta$, hence $u(t_k+T_{\varepsilon,k},\tilde y_k)\le \delta$. This contradicts $u(t_k+T_{\varepsilon,k},\tilde y_k)\ge 1-2\delta$.

Therefore, there holds $s_k-t_k\le T^1_{\varepsilon}-t_k<T_{\varepsilon,k}$ for every $k\in\mathbb N$. It follows from~\eqref{eq5.33} that
\be\label{eq5.36}
u(s_k,y)\ge 1-2\delta\ \ \text{for all $y\in\overline{\mathcal{H}_{i_k}}$ such that $|y\cdot e_{i_k}-y_k\cdot e_{i_k}|\le (c_f-\varepsilon)(s_k-t_k)$},
\ee
and these points $y$ belong to $\overline{\mathcal{H}_{i_k}}\cap\overline{\Omega}$ since $y\cdot e_{i_k}\ge y_k\cdot e_{i_k}-(c_f-\varepsilon)(s_k-t_k)\ge L_{\varepsilon}+R_{\varepsilon}>L$ due to $s_k-t_k\le T_{\varepsilon,k}=(y_k\cdot e_{i_k}-L_{\varepsilon}-R_{\varepsilon})/(c_f-\varepsilon)$. For each $k\in\N$, since $z_k\in\Gamma_{s_k}$, one infers from~\eqref{eq1.4}-\eqref{eq1.5} the existence of $y'_k\in\Omega_{s_k}^-$ such that $d_{\Omega}(z_k,y'_k)\le r_{M_{\delta}}$ and $d_{\Omega}(y'_k,\Gamma_{s_k})\ge M_{\delta}$, hence
\be\label{eq:5.46}
u(s_k,y'_k)\le \delta.
\ee
Since $d_{\Omega}(x_k,y_k)\le r_{M_{\delta}+\overline{R}_{\varepsilon}}$ and $d_{\Omega}(x_k,z_k)\le (c_f-2\varepsilon)(s_k-t_k)$, one has
$$d_{\Omega}(y_k,y'_k)\le r_{M_{\delta}+\overline{R}_{\varepsilon}}+(c_f-2\varepsilon)(s_k-t_k)+r_{M_{\delta}}\le (c_f-\varepsilon)(s_k-t_k)\ \ \text{for $k$ large enough.}$$
Then \eqref{eq5.36} implies that $y'_k\in\mathcal{H}_{i_k}$ and $u(s_k,y'_k)\ge 1-2\delta$ for $k$ large enough. This contradicts~\eqref{eq:5.46}. Hence the claim~\eqref{5.34} has been proved.
\vskip 0.2cm

Secondly, we claim that
\be\label{5.42}
\liminf_{T^2_{\varepsilon}\le t<s,\ |t-s|\rightarrow +\infty} \frac{d_{\Omega}(\Gamma_{t},\Gamma_{s})}{|t-s|}\ge c_f-2\varepsilon.
\ee
Assume by contradiction that there exist two sequences $(t_k)_{k\in\mathbb{N}}$ and $(s_k)_{k\in\mathbb{N}}$ such that $T^2_{\varepsilon}\le t_k<s_k$, $s_k-t_k\rightarrow +\infty$ as $k\rightarrow +\infty$ and $d_\Omega(\Gamma_{t_k},\Gamma_{s_k})<(c_f-2\varepsilon)(s_k-t_k)$ for all~$k\in\mathbb{N}$. By definition of the distance, there exist two sequences $(x_k)_{k\in\mathbb N}$ and $(z_k)_{k\in\mathbb N}$ such that $x_k\in\Gamma_{t_k}$, $z_k\in\Gamma_{s_k}$ and $d_{\Omega}(x_k,z_k)<(c_f-2\varepsilon)(s_k-t_k)$ for all $k\in\mathbb{N}$. As in the proof of~\eqref{5.34}, properties~\eqref{eq5.29}-\eqref{eq5.30} imply that, for each $k\in\N$, there is $i_k\in\{1,\cdots,m\}$ satisfying $x_k\in\Gamma_{t_k}\cap \mathcal{H}_{i_k}$ and~\eqref{eq5.31}, and then by \eqref{eq1.4}-\eqref{eq1.5} there is $y_k\in\Omega^+_{t_k}$ satisfying~\eqref{eq5.32}. As above,~\eqref{eq5.33} then holds.

On the other hand, for each $k\in\N$, since $z_k\in\Gamma_{s_k}$, it follows from~\eqref{eq1.4}-\eqref{eq1.5} that there exists~$y'_k\in\Omega_{s_k}^-$ such that $d_{\Omega}(z_k,y'_k)\le r_{M_{\delta}}$ and $d_{\Omega}(y'_k,\Gamma_{s_k})\ge M_{\delta}$. This further implies that
\be\label{eq:5.57}
u(s_k,y'_k)\le \delta.
\ee
Moreover, since $x_k$, $y_k\in\mathcal{H}_{i_k}$, since $d_{\Omega}(x_k,y_k)\le r_{M_{\delta}+\overline{R_{\varepsilon}}}$ and since $d_{\Omega}(x_k,z_k)<(c_f-2\varepsilon)(s_k-t_k)$, one infers that
\be\label{eq.5.57+}
d_{\Omega}(y_k,y'_k)<r_{M_{\delta}+\overline{R}_{\varepsilon}}+(c_f-2\varepsilon)(s_k-t_k)+r_{M_{\delta}}\le\Big(c_f-\frac{3\varepsilon}{2}\Big)(s_k-t_k)<(c_f-\epsilon)(s_k-t_k)
\ee
for $k$ large enough. If $s_k-t_k\le T_{\varepsilon,k}=(y_k\cdot e_{i_k}-R_{\varepsilon}-L_{\varepsilon})/(c_f-\varepsilon)$ for a sequence of $k$'s, then
$$y'_k\cdot e_{i_k}>y_k\cdot e_{i_k}-(c_f-3\epsilon/2)(s_k-t_k)\ge R_\epsilon+L_\epsilon>L$$
and $y'_k\in\mathcal{H}_{i_k}$ for these large enough $k$'s, while~\eqref{eq5.33} implies that~$u(s_k,y)\ge1-2\delta$ for all $y\in\overline{\mathcal{H}_{i_k}}$ with $|y\cdot e_{i_k}-y_k\cdot e_{i_k}|\le (c_f-\varepsilon)(s_k-t_k)$, hence
\be\label{usky'k}
u(s_k,y'_k)\ge 1-2\delta.
\ee
This contradicts~\eqref{eq:5.57}. Therefore, $s_k-t_k>T_{\varepsilon,k}$ for all $k$ large enough and it follows from~\eqref{eq5.33} that $u(s_k,y)\ge 1-3\delta$ for all $y\in\overline{\mathcal{H}_{i_k}}$ such that $R_{\varepsilon}+L_{\varepsilon}\le y\cdot e_{i_k}\le y_k\cdot e_{i_k} +(c_f-\varepsilon)(s_k-t_k)$. By remembering that $y_k\in\mathcal{H}_{i_k}$ together with~\eqref{eq5.41-} and~\eqref{eq.5.57+}, one then gets that $u(s_k,y'_k)\ge 1-3\delta$ for $k$ large enough, whether $y'_k$ be in $\overline{\mathcal{H}_{i_k}}$ or in $\overline{B(0,L)}$ or in another $\overline{\mathcal{H}_j}$ with $j\neq i_k$. As a consequence, one has contradicted~\eqref{eq:5.57} and the proof of the claim~\eqref{5.42} is thereby complete.
\vskip 0.2cm

Thirdly, we prove that
\be\label{liminf4}
\liminf_{|t-s|\rightarrow +\infty} \frac{d_\Omega(\Gamma_t,\Gamma_s)}{|t-s|}\ge c_f-3\varepsilon.
\ee
Assume by contradiction that there exist two sequences $(t_k)_{k\in\mathbb{N}}$ and $(s_k)_{k\in\mathbb{N}}$ of real numbers such that $t_k<s_k$, $s_k-t_k\rightarrow +\infty$ as $k\rightarrow +\infty$ and
\be\label{eq:5.60}
d_{\Omega}(\Gamma_{t_k},\Gamma_{s_k})<(c_f-3\varepsilon)(s_k-t_k)\ \  \text{for all $k\in\mathbb{N}$}.
\ee
Then six cases may occur up to extraction of a subsequence, namely, case~1: $t_k<s_k\le T_{\varepsilon}^1$ for all $k\in\N$; case~2: $t_k< T_{\varepsilon}^1< s_k< T_{\varepsilon}^2$ for all $k\in\N$; case~3: $t_k< T_{\varepsilon}^1<T_{\varepsilon}^2\le s_k$ for all $k\in\N$; case~4: $ T_{\varepsilon}^1 \le t_k<s_k\le T_{\varepsilon}^2$ for all $k\in\N$; case~5: $T_{\varepsilon}^1\le t_k\le T_{\varepsilon}^2< s_k$ for all $k\in\N$; case~6: $T_{\varepsilon}^2< t_k<s_k$ for all $k\in\N$. In fact, cases~1 and~6 can not happen for large~$k$ by~\eqref{5.34} and~\eqref{5.42}. Furthermore, since $s_k-t_k\to+\infty$ as $k\to +\infty$, case~4 is ruled out immediately.

Now we consider case~2. In this case, one has $t_k\to-\infty$ as $k\to+\infty$ and it follows from~\eqref{5.34} that $d_{\Omega}(\Gamma_{t_k},\Gamma_{T_{\varepsilon}^1})\ge (c_f-5\varepsilon/2) (T_{\varepsilon}^1-t_k)$ for large $k$. Since $|s_k-T^1_\epsilon|\le T^2_\epsilon-T^1_\epsilon$ for all $k$, property~\eqref{eq+2.19} (which also holds similarly to~\cite[Lemma~3 and Remark~3]{GH} and~\cite[Propositions~1.2 and~4.2]{HR2} in domains with multiple branches) yields the existence of a constant $M>0$ such that $d_{\Omega}(\Gamma_{t_k},\Gamma_{s_k})\ge d_{\Omega}(\Gamma_{t_k},\Gamma_{T_{\varepsilon}^1})-M$ for all $k\in\N$, hence $d_{\Omega}(\Gamma_{t_k},\Gamma_{s_k})\ge(c_f-3\varepsilon) (s_k-t_k)$ for large~$k$, a contradiction with~\eqref{eq:5.60}. Similarly, one can reach a contradiction with~\eqref{eq:5.60} in case~5.

It remains to handle case~3. We assume without loss of generality that $t_k\rightarrow -\infty$ and~$s_k\rightarrow+\infty$ as $k\rightarrow+\infty$ (otherwise, by decreasing $T_{\varepsilon}^1$ and increasing $T_{\varepsilon}^2$, one can reduce case~3 to case~2 or case~5). It follows from \eqref{eq:5.60} that there are two sequences $(x_k)_{k\in\mathbb N}$ and~$(z_k)_{k\in\mathbb N}$ such that
\be\label{xkzk}
x_k\in\Gamma_{t_k},\ \ z_k\in\Gamma_{s_k}\ \hbox{ and }\ d_{\Omega}(x_k,z_k)<(c_f-3\varepsilon)(s_k-t_k)
\ee
for every $k\in\N$. Correspondingly, as in the proofs of~\eqref{5.34} and~\eqref{5.42}, there are two sequences~$(y_k)_{k\in\mathbb N}$ and $(y_k')_{k\in\mathbb N}$ such that $x_k\in\mathcal{H}_{i_k}$ for some $i_k\in\{1,\cdots,m\}$,
\be\label{yky'k}\left\{\baa{l}
y_k\in\Omega_{t_k}^+\cap \mathcal{H}_{i_k},\ \ d_{\Omega}(x_k,y_k)\le r_{M_{\delta}+\overline{R}_{\varepsilon}},\ \ d_{\Omega}(y_k,\Gamma_{t_k})\ge M_{\delta}+\overline{R}_{\varepsilon},\vspace{3pt}\\
y_k'\in\Omega_{s_k}^-,\ \ d_{\Omega}(z_k,y'_k)\le r_{M_{\delta}},\ \ d_{\Omega}(y'_k,\Gamma_{s_k})\ge M_{\delta},\ \ u(s_k,y'_k)\le\delta,\eaa\right.
\ee
and~\eqref{eq5.33} holds. As above, if $s_k-t_k\le T_{\varepsilon,k}=(y_k\cdot e_{i_k}-R_\epsilon-L_\epsilon)/(c_f-\epsilon)$ for a sequence of indices~$k$'s, one infers as in the proof of~\eqref{usky'k} that, for these large $k$'s, $y'_k\in\mathcal{H}_{i_k}$ and~$u(s_k,y'_k)\ge1-2\delta$,  contradicting the inequality $u(s_k,y'_k)\le\delta$ in~\eqref{yky'k}. Therefore,
$$s_k-t_k>T_{\varepsilon,k}=\frac{y_k\cdot e_{i_k}-R_\epsilon-L_\epsilon}{c_f-\epsilon},$$
for all $k$ large enough, and~\eqref{eq5.33} implies that
\be\label{usky}
u(s_k,y)\ge 1-3\delta\text{ for all $y\in\overline{\mathcal{H}_{i_k}}$ with $R_{\varepsilon}+L_{\varepsilon}\le y\cdot e_{i_k}\le y_k\cdot e_{i_k} +(c_f-\varepsilon)(s_k-t_k)$}.
\ee
Now, since $t_k\to-\infty$ and $s_k\to+\infty$ as $k\to+\infty$, Lemma~\ref{lemma5.6} together with~\eqref{xkzk}-\eqref{yky'k} implies that $\lim_{k\to+\infty}|x_k|=\lim_{k\to+\infty}|z_k|=+\infty$ and $\lim_{k\to+\infty}|y_k|=\lim_{k\to+\infty}|y'_k|=+\infty$. In particular, for all $k$ large enough, there is $j_k\in\{1,\cdots,m\}$ such that $y'_k\in\mathcal{H}_{j_k}$. In order to reach a contradiction and complete the proof, we shall consider two cases, up to extraction of a subsequence: either $j_k=i_k$ for all $k\in\N$, or $j_k\neq i_k$ for all $k\in\N$. Since
$$d_{\Omega}(y_k,y'_k)\le d_{\Omega}(y_k,x_k)+d_{\Omega}(x_k,z_k)+d_{\Omega}(z_k,y'_k)<r_{M_\delta+\overline{R_\epsilon}}+(c_f-3\epsilon)(s_k-t_k)+r_{M_\delta},$$
the former case yields
$$R_\epsilon+L_\epsilon\le y'_k\cdot e_{i_k}\le y_k\cdot e_{i_k}+(c_f-\epsilon)(s_k-t_k)$$
for large $k$ and $u(s_k,y'_k)\ge1-3\delta$ by~\eqref{usky}, contradicting the inequality $u(s_k,y'_k)\le\delta$ in~\eqref{yky'k}. Consider now the case where $j_k\neq i_k$ for all $k\in\N$ (that is, roughly spea\-king, the front goes from a branch $\mathcal{H}_{i_k}$ at time $t_k$ to another branch $\mathcal{H}_{j_k}$ at time $s_k$). Since $\lim_{k\to+\infty}|y_k|=\lim_{k\to+\infty}|y'_k|=+\infty$, it then follows from~\eqref{branches} that there is a constant $\Lambda>0$ such that
$$d_\Omega(y_k,y'_k)\ge y_k\cdot e_{i_k}+y'_k\cdot e_{j_k}-\Lambda\ \hbox{ for all }k\in\N.$$
Therefore,
\be\label{y'kejk}
y'_k\cdot e_{j_k}\le d_\Omega(y_k,y'_k)-y_k\cdot e_{i_k}+\Lambda<r_{M_\delta+\overline{R_\epsilon}}+(c_f-3\epsilon)(s_k-t_k)+r_{M_\delta}-y_k\cdot e_{i_k}+\Lambda
\ee
for all $k\in\N$. Let now $(X_i)_{1\le i\le m}$ be some points (independent of the indices~$k$) such that~$X_i\in\mathcal{H}_i$ and $X_i\cdot e_i=R_\epsilon+L_\epsilon$ for each $i\in\{1,\cdots,m\}$. It follows from~\eqref{eq5.29}-\eqref{eq5.30} that $X_i\in\Omega^-_{T^1_\epsilon}$ and $d_\Omega(X_i,\Gamma_{T^1_\epsilon})\ge M_\delta$, hence $u(T^1_\epsilon,X_i)\le\delta$ for each $i\in\{1,\cdots,m\}$. On the other hand, if~$T^1_\epsilon-t_k\ge T_{\epsilon,k}$ for some $k\in\N$, then~\eqref{eq5.33} implies that $u(T^1_\epsilon,X_{i_k})\ge1-3\delta$, leading to a contradiction. Therefore, $T^1_\epsilon-t_k<T_{\epsilon,k}=(y_k\cdot e_{i_k}-R_\epsilon-L_\epsilon)/(c_f-\epsilon)$ for all $k\in\N$. Together with~\eqref{y'kejk}, one infers that
$$\baa{rcl}
y'_k\cdot e_{j_k} & < & r_{M_\delta+\overline{R_\epsilon}}+(c_f-3\epsilon)(s_k-t_k)+r_{M_\delta}-R_\epsilon-L_\epsilon-(c_f-\epsilon)(T^1_\epsilon-t_k)+\Lambda\vspace{3pt}\\
& = & (c_f-3\epsilon)s_k+2\epsilon t_k+r_{M_\delta+\overline{R_\epsilon}}+r_{M_\delta}-R_\epsilon-L_\epsilon-(c_f-\epsilon)T^1_\epsilon+\Lambda\vspace{3pt}\\
& < & L_\epsilon+2R_\epsilon+(c_f-\epsilon)(s_k-T^2_\epsilon)\eaa $$
for all $k$ large enough, since $s_k\to+\infty$ and $t_k\to-\infty$. Finally,~\eqref{eq5.41-} then yields $u(s_k,y'_k)\ge1-3\delta$ for all $k$ large enough, contradicting the inequality $u(s_k,y'_k)\le\delta$ in~\eqref{yky'k}. Consequently, case~3 is ruled out too.

As a conclusion,~\eqref{liminf4} holds for any $\epsilon\in(0,c_f/3)$ and the proof of~\eqref{5.29} is thereby complete.
\vskip 0.3cm

{\it Step 3: the upper estimate \eqref{5.30}.} We first claim that
\be\label{eq:5.68-}
\limsup_{t<s\le T^1_{\varepsilon},\ |t-s|\rightarrow +\infty} \frac{d_\Omega(\Gamma_{t},\Gamma_{s})}{|t-s|}\le c_f+2\varepsilon.
\ee
Assume by contradiction that there exist two sequences $(t_k)_{k\in\mathbb{N}}$ and $(s_k)_{k\in\mathbb{N}}$ such that $t_k<s_k\le T^1_{\varepsilon}$, $s_k-t_k\rightarrow +\infty$ as $k\rightarrow +\infty$ and
\be\label{3.6}
d(\Gamma_{t_k},\Gamma_{s_k})>(c_f+2\varepsilon)(s_k-t_k)\ \ \text{for all $k\in\mathbb{N}$}.
\ee

It follows from~\eqref{branches} and~\eqref{eq5.30} that, for every $i\in\{1,\cdots,m\}$ and $k\in\N$, either $\mathcal{H}_i\cap \Gamma_{t_k}\neq \emptyset$ or~$\mathcal{H}_i\cap\Omega\subset \Omega^-_{t_k}$. Furthermore, for every $k\in\N$, there is at least one integer $i$ such that~$\mathcal{H}_i\cap\Gamma_{t_k}\neq\emptyset$ (otherwise, $\Omega^-_{t_k}$ would be the whole domain $\Omega$). Consider now some integers $k\in\N$ and $i\in\{1,\cdots,m\}$ such that
$$\Gamma_{t_k}\cap\mathcal{H}_i\neq \emptyset,$$
in other words $\xi_{i,t_k,1}\in\R$. Then there is $x_k\in\mathcal{H}_i\cap\Gamma_{t_k}$ such that $x_k\cdot e_i=\xi_{i,t_k,1}$ and by~\eqref{eq1.4}-\eqref{eq1.5} there is $y_k\in\Omega^+_{t_k}$ such that $d_{\Omega}(x_k,y_k)\le r_{M_{\delta}+\overline{R}_{\varepsilon}}$ and $d_{\Omega}(y_k,\Gamma_{t_k})\ge M_{\delta}+\overline{R}_{\varepsilon}$. As in Step~2, it follows that~\eqref{eq5.32+}-\eqref{eq5.33} and~\eqref{eq5.36} hold with $i=i_k$, together with~$s_k-t_k\le T^1_\epsilon-t_k<T_{\epsilon,k}=(y_k\cdot e_i-R_\epsilon-L_\epsilon)/(c_f-\epsilon)$. Since $d_{\Omega}(x_k,y_k)\le r_{M_{\delta}+\overline{R}_{\varepsilon}}$, property~\eqref{eq5.36} especially implies that
$$u(s_k,x)\ge 1-2\delta\ \hbox{ for all }x\in\overline{\mathcal{H}_i}\hbox{ with }\xi_{i,t_k,1}-r_{M_\delta}-M_{\delta}\le x\cdot e_i\le \xi_{i,t_k,1}+M_{\delta}+r_{M_\delta}$$
if $k$ is large enough (so that $(c_f-\epsilon)(s_k\!-\!t_k)\!>\!M_\delta\!+\!r_{M_\delta}\!+\!r_{M_\delta+\overline{R_\epsilon}}$), hence $\{x\in\mathcal{H}_i\!:\! x\cdot e_i\!=\!\xi_{i,t_k,1}\}\!\subset\!\Omega^+_{s_k}$. Using~\eqref{eq5.29}-\eqref{eq5.30}, we then get that
\be\label{eq:5.70-}
\Gamma_{s_k}\cap\big\{x\in\mathcal{H}_i: L_{\varepsilon}+3R_{\varepsilon}+M_\delta+r_{M_{\delta}+\overline{R_\varepsilon}}<x\cdot e_i<\xi_{i,t_k,1}\big\}\neq \emptyset
\ee
if $k$ is large enough. Together with~\eqref{3.6}, one infers that
\be\label{eq:5.70}
\xi_{i,t_k,1}-(c_f+2\varepsilon)(s_k-t_k)>L_{\varepsilon}+3R_{\varepsilon}+M_\delta+r_{M_{\delta}+\overline{R_\varepsilon}}
\ee
for every $i\in\{1,\cdots,m\}$ and $k$ (large enough) with $\mathcal{H}_i\cap\Gamma_{t_k}\neq \emptyset$. Furthermore, if $\mathcal{H}_i\cap\Gamma_{t_k}=\emptyset$ (that is, $n_{i,t_k}=0$), then~\eqref{eq:5.70} holds immediately since $\xi_{i,t_k,1}=+\infty$ in this case by convention. In other words,~\eqref{eq:5.70} holds for all $k$ large enough and for all $i\in\{1,\cdots,m\}$.

On the other hand one has $\Omega\cap\big(B(0,L)\cup\cup_{i=1}^m\big\{x\in\mathcal{H}_i: x\cdot e_i<\xi_{i,t_k,1}\big\}\big)\subset\Omega^-_{t_k}$ for every $k\in\N$ by~\eqref{interfaces} and~\eqref{eq5.30} (remember especially that $\mathcal{H}_i\cap\Omega\subset\Omega^-_{t_k}$ and $\xi_{i,t_k,1}=+\infty$ if $\mathcal{H}_i\cap\Gamma_{t_k}=\emptyset$). Having in mind~\eqref{xiit1}, set for each $k\in\N$
$$\rho_k=\min_{1\le i\le m}\xi_{i,t_k,1}-M_\delta>L_\epsilon+3R_\epsilon+r_{M_\delta+\overline{R_\epsilon}}>R_\epsilon+L$$
(notice that $\rho_k$ is a real number since there is at least one of integer $i$ such that $\mathcal{H}_i\cap\Gamma_{t_k}\neq\emptyset$ and then $\xi_{i,t_k,1}\in\R$). Therefore, $u(t_k,y)\le \delta$ for all $y\in\overline{\Omega}\cap\big(\overline{B(0,L)}\cup\cup_{i=1}^m\big\{y\in\overline{\mathcal{H}_i}: y\cdot e_i\le \rho_k\big\}\big)$. The comparison principle then implies that $u(t_k+t,y)\le \widetilde{w}_{\rho_k}(t,y)$ for all $t\ge 0$ and $y\in\overline\Omega$, with the notation~\eqref{defwtildeR}. Notice that~\eqref{eq:5.70} also yields $s_k-t_k\le (\rho_k-R_{\varepsilon}-L)/(c_f+\varepsilon)$ for all large~$k$. By Lemma~\ref{lemma5.3+}, it then follows that, for all large $k$,
\be\label{eq:5.72}
u(s_k,y)\le3\delta\ \text{for all $y\in\overline{\Omega}\cap\!\Big(\overline{B(0,L)}\cup\bigcup_{i=1}^m\big\{y\in\overline{\mathcal{H}_i}: y\cdot e_i\!\le\!\rho_k\!-\!R_{\varepsilon}\!-\!(c_f\!+\!\varepsilon)(s_k\!-\!t_k)\big\}\!\Big)$}.
\ee

Let, for each $k\in\N$, $i_k\in\{1,\cdots,m\}$ be an integer such that $\rho_k=\xi_{i_k,t_k,1}-M_{\delta}$. By \eqref{eq:5.70-}, for each $k$ large enough, there is $z_k\in\Gamma_{s_k}\cap\mathcal{H}_{i_k}$ such that $L_{\varepsilon}+3R_{\varepsilon}+M_{\delta}+r_{M_\delta+\overline{R_\epsilon}}<z_k\cdot e_{i_k}<\xi_{i_k,t_k,1}$ and, by \eqref{eq1.4}-\eqref{eq1.5}, there is $y'_{k}\in\Omega^+_{s_k}$ such that $d_{\Omega}(z_k,y'_k)\le r_{M_{\delta}}\,(\le r_{M_\delta+\overline{R_\epsilon}})$ and $d_{\Omega}(y'_{k},\Gamma_{s_k})\ge M_{\delta}$. In particular, there holds
\be\label{eq:5.73}
u(s_k,y'_k)\ge 1-\delta.
\ee
On the other hand, by \eqref{3.6}, one has $L+r_{M_\delta}\le L_{\varepsilon}+r_{M_\delta+\overline{R_\epsilon}}<z_k\cdot e_{i_k}\le \xi_{i_k,t_k,1}-(c_f+3\varepsilon/2)(s_k-t_k)$ for all $k$ large enough, while $d_{\Omega}(z_k,y'_k)\le r_{M_{\delta}}$ implies that $y'_k\in\mathcal{H}_{i_k}$ and
$$y'_k\cdot e_i\le z_k\cdot e_{i_k}+r_{M_{\delta}}\le \xi_{i_k,t_k,1}-\Big(c_f+\frac{3\varepsilon}{2}\Big)(s_k-t_k)+r_{M_{\delta}}\le\rho_k-R_\epsilon-(c_f+\varepsilon)(s_k-t_k)$$
for all large $k$. By~\eqref{eq:5.72}, one then gets that $u(s_k,y'_k)\le 3\delta$, which contradicts~\eqref{eq:5.73}. This completes the proof of the claim \eqref{eq:5.68-}.
\vskip 0.2cm

Secondly, we claim that
\be\label{e+2}
\limsup_{T^2_{\varepsilon}\le t<s,\ |t-s|\rightarrow +\infty} \frac{d(\Gamma_{t},\Gamma_{s})}{|t-s|}\le c_f+2\varepsilon.
\ee
Assume by contradiction that there exist two sequences $(t_k)_{k\in\mathbb{N}}$ and $(s_k)_{k\in\mathbb{N}}$ such that $T^2_{\varepsilon}\le t_k<s_k$, $s_k-t_k\rightarrow +\infty$ as $k\rightarrow +\infty$ and
$$d(\Gamma_{t_k},\Gamma_{s_k})>(c_f+2\varepsilon)(s_k-t_k) \text{ for all $k\in\mathbb{N}$}.$$
As above, \eqref{eq5.30} implies that, for each $k\in\N$, there is $i_k\in \{1,\cdots,m\}$ such that $\Gamma_{s_k}\cap \mathcal{H}_{i_k}\neq \emptyset$ (otherwise, $\Omega^+_{s_k}$ would be the whole domain $\Omega$). We then claim that
\be\label{xiiksk1}
\Gamma_{t_k}\cap\big\{x\in\mathcal{H}_{i_k}: L_{\varepsilon}+3R_{\varepsilon}+M_{\delta}+r_{M_{\delta}+ \overline{R}_{\varepsilon}}<x\cdot e_{i_k}<\xi_{{i_k},s_k,1}\big\}\neq \emptyset.
\ee
Assume not. Then $(\Omega\cap B(0,D_{\varepsilon}))\cup\big\{x\in\mathcal{H}_{i_k}: L_{\varepsilon}+3R_{\varepsilon}+M_{\delta}+r_{M_{\delta}+ \overline{R}_{\varepsilon}}<x\cdot e_{i_k}<\xi_{{i_k},s_k,1}\big\}\subset \Omega^+_{t_k}$ by~\eqref{eq5.29}-\eqref{eq5.30}, hence $u(t_k,x)\ge 1-\delta$ for all $x\in\overline{\mathcal{H}_{i_k}}$ with $\xi_{{i_k},s_k,1}-M_{\delta}-2R_{\varepsilon}\le x\cdot e_{i_k}\le \xi_{{i_k},s_k,1}-M_{\delta}$ (notice also that $\xi_{i_k,s_k,1}-M_\delta-2R_\epsilon\ge L_\epsilon>L$ by~\eqref{xiit1}). The comparison principle yields
$$u(t_k+t,x)\ge v_{i_k,\xi_{i_k,s_k,1}-M_{\delta}-R_\epsilon,R_{\varepsilon}}(t,x)\ \text{ for all $t\ge 0$ and $x\in\overline{\Omega}$}.$$
Since $\xi_{i_k,s_k,1}-r_{M_\delta}>L_\epsilon+3R_\epsilon+M_\delta+r_{M_\delta+\overline{R_\epsilon}}-r_{M_\delta}\ge R_\epsilon+L_\epsilon>L$ by~\eqref{xiit1} and since $s_k-t_k\rightarrow +\infty$ as $k\to+\infty$, Lemma \ref{lemma3.3} implies in particular that, for all large $k$,
\be\label{eq:5.75}
u(s_k,x)\ge1-3\delta\ \  \text{for all $x\in\overline{\mathcal{H}_{i_k}}$ such that $\xi_{i_k,s_k,1}-r_{M_{\delta}}\le x\cdot e_{i_k}\le \xi_{{i_k},s_k,1}+r_{M_{\delta}}$}.
\ee
On the other hand, for $z_k\in\Gamma_{s_k}\cap\mathcal{H}_{i_k}$ such that $z_k\cdot e_{i_k}=\xi_{{i_k},s_k,1}$, there exists $y'_k\in\Omega^-_{s_k}$ such that $d_{\Omega}(z_k,y'_k)\le r_{M_{\delta}}$ and  $d(y'_k,\Gamma_{s_k})\ge M_{\delta}$. Thus, $u(s_k,y'_k)\le \delta$, whereas $|y'_k\cdot e_{i_k}-\xi_{{i_k},s_k,1}|\le r_{M_{\delta}}$ and $y'_k\in\mathcal{H}_{i_k}$ since $\xi_{i_k,s_k,1}-r_{M_\delta}>L$. That contradicts~\eqref{eq:5.75}. Therefore,~\eqref{xiiksk1} holds.

Since $d(\Gamma_{t_k},\Gamma_{s_k})>(c_f+2\varepsilon)(s_k-t_k)$, property~\eqref{xiiksk1} implies that
\be\label{xiiksk1bis}
\xi_{{i_k},s_k,1}-(c_f+2\varepsilon)(s_k-t_k)>L_{\varepsilon}+3R_{\varepsilon}+M_{\delta}+r_{M_{\delta}+ \overline{R_{\varepsilon}}}
\ee
and either $E_k:=\big\{x\in\mathcal{H}_{i_k}: |x\cdot e_{i_k}-\xi_{{i_k},s_k,1}|\le (c_f+2\varepsilon)(s_k-t_k)\big\}\subset \Omega^+_{t_k}$ or $E_k\subset\Omega^-_{t_k}$. In the former case, one has $u(t_k,x)\ge1-\delta$ for large $k$ and for all $x\in\overline{\mathcal{H}_{i_k}}$ with $\xi_{{i_k},s_k,1}-M_\delta-2R_\epsilon\le x\cdot e_{i_k}\le\xi_{{i_k},s_k,1}-M_\delta$ and one gets a contradiction as in the previous paragraph. Consider now the latter case, namely $E_k\subset\Omega^-_{t_k}$. Hence $u(t_k,x)\le \delta$ for all $x\in\overline{\mathcal{H}_{i_k}}$ such that $|x\cdot e_{i_k}-\xi_{{i_k},s_k,1}|\le (c_f+2\varepsilon)(s_k-t_k)-M_{\delta}$. Notice that
$$\rho_k:=(c_f+2\epsilon)(s_k-t_k)-M_\delta>R_\epsilon$$
for large $k$, that $\xi_{i_k,s_k,1}\ge\rho_k+L_\epsilon$ by~\eqref{xiiksk1bis}, and that $s_k-t_k\le(\rho_k-R_\epsilon)/(c_f+\epsilon)$ for large $k$. Therefore, the comparison principle and Lemma~\ref{lemma5.2} applied with $R=\rho_k$ and $l=\xi_{i_k,s_k,1}$ imply that, for all $k$ large enough,
\be\label{eq:5.76}\baa{l}
u(s_k,x)\le w_{i_k,\xi_{i_k,s_k,1},\rho_k}(s_k-t_k,x)\le2\delta\vspace{3pt}\\
\qquad\qquad\qquad\qquad\text{for all $x\in\overline{\mathcal{H}_{i_k}}$ with $|x\cdot e_{i_k}-\xi_{{i_k},s_k,1}|\le \varepsilon(s_k-t_k)-M_{\delta}-R_{\varepsilon}$}.\eaa
\ee
However, for $z_k\in\Gamma_{s_k}\cap\mathcal{H}_{i_k}$ such that $z_k\cdot e_{i_k}=\xi_{{i_k},s_k,1}$, there exists $y'_k\in\Omega^+_{s_k}$ such that $d_{\Omega}(z_k,y'_k)\le r_{M_{\delta}}$ and $d(y'_k,\Gamma_{s_k})\ge M_{\delta}$. Hence $u(s_k,y'_k)\ge 1-\delta$, whereas $|y_k'\cdot e_{i_k}-\xi_{{i_k},s_k,1}|\le r_{M_{\delta}}\le \varepsilon(s_k-t_k)-M_{\delta}-R_{\varepsilon}$ for large $k$ and $y'_k\in\mathcal{H}_{i_k}$ since $\xi_{i_k,s_k,1}-r_{M_\delta}>L$. That contradicts \eqref{eq:5.76}. As a consequence, both cases $E_k\subset\Omega^+_{t_k}$ and $E_k\subset\Omega^-_{t_k}$ are ruled out and the proof of the claim \eqref{e+2} is complete.
\vskip 0.2cm

Thirdly, we prove that
\be\label{limsup5}
\limsup_{|t-s|\rightarrow +\infty} \frac{d_{\Omega}(\Gamma_t,\Gamma_s)}{|t-s|}\le c_f+3\varepsilon.
\ee
Assume by contradiction that there exist two sequences $(t_k)_{k\in\mathbb{N}}$ and $(s_k)_{k\in\mathbb{N}}$ such that $t_k<s_k$, $s_k-t_k\rightarrow +\infty$ as $k\rightarrow +\infty$ and
\be\label{eq:5.77}
d_{\Omega}(\Gamma_{t_k},\Gamma_{s_k})>(c_f+3\varepsilon)(s_k-t_k) \text{ for all $k\in\mathbb{N}$}.
\ee
Then, six cases may occur up to extraction of a subsequence, namely, case~1: $t_k<s_k\le T_{\varepsilon}^1$ for all~$k\in\N$; case~2: $t_k< T_{\varepsilon}^1< s_k< T_{\varepsilon}^2$ for all $k\in\N$; case~3: $t_k< T_{\varepsilon}^1<T_{\varepsilon}^2\le s_k$ for all $k\in\N$; case~4: $ T_{\varepsilon}^1 \le t_k<s_k\le T_{\varepsilon}^2$ for all $k\in\N$; case~5: $T_{\varepsilon}^1\le t_k\le T_{\varepsilon}^2< s_k$ for all $k\in\N$; case~6:~$T_{\varepsilon}^2< t_k<s_k$ for all $k\in\N$. In fact, we have already shown case~1 and case~6 are impossible for all~$k$ large enough by \eqref{eq:5.68-} and \eqref{e+2}. Case~4 is clearly not true since $s_k-t_k\to+\infty$ as~$k\to+\infty$.

Now we consider case~2. In this case, one has $t_k\to-\infty$ as $k\to+\infty$ and it follows from~\eqref{eq:5.68-} that $d_{\Omega}(\Gamma_{t_k},\Gamma_{T_{\varepsilon}^1})\le (c_f+5\varepsilon/2) (T_{\varepsilon}^1-t_k)$ for large $k$. Since $|s_k-T^1_\epsilon|\le T^2_\epsilon-T^1_\epsilon$ for all $k$, property~\eqref{eq+2.19} yields the existence of a constant $M>0$ such that $d_{\Omega}(\Gamma_{t_k},\Gamma_{s_k})\le d_{\Omega}(\Gamma_{t_k},\Gamma_{T_{\varepsilon}^1})+M$ for all $k\in\N$, hence $d_{\Omega}(\Gamma_{t_k},\Gamma_{s_k})\le(c_f+3\varepsilon) (s_k-t_k)$ for large~$k$, a contradiction with~\eqref{eq:5.77}. Similarly, one can reach a contradiction with~\eqref{eq:5.77} in case~5.

It remains to handle case~3. We can assume without loss of generality that $t_k\rightarrow -\infty$ and~$s_k\rightarrow+\infty$ as $k\rightarrow+\infty$ (otherwise, by decreasing $T_{\varepsilon}^1$ and increasing $T_{\varepsilon}^2$, case~3 can be reduced to case~2 or case~5). By~\eqref{eq:5.68-} and~\eqref{e+2}, there exist some sequences $(x_k)_{k\in\N}$, $(x'_k)_{k\in\N}$, $(z_k)_{k\in\N}$ and $(z'_k)_{k\in\N}$ such that $x_k\in\Gamma_{t_k}$, $x'_k\in\Gamma_{T^1_\epsilon}$, $z'_k\in\Gamma_{T^2_\epsilon}$, $z_k\in\Gamma_{s_k}$ together with~$d_\Omega(x_k,x'_k)\le(c_f+5\epsilon/2)(T^1_\epsilon-t_k)$ and $d_\Omega(z_k,z'_k)\le(c_f+5\epsilon/2)(s_k-T^2_\epsilon)$. On the other hand, remembering~\eqref{interfaces}, both $\Gamma_{T^1_\epsilon}$ and $\Gamma_{T^2_\epsilon}$ are bounded. Hence, there is a constant $M>0$ such that $d_{\Omega}(x'_k,z'_k)\le M$ for all $k\in\N$. Finally,
$$\baa{rcl}
d_{\Omega}(\Gamma_{t_k},\Gamma_{s_k})\le d_{\Omega}(x_k,z_k) & \le & d_{\Omega}(x_k,x'_k)+d_{\Omega}(x'_k,z'_k)+d_{\Omega}(z'_k,z_k)\vspace{3pt}\\
& \le & \displaystyle\Big(c_f+\frac{5\epsilon}{2}\Big)(T^1_\epsilon-t_k)+M+\Big(c_f+\frac{5\epsilon}{2}\Big)(s_k-T^2_\epsilon)\vspace{3pt}\\
& \le & (c_f+3\epsilon)(s_k-t_k)\eaa$$
for all large $k$, contradicting~\eqref{eq:5.77}.

As a conclusion,~\eqref{limsup5} holds for every $\epsilon>0$ small enough, implying the desired upper estimate~\eqref{5.30}. Together with~\eqref{5.29}, the proof of Theorem~\ref{th6} is thereby complete.
\end{proof}


\subsection{Proof of Corollary \ref{th5}}

Before going further, we need a lemma ensuring the complete propagation of any front-like solution $u_i$ coming from the branch $\mathcal{H}_i$ under the assumptions on $\Omega$ in Corollary~\ref{th5}.

\begin{lemma}\label{lemma5.3}
There exist $R>0$ and $\mu\in(0,1)$ such that the following holds. For any smooth domain $\Omega$ satisfying the conditions of Corollary~$\ref{th5}$ and for any solution $v$ of the Cauchy problem
\be\label{Cauchy}\left\{\baa{rcll}
v_t & = & \Delta v+f(v), &\text{ for $t>0$, $x\in\overline{\Omega}$},\vspace{3pt}\\
v_\nu & = & 0, &\text{ for $t>0$, $x\in\partial\Omega$},\vspace{3pt}\\
v(0,x) & = & v_0(x)\in[0,1], &\text{ for $x\in\overline{\Omega}$},\eaa\right.
\ee
if there are $i\neq j\in\{1,\cdots,m\}$ and $s\in\R$ such that $v_0\ge1-\mu$ in $B(P_{i,j}(s),R)$, then $v(t,\cdot)\to1$ as $t\to+\infty$ locally uniformly in $\overline{\Omega}$.
\end{lemma}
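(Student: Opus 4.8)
The plan is to show first that $v$ exceeds $1-3\delta$ on larger and larger pieces of $\Omega$, and then to upgrade this to $v\to1$. Fix $\delta$ as in~\eqref{d} and $\varepsilon\in(0,c_f)$; let $R_1<R_2<R_3$, $T$ be the constants of Lemma~\ref{lemma2.1} (whose proof is purely local and hence applies in any smooth domain as soon as the relevant ball lies in it), and $L_\varepsilon>L$, $R_\varepsilon>0$ those of Lemma~\ref{lemma3.3}; all these depend only on $f$ and $N$, and I fix some $l_0\ge L_\varepsilon+R_\varepsilon$ such that, in every branch, the balls of radius $R_3$ centred near the section $\{x\cdot e_k=l_0\}$ of $\mathcal{H}_k$ lie in $\Omega$. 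I would take $\mu:=\delta$ and $R>0$ a large universal number (in particular $R\ge R_3$, $R\ge L+R_3$, and $R$ large enough for the chaining below). If $v_0\ge1-\mu=1-\delta$ on $B(P_{i,j}(s),R)$, then $v_0$ dominates the datum of $v_{P_{i,j}(s),R_1}$ from~\eqref{vrx0}, and since $B(P_{i,j}(s),R_3)\subset\Omega$, Lemma~\ref{lemma2.1} and the comparison principle give $v(T,\cdot)\ge1-\delta$ on $\overline{B(P_{i,j}(s),R_2)}$.

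Next I would push the level $1-\delta$ along the connecting paths to a large ``core''. Since $R\ge R_3$, the balls $B(P_{k,l}(\sigma),R_3)$ lie in $\Omega$ for all $\sigma$ and all $k\ne l$; as the paths overlap in the branches far from the origin, the set $\bigcup_{k\ne l}P_{k,l}(\R)$ is path-connected, contains $P_{i,j}(s)$ and $0$, and enters every branch $\mathcal{H}_k$ with $e_k$-coordinate tending to $+\infty$. Chaining Lemma~\ref{lemma2.1} finitely many times along this set, then inside each (straight) half-cylinder $\mathcal{H}_k$ around the section $\{x\cdot e_k=l_0\}$ and inside $\Omega\cap B(0,L)$, and completing the rims of the sections and the thin parts of the central region by reflecting the radially symmetric sub-solutions of Lemma~\ref{lemma2.1} across the globally $C^{2,\beta}$ Neumann boundary, one gets a finite time $T_*$ (depending on $\Omega$ and $s$) with $v(T_*,\cdot)\ge1-\delta$ on
$$\big(\overline\Omega\cap\overline{B(0,L)}\big)\ \cup\ \bigcup_{k}\big\{x\in\overline{\mathcal{H}_k}:\ L\le x\cdot e_k\le l_0+R_\varepsilon\big\}.$$
Star-shapedness of $\Omega$ with respect to $0$ enters precisely here: it guarantees that this spreading never gets stuck (no blocking), since every point of the core is joined to a neighbourhood of $0$ by a segment contained in $\overline\Omega$, along which the $1-\delta$ region can be pushed by the above local comparisons. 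In particular $v(T_*,\cdot)\ge v_{k,l_0,R_\varepsilon}(0,\cdot)$ on $\overline\Omega$ for every $k$ (notation of~\eqref{3.2}), so, after enlarging $T_*$ if needed, Lemma~\ref{lemma3.3} and the comparison principle give, for all $t\ge0$ and all $k$,
$$v(T_*+t,x)\ge1-3\delta\qquad\text{whenever } x\in\overline{\mathcal{H}_k},\ \ L_\varepsilon+R_\varepsilon\le x\cdot e_k\le l_0+(c_f-\varepsilon)t.$$
As the condition $x\cdot e_k\le l_0+(c_f-\varepsilon)t$ is monotone in $t$, this yields $v(s,x)\ge1-3\delta$ for all large $s$ at each fixed $x$ with $x\cdot e_k\ge L_\varepsilon+R_\varepsilon$. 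On the bounded set $\mathcal{B}:=\big(\overline\Omega\cap\overline{B(0,L)}\big)\cup\bigcup_k\{x\in\overline{\mathcal{H}_k}:L\le x\cdot e_k\le l_0\}$ one instead compares $v$ — which is $\ge1-\delta$ on $\mathcal{B}$ at time $T_*$ and stays $\ge1-3\delta$ on $\partial\mathcal{B}\cap\Omega$ for all $s\ge T_*$ by the previous line — with the solution of~\eqref{eq1.1} on $\mathcal{B}$ having Dirichlet datum $1-3\delta$ on $\partial\mathcal{B}\cap\Omega$, the Neumann condition on $\partial\Omega\cap\mathcal{B}$, and datum $1-3\delta$ at time $T_*$; this solution is nondecreasing in time (since $1-3\delta>\theta_2$ and $f>0$ on $(\theta_2,1)$), so $v(s,\cdot)\ge1-3\delta$ on $\mathcal{B}$ for all $s\ge T_*$. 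Hence for every $\mathcal{R}>0$ there is $\tau_{\mathcal{R}}$ with $v(s,\cdot)\ge1-3\delta$ on $\overline{\Omega\cap B(0,\mathcal{R})}$ for all $s\ge\tau_{\mathcal{R}}$.

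The last step upgrades $1-3\delta$ to $1$. For fixed $\mathcal{R}$, let $w_{\mathcal{R}}$ solve~\eqref{eq1.1} on $\Omega\cap B(0,\mathcal{R})$ with Dirichlet datum $1-3\delta$ on $(\partial B(0,\mathcal{R}))\cap\Omega$, the Neumann condition on $(\partial\Omega)\cap B(0,\mathcal{R})$, and datum $1-3\delta$ at time $\tau_{\mathcal{R}}$; again $w_{\mathcal{R}}$ is nondecreasing in time, and $v\ge w_{\mathcal{R}}$ on $\overline{\Omega\cap B(0,\mathcal{R})}$ for $s\ge\tau_{\mathcal{R}}$, so $\liminf_{t\to+\infty}v(t,\cdot)\ge w_{\mathcal{R},\infty}$ on $\Omega\cap B(0,\mathcal{R})$, where $1-3\delta\le w_{\mathcal{R},\infty}\le1$ is the elliptic solution $w_{\mathcal{R}}(s,\cdot)$ increases to. The $w_{\mathcal{R},\infty}$ are nondecreasing in $\mathcal{R}$, so by elliptic estimates they converge as $\mathcal{R}\to+\infty$ to a classical solution $w_\infty:\overline\Omega\to[1-3\delta,1]$ of $\Delta w_\infty+f(w_\infty)=0$, $(w_\infty)_\nu=0$ on $\partial\Omega$. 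Since $\min_{\overline\Omega}w_\infty\ge1-3\delta>\theta_2$, comparing $w_\infty$ (as a stationary solution of the parabolic problem) from below with the spatially constant sub-solution $\zeta(s)$ solving $\zeta'=f(\zeta)$, $\zeta(0)=\min_{\overline\Omega}w_\infty$, forces $w_\infty\ge\lim_{s\to+\infty}\zeta(s)=1$, i.e. $w_\infty\equiv1$. Therefore $\liminf_{t\to+\infty}v(t,x)\ge\sup_{\mathcal{R}}w_{\mathcal{R},\infty}(x)=1$ for every $x$, and since $v\le1$, parabolic estimates give $v(t,\cdot)\to1$ locally uniformly in $\overline\Omega$.

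The step I expect to be the main obstacle is the ``filling'' of the core in the second paragraph: one has to make precise the reflection of the radially symmetric sub-solutions of Lemma~\ref{lemma2.1} across the curved Neumann boundary, so as to cover the rims of the cross-sections and the thin parts of $\Omega\cap B(0,L)$, and, above all, to exploit the star-shapedness of $\Omega$ with respect to $0$ to certify that this process reaches every fixed compact subset of $\overline\Omega$ without being blocked — this is where that geometric hypothesis is used. (Alternatively, one could argue through the limiting stationary solution $p$ with $\liminf_{t\to+\infty}v\ge p$ obtained by sliding the compactly supported sub-solution of Lemma~\ref{lemmap} along the paths, and then prove $p\equiv1$ by a sliding/scaling argument using star-shapedness, in the spirit of the proof of Corollary~\ref{cor3}.) The remainder is comparison-principle bookkeeping built on Lemmas~\ref{lemma2.1} and~\ref{lemma3.3}.
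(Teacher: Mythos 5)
Your main line of argument has a genuine gap at exactly the step you flag in the last paragraph: the ``filling'' of the core by chaining Lemma~\ref{lemma2.1}. That lemma requires a ball $B(x_0,R_3)\subset\Omega$ at each step; near $\partial\Omega$, in the rims of the cross-sections and in any thin parts of $\Omega\cap B(0,L)$, no such ball exists, and the reflection you invoke across a curved $C^{2,\beta}$ Neumann boundary does not produce an exact sub-solution (it is exact only for a flat boundary). Moreover, your appeal to star-shapedness is incorrect for this purpose: star-shapedness of $\Omega$ with respect to $0$ only guarantees that the segment $[0,y]$ lies in $\overline\Omega$ for $y\in\overline\Omega$, not that this segment has a tubular neighbourhood of radius $R_3$ inside $\Omega$; so it does not certify that the chaining reaches every compact subset. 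In short, the second paragraph is not a proof, and the hypothesis of star-shapedness is being used for something it does not give.

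The paper's proof is precisely the alternative you mention in parentheses, and it sidesteps the problem entirely: it uses the compactly supported radially decreasing stationary sub-solution $\psi$ from the proof of Lemma~\ref{lemmap}, takes $\mu=1-\psi(0)$, and slides $\psi$ (via the strong maximum principle) first along the paths $P_{i,j}$, where the hypothesis $B(P_{i,j}(s),R)\subset\Omega$ gives room, and then \emph{from $0$ outward in every direction} $e\in\mathbb{S}^{N-1}$; star-shapedness enters only to make the Hopf lemma work at a boundary contact point (the radial monotonicity of $\psi$ and the sign of $\nu(x^*)\cdot(x^*-he)$ give the contradiction), exactly as in the proof of~\cite[Theorem~1.12]{BBC}, not as in Corollary~\ref{cor3}, which is a scaling argument for dilated domains. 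This sliding is a pointwise comparison and needs no interior balls of a fixed large radius, which is why it traverses thin regions. Your final ``upgrade to $1$'' paragraph is essentially fine modulo standard care with the infimum (it mirrors the end of the proof of Lemma~\ref{lemmap}), but the argument should be reorganized around the sliding of $\psi$, as the paper does, rather than around chaining Lemma~\ref{lemma2.1}.
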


\begin{proof}
Let $\theta_2\in(0,1)$ be defined as in~\eqref{deftheta12}. From the proof of Lemma~\ref{lemmap}, there exist $R>0$ and a radially decreasing solution $\psi\in C^2(\overline{B(0,R)})$ of~\eqref{eqphi}. Denote
$$\mu=1-\max_{\overline{B(0,R)}}\psi=1-\psi(0)\in(0,1-\theta_2)\subset(0,1)$$
and let us show that Lemma~\ref{lemma5.3} holds with these values of $R>0$ and $\mu\in(0,1)$. Let also~$\overline{\psi}\in C(\R^N)$ be the function defined by $\overline{\psi}=\psi$ in $\overline{B(0,R)}$ and $\overline{\psi}=0$ in $\R^N\setminus\overline{B(0,R)}$.

Consider then any smooth domain $\Omega$ satisfying the conditions of Corollary~\ref{th5} and let $v$ be given as in Lemma~\ref{lemma5.3}. Let $w$ be the solution of the Cauchy problem~\eqref{Cauchy} with initial condition $w(0,x)=\overline{\psi}(x-P_{i,j}(s))$ for $x\in\overline{\Omega}$. Remember also that $B(P_{i,j}(s),R)\subset\Omega$ from the assumptions made on $\Omega$. As in the proof of Lemma~\ref{lemmap}, the maximum principle then implies that $w$ is increasing with respect to $t$ and, from standard parabolic estimates, there is a solution~$w_\infty\in C^2(\overline{\Omega})$ of
$$\left\{\baa{rcl}
\Delta w_{\infty}+f(w_{\infty}) & = & 0\ \text{ in $\overline{\Omega}$},\vspace{3pt}\\
w_\infty & > & 0\ \text{ in $\overline{\Omega}$},\vspace{3pt}\\
(w_{\infty})_\nu & = & 0\ \text{ on $\partial\Omega$,}\eaa\right.$$
such that $w(t,\cdot)\to w_\infty$ in $C^2_{loc}(\overline{\Omega})$ as $t\to+\infty$. Furthermore, $1\ge v(0,\cdot)\ge w(0,\cdot)$ in $\overline{\Omega}$ owing to the definition of $w(0,\cdot)$ and since $1-\mu\ge\psi$ in $\overline{B(0,R)}$. Hence, $0<w(t,x)\le v(t,x)\le1$ for all~$t>0$ and $x\in\overline{\Omega}$. In order to complete the proof of Lemma~\ref{lemma5.3}, it is therefore sufficient to show that $w_\infty\equiv 1$ in $\overline{\Omega}$. 

To show that $w_\infty\equiv1$ in $\overline{\Omega}$, notice first that $w_\infty>\psi(\cdot-P_{i,j}(s))$ in $\overline{B(P_{i,j}(s),R)}\,(\subset\overline{\Omega})$ since~$w$ is increasing in $t$ and both $w_\infty$ and $\psi(\cdot-P_{i,j}(s))$ are continuous in $\overline{B(P_{i,j}(s),R)}$. As in the proof of Lemma~\ref{lemmap}, the sliding method and the strong maximum principle then imply that
$$w_\infty>\psi(\cdot-P_{i,j}(s'))\ \hbox{ in }\overline{B(P_{i,j}(s'),R)}\,(\subset\overline{\Omega})$$
for all $s'\in\R$. From the assumptions made on the paths $P_{i',j'}$ before Corollary~\ref{th5}, it follows from the same arguments that $w_\infty>\psi(\cdot-P_{i,j'}(s'))$ in $\overline{B(P_{i,j'}(s'),R)}$ for all $j'\in\{1,\cdots,m\}\setminus\{i\}$ and for all $s'\in\R$ and finally $w_\infty>\psi(\cdot-P_{i',j'}(s'))$ in $\overline{B(P_{i',j'}(s'),R)}$ for all $i'\neq j'\in\{1,\cdots,m\}$ and for all $s'\in\R$. In particular, one has $w_\infty>\psi$ in $\overline{B(0,R)}$ since by assumption $0\in\cup_{1\le i'\neq j'\le m}P_{i',j'}(\R)$. 

By using now that $\Omega$ is star-shaped with respect to $0$, one then infers from the sliding method exactly as in the proof of~\cite[Theorem~1.12]{BBC} that $w_\infty>\max_{e\in\mathbb{S}^{N-1}}\overline{\psi}(\cdot-he)$ in $\overline{\Omega}$ for all $h\ge0$. As a consequence, $1\ge w_\infty>\psi(0)>\theta_2$ in $\overline{\Omega}$ and one concludes that $w_\infty\equiv 1$ in $\overline{\Omega}$ with the same arguments as for $p_\infty$ at the end of the proof of Lemma~\ref{lemmap}.

The proof of Lemma~\ref{lemma5.3} is thereby complete.
\end{proof}
\vskip 0.3cm

\begin{proof}[Proof of Corollary~\ref{th5}]
Let $R>0$ and $\mu\in(0,1)$ be as in Lemma~\ref{lemma5.3} and let $\Omega$ be any smooth domain with $m\,(\ge2)$ cylindrical branches satisfying the conditions of Corollary~\ref{th5}. Let $i$ be any integer in $\{1,\cdots,m\}$ and let $u_i:\R\times\overline{\Omega}\to(0,1)$ be the time-increasing front-like solution of~\eqref{eq1.1} emanating from the branch $\mathcal{H}_i$, that is, $u_i$ satisfies~\eqref{frontlikei}. In particular, since $\phi(-\infty)=1$, there are $t_0\in\R$ and $s\in\R$ such that
$$u_i(t_0,\cdot)\ge1-\mu\ \hbox{ in }\overline{B(P_{i,j}(s),R)}$$
for all $j\in\{1,\cdots,m\}\setminus\{i\}$. Lemma~\ref{lemma5.3} then implies that $u_i(t,\cdot)\to1$ as $t\to+\infty$ locally uniformly in $\overline{\Omega}$, that is, $u_i$ propagates completely. Corollary~\ref{th5} then follows from Theorems~\ref{th10} and~\ref{th6}.
\end{proof}


\subsection{Proof of Corollary~\ref{cor4}}

Let $\Omega$ be a smooth domain with $m\,(\ge2)$ cylindrical branches in the sense of~\eqref{branches}. Notice first that, from standard parabolic estimates, for each $i\in\{1,\cdots,m\}$, the time-increasing front-like solution $u_i:\R\times\overline{\Omega}\to(0,1)$ of~\eqref{eq1.1} emanating from the branch $\mathcal{H}_i$ (that is, $u_i$ satisfies~\eqref{frontlikei}) converges to a $C^2(\overline{\Omega})$ solution $p_i:\overline{\Omega}\to(0,1]$ of
\be\label{eqpi}\left\{\baa{rcl}
\Delta p_i+f(p_i) & = & 0\ \hbox{ in }\overline{\Omega},\vspace{3pt}\\
(p_i)_\nu & = & 0\ \hbox{ on }\partial\Omega,\vspace{3pt}\\
p_i(x) & \to & 1\ \hbox{ as }|x|\to+\infty\hbox{ with }x\in\overline{\mathcal{H}_i}.\eaa\right.
\ee
From Theorems~\ref{th10} and~\ref{th6}, the proof of Corollary~\ref{cor4} will be complete once one shows that there is $R_0>0$ such that, for any $R\ge R_0$, for any $x_0\in\R^N$ and for any $i\in\{1,\cdots,m\}$, any $C^2(\overline{R\Omega+x_0})$ solution $p_i:\overline{R\Omega+x_0}\to(0,1]$ of~\eqref{eqpi} (with $R\Omega+x_0$ instead of $\Omega$ and $R\mathcal{H}_i+x_0$ instead of $\mathcal{H}_i$) satisfies $p_i\equiv1$ in $\overline{R\Omega+x_0}$.

To do so, assume by way of contradiction that the conclusion does not hold. Then there are an integer $i\in\{1,\cdots,m\}$, a sequence $(R_k)_{k\in\N}$ of positive real numbers converging to $+\infty$, a sequence $(x_k)_{k\in\N}$ in $\R^N$ and a sequence $(p_{i,k})_{k\in\N}$ of classical solutions $p_{i,k}:\overline{R_k\Omega+x_k}\to(0,1]$ of
$$\left\{\baa{rcl}
\Delta p_{i,k}+f(p_{i,k}) & = & 0\ \hbox{ in }\overline{R_k\Omega+x_k},\vspace{3pt}\\
(p_{i,k})_\nu & = & 0\ \hbox{ on }\partial(R_k\Omega+x_k),\vspace{3pt}\\
p_{i,k}(x) & \to & 1\ \hbox{ as }|x|\to+\infty\hbox{ with }x\in\overline{R_k\mathcal{H}_i+x_k},\eaa\right.$$
such that $p_{i,k}\not\equiv1$ in $\overline{R_k\Omega+x_k}$ for each $k\in\N$.

Now, from~\eqref{branches} and the smoothness of $\Omega$, there is $r>0$ such that, for any $y\in\overline{\Omega}$ and for any $r'\in(0,r]$, there is a continuous path $\gamma:[0,+\infty)\to\Omega$ such that
$$y\in\overline{B(\gamma(0),r')},\ B(\gamma(s),r')\subset\Omega\hbox{ for all }s\ge0,\ \lim_{s\to+\infty}|\gamma(s)|=+\infty\hbox{ and }\gamma(s)\in\mathcal{H}_i\text{ for $s$ large}.$$
Furthermore, if $B(y,r)\subset\Omega$, one can take $\gamma(0)=y$. Consider also $R>0$ and a solution $\psi$ of~\eqref{eqphi}, as given in the proof of Lemma~\ref{lemmap}. Observe that $r$ and $R$ are independent of $k$.

Take any $k\in\N$ large enough so that $r\,R_k\ge R$ and consider any point $y\in\overline{R_k\Omega+x_k}$ such that $B(y,R)\subset R_k\Omega+x_k$. There is then a continuous path $\gamma:[0,+\infty)\to\,R_k\Omega+x_k$ such that~$\gamma(0)=y$,~$B(\gamma(s),R)\subset R_k\Omega+x_k$ for all $s\ge0$, $\lim_{s\to+\infty}|\gamma(s)|=+\infty$ and $\gamma(s)\in R_k\mathcal{H}_i+x_k$ for all $s$ large enough. Since $\lim_{|x|\to+\infty,\,x\in\overline{R_k\mathcal{H}_i+x_k}}p_{i,k}(x)=1$ and $\max_{\overline{B(0,R)}}\psi=\psi(0)<1$, there holds~$p_{i,k}>\psi(\cdot-\gamma(s))$ in $\overline{B(\gamma(s),R)}$ for all $s$ large enough. Since $p_{i,k}>0$ in $\overline{\Omega}$ and~$\psi=0$ on $\partial B(0,R)$, the same type of sliding method as in the proof of Lemma~\ref{lemmap} then implies that~$p_{i,k}>\psi(\cdot-\gamma(s))$ in $\overline{B(\gamma(s),R))}$ for all $s\ge0$. As a consequence,
\be\label{pik}
p_{i,k}(y)\ge\psi(y-\gamma(0))=\psi(0)>\theta_2\ \hbox{ for all }y\in\overline{R_k\Omega+x_k}\hbox{ such that }B(y,R)\subset R_k\Omega+x_k.
\ee

On the other hand, since $f>0$ on $(\theta_2,1)$ and since $p_{i,k}\not\equiv1$ in $\overline{R_k\Omega+x_k}$, the strong maximum principle and the Hopf lemma imply that $\inf_{\overline{R_k\Omega+x_k}}p_{i,k}\le\theta_2$. Therefore, for each $k\in\N$, there is a point $y_k\in\overline{R_k\Omega+x_k}$ such that $p_{i,k}(y_k)<(\theta_2+\psi(0))/2$. Together with~\eqref{pik}, one infers that, for each $k\in\N$, there is $z_k\in\partial(R_k\Omega+x_k)$ such that~$|z_k-y_k|<R$. One is then led to a contradiction as in the last two paragraphs of the proof of Corollary~\ref{cor3} (up to replacing the inequalities $p_\infty(\zeta,0,\cdots,0)=p_\infty(X(\zeta))\le\theta_2<\psi(0)$ with~$p_\infty(\zeta,0,\cdots,0)=p_\infty(X(\zeta))\le(\theta_2+\psi(0))/2<\psi(0)$). As a conclusion, the existence of the sequences $(R_k)_{k\in\N}$, $(x_k)_{k\in\N}$ and $(p_{i,k})_{k\in\N}$ together with the integer $i$ is thus ruled out and the proof of Corollary~\ref{cor4} is complete.\hfill$\Box$


\end{document}